\documentclass[11pt]
{amsart}
\usepackage{amsmath,amsfonts,amssymb,amsthm}
\usepackage{amscd}
\usepackage{latexsym}
\usepackage[usenames]{color}
\usepackage{mathrsfs}
\makeindex

\usepackage[left=1in,right=1in,top=1in,bottom=1in]{geometry}

\newtheorem{deff}{Definition}[section]
\newtheorem{lemma}[deff]{Lemma}
\newtheorem{theorem}[deff]{Theorem}
\newtheorem{corollary}[deff]{Corollary}

\newtheorem{proposition}[deff]{Proposition}
\newtheorem{fact}[deff]{Fact}
\newtheorem{em-example}[deff]{Example}
\newtheorem{em-def}[deff]{Definition}        
\newtheorem{em-remark}[deff]{Remark}         
\newtheorem{em-question}[deff]{Question}

\newtheorem{claim}{Claim}

\newenvironment{example}{\begin{em-example} \em }{ \end{em-example}}
\newenvironment{definition}{\begin{em-def} \em  }{ \end{em-def}}
\newenvironment{remark}{\begin{em-remark} \em }{\end{em-remark}}
\newenvironment{question}{\begin{em-question}\em }{\end{em-question}}

\newcommand\DEQ{\hfill $\bigtriangleup$ \medskip}

\def\ker{\mathop{\rm ker}}

\def\:{\nobreak \hskip .1111em\mathpunct {}\nonscript \mkern
-\thinmuskip {:}\hskip .3333emplus.0555em\relax}

\catcode`\@=12
\def\T{{\mathbb T}}

\def\Z{{\mathbb Z}}
\def\N{{\mathbb N}}

\def\R{{\mathbb R}}
\def\Q{{\mathbb Q}}
\def\P{{\mathbb P}}

\def\J{{\mathbb Z}}

\def\cont{\mathfrak c}

\def\cont{\mathfrak{c}}

\newcommand{\IANS}{TAP}
\def\ap{$f_\omega$-productive}

\def\IANS{TAP}

\title[NSS and TAP properties in compact-like groups]{NSS and TAP properties in topological groups close to being compact}
\author[D. Dikranjan]{Dikran Dikranjan}
\author[D. Shakhmatov]{Dmitri Shakhmatov}
\author[J. Sp\v{e}v\'ak]{Jan Sp\v{e}v\'ak}

\begin{document}

\begin{abstract}
We introduce a notion of productivity (summability) of sequences in a topological group $G$, parametrized by a given function $f:\N\to\omega+1$.
The extreme case when $f$ is the function taking constant value
$\omega$ is closely related to the TAP property,   
the weaker version of the well-known property NSS.
We prove that TAP property 
 coincides with
NSS in
locally compact groups, $\omega$-bounded abelian groups
and countably compact minimal abelian groups.
As an application of our results, we provide a negative answer to \cite[Question 11.1]{Sp}.
\end{abstract}

\maketitle

Symbols $\Z$, $\Q$ and $\R$ denote topological groups of integer numbers, rational numbers and real numbers, $\T$ denotes the quotient group $\R/\Z$,
$Z(n)$ is the discrete cyclic group of order $n$. The symbol $e$ denotes the identity element of a (topological) group $G$, $\P$ denotes the set of prime numbers, $\N$ denotes the set of natural numbers, $\omega$ denotes the first infinite ordinal, and $\cont$ denotes the cardinality of the continuum.
An ordinal (in particular, a natural number) is identified with the set of all smaller ordinals.

\section{Preliminaries}

Recall that a sequence $\{a_i:i\in\N\}$ of elements of a topological
group $G$ is called {\em left Cauchy sequence} ({\em right Cauchy
sequence}) provided that for every neighborhood $U$ of the identity
of $G$ there exists $n\in\N$ such that $a_k^{-1}a_l\in U$
(respectively, $a_ka_l^{-1}\in U$) whenever $k,l$ are integers
satisfying $k\geq n$ and $l\geq n$.

\begin{definition}
We say that a topological group $G$ is {\em sequentially Weil complete} provided that every left Cauchy sequence in $G$ is convergent to some element of $G$.
\end{definition}

\begin{proposition}
Let $G$ be a topological group. The following statements are equivalent:
\begin{itemize}
\item[(i)] $G$ is sequentially Weil complete;
\item[(ii)] Every right Cauchy sequence in $G$ converges;
\item[(iii)] Every left Cauchy sequence in $G$ is also a right
Cauchy sequence  in $G$, and $G$ is sequentially closed in its Raikov completion (that is, the completion of $G$ with respect to the two-sided uniformity).
\end{itemize}
\end{proposition}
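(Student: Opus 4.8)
The plan is to exploit the symmetry between the two one-sided uniformities under the inversion map to dispatch the equivalence (i)$\Leftrightarrow$(ii) at once, and then to link (i) with (iii) by identifying the two-sided (Raikov) Cauchy condition with the conjunction ``left Cauchy \emph{and} right Cauchy'' and invoking completeness of the Raikov completion $\widehat{G}$. First I would record the basic observation that $x\mapsto x^{-1}$ carries left Cauchy sequences to right Cauchy sequences and preserves convergence: if $\{a_i\}$ is left Cauchy and $b_i=a_i^{-1}$, then $b_kb_l^{-1}=a_k^{-1}a_l$, so $\{b_i\}$ is right Cauchy, while $a_i\to a$ iff $a_i^{-1}\to a^{-1}$. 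This makes (i)$\Leftrightarrow$(ii) a formality: a right Cauchy $\{a_i\}$ produces a left Cauchy $\{a_i^{-1}\}$, which converges by (i), whence $\{a_i\}$ converges, and the converse direction is identical.

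Next I would isolate the auxiliary fact that every convergent sequence in $G$ is simultaneously left and right Cauchy. If $a_i\to a$, put $c_i=a_ia^{-1}\to e$; then $a_ka_l^{-1}=c_kc_l^{-1}$, and choosing a neighborhood $V$ of $e$ with $VV^{-1}\subseteq U$ shows $\{a_i\}$ is right Cauchy, and symmetrically left Cauchy.

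For the implication (i)$\Rightarrow$(iii): a left Cauchy sequence is convergent by (i), hence right Cauchy by the auxiliary fact, which is the first clause of (iii). For sequential closedness, a sequence from $G$ converging in $\widehat{G}$ is two-sided Cauchy there, hence left Cauchy in $G$; by (i) it converges in $G$, and uniqueness of limits in the Hausdorff space $\widehat{G}$ forces its $\widehat{G}$-limit to lie in $G$. For (iii)$\Rightarrow$(i): a left Cauchy sequence is right Cauchy by the first clause of (iii), hence two-sided Cauchy; completeness of the Raikov completion yields a limit $x\in\widehat{G}$; sequential closedness places $x\in G$; and since $G$ carries the subspace topology from $\widehat{G}$, the convergence already takes place in $G$.

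The only place where the statement carries genuine content, and thus the step I expect to require the most care, is the identification of the two-sided Raikov-uniform Cauchy condition with ``left Cauchy and right Cauchy,'' together with the fact that $\widehat{G}$ is complete precisely for this two-sided uniformity, so that two-sided Cauchy sequences (and not merely one-sided ones) are guaranteed to converge there. Everything else reduces to continuity of the group operations and uniqueness of limits, the latter relying on the standing Hausdorffness assumption that makes $G\hookrightarrow\widehat{G}$ a topological embedding with unique limits.
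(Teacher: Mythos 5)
Your proposal is correct, but there is nothing in the paper to compare it against: the authors state this proposition in the preliminaries \emph{without proof}, immediately after the definition of sequential Weil completeness, so your argument actually fills a gap that the paper leaves to the reader. All three pillars of your proof check out. First, the computation $b_kb_l^{-1}=a_k^{-1}a_l$ for $b_i=a_i^{-1}$ shows that inversion interchanges the left and right Cauchy conditions while preserving convergence, which disposes of (i)$\Leftrightarrow$(ii). Second, your auxiliary fact that a convergent sequence is both left and right Cauchy is proved correctly via $c_i=a_ia^{-1}\to e$ and the choice $VV^{-1}\subseteq U$ (with the symmetric variant for the left condition). Third, the point you flag as carrying the real content is indeed the crux and is true for the right reason: a basic entourage of the two-sided (Raikov) uniformity is an intersection of a basic left entourage with a basic right entourage, so for \emph{sequences} (unlike for general filters, where the Weil and Raikov completions genuinely diverge) Cauchy for the join is the same as left Cauchy and right Cauchy simultaneously, with the threshold taken as the maximum of the two one-sided thresholds. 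Combined with the standard facts that $G$ sits in $\widehat{G}$ as a dense topological subgroup (so all Cauchy and convergence notions restrict correctly along the trace uniformities) and that $\widehat{G}$ is complete for its two-sided uniformity, your chain (i)$\Rightarrow$(iii)$\Rightarrow$(i) goes through, with Hausdorffness guaranteeing uniqueness of limits where you invoke it.
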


\begin{lemma}\label{G:with:no:conv:seq:is:seq:Weil:complete}
Every topological group without nontrivial convergent sequences is sequentially Weil complete.
\end{lemma}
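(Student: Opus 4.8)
The plan is to show that, in a group $G$ without nontrivial convergent sequences, every left Cauchy sequence is in fact \emph{eventually constant}, hence trivially convergent. This collapses the whole statement to a single application of the hypothesis to an auxiliary sequence formed from consecutive quotients.

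First I would fix an arbitrary left Cauchy sequence $\{a_i:i\in\N\}$ in $G$ and introduce the sequence of consecutive quotients $c_i=a_i^{-1}a_{i+1}$. Applying the left Cauchy condition to the pair of indices $k=i$ and $l=i+1$ says precisely that for every neighborhood $U$ of the identity $e$ there is $n\in\N$ with $c_i=a_i^{-1}a_{i+1}\in U$ for all $i\geq n$; that is, $c_i\to e$ in $G$.

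Next I would invoke the hypothesis: since $\{c_i:i\in\N\}$ is a convergent sequence and $G$ has no nontrivial convergent sequences, $\{c_i:i\in\N\}$ must be eventually constant. Because sequential limits in $G$ are unique, the eventual constant value coincides with the limit $e$, so there is $m\in\N$ with $c_i=e$ for all $i\geq m$. Unwinding the definition of $c_i$ yields $a_i=a_{i+1}$ for every $i\geq m$, and therefore $a_i=a_m$ for all $i\geq m$.

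Thus $\{a_i:i\in\N\}$ is eventually constant and converges to $a_m\in G$, which is exactly sequential Weil completeness. The only step requiring care is the passage from ``eventually constant'' to ``eventually equal to $e$'', which rests on uniqueness of sequential limits; apart from this the argument is essentially immediate, so I do not anticipate a genuine obstacle.
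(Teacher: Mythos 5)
Your proof is correct and follows essentially the same route as the paper's: both form the sequence of consecutive quotients $a_i^{-1}a_{i+1}$, observe it converges to $e$ by the left Cauchy condition, and use the hypothesis to conclude it is eventually $e$, making the original sequence eventually constant. Your extra remark about uniqueness of limits merely makes explicit a step the paper leaves implicit.
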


\begin{proof}
Let $G$ be a topological group without nontrivial convergent sequences, and assume that $\{a_i:i\in\N\}$ is a left Cauchy
sequence in $G$. Then for every neighborhood $U$ of the identity $e$
of $G$ there exists $n\in\N$ such that $a_i^{-1}a_{i+1}\in U$ for every integer $i$ such that $i\ge n$. Thus, the sequence
$\{a_i^{-1}a_{i+1}:i\in\N\}$ converges to $e$. By our assumption,
there exists $n\in\N$ such that $a_i^{-1}a_{i+1}=e$ for every $i\ge n$.
Therefore, $a_i=a_n$ for every $i\ge n$, and so the sequence $\{a_i:i\in\N\}$ converges to $a_n$.
\end{proof}

\section{(Cauchy) productive sequences}

\begin{definition}
A sequence $\{b_n:n\in\N\}$ of elements of a topological group $G$ will be called:
\begin{itemize}
\item[(i)]
{\em Cauchy productive\/} provided that the sequence
$\left\{\prod_{i=0}^n b_i:n\in\N\/\right\}$ is left Cauchy;
\item[(ii)]
{\em productive\/} provided that the
sequence $\left\{\prod_{i=0}^n b_i:n\in\N\/\right\}$ converges to some element
of $G$.
\end{itemize}
\end{definition}

When the group $G$ is abelian, we will use additive notations:
\begin{definition}
A sequence $\{b_n:n\in\N\}$ of elements of an abelian topological group $G$ will be called:
\begin{itemize}
\item[(i)]
{\em Cauchy summable\/} provided that the sequence
$\left\{\sum_{i=0}^n b_i:n\in\N\/\right\}$ is a Cauchy sequence (in any of the three coinciding uniformities on $G$);
\item[(ii)]
{\em summable\/} provided that the
sequence $\left\{\sum_{i=0}^n b_i:n\in\N\/\right\}$ converges to some element
of $G$.
\end{itemize}
\end{definition}

\begin{lemma}
\label{productive:and:Cauchy:productive:coincide}
A sequence of elements of a sequentially Weil complete (abelian) group is Cauchy productive (summable) if and only if it is productive (summable).
\end{lemma}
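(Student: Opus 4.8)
The plan is to unwind both definitions and observe that the asserted equivalence is nothing more than the defining property of sequential Weil completeness applied to the sequence of partial products. Write $p_n=\prod_{i=0}^n b_i$ for the $n$-th partial product of the given sequence $\{b_n:n\in\N\}$. By definition, $\{b_n:n\in\N\}$ is Cauchy productive precisely when $\{p_n:n\in\N\}$ is left Cauchy, and it is productive precisely when $\{p_n:n\in\N\}$ converges to some element of $G$. Thus the whole statement collapses to the single assertion that $\{p_n:n\in\N\}$ is left Cauchy if and only if it converges in $G$.

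For the implication productive $\Rightarrow$ Cauchy productive, I would verify the elementary fact, valid in \emph{every} topological group, that a convergent sequence is left Cauchy. If $p_n\to g$, then given a neighborhood $U$ of the identity $e$, I would choose a neighborhood $V$ of $e$ with $V^{-1}V\subseteq U$ and an index $n$ with $p_k\in gV$ for all $k\ge n$; writing $p_k=gv_1$ and $p_l=gv_2$ with $v_1,v_2\in V$ then gives $p_k^{-1}p_l=v_1^{-1}v_2\in V^{-1}V\subseteq U$ whenever $k,l\ge n$, so $\{p_n:n\in\N\}$ is left Cauchy. This direction does not use completeness at all.

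For the converse, Cauchy productive $\Rightarrow$ productive, I would invoke sequential Weil completeness directly: by hypothesis $\{p_n:n\in\N\}$ is left Cauchy, and sequential Weil completeness says precisely that every left Cauchy sequence converges to an element of $G$; hence $\{p_n:n\in\N\}$ converges and the sequence $\{b_n:n\in\N\}$ is productive. The abelian (summable) version is literally the same argument rewritten additively: Cauchy summable means the partial sums $\sum_{i=0}^n b_i$ form a Cauchy sequence, summable means they converge, and in an abelian group the three uniformities coincide so ``left Cauchy'' is just ``Cauchy.'' There is no genuine obstacle here; the only point requiring a line of verification is the standard ``convergent implies left Cauchy'' fact, and the essential content of the lemma is simply the definition of sequential Weil completeness.
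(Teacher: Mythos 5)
Your proof is correct and matches the intended argument: the paper states this lemma without proof, treating it as immediate from the definitions, and your unwinding --- the standard fact that a convergent sequence is left Cauchy in any topological group (via a neighborhood $V$ with $V^{-1}V\subseteq U$), plus sequential Weil completeness giving the converse --- is precisely that routine verification. Nothing is missing.
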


\begin{lemma}
\label{Cauchy:productive:criterion}
For a sequence $B=\{b_n:n\in\N\}$ of elements of a topological group $G$ the following conditions are equivalent:
\begin{itemize}
\item[(i)] $B$ is Cauchy productive;
\item[(ii)] for every open neighborhood $U$ of $e$ there exists $n\in\N$ such that $\prod_{i=l+1}^m b_i\in U$ whenever $m> l\ge n$.
\end{itemize}
\end{lemma}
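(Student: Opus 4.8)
The plan is to unwind both conditions in terms of the partial products $p_n=\prod_{i=0}^n b_i$ and to exploit the telescoping identity that converts consecutive partial products into the finite products appearing in (ii). By definition, $B$ is Cauchy productive exactly when the sequence $\{p_n:n\in\N\}$ is left Cauchy, i.e.\ for every neighborhood $U$ of $e$ there is $n\in\N$ with $p_k^{-1}p_l\in U$ whenever $k,l\ge n$. The crucial algebraic observation is that for $m>l$ one has $p_m=p_l\cdot\prod_{i=l+1}^m b_i$, hence $p_l^{-1}p_m=\prod_{i=l+1}^m b_i$; for $l>m$ the expression $p_l^{-1}p_m$ is the inverse of such a product, and for $l=m$ it equals $e$.

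For the implication (i)$\Rightarrow$(ii) I would argue directly. Given an open neighborhood $U$ of $e$, left Cauchyness of $\{p_n:n\in\N\}$ supplies an $n\in\N$ such that $p_k^{-1}p_l\in U$ for all $k,l\ge n$. Specializing to $k=l$ and $l=m$ with $m>l\ge n$ and invoking the telescoping identity yields $\prod_{i=l+1}^m b_i=p_l^{-1}p_m\in U$, which is precisely (ii).

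For the converse (ii)$\Rightarrow$(i) the only delicate point is that the left Cauchy condition is symmetric in the index pair $(k,l)$, whereas (ii) only controls the ``forward'' products $\prod_{i=l+1}^m b_i$ with $m>l$. I would resolve this by passing to a symmetric neighborhood: given a neighborhood $U$ of $e$, continuity of inversion provides a neighborhood $V$ of $e$ with $V=V^{-1}\subseteq U$. Applying (ii) to $V$ gives an $n\in\N$ such that $\prod_{i=l+1}^m b_i\in V$ whenever $m>l\ge n$. Then for arbitrary $k,l\ge n$ I check the three cases using the telescoping identity: if $k=l$ then $p_k^{-1}p_l=e\in U$; if $l>k$ then $p_k^{-1}p_l=\prod_{i=k+1}^l b_i\in V\subseteq U$; and if $k>l$ then $p_k^{-1}p_l=\left(\prod_{i=l+1}^k b_i\right)^{-1}\in V^{-1}=V\subseteq U$. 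Hence $\{p_n:n\in\N\}$ is left Cauchy, so $B$ is Cauchy productive.

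The main (and essentially only) obstacle is this bookkeeping asymmetry between the two-sided index condition defining left Cauchy sequences and the one-sided product condition in (ii); once it is handled by choosing a symmetric $V$, both implications follow purely formally from the telescoping identity, with no reliance on any completeness or commutativity hypotheses on $G$.
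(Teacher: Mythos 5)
Your proof is correct and follows essentially the same route as the paper: the telescoping identity $p_l^{-1}p_m=\prod_{i=l+1}^m b_i$ for (i)$\Rightarrow$(ii), and symmetrization plus a three-case check ($k=l$, $l>k$, $k>l$) for the converse; the only cosmetic difference is that you pass to a symmetric $V\subseteq U$ where the paper assumes $U$ itself symmetric without loss of generality.
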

\begin{proof}
(i)$\to$(ii) Suppose that $B$ is Cauchy productive. Let $U$ be an
open neighborhood of $e$. Since the sequence $\{\prod_{i=0}^n
b_i:n\in\N\/\}$ is left Cauchy, there exists $n\in\N$
such that
$$
\left(\prod_{i=0}^l b_i\right)^{-1}
\left(\prod_{i=0}^m b_i\right)
=
\prod_{i=l+1}^m b_i
\in U
$$
whenever $m>l\ge n$.

(ii)$\to$(i) Let $U$ be an open neighborhood of $e$ in $G$. Without
loss of generality, we may assume that $U$ is symmetric. Let $n\in
\N$ be as in (ii), and choose $l,m\in\N$ so that $l\ge n$ and $m\ge
n$. Let $p_l=\prod_{i=0}^l b_i$ and $p_m=\prod_{i=0}^m b_i$. If
$m>l$,
 then $p_l^{-1}p_m\in U$ by (ii). If $l>m$, then
$p_l^{-1}p_m=\left(p_m^{-1} p_l\right)^{-1}\in U^{-1}=U$ by (ii).
Finally, $p_l^{-1}p_m=e\in U$ when $l=m$.
\end{proof}

\begin{corollary}
Every Cauchy productive sequence in a topological group $G$ converges to the identity $e$ of $G$.
\end{corollary}

\begin{corollary}\label{no:sequences:no:Cauchy:prod:sequences}
A topological group without nontrivial convergent sequences has no
nontrivial
Cauchy productive sequences.
\end{corollary}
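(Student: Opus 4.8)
The plan is to prove the contrapositive in spirit by combining the two earlier corollaries. The statement to establish is that if a topological group $G$ has no nontrivial convergent sequences, then it has no nontrivial Cauchy productive sequences. The key observation is that the immediately preceding corollary already tells us that \emph{every} Cauchy productive sequence converges to the identity $e$. This does most of the work for us: convergence to $e$ is exactly the kind of convergent-sequence phenomenon that the hypothesis forbids, provided the convergence is nontrivial.

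First I would let $B=\{b_n:n\in\N\}$ be an arbitrary Cauchy productive sequence in $G$ and apply the preceding corollary to conclude that $B$ converges to $e$. Next I would invoke the hypothesis that $G$ has no nontrivial convergent sequences: since $B$ converges, it must be eventually constant, and because its limit is $e$, the eventual constant value must be $e$. Thus there is some $n\in\N$ with $b_i=e$ for all $i\ge n$. This is precisely what it means for $B$ to be a trivial Cauchy productive sequence (in the sense that all but finitely many of its terms equal the identity). Hence $G$ admits no nontrivial Cauchy productive sequence, as required.

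The only genuinely delicate point is pinning down the intended meaning of ``nontrivial,'' and making sure the two occurrences of that word (one attached to convergent sequences, one attached to Cauchy productive sequences) are used consistently. A convergent sequence is nontrivial when it is not eventually constant, and the natural parallel notion for a Cauchy productive sequence is one in which infinitely many terms $b_i$ differ from $e$. Under these readings the argument above is a clean two-line deduction, so I do not expect any real obstacle; the proof is essentially an immediate corollary of the previous corollary together with an unwinding of definitions. Should ``nontrivial'' for Cauchy productive sequences instead be interpreted via the partial products rather than the terms themselves, I would simply note that the partial product sequence stabilizes once the terms do, so the two interpretations agree here.
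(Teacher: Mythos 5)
Your proof is correct and follows exactly the route the paper intends: the paper gives no explicit argument because the statement is an immediate consequence of the preceding corollary (every Cauchy productive sequence converges to the identity) combined with the hypothesis forcing that convergent sequence to be eventually constant, hence eventually equal to $e$. Your care about the two meanings of ``nontrivial'' is sensible but, as you note, both readings coincide here, so nothing further is needed.
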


\begin{example}
\label{symmetric:group:example}
 Let  $G = S(\N)$  be the symmetric group with the usual pointwise convergence topology. For each $n\in\N$, denote by $b_n$ the transposition of $n$ and $n +1$. Let $B=\{b_n:n\in\N\}\subseteq G$.
Then $G$ is a metric group with the following properties:
\begin{itemize}
\item[(i)] $G$ is two-sided (Raikov) complete;
\item[(ii)] $G$ is not Weil complete;
\item[(iii)] the sequence $B$ 
converges to the identity element $id$ of $S(\N)$;
\item[(iv)] the sequence $B$ is Cauchy productive;
\item[(v)] the sequence $B$ is not productive.
 \end{itemize}
 Indeed, (i) and (ii) are well known \cite[Chapter 7]{DPS}.
 To check (iii) it suffices to note that for every $m$ and for every $n>m$ one has $b_n(k) = k = id(k)$
 for all $k \leq m$. This proves that $b_n$ converges to the neutral element $id$ of $G$.

To prove (iv), 
let $\pi_n= b_0 b_1 \ldots b_n$. 
Given $n\in\N$ and integers $m$ and $l$ with $m>l\ge n$,
one can easily see that $b_{l+1} b_{l+2}\dots b_m$ does not move any $i\le k$,
which combined with 
Lemma \ref{Cauchy:productive:criterion}
yields that $B$ is Cauchy productive.

Let us prove (v).
Note that $\pi_n$ is the
 cycle $0\to 1\to 2\to\dots\to n\to n+1\to 0$ 
with support $\{0,1, \ldots, n, n+1\}$.
 To see that $\pi_n$ is not convergent, assume by contradiction that $\pi_n \to \sigma \in G$.
 Since $\sigma$ is surjective, there exists $k \in \N$ such that  $1 = \sigma(k)$. Then by the definition
 of the pointwise convergence topology of $G$, there exists $n_0 > k$ such that for all $n>n_0$
 one has
\begin{equation}
\label{eq:dagger}
 \pi_n(k) = \sigma(k)=1.
\end{equation}
On the other hand, the definition of $\pi_n$ implies $\pi_n(k) = k +1  \ne 1$ for $n>n_0$,  which contradicts \eqref{eq:dagger}.   Hence, $B$ is not
productive.
\end{example}

\begin{lemma}\label{sum:of:lin:is:lim:of:sum}
If $\{a_i:i\in\N\}$ and $\{b_i:i\in\N\}$ are (Cauchy) summable sequences in
an abelian topological group $G$, then the sequence $\{a_i+b_i:i\in\N\}$ is also
(Cauchy) summable.
\end{lemma}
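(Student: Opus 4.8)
The plan is to reduce the assertion to continuity of the group operations by exploiting commutativity. Set $S_n=\sum_{i=0}^n a_i$ and $T_n=\sum_{i=0}^n b_i$ for each $n\in\N$. Since $G$ is abelian, the partial sums of the termwise sum rearrange as $\sum_{i=0}^n(a_i+b_i)=S_n+T_n$, so the entire lemma becomes a statement about the sequence $\{S_n+T_n:n\in\N\}$ given information about $\{S_n:n\in\N\}$ and $\{T_n:n\in\N\}$.

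For the summable case, the hypothesis says that $S_n\to S$ and $T_n\to T$ for some $S,T\in G$. Continuity of the addition map $G\times G\to G$ then yields $S_n+T_n\to S+T$, which is exactly the summability of $\{a_i+b_i:i\in\N\}$.

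For the Cauchy summable case I would argue directly from the definition of a Cauchy sequence in the (two-sided, hence by abelianness unique) uniformity. Fix a neighborhood $U$ of $0$ and, using continuity of addition at the origin, choose a neighborhood $V$ of $0$ with $V+V\subseteq U$. Because $\{S_n:n\in\N\}$ and $\{T_n:n\in\N\}$ are Cauchy, pick $N\in\N$ so large that $S_m-S_n\in V$ and $T_m-T_n\in V$ whenever $m,n\ge N$. Then, using commutativity to regroup, $(S_m+T_m)-(S_n+T_n)=(S_m-S_n)+(T_m-T_n)\in V+V\subseteq U$ for all $m,n\ge N$, so $\{S_n+T_n:n\in\N\}$ is Cauchy.

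The argument is essentially routine; the only points requiring care are the rearrangement of the finite partial sums (legitimate precisely because $G$ is abelian) and the standard choice of a ``halving'' neighborhood $V$ with $V+V\subseteq U$ in the Cauchy case. I do not expect any genuine obstacle to arise.
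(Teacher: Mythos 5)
Your proof is correct and follows essentially the same route as the paper: for the Cauchy case you use the identical halving trick (choose $V$ with $V+V\subseteq U$ and regroup by commutativity), the only cosmetic difference being that you work directly with differences of partial sums $S_m-S_n$ rather than citing the paper's block-sum criterion (Lemma \ref{Cauchy:productive:criterion}), which amounts to the same thing since $S_m-S_n=\sum_{i=n+1}^m a_i$. The summable case via continuity of addition is exactly the argument the paper leaves to the reader.
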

\begin{proof}
Suppose that $\{a_i:i\in\N\}$ and $\{b_i:i\in\N\}$ are Cauchy
summable. Let $U$ be an open neighborhood of $0$ in $G$. Choose an
open neighborhood of $0$ such that $V+V\subseteq U$. By Lemma
\ref{Cauchy:productive:criterion}, we can find $n\in\N$ such that
$\sum_{i=l+1}^m a_i\in V$ and $\sum_{i=l+1}^m b_i\in V$ whenever $m$
and $l$ are integers satisfying $m>l\ge n$. Since $G$ is abelian,
$\sum_{i=l+1}^m (a_i+b_i)= \sum_{i=l+1}^m a_i + \sum_{i=l+1}^m b_i
\in V+V\subseteq U$. Applying Lemma
\ref{Cauchy:productive:criterion} once again, we conclude that the
sequence $\{a_i+b_i:i\in\N\}$ is Cauchy summable.

A similar proof for summable sequences is left to the reader.
\end{proof}

\section{$f$-(Cauchy) productive sequences, for a given function $f:\N\to\omega+1$}

\begin{definition}
$f_\omega:\N\to\omega+1$ is the function defined by
$f_\omega(n)=\omega$ for all $n\in\N$, and $f_1:\N\to\omega+1$ is
the function defined by $f_1(n)=1$ for all $n\in\N$.
\end{definition}

\begin{definition}
Let 
$f:\N\to\omega+1$ and $g:\N\to\omega+1$ be functions.
\begin{itemize}
\item[(i)] $f\le g$ means that $f(n)\le g(n)$ for every $n\in\N$. 
\item[(ii)]
$f\le^* g$ means that there exists $i\in\N$ such that $f(n)\le g(n)$
for all integers $n$ satisfying $n\ge i$.
\end{itemize}
\end{definition}

For a function $z:\N\to\Z$ we denote by $|z|$ the function $f:\N\to\N$ defined
by $f(n)=|z(n)|$ for all $n\in\N$. 

\begin{definition}\label{def:ap-seqence} Let $f:\N\to \omega+1$ be a function.
We say that a faithfully indexed sequence
$\{a_n:n\in\N\}$
 of elements of
a
topological group $G$ is:
\begin{itemize}
\item[(i)]
{\em $f$-Cauchy productive ($f$-productive) in $G$\/}
provided that
the sequence $\{a_n^{z(n)}:n\in\N\}$
is
Cauchy productive
(respectively, productive) in $G$
for every function $z:\N\to\Z$ such that 
$|z|\le f$;
\item[(ii)]
{\em $f^\star$-Cauchy productive ($f^\star$-productive) in $G$\/}
provided that
the sequence $\{a_n^{z(n)}:n\in\N\}$
is
Cauchy productive
(respectively, productive) in $G$
for every function $z:\N\to\N$ such that 
$z\le f$.
\end{itemize}
\end{definition}

When the group $G$ is abelian, we will use the natural variant of the above notions with ``productive'' replaced by ``summable''.

\begin{lemma}\label{if:f<g}
Let $f:\N\to\omega+1$ and $g:\N\to\omega+1$ be functions such that
$g\le^*f$.
Then
 every $f$-productive sequence ($f$-Cauchy productive sequence)
is also a $g$-productive sequence (a $g$-Cauchy productive sequence).
\end{lemma}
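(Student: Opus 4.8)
The plan is to reduce the statement to a single, fixed choice of exponent function and then exploit the fact that altering only finitely many exponents changes the partial products by nothing more than a fixed group element. First I would fix an $f$-productive sequence $\{a_n:n\in\N\}$ (which is in particular faithfully indexed, so the same holds for the conclusion) and an arbitrary function $z:\N\to\Z$ with $|z|\le g$; the goal is to prove that $\{a_n^{z(n)}:n\in\N\}$ is productive. Using the hypothesis $g\le^* f$, I would choose $i\in\N$ with $g(n)\le f(n)$ for all $n\ge i$, and define an auxiliary function $z':\N\to\Z$ by setting $z'(n)=0$ for $n<i$ and $z'(n)=z(n)$ for $n\ge i$. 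Then $|z'|\le f$ holds everywhere: for $n\ge i$ one has $|z'(n)|=|z(n)|\le g(n)\le f(n)$, while $|z'(n)|=0\le f(n)$ otherwise. Hence $f$-productivity of $\{a_n:n\in\N\}$ yields that $\{a_n^{z'(n)}:n\in\N\}$ is productive.

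The key step is to compare the two sequences of partial products. Writing $\pi_n=\prod_{k=0}^n a_k^{z(k)}$ and $\pi'_n=\prod_{k=0}^n a_k^{z'(k)}$, the exponents $z$ and $z'$ agree at all indices $\ge i$, so the tails $\prod_{k=i}^n a_k^{z(k)}$ and $\prod_{k=i}^n a_k^{z'(k)}$ coincide for every $n\ge i$. Factoring off the first $i$ terms, one obtains $\pi_n=D\,\pi'_n$ for all $n\ge i$, where $D=\left(\prod_{k=0}^{i-1}a_k^{z(k)}\right)\left(\prod_{k=0}^{i-1}a_k^{z'(k)}\right)^{-1}$ is a fixed element of $G$. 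Since left translation $x\mapsto Dx$ is a homeomorphism of $G$, convergence of $\{\pi'_n:n\in\N\}$ to some $\ell\in G$ forces $\{\pi_n:n\in\N\}$ to converge to $D\ell$; thus $\{a_n^{z(n)}:n\in\N\}$ is productive, as required.

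For the Cauchy productive assertion the same factorization works even more cleanly: from $\pi_n=D\pi'_n$ one computes $\pi_k^{-1}\pi_l=(\pi'_k)^{-1}\pi'_l$ for all $k,l\ge i$, so the left Cauchy condition for $\{\pi_n:n\in\N\}$ is literally inherited from that for $\{\pi'_n:n\in\N\}$, the fixed factor $D$ cancelling outright. The abelian (summable) variant is identical, with products replaced by sums and left translation by the map $x\mapsto D+x$. I do not anticipate any serious obstacle here; the only point demanding care is the passage from $g\le f$ to the weaker $g\le^* f$, and this is precisely what the truncation defining $z'$ accomplishes—zeroing out the initial $i$ exponents preserves $|z'|\le f$ while perturbing the partial products by only a constant left factor.
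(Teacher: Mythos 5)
Your proof is correct. The paper states this lemma without any proof at all (it is treated as routine), and your argument---truncating $z$ to a function $z'$ that vanishes on the finite initial segment where $g\le f$ may fail, then absorbing the discrepancy between the two partial-product sequences into the fixed left factor $D$, which is harmless both for convergence (left translation is a homeomorphism) and for the left Cauchy condition (where it cancels outright)---is precisely the verification the authors leave to the reader, carried out correctly even in the non-abelian setting since the split $\prod_{k=0}^{n}a_k^{z(k)}=\bigl(\prod_{k<i}a_k^{z(k)}\bigr)\bigl(\prod_{k=i}^{n}a_k^{z(k)}\bigr)$ respects the ordering of the product.
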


Our next proposition demonstrates that the notions of  $f^\star$-productive sequences and $f^\star$-Cauchy productive sequences may only lead to something new in non-commutative groups.

\begin{proposition}\label{fstar:is:f:for:abel} Let $f:\N\to \omega+1$ be a function. A
sequence  $\{a_n:n\in\N\}$
 of elements of an abelian topological group $G$
is $f$-summable ($f$-Cauchy summable) if and only if it is $f^\star$-summable
($f^\star$-Cauchy summable, respectively).
\end{proposition}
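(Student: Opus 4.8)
The plan is to prove the two implications separately, with the forward direction being essentially trivial and all the content concentrated in the converse. For the implication ``$f$-summable $\Rightarrow$ $f^\star$-summable'', I would simply observe that the defining condition of $f^\star$-summability is a special case of that of $f$-summability: any $z:\N\to\N$ with $z\le f$, viewed as a map into $\Z$, satisfies $|z|=z\le f$, so the sequence $\{z(n)a_n\}$ is already summable by the $f$-summability hypothesis. The Cauchy version is identical. No difficulty arises here.

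For the converse ``$f^\star$-summable $\Rightarrow$ $f$-summable'', I would fix an arbitrary $z:\N\to\Z$ with $|z|\le f$ and split it into positive and negative parts. Setting $z^+(n)=\max\{z(n),0\}$ and $z^-(n)=\max\{-z(n),0\}$ for each $n\in\N$ gives two functions $\N\to\N$ with $z=z^+-z^-$ pointwise; moreover $0\le z^+(n)\le|z(n)|$ and $0\le z^-(n)\le|z(n)|$, so $z^+\le|z|\le f$ and $z^-\le|z|\le f$. One point to keep in mind is exactly this last check, namely that $z^+$ and $z^-$ are nonnegative and dominated by $f$, so that they are admissible functions for the $f^\star$-condition; the definitions $z^\pm=\max\{\pm z,0\}$ guarantee it. Applying the $f^\star$-summability hypothesis to $z^+$ and to $z^-$, both $\{z^+(n)a_n\}$ and $\{z^-(n)a_n\}$ are summable.

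The one genuinely new ingredient, and the step I expect to need a short separate argument, is that negating a summable sequence preserves summability. Since $G$ is abelian and inversion $x\mapsto -x$ is a homeomorphism, the partial sums of $\{-z^-(n)a_n\}$ are precisely the negatives of the partial sums of $\{z^-(n)a_n\}$ and hence converge; for the Cauchy version one invokes Lemma \ref{Cauchy:productive:criterion} together with the observation that for a symmetric neighborhood $U$ of $e$ one has $-U=U$. Granting this, $\{-z^-(n)a_n\}$ is summable, and Lemma \ref{sum:of:lin:is:lim:of:sum} applied to $\{z^+(n)a_n\}$ and $\{-z^-(n)a_n\}$ shows that their termwise sum $\{z^+(n)a_n-z^-(n)a_n\}=\{z(n)a_n\}$ is summable, which is what $f$-summability requires. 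The Cauchy-summable case follows by running the identical decomposition with ``summable'' replaced by ``Cauchy summable'' throughout, so the whole argument is really a single decomposition combined with the closure of (Cauchy) summability under negation and under addition.
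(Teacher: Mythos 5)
Your proof is correct and follows essentially the same route as the paper's: split $z$ into its positive and negative parts, apply the $f^\star$-hypothesis to each, and combine them via Lemma \ref{sum:of:lin:is:lim:of:sum}. In fact you are slightly more careful than the paper, which defines $z_-=\min\{0,z\}$ (a $\Z$-valued, nonpositive function) and applies $f^\star$-summability to it directly, leaving implicit exactly the negation step (partial sums of $\{-z^-(n)a_n\}$ are the negatives of those of $\{z^-(n)a_n\}$, and inversion is a homeomorphism) that you spell out.
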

\begin{proof}
The ``only if'' part is obvious. To show the ``if'' part,
suppose that the sequence $\{a_n:n\in\N\}$ is $f^\star$-summable
($f^\star$-Cauchy summable).
Fix a function $z:\N\to\Z$ such that
$|z|\le f$.
Define functions $z_+:\N\to\N$ and $z_-:\N\to\Z$ by $z_+(n)=\max\{0,z(n)\}$
and $z_-(n)=\min\{0,z(n)\}$ for each $n\in\N$. Since $\{a_n:n\in\N\}$ is $f^\star$-summable, sequences
$\{z_+(i)a_i:i\in\N\}$ and $\{z_-(i)a_i:i\in\N\}$
are (Cauchy) summable.
Since
$z(i)=z_-(i)+z_+(i)$ for every $i\in\N$,
from Lemma \ref{sum:of:lin:is:lim:of:sum}
 we conclude that
the sequence $\{z(i)a_i:i\in\N\}$ is (Cauchy) summable.
Therefore, the sequence $\{a_i:i\in\N\}$ is $f$-summable (respectively, $f$-Cauchy summable).
\end{proof}

The following simple but important lemma  provides a criterion for a sequence to be $f$-Cauchy productive. It is an analogue of the Cauchy
criterion for convergence of a series in the real line.

\begin{lemma}\label{left:cauchy:condition}
Let $f:\N\to \omega+1$ be a function and $A=\{a_n:n\in\N\}$ a faithfully indexed sequence of elements of a topological group $G$.
Then the following statements are equivalent:
\begin{itemize}
\item[(i)] $A$ is 
$f$-Cauchy productive 
in $G$.
\item[(ii)]
 For every neighborhood $U$ of the identity of $G$ and every 
function $z:\N\to\Z$
with $|z|\le f$,
there exists $n\in\N$
such that $\prod_{i=l}^m a_i^{z(i)}\in U$ for every $l,m\in\N$ satisfying $n\leq l\leq m$.
\end{itemize}
\end{lemma}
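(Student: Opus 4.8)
The plan is to reduce the statement to Lemma~\ref{Cauchy:productive:criterion}, applied for each admissible exponent function to the transformed sequence of powers. Both conditions (i) and (ii) are universally quantified over all functions $z:\N\to\Z$ with $|z|\le f$, so it suffices to fix one such $z$ and prove the equivalence of the corresponding ``per-$z$'' statements; the full equivalence then follows by quantifying over $z$.

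For a fixed $z$ with $|z|\le f$, write $B_z=\{a_n^{z(n)}:n\in\N\}$. By the definition of $f$-Cauchy productivity, condition (i) is precisely the assertion that $B_z$ is Cauchy productive for every such $z$, while condition (ii) is a Cauchy-type estimate on the partial products of $B_z$. Applying Lemma~\ref{Cauchy:productive:criterion} to $B_z$, Cauchy productivity of $B_z$ is equivalent to the following: for every neighborhood $U$ of $e$ there exists $n'\in\N$ with $\prod_{i=l'+1}^m a_i^{z(i)}\in U$ whenever $m>l'\ge n'$. Thus the lemma reduces to showing, for each fixed $z$, that this last condition is equivalent to the condition in (ii).

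The only real work is reconciling the two index conventions, which differ by an off-by-one. Condition (ii) involves products $\prod_{i=l}^m a_i^{z(i)}$ with $n\le l\le m$ (inclusive lower limit, with $l=m$ permitted, in which case the product degenerates to the single factor $a_l^{z(l)}$), whereas Lemma~\ref{Cauchy:productive:criterion} yields products $\prod_{i=l'+1}^m a_i^{z(i)}$ with the strict inequality $m>l'\ge n'$. I would pass between the two via the substitution $l'=l-1$: then $\prod_{i=l'+1}^m a_i^{z(i)}=\prod_{i=l}^m a_i^{z(i)}$, the constraint $m>l'$ becomes $m\ge l$, and $l'\ge n'$ becomes $l\ge n'+1$. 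Choosing the threshold $n=n'+1$ in the forward direction and keeping $n'=n$ in the reverse direction turns each condition into the other, so the equivalence holds for the fixed $z$. Since $z$ was arbitrary subject to $|z|\le f$, quantifying over all such $z$ completes the proof.

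I expect no genuine obstacle here: the content is entirely carried by Lemma~\ref{Cauchy:productive:criterion}, and the remaining argument is the routine translation of index ranges. The one point demanding a moment's care is that the product in (ii) is allowed to degenerate to a single term when $l=m$, which must be matched against the strict inequality $m>l'$ of Lemma~\ref{Cauchy:productive:criterion} under the substitution $l'=l-1$; since each threshold condition is preserved under enlarging the threshold, this boundary case causes no difficulty.
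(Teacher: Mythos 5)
Your proposal is correct and follows exactly the route the paper intends: the paper's own proof is the one-line remark that the lemma ``follows from Lemma~\ref{Cauchy:productive:criterion} and Definition~\ref{def:ap-seqence},'' and your argument is precisely that reduction, fixing each admissible exponent function $z$, applying Lemma~\ref{Cauchy:productive:criterion} to the sequence $\{a_n^{z(n)}:n\in\N\}$, and carrying out the off-by-one index translation that the paper leaves implicit. The quantifier-commutation remark and the handling of the degenerate single-factor case $l=m$ are both accurate, so nothing is missing.
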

\begin{proof}
This follows from Lemma \ref{Cauchy:productive:criterion} and
Definition 
\ref{def:ap-seqence}.
\end{proof}

Let us state explicitly a particular case of Lemma \ref{left:cauchy:condition}.

\begin{lemma}
\label{AP:is:null:sequence} Let $f: \N\to \omega+1$ be a function 
such that 
$f\ge f_1$.
Suppose that $\{a_n:n\in\N\}$ is an
$f$-Cauchy productive sequence in a topological group $G$.
$z:\N\to\Z$ is a function with $|z|\le f$.
Then the sequence $\{a_n^{z(n)}:n\in\N\}$ converges to the identity $e$ of $G$
for every function $z:\N\to\Z$ satisfying $|z|\le f$.
In particular, $\{a_n:n\in\N\}$ converges to $e$.
\end{lemma}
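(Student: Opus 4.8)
The plan is to deduce the statement directly from the (unlabelled) corollary stated just after Lemma~\ref{Cauchy:productive:criterion}, namely that every Cauchy productive sequence in a topological group converges to the identity $e$. Everything reduces to recognizing that each exponentiated sequence is itself Cauchy productive, so that this corollary applies verbatim.

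First I would fix an arbitrary function $z:\N\to\Z$ with $|z|\le f$. By Definition~\ref{def:ap-seqence}(i), the hypothesis that $\{a_n:n\in\N\}$ is $f$-Cauchy productive means precisely that the sequence $\{a_n^{z(n)}:n\in\N\}$ is Cauchy productive in $G$. Applying the corollary mentioned above to this particular Cauchy productive sequence yields that $\{a_n^{z(n)}:n\in\N\}$ converges to $e$. Since $z$ was an arbitrary function satisfying $|z|\le f$, this already establishes the main assertion.

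For the final ``in particular'' clause, I would invoke the hypothesis $f\ge f_1$: it guarantees that the constant function $z=f_1$ (with $z(n)=1$ for all $n$) satisfies $|z|=f_1\le f$ and is therefore admissible in the argument above. Since then $a_n^{z(n)}=a_n^{1}=a_n$, the convergence just proved specializes to $\{a_n:n\in\N\}\to e$.

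This argument is essentially immediate, so I do not expect a genuine obstacle. The only point worth flagging is the role of the hypothesis $f\ge f_1$: it is exactly what renders the exponent function identically equal to $1$ admissible, and hence what legitimizes the passage from the general statement to the ``in particular'' conclusion. (Without $f\ge f_1$ one could not be sure that $z\equiv 1$ satisfies $|z|\le f$, and the specialization would fail.)
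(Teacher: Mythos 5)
Your proposal is correct and follows essentially the same route as the paper: the paper presents this lemma as a particular case of Lemma~\ref{left:cauchy:condition} (take $l=m$ in its condition (ii)), while you combine Definition~\ref{def:ap-seqence}(i) with the corollary that every Cauchy productive sequence converges to $e$ --- both arguments reduce to the same application of Lemma~\ref{Cauchy:productive:criterion} to the sequence $\{a_n^{z(n)}:n\in\N\}$. Your remark on the role of $f\ge f_1$ in legitimizing $z\equiv 1$ for the ``in particular'' clause is also exactly right.
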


\begin{lemma}
Let $f:\N\to \omega+1$ be a function and $A=\{a_n:n\in\N\}$ be a
faithfully indexed sequence of elements of a topological group $G$.
\begin{itemize}
\item[(i)]
If $A$ is $f$-productive, then $A$ is $f$-Cauchy productive.
\item[(ii)]
If one additionally assumes that $G$ is Weil complete, then
 $A$ is an $f$-productive sequence in $G$ if and only if it is an $f$-Cauchy productive sequence in $G$.
\end{itemize}
\end{lemma}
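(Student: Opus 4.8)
The plan is to reduce everything to two facts that are already at hand: the elementary observation that a convergent sequence in a topological group is automatically left Cauchy, and the equivalence recorded in Lemma~\ref{productive:and:Cauchy:productive:coincide}.

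For (i), I would fix an arbitrary function $z:\N\to\Z$ with $|z|\le f$ and show that the sequence $\{a_n^{z(n)}:n\in\N\}$ is Cauchy productive, knowing that it is productive. By definition of productivity, the partial products $p_n=\prod_{i=0}^n a_i^{z(i)}$ converge to some $p\in G$. The remaining point is that any convergent sequence is left Cauchy: given a neighborhood $U$ of $e$, continuity of the map $(x,y)\mapsto x^{-1}y$ at $(p,p)$ yields neighborhoods $W_1,W_2$ of $p$ with $W_1^{-1}W_2\subseteq U$, and the convergence $p_n\to p$ supplies an index $n$ beyond which $p_m\in W_1\cap W_2$, whence $p_k^{-1}p_l\in U$ for all $k,l\ge n$. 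Thus $\{p_n:n\in\N\}$ is left Cauchy, i.e. $\{a_n^{z(n)}:n\in\N\}$ is Cauchy productive. As $z$ was an arbitrary function subject to $|z|\le f$, the sequence $A$ is $f$-Cauchy productive.

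For (ii), the forward implication is precisely part (i), so only the converse requires the completeness hypothesis. Here I would first note that a Weil complete group is sequentially Weil complete, since convergence of every left Cauchy net trivially specializes to convergence of every left Cauchy sequence. Now assume $A$ is $f$-Cauchy productive and fix $z:\N\to\Z$ with $|z|\le f$; then $\{a_n^{z(n)}:n\in\N\}$ is Cauchy productive, and Lemma~\ref{productive:and:Cauchy:productive:coincide}, applied in the sequentially Weil complete group $G$, upgrades this to productivity of $\{a_n^{z(n)}:n\in\N\}$. Ranging over all admissible $z$ shows that $A$ is $f$-productive.

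I do not anticipate a genuine obstacle, as the argument is a bookkeeping reduction to prior results; the only point demanding care is to invoke completeness at the correct strength. We never need the full force of Weil completeness (a statement about nets or filters), but only its sequential shadow, and emphasizing that $\{p_n:n\in\N\}$ is a \emph{left} Cauchy sequence — not merely a right Cauchy one — is exactly what makes Lemma~\ref{productive:and:Cauchy:productive:coincide} directly applicable.
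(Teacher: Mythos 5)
Your proof is correct: part (i) is exactly the standard fact that a convergent sequence in a topological group is left Cauchy, applied to the partial products $p_n=\prod_{i=0}^n a_i^{z(i)}$ for each admissible $z$, and part (ii) correctly reduces the nontrivial direction to Lemma~\ref{productive:and:Cauchy:productive:coincide} after noting that Weil completeness implies sequential Weil completeness. The paper states this lemma without proof, regarding it as immediate from the definitions and that lemma, and your argument supplies precisely the intended reasoning.
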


Recall that a topological group $G$ is called {\em NSS\/}
provided that there exists a neighborhood of the identity of $G$
that contains no nontrivial subgroup of $G$.

Our next theorem strengthens \cite[Theorem 4.9]{Sp}.

\begin{theorem}\label{NSS:is:NACP} An NSS group
contains no $f_\omega$-Cauchy productive sequences.
\end{theorem}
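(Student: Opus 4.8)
The plan is to argue by contradiction, exploiting the freedom that $f_\omega$ grants to the exponent function. Suppose $G$ is NSS and that $A=\{a_n:n\in\N\}$ is a faithfully indexed $f_\omega$-Cauchy productive sequence in $G$. Since $G$ is NSS, fix an open neighborhood $U$ of $e$ that contains no nontrivial subgroup of $G$. The crucial tool will be Lemma \ref{AP:is:null:sequence}: because $f_\omega\ge f_1$ and $A$ is $f_\omega$-Cauchy productive, the reweighted sequence $\{a_n^{z(n)}:n\in\N\}$ must converge to $e$ for \emph{every} function $z:\N\to\Z$ (note that $|z|\le f_\omega$ holds automatically, since each $|z(n)|$ is a natural number and $f_\omega(n)=\omega$). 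My goal is to construct a single $z$ for which this conclusion visibly fails.

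To build $z$, I use the NSS neighborhood $U$ termwise. Since $A$ is faithfully indexed, the map $n\mapsto a_n$ is injective, so at most one term equals $e$. For each $n$ with $a_n\ne e$, the cyclic subgroup $\langle a_n\rangle$ is a nontrivial subgroup of $G$, whence $\langle a_n\rangle\not\subseteq U$; thus there is an integer $z(n)$ with $a_n^{z(n)}\notin U$. For the (at most one) index $n$ with $a_n=e$, set $z(n)=0$. This defines a function $z:\N\to\Z$, and again $|z|\le f_\omega$ holds trivially.

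Now I derive the contradiction. By construction $a_n^{z(n)}\notin U$ for all but at most one $n\in\N$, so the sequence $\{a_n^{z(n)}:n\in\N\}$ does not converge to $e$. This contradicts the conclusion of Lemma \ref{AP:is:null:sequence} drawn above. Hence no faithfully indexed $f_\omega$-Cauchy productive sequence $A$ can exist in $G$.

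I expect the proof to be short, and the only point requiring genuine care is the choice of exponents: the NSS property guarantees merely that \emph{some} power of $a_n$ escapes $U$, and the escaping power may grow without bound as $n\to\infty$. It is precisely here that $f_\omega$ is essential — it imposes no bound whatsoever on $|z(n)|$, so the escaping exponents $z(n)$ are always admissible inputs to the $f_\omega$-Cauchy productivity hypothesis. For a bounded function $f$ this construction would break down, which is consistent with the relevant notions genuinely differing for such $f$.
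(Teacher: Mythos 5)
Your proof is correct and follows essentially the same route as the paper: both use the NSS neighborhood $U$ to choose exponents $z(n)$ with $a_n^{z(n)}\notin U$ whenever $a_n\ne e$, and then invoke Lemma \ref{AP:is:null:sequence} to conclude that such a sequence cannot be $f_\omega$-Cauchy productive. Your contrapositive-vs-contradiction phrasing and the explicit handling of the (at most one) index with $a_n=e$ are only cosmetic differences from the paper's argument.
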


\begin{proof}
Let $A=\{a_i:i\in\N\}$ be a
faithfully indexed
sequence in an NSS group $G$.
Fix a neighborhood $U$ of the identity $e$ of $G$ witnessing that $G$ is NSS. Then for all
$n\in\N$ such that $a_n\neq e$ there exists $z_n\in\Z$ such that $a_n^{z_n}\not\in U$.
Since
the sequence $\{a_n^{z_n}:n\in\N\}$ does not converge to $e$,
$A$ is not
$f_\omega$-Cauchy productive in $G$ by Lemma \ref{AP:is:null:sequence}.
\end{proof}

\begin{definition}
\label{f:productive:sets}
Let $f:\N\to\omega+1$ be a function. A
faithfully indexed set $A=\{a_n:n\in\N\}$ of a topological group $G$
will be called {\em an $f$-Cauchy productive set\/} (an {\em
$f$-productive set\/}) provided that the sequence
$\{a_{\varphi(n)}:n\in\N\}$ is $(f\circ\varphi)$-Cauchy productive
(respectively, $(f\circ\varphi)$-productive) for every bijection
$\varphi:\N\to\N$.
\end{definition}

\begin{example}
Define $f:\N\to \omega+1$ by $f(2n+1)=1$ and $f(2n)=\omega$ for
every $n\in\N$. For $i\in\N$ let $g_{2i}\in G=\R^\omega$ be such
that $g_{2i}(i)=1$ and $g_{2i}(j)=0$ whenever $j$ is a natural
number distinct from $i$. Further, define $g_{2i+1}\in G$ by
$g_{2i+1}(j)=0$ for $j\neq 1$ and $g_{2i+1}(1)=\frac{1}{2^i}$. Then
it is easy to check that $\{g_i:i\in\N\}$ is an $f$-productive
sequence in $G$ but it is not an $f$-productive set in $G$.
\end{example}

\begin{definition}
{\rm (\cite{Sp})} A topological group $G$ is called {\em TAP\/} (or
a {\em TAP group\/}) if $G$ contains no $f_\omega$-productive set.
\end{definition}

\section{$f$-(Cauchy) productive sets in metric groups}

\begin{lemma}\label{reshuffling}
Let $\{U_j:j\in\N\}$ be a family of subsets of a group $G$ such that
$e\in U_{j+1}^3\subset U_{j}$ for every integer $j\in\N$.
If $n\in\N$ and $\varphi:\{0,\dots,n\}\to \N\setminus\{0\}$ is an injection,
then
$\prod_{j=0}^n U_{\varphi(j)}\subseteq U_{k-1}$, where $k=\min\{\varphi(j):
j=0,\ldots,n\}$.
\end{lemma}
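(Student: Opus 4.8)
The plan is to argue by strong induction on $n$, splitting the product at the single position where the smallest index occurs. Before starting I would record two elementary consequences of the hypothesis $e\in U_{j+1}^3\subset U_j$. First, since $e\in U_{j+1}^3\subseteq U_j$ for every $j\in\N$, we get $e\in U_j$ for all $j\in\N$. Second, using $e\in U_{j+1}$ we have $U_{j+1}=U_{j+1}\cdot e\cdot e\subseteq U_{j+1}^3\subseteq U_j$, so the family is decreasing: $U_a\subseteq U_b$ whenever $a\ge b$. Note also that $k=\min\{\varphi(j):j=0,\dots,n\}\ge 1$, because $\varphi$ takes values in $\N\setminus\{0\}$, so $U_{k-1}$ is meaningful. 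The base case $n=0$ is immediate: the product is the single set $U_{\varphi(0)}=U_k$, and $U_k\subseteq U_k^3\subseteq U_{k-1}$.

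For the inductive step, let $j_0$ be the unique index with $\varphi(j_0)=k$ (unique by injectivity of $\varphi$) and write
$$
\prod_{j=0}^n U_{\varphi(j)}=\left(\prod_{j=0}^{j_0-1}U_{\varphi(j)}\right)\cdot U_k\cdot\left(\prod_{j=j_0+1}^{n}U_{\varphi(j)}\right).
$$
Each of the two flanking subproducts is indexed by fewer than $n+1$ elements and, after an order-preserving reindexing of its index set onto an initial segment of $\N$, has exactly the form required by the lemma. Moreover every index occurring in such a subproduct is distinct from $k$, hence $\ge k+1$, so its own minimum index $k'$ satisfies $k'-1\ge k$. By the induction hypothesis each flanking subproduct is therefore contained in $U_{k'-1}\subseteq U_k$; when a subproduct is empty (which happens if $j_0=0$ or $j_0=n$) it equals $\{e\}\subseteq U_k$, since $e\in U_k$. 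Consequently the whole product lies in $U_k\cdot U_k\cdot U_k=U_k^3\subseteq U_{k-1}$, which is the desired conclusion.

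The one point that needs genuine care, and the reason the statement deserves the name ``reshuffling'', is that $\varphi$ need not be monotone, so the factors appear in an arbitrary order. The device of cutting the product at the sole occurrence of the globally smallest index neutralizes this: whatever stands to the left or to the right of $U_k$ is built entirely from sets with strictly larger indices, and by induction each side collapses into a single copy of $U_k$ irrespective of its internal arrangement. The remaining bookkeeping, namely verifying the reindexing, the inequality $k'-1\ge k$, and the empty-subproduct cases, is routine.
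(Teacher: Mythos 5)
Your proof is correct and follows essentially the same route as the paper's: induction on $n$, cutting the product at the unique position of the minimum index $k$, bounding each flanking subproduct by $U_k$ via the inductive hypothesis together with the monotonicity $U_{k'-1}\subseteq U_k$, and concluding with $U_kU_kU_k\subseteq U_{k-1}$. The only difference is that you spell out the bookkeeping (the decreasing property of the family, the reindexing of the subproducts, and the empty-product case) that the paper leaves implicit.
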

\begin{proof}
We will proceed by induction on $n$. If $n=0$, then $\prod_{j=0}^n
U_{\varphi(j)}=U_{\varphi(0)}=U_k\subseteq U_{k-1}$.

Assume that $n\in\N\setminus\{0\}$, and suppose that
the conclusion of our lemma holds for every integer $n'<n$.
Given an injection $\varphi:\{0,\dots,n\}\to \N\setminus\{0\}$, choose
an integer $m$ such that $0\le m\le n$ and $\varphi(m)=\min\{\varphi(j):j=0,\ldots,n\}=k$.
Applying the inductive assumption, we obtain
$$\prod_{j=0}^n
U_{\varphi(j)}=\left(\prod_{j=0}^{m-1}U_{\varphi(j)}\right)U_{\varphi(m)}\left(\prod_{j=m+1}^nU_{\varphi(j)}\right)\subseteq
U_{k}U_{k}U_{k}\subseteq U_{k-1},
$$
where the
left or the right factor of the product may
be equal to
the identity of $G$ if $m=0$ or $m=n$.
\end{proof}

Our next lemma offers a way to build $f$-Cauchy  productive sets in
a metric group.

\begin{lemma}
\label{building:f-Cauchy:sequences:in:a:metric:group} Let
$f:\N\to\omega+1\setminus\{0\}$ be a function, and let
 $\{U_n:n\in\N\}$
be a sequence of symmetric open neighborhoods forming a base at the
identity $e$ of a metric group $G$ such that $U_{n+1}^3\subseteq
U_n$ for every $n\in\N$. Assume that  $\{a_n:n\in\N\}$ is a
faithfully indexed sequence of elements of $G$ such that $\{a_n^z:
z\in\Z, |z|\le f(n)\}\subseteq U_n$ for every $n\in\N$. Then
$\{a_n:n\in\N\}$ is an $f$-Cauchy productive set.
\end{lemma}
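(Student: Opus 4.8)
The plan is to verify condition (ii) of Lemma~\ref{left:cauchy:condition}, which characterizes $f$-Cauchy productivity. So I must show that for an arbitrary bijection $\varphi:\N\to\N$, the reindexed sequence $\{a_{\varphi(n)}:n\in\N\}$ is $(f\circ\varphi)$-Cauchy productive. Fix such a $\varphi$, fix a neighborhood $U$ of $e$, and fix a function $z:\N\to\Z$ with $|z|\le f\circ\varphi$. Since $\{U_n:n\in\N\}$ is a base at $e$, choose $N\in\N$ with $U_N\subseteq U$; replacing $U$ by $U_N$, it suffices to land inside $U_N$. The goal is to find $n_0\in\N$ such that $\prod_{i=l}^m a_{\varphi(i)}^{z(i)}\in U_N$ for all $n_0\le l\le m$.

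The key observation is that the hypothesis $\{a_n^z:z\in\Z,\,|z|\le f(n)\}\subseteq U_n$ translates, under the bijection, into $a_{\varphi(i)}^{z(i)}\in U_{\varphi(i)}$ for every $i$: indeed $|z(i)|\le f(\varphi(i))$ by our choice of $z$, so the power $a_{\varphi(i)}^{z(i)}$ is one of the permitted powers of $a_{\varphi(i)}$ and hence lies in $U_{\varphi(i)}$. Thus each factor in the product $\prod_{i=l}^m a_{\varphi(i)}^{z(i)}$ sits in $U_{\varphi(i)}$, where the indices $\varphi(l),\varphi(l+1),\dots,\varphi(m)$ are \emph{distinct} natural numbers because $\varphi$ is injective. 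This is precisely the setup of the reshuffling Lemma~\ref{reshuffling}.

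To invoke Lemma~\ref{reshuffling} with the target $U_N$, I want the minimum index $k=\min\{\varphi(i):l\le i\le m\}$ to satisfy $k-1\ge N$, i.e.\ $k\ge N+1$. Since $\varphi$ is a bijection, the finite set $\varphi^{-1}(\{0,1,\dots,N\})$ is finite, so I can set $n_0=1+\max\varphi^{-1}(\{0,1,\dots,N\})$ (or $n_0=0$ if that preimage is empty). Then for any $l\ge n_0$ every index $i$ in the range satisfies $\varphi(i)>N$, so the minimum $k$ over $l\le i\le m$ is at least $N+1$, and $U_{k-1}\subseteq U_N$. One minor bookkeeping point: Lemma~\ref{reshuffling} is stated for injections into $\N\setminus\{0\}$, so I should note that after choosing $n_0$ as above the relevant indices $\varphi(i)$ are all $>N\ge 0$, hence positive (assuming $N\ge 0$, which we may arrange by taking $N\ge 1$), and reindex the finite product $\prod_{i=l}^m$ as a product over $\{0,\dots,m-l\}$ to match the statement verbatim.

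Applying Lemma~\ref{reshuffling} to the injection $i\mapsto\varphi(i)$ on the block $\{l,\dots,m\}$ then yields $\prod_{i=l}^m U_{\varphi(i)}\subseteq U_{k-1}\subseteq U_N\subseteq U$, and since each $a_{\varphi(i)}^{z(i)}\in U_{\varphi(i)}$ we get $\prod_{i=l}^m a_{\varphi(i)}^{z(i)}\in U$. This establishes condition (ii) of Lemma~\ref{left:cauchy:condition} for the reindexed sequence and the chosen $z$, so $\{a_{\varphi(n)}:n\in\N\}$ is $(f\circ\varphi)$-Cauchy productive; as $\varphi$ was arbitrary, $\{a_n:n\in\N\}$ is an $f$-Cauchy productive set. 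I expect the main (though modest) obstacle to be the indexing hygiene: correctly matching the finite reordered product to the hypotheses of Lemma~\ref{reshuffling}, confirming the distinctness of the $\varphi(i)$, and verifying that $n_0$ forces the minimal index high enough that $U_{k-1}$ falls inside the prescribed $U$.
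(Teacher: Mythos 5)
Your proposal is correct and follows essentially the same route as the paper's proof: the same key observation that $a_{\varphi(i)}^{z(i)}\in U_{\varphi(i)}$, the same cutoff $n_0=1+\max\varphi^{-1}(\{0,\dots,N\})$ forcing the minimal index past $N$, and the same application of Lemma~\ref{reshuffling} followed by the Cauchy-productivity criterion. The indexing care you flag (injectivity of $\varphi$ and the $\N\setminus\{0\}$ hypothesis of Lemma~\ref{reshuffling}) is handled in the paper in exactly the same implicit way, by choosing the target index $k\ge 1$.
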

\begin{proof}
According to Definition \ref{f:productive:sets},
we must prove that the sequence
$\{a_{\varphi(n)}:n\in\N\}$ is $(f\circ\varphi)$-Cauchy productive 
for every bijection $\varphi:\N\to\N$. Fix such a bijection $\varphi$.
Let $z:\N\to\Z$ be a function such that 
$|z|\le f\circ\varphi$.
Note that $|z(n)|\le f(\varphi(n))$ for every $n\in\N$, so 
by our assumption,
\begin{equation}
\label{b_n:equation}
b_n= a_{\varphi(n)}^{z(n)}\subseteq U_{\varphi(n)}
\mbox{ for every }
n\in\N.
\end{equation}
It remains only to show that the sequence $\{b_{n}:n\in\N\}$
is Cauchy productive.
Let
$U$ be an open neighborhood of $e$ in
$G$. There exists $k\in\N\setminus\{0\}$ such that $U_{k}\subseteq
U$. Define $n=\max\{\varphi^{-1}(j):j=0,1,\dots, k\}+1$. Suppose that
$l,m\in\N$ and  $m> l\ge n$.
Then $k'=\min\{\varphi(j):j\in\N, l\le j\le m\}>k$, and so
$$
\prod_{j=l+1}^m 
b_j
\in
U_{\varphi(l+1)}\dots U_{\varphi(m)}
\subseteq U_{k'-1}\subseteq U_k
\subseteq U
$$
by \eqref{b_n:equation} and Lemma \ref{reshuffling}.
Applying the implication (ii)$\to$(i) of Lemma \ref{Cauchy:productive:criterion},
we conclude that the sequence 
$\{b_n:n\in\N\}$ is 
Cauchy productive.
\end{proof}

\begin{theorem}\label{non1-TAP}
A non-discrete metric group contains an
$f$-Cauchy productive set for every function
$f:\N\to \omega\setminus\{0\}$.
\end{theorem}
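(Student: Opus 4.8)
The plan is to deduce the theorem directly from Lemma \ref{building:f-Cauchy:sequences:in:a:metric:group}. That lemma reduces the problem to producing two ingredients: a base $\{U_n:n\in\N\}$ of symmetric open neighbourhoods of the identity $e$ satisfying $U_{n+1}^3\subseteq U_n$, and a faithfully indexed sequence $\{a_n:n\in\N\}$ of nontrivial elements with $\{a_n^z:z\in\Z,\ |z|\le f(n)\}\subseteq U_n$ for every $n\in\N$. Once both are in hand, Lemma \ref{building:f-Cauchy:sequences:in:a:metric:group} applies verbatim (note that $f:\N\to\omega\setminus\{0\}$ has codomain contained in $\omega+1\setminus\{0\}$) and yields that $\{a_n:n\in\N\}$ is an $f$-Cauchy productive set, as required.

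First I would build the base. Since $G$ is metrizable it has a countable base $\{W_n:n\in\N\}$ at $e$. Starting from any symmetric open neighbourhood $U_0$ of $e$, I would recursively choose $U_{n+1}$ using continuity of the map $(x,y,w)\mapsto xyw$ at $(e,e,e)$: there is a symmetric open neighbourhood $U_{n+1}$ of $e$ with $U_{n+1}^3\subseteq U_n\cap W_{n+1}$. The inclusion $U_n\subseteq W_n$ guarantees that $\{U_n:n\in\N\}$ is again a base at $e$, while $U_{n+1}^3\subseteq U_n$ holds by construction; this is the standard chain from the Birkhoff--Kakutani metrization argument.

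Next I would construct the $a_n$ recursively, and this is where non-discreteness enters. Fix $n$ and, for each integer $z$ with $|z|\le f(n)$, consider the continuous map $p_z:G\to G$, $p_z(x)=x^z$, which sends $e$ to $e$; hence $p_z^{-1}(U_n)$ is a neighbourhood of $e$. Because $f(n)$ is a \emph{natural number}, the set $V_n=\bigcap_{|z|\le f(n)}p_z^{-1}(U_n)$ is a finite intersection of neighbourhoods of $e$, so it is itself a neighbourhood of $e$, and every $x\in V_n$ satisfies $x^z\in U_n$ for all $|z|\le f(n)$. Since $G$ is non-discrete, by homogeneity $e$ is not isolated, and in a Hausdorff space every neighbourhood of a non-isolated point is infinite; thus $V_n$ is infinite and I may pick $a_n\in V_n\setminus\{e,a_0,\dots,a_{n-1}\}$. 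This choice makes $\{a_n:n\in\N\}$ a faithfully indexed sequence of nontrivial elements and verifies the power condition demanded by Lemma \ref{building:f-Cauchy:sequences:in:a:metric:group}.

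The genuine obstacle is isolated in the previous paragraph: the argument works precisely because each $f(n)$ is finite, which is what turns $V_n$ into a neighbourhood of $e$; an infinite intersection of the sets $p_z^{-1}(U_n)$ could collapse to $\{e\}$. This is exactly the point at which the hypotheses $f:\N\to\omega\setminus\{0\}$ and the non-discreteness of $G$ are used, and it clarifies why the analogous statement must fail for $f=f_\omega$ in NSS groups, in agreement with Theorem \ref{NSS:is:NACP}. Everything else is bookkeeping: distinctness is secured by recursively excluding the finitely many previously chosen points, and the Cauchy productivity itself is delivered wholesale by Lemma \ref{building:f-Cauchy:sequences:in:a:metric:group}.
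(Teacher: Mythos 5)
Your proof is correct and takes essentially the same approach as the paper's: the paper likewise defines $V_n=\{x\in G: x^k\in U_n \mbox{ for all } k\in\Z \mbox{ with } |k|\le f(n)\}$ (your finite intersection of preimages $p_z^{-1}(U_n)$), uses non-discreteness to pick $a_n\in V_n\setminus\{a_0,\dots,a_{n-1}\}$ inductively, and then applies Lemma \ref{building:f-Cauchy:sequences:in:a:metric:group}. The only difference is presentational: you spell out the standard construction of the base $\{U_n:n\in\N\}$ and the reason $V_n$ is an (infinite) neighborhood of $e$, both of which the paper leaves implicit.
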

\begin{proof}
Let $G$ be a non-discrete metric group, and let $\{U_n:n\in\N\}$ be
a base at $e$ as in the assumption of Lemma
\ref{building:f-Cauchy:sequences:in:a:metric:group}. For every
$n\in\N$, the set $V_n=\{x\in G: x^k\in U_n$ for all $k\in\Z$ with
$|k|\le f(n)\}$ is open in $G$. Since $G$ is non-discrete, by
induction on $n\in\N$ we can choose $a_n\in
V_n\setminus\{a_0,\dots,a_{n-1}\}$. Clearly, $A=\{a_n:n\in\N\}$ is
faithfully indexed. Applying Lemma
\ref{building:f-Cauchy:sequences:in:a:metric:group}, we conclude
that the set $A$ is $f$-Cauchy productive.
\end{proof}

\begin{corollary}\label{no:f-product:seq:in:nondiscrete:metr:group}
A non-discrete metric Weil complete  group contains an
$f$-productive set for every function
$f:\N\to \omega\setminus\{0\}$.
\end{corollary}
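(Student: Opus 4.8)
The plan is to obtain the desired $f$-productive set by first producing an $f$-Cauchy productive set via Theorem~\ref{non1-TAP}, and then upgrading ``Cauchy productive'' to ``productive'' using the completeness hypothesis. The whole argument is a conjunction of an existence result with a completeness lemma, so there is very little to do beyond unwinding the definition of an $f$-productive set.

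First I would invoke Theorem~\ref{non1-TAP}: since $G$ is a non-discrete metric group and $f:\N\to\omega\setminus\{0\}$ is given, there exists an $f$-Cauchy productive set $A=\{a_n:n\in\N\}$ in $G$. By Definition~\ref{f:productive:sets}, this precisely means that for every bijection $\varphi:\N\to\N$ the sequence $\{a_{\varphi(n)}:n\in\N\}$ is $(f\circ\varphi)$-Cauchy productive. I would then fix an arbitrary bijection $\varphi:\N\to\N$ and put $g=f\circ\varphi$, noting that $g:\N\to\omega\setminus\{0\}\subseteq\omega+1$, so that $g$ is an admissible function for all the earlier results.

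Next I would apply the earlier lemma on the coincidence of productive and Cauchy productive sequences in Weil complete groups. Since $G$ is Weil complete, part (ii) of that lemma, applied to the sequence $\{a_{\varphi(n)}:n\in\N\}$ together with the function $g=f\circ\varphi$, shows that this sequence, being $g$-Cauchy productive, is in fact $g$-productive, i.e. $(f\circ\varphi)$-productive. As $\varphi$ was arbitrary, Definition~\ref{f:productive:sets} now yields that $A$ is an $f$-productive set in $G$, which is exactly the conclusion sought.

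I do not expect any genuine obstacle here: the combinatorial and metric content is entirely absorbed into Theorem~\ref{non1-TAP}, and Weil completeness converts Cauchy summability into honest convergence along each reindexing. The only point requiring mild care is that the notion of an $f$-productive \emph{set} is quantified over all bijections $\varphi$, so the completeness lemma must be applied separately to each reindexed sequence $\{a_{\varphi(n)}:n\in\N\}$ with the correspondingly composed function $f\circ\varphi$, rather than to the pair $(A,f)$ directly.
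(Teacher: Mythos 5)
Your proposal is correct and follows exactly the route the paper intends: the corollary is stated immediately after Theorem~\ref{non1-TAP} precisely because it follows by combining that theorem with the lemma that, in a Weil complete group, $f$-Cauchy productive and $f$-productive sequences coincide, applied to each reindexed sequence $\{a_{\varphi(n)}:n\in\N\}$ with the function $f\circ\varphi$. Your closing remark about quantifying over all bijections $\varphi$ is the right (and only) point of care in unwinding Definition~\ref{f:productive:sets}.
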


The real line $\mathbb{R}$ is NSS, so it does not contain
$f_\omega$-Cauchy productive sequences by Theorem \ref{NSS:is:NACP}.
This shows that one cannot replace  $f:\N\to\omega\setminus\{0\}$ by
$f_\omega$ either in Theorem \ref{non1-TAP}
or Corollary \ref{no:f-product:seq:in:nondiscrete:metr:group}.
Moreover,
the non-existence of $f_\omega$-Cauchy productive sequences
characterizes the NSS property in metric groups.

\begin{theorem}\label{metric:NACP:iff:NSS}
For a metric group $G$ the following conditions are equivalent:
\begin{itemize}
\item[(i)] $G$ is NSS,
\item[(ii)] $G$ does not contain an $f_\omega$-Cauchy productive sequence,
\item[(iii)] $G$ does not contain an $f_\omega$-Cauchy productive set (that is, $G$ is TAP).
\end{itemize}
\end{theorem}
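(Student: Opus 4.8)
The plan is to close the cycle of implications (i)$\Rightarrow$(ii)$\Rightarrow$(iii)$\Rightarrow$(i); two of the three links come almost for free, and all the work is concentrated in the last one. The implication (i)$\Rightarrow$(ii) is precisely Theorem~\ref{NSS:is:NACP}: an NSS group has no $f_\omega$-Cauchy productive sequence. For (ii)$\Rightarrow$(iii) I would argue contrapositively. If $G$ carried an $f_\omega$-Cauchy productive set $A=\{a_n:n\in\N\}$, then by Definition~\ref{f:productive:sets} the sequence $\{a_{\varphi(n)}:n\in\N\}$ would be $(f_\omega\circ\varphi)$-Cauchy productive for every bijection $\varphi:\N\to\N$; since $f_\omega$ is constant we have $f_\omega\circ\varphi=f_\omega$, and the choice $\varphi=\mathrm{id}$ exhibits $A$ itself as an $f_\omega$-Cauchy productive sequence. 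Hence the absence of such sequences forbids such sets.

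The crux is (iii)$\Rightarrow$(i), which I would also prove by contraposition: assuming $G$ is \emph{not} NSS I will produce an $f_\omega$-Cauchy productive set. First I would fix a base $\{U_n:n\in\N\}$ at the identity $e$ of symmetric open neighborhoods satisfying $U_{n+1}^3\subseteq U_n$ for all $n$ --- such a base exists in every metric group and is exactly the input required by Lemma~\ref{building:f-Cauchy:sequences:in:a:metric:group}. Since $G$ fails NSS, each $U_n$ contains a nontrivial subgroup $H_n$, so picking any $a_n\in H_n\setminus\{e\}$ gives $\langle a_n\rangle\subseteq H_n\subseteq U_n$. The key observation is that for $f=f_\omega$ the hypothesis $\{a_n^z:z\in\Z,\ |z|\le f_\omega(n)\}\subseteq U_n$ of that lemma says nothing more than $\langle a_n\rangle\subseteq U_n$, which we already have; thus once the $a_n$ form a faithfully indexed sequence the lemma immediately yields an $f_\omega$-Cauchy productive set, contradicting (iii).

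The one point that needs genuine care is faithful indexing, since some $H_n$ could be a two-element group and thereby force repetitions among the $a_n$. I would handle this by passing to a subsequence. Because $a_n\in U_n$ and $\{U_n\}$ is a base at $e$ in the Hausdorff group $G$, the sequence $\{a_n\}$ converges to $e$; as every $a_n\ne e$, each fixed value $g\ne e$ is attained only finitely often, so the range $\{a_n:n\in\N\}$ is infinite and admits an injective enumeration $b_k=a_{n_k}$ with $n_0<n_1<\cdots$. Then $n_k\ge k$ forces $\langle b_k\rangle\subseteq U_{n_k}\subseteq U_k$, so $\{b_k:k\in\N\}$ is a faithfully indexed sequence meeting the hypotheses of Lemma~\ref{building:f-Cauchy:sequences:in:a:metric:group} with $f=f_\omega$ and is therefore the desired $f_\omega$-Cauchy productive set. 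This closes the cycle and establishes (i)$\Leftrightarrow$(ii)$\Leftrightarrow$(iii). I expect the main obstacle to be exactly this distinctness issue rather than anything analytic: once Lemma~\ref{building:f-Cauchy:sequences:in:a:metric:group} is available the construction is short, and the convergence $a_n\to e$ disposes of distinctness cleanly. Finally, since every $f_\omega$-productive set is in particular $f_\omega$-Cauchy productive, condition (iii) trivially implies that $G$ is TAP.
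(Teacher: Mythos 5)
Your proof is correct and follows essentially the same route as the paper: (i)$\Rightarrow$(ii) via Theorem~\ref{NSS:is:NACP}, (ii)$\Rightarrow$(iii) by unwinding Definition~\ref{f:productive:sets} with $\varphi=\mathrm{id}$, and (iii)$\Rightarrow$(i) contrapositively by feeding non-NSS witnesses into Lemma~\ref{building:f-Cauchy:sequences:in:a:metric:group} with $f=f_\omega$. The only difference is cosmetic: where the paper disposes of faithful indexing with a brief ``without loss of generality,'' you justify it explicitly by noting $a_n\to e$ and passing to an injective subsequence, which is a valid (and welcome) elaboration of the same idea.
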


\begin{proof}
The implication (i)$\to$(ii) is proved in
Theorem \ref{NSS:is:NACP}.
The implication (ii)$\to$(iii) is clear.

It remains to show that (iii)$\to$(i). Assume that $G$ is not NSS.
Let $\{U_n:n\in\N\}$ be 
as in the assumption of
Lemma~\ref{building:f-Cauchy:sequences:in:a:metric:group}. Since $G$
is not NSS, for every $n\in\N$ there exists $a_n\in G\setminus\{0\}$
such that $\{a_n^z:z\in\Z\}\subseteq U_n$. Since the sequence
$\{U_n:n\in\N\}$ is decreasing, we may assume, without loss of
generality, that $a_n\not\in \{a_0,\dots,a_{n-1}\}$ for every
$n\in\N$. Thus, $A=\{a_n:n\in\N\}$ is a faithfully indexed sequence
satisfying the assumption of Lemma
\ref{building:f-Cauchy:sequences:in:a:metric:group} with
$f=f_\omega$. Applying this lemma, we conclude that the set $A$ is
$f_\omega$-Cauchy productive.
\end{proof}

Combining Theorem \ref{metric:NACP:iff:NSS} and Lemma
\ref{productive:and:Cauchy:productive:coincide}, we obtain the following

\begin{corollary}\label{weil:NSS:iff:TAP}
For a metric Weil complete group the following conditions are equivalent:
\begin{itemize}
\item[(i)] $G$ is NSS,
\item[(ii)] $G$ does not contain an $f_\omega$-productive sequence,
\item[(iii)] $G$ does not contain an $f_\omega$-productive set (that is, $G$ is TAP).
\end{itemize}
\end{corollary}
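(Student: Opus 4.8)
The plan is to show that, in the presence of Weil completeness, conditions (ii) and (iii) of the desired corollary coincide verbatim with conditions (ii) and (iii) of Theorem~\ref{metric:NACP:iff:NSS}, after which the corollary is immediate from that theorem.

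First I would record that a Weil complete group is sequentially Weil complete, since completeness of a uniform space entails sequential completeness of the same uniformity; in particular a metric Weil complete group $G$ is sequentially Weil complete. Consequently Lemma~\ref{productive:and:Cauchy:productive:coincide} applies to $G$: a sequence of elements of $G$ is Cauchy productive if and only if it is productive.

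Next I would unwind the definitions of $f_\omega$-productivity and transport this equivalence. A faithfully indexed sequence $\{a_n:n\in\N\}$ is $f_\omega$-Cauchy productive exactly when $\{a_n^{z(n)}:n\in\N\}$ is Cauchy productive for every $z:\N\to\Z$ (the constraint $|z|\le f_\omega$ being vacuous). By the previous paragraph, applied to each fixed sequence $\{a_n^{z(n)}:n\in\N\}$, such a sequence is Cauchy productive if and only if it is productive; hence $\{a_n:n\in\N\}$ is $f_\omega$-Cauchy productive if and only if it is $f_\omega$-productive, so the two conditions (ii) agree. For sets, observe that $f_\omega\circ\varphi=f_\omega$ for every bijection $\varphi:\N\to\N$, so by Definition~\ref{f:productive:sets} a faithfully indexed set is $f_\omega$-Cauchy productive precisely when $\{a_{\varphi(n)}:n\in\N\}$ is $f_\omega$-Cauchy productive for every $\varphi$, and similarly for $f_\omega$-productive. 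Applying the sequence-level equivalence once more, a set is $f_\omega$-Cauchy productive if and only if it is $f_\omega$-productive, so the two conditions (iii) also agree.

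Finally I would invoke Theorem~\ref{metric:NACP:iff:NSS}, which yields the equivalence of its (i), (ii), (iii) for any metric group, and whose (ii) and (iii) have just been identified with (ii) and (iii) of the present statement; this delivers the three-way equivalence. I do not expect a genuine obstacle here, as the argument is entirely the transport of an already-established equivalence across the Cauchy-productive/productive identification. The only point that requires care is checking that the universally quantified $f_\omega$ (and set) versions are preserved term-by-term under Lemma~\ref{productive:and:Cauchy:productive:coincide}, that is, that the equivalence for each individual sequence $\{a_n^{z(n)}:n\in\N\}$ indeed lifts to the statements quantified over all $z$ and all $\varphi$.
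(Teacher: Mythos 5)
Your proposal is correct and is essentially the paper's own proof: the paper derives this corollary precisely by combining Theorem~\ref{metric:NACP:iff:NSS} with Lemma~\ref{productive:and:Cauchy:productive:coincide}, which is exactly your transport of the Cauchy-productive/productive identification (for each fixed $z$ and each bijection $\varphi$) into conditions (ii) and (iii). Your write-up merely makes explicit the term-by-term lifting that the paper leaves implicit.
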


The implication (ii)$\to$(i) of Corollary \ref{weil:NSS:iff:TAP} has been recently proved in \cite{DT}.

The assumption that $G$ is metrizable is essential in both
Theorem \ref{metric:NACP:iff:NSS} and Corollary \ref{weil:NSS:iff:TAP}; see
Example
\ref{ex:not:NSS:NACP}.

\section{$f$-(Cauchy) productive sets in linear groups}

 Call a topological group $G$  {\em  linear\/} (and its topology a {\em linear group topology\/})
 if $G$ has a base of neighborhoods of the neutral element formed by open subgroups of $G$. Clearly,
 a linear group is NSS if and only if it is discrete. Therefore,
 the non-discrete linear groups can be considered as strongly missing the NSS property.

In a presence of a linear topology Lemma \ref{left:cauchy:condition} can be simplified substantially.

\begin{theorem}\label{AP-Cauchy:seq:in:lin:groups}
For a faithfully indexed sequence $\{a_n:n\in\N\}$ of points of a linear group $G$, the following statements are equivalent:
\begin{itemize}
\item[(i)]
$\{a_n:n\in\N\}$ converges to the identity element of $G$;
\item[(ii)] $\{a_n:n\in\N\}$ is an $f$-Cauchy productive sequence for some function $f:\N\to\omega+1$
such that 
$f\ge f_1$;
\item[(iii)] $\{a_n:n\in\N\}$ is an $f$-Cauchy productive sequence for every function $f:\N\to\omega+1$
such that 
$f\ge f_1$;
\item[(iv)] $\{a_n:n\in\N\}$ is an
$f_\omega$-Cauchy productive set in $G$.
\end{itemize}
\end{theorem}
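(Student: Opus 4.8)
The plan is to prove the cyclic chain of implications (i)$\Rightarrow$(iv)$\Rightarrow$(iii)$\Rightarrow$(ii)$\Rightarrow$(i), with essentially all of the content concentrated in the first arrow; the remaining three are formal consequences of results already established in the excerpt.

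I would dispose of the three routine arrows first. For (iv)$\Rightarrow$(iii): taking $\varphi=\mathrm{id}_\N$ in (iv) shows that $\{a_n:n\in\N\}$ is itself an $f_\omega$-Cauchy productive sequence; since every $f\ge f_1$ satisfies $f\le f_\omega$ and hence $f\le^* f_\omega$, Lemma~\ref{if:f<g} (applied with $f_\omega$ in the role of its ``$f$'' and the given $f$ in the role of its ``$g$'') downgrades this to an $f$-Cauchy productive sequence for each such $f$, which is exactly (iii). The implication (iii)$\Rightarrow$(ii) is trivial, as one merely instantiates (iii) at a single admissible $f$, say $f=f_1$. Finally (ii)$\Rightarrow$(i) is precisely Lemma~\ref{AP:is:null:sequence}: an $f$-Cauchy productive sequence with $f\ge f_1$ has all its terms converging to the identity.

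The heart of the argument is (i)$\Rightarrow$(iv), and this is where linearity is exploited decisively. Assuming $\{a_n:n\in\N\}$ converges to $e$, I would fix an arbitrary bijection $\varphi:\N\to\N$ and an arbitrary function $z:\N\to\Z$ (the constraint $|z|\le f_\omega$ being vacuous), set $b_n=a_{\varphi(n)}^{z(n)}$, and show that $\{b_n:n\in\N\}$ is Cauchy productive. Since $G$ has a base at $e$ consisting of open subgroups, and since the criterion of Lemma~\ref{Cauchy:productive:criterion}(ii) need only be verified on a neighborhood base, it suffices to treat an arbitrary open \emph{subgroup} $U\ni e$. Convergence $a_n\to e$ yields $N_0$ with $a_n\in U$ for all $n\ge N_0$, and because $U$ is a subgroup this gives $a_n^z\in U$ for every $z\in\Z$ and every $n\ge N_0$. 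As $\varphi$ is a bijection, the set $\varphi^{-1}(\{0,\dots,N_0-1\})$ is finite, so there is $N$ with $\varphi(i)\ge N_0$, and hence $b_i=a_{\varphi(i)}^{z(i)}\in U$, for all $i\ge N$. Then for any $m>l\ge N$ the factors $b_{l+1},\dots,b_m$ all lie in the subgroup $U$, whence $\prod_{i=l+1}^m b_i\in U$, and Lemma~\ref{Cauchy:productive:criterion} delivers Cauchy productivity.

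The only real obstacle is organizational rather than technical: ensuring that the two nested universal quantifiers in the definition of an $f_\omega$-Cauchy productive \emph{set}—``for every bijection $\varphi$'' on the outside and ``for every exponent function $z$'' buried inside Cauchy productivity—are handled uniformly. The linear structure dissolves this difficulty, since the crucial index $N$ depends only on the open subgroup $U$ and on the finite set $\varphi^{-1}(\{0,\dots,N_0-1\})$, and not on $z$ at all: once the terms enter $U$, closure of the subgroup under multiplication makes the bound on arbitrary products automatic and independent of the exponents. This is exactly the feature that fails in a general (non-linear) metric group, which is why the analogous constructions there require the more delicate reshuffling estimate of Lemma~\ref{building:f-Cauchy:sequences:in:a:metric:group}.
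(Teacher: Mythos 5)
Your proposal is correct and follows essentially the same route as the paper: the same cycle (i)$\Rightarrow$(iv)$\Rightarrow$(iii)$\Rightarrow$(ii)$\Rightarrow$(i), with (i)$\Rightarrow$(iv) done exactly as in the paper by passing to an open subgroup $H\subseteq U$ (your "verify on a base" remark), noting that $a_{\varphi(n)}\to e$ via finiteness of $\varphi^{-1}(\{0,\dots,N_0-1\})$, and using closure of the subgroup under products and inverses to absorb arbitrary exponents $z(n)$, then invoking Lemma \ref{Cauchy:productive:criterion}; the remaining arrows are handled by the same tools (Lemma \ref{if:f<g} and Lemma \ref{AP:is:null:sequence}) the paper relies on. The only difference is that you spell out the steps the paper calls "straightforward," which is fine.
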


\begin{proof}
(i)$\to$(iv) 
Take an arbitrary bijection $\varphi:\N\to\N$. To establish (iv), it suffices to show that the sequence $\{a_{\varphi(n)}:n\in\N\}$ is $(f_\omega\circ\varphi)$-Cauchy productive.
Since $f_\omega=f_\omega\circ\varphi$, 
we must prove that, for an arbitrary function
$z:\N\to\N$,
the sequence $\{a_{\varphi(n)}^{z(n)}:n\in\N\}$
is Cauchy productive.
Let $U$ be a
neighborhood of the identity of $G$. Since $G$ is linear, there
exists an open subgroup $H$ of $G$ with $H\subset U$. Since
$\{a_n:n\in\N\}$ converges to $e$ by (i), so does the sequence
$\{a_{\varphi(n)}:n\in\N\}$. Therefore, there is some $n\in\N$ such
that $a_{\varphi(k)}\in H$ for every integer $k\geq n$. Since $H$ is
a group, it follows that $\prod_{j=l+1}^m
a_{\varphi(j)}^{z(j)}\in H\subset U$ for every $l,m\in\N$
such that $n\leq l< m$. Therefore, the sequence
$\left\{a_{\varphi(n)}^{z(n)}:n\in\N\right\}$ is Cauchy
productive by Lemma 
\ref{Cauchy:productive:criterion}.

(iv)$\to$(iii) and (iii)$\to$(ii) are straightforward.

The implication (ii)$\to$(i) follows from Lemma
\ref{AP:is:null:sequence}.
\end{proof}

\begin{corollary}
\label{Weil:complete:linear:group}
For a faithfully indexed sequence $\{a_n:n\in\N\}$ of points of a sequentially Weil complete
linear group $G$, the following statements are equivalent:
\begin{itemize}
\item[(i)]
$\{a_n:n\in\N\}$ converges to the identity element of $G$;
\item[(ii)] $\{a_n:n\in\N\}$ is an $f$-productive sequence for some function $f:\N\to\omega+1$
such that 
$f\ge f_1$;
\item[(iii)] $\{a_n:n\in\N\}$ is an $f$-productive sequence for every function $f:\N\to\omega+1$
such that 
$f\ge f_1$;
\item[(iv)] $\{a_n:n\in\N\}$ is an
$f_\omega$-productive set in $G$.
\end{itemize}
\end{corollary}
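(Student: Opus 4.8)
The plan is to derive this corollary from Theorem \ref{AP-Cauchy:seq:in:lin:groups} by upgrading each ``Cauchy productive'' clause there to the corresponding ``productive'' clause, using the extra hypothesis of sequential Weil completeness. The single tool that makes this possible is Lemma \ref{productive:and:Cauchy:productive:coincide}: in a sequentially Weil complete group a sequence is Cauchy productive if and only if it is productive.

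First I would lift this equivalence from a single sequence to the $f$-parametrized notions of Definition \ref{def:ap-seqence}. Fix any $z:\N\to\Z$ with $|z|\le f$; applying Lemma \ref{productive:and:Cauchy:productive:coincide} to the sequence $\{a_n^{z(n)}:n\in\N\}$ shows it is productive exactly when it is Cauchy productive. Since this holds for every admissible $z$, the sequence $\{a_n:n\in\N\}$ is $f$-productive if and only if it is $f$-Cauchy productive. Thus condition (ii) (respectively (iii)) of the corollary is equivalent to condition (ii) (respectively (iii)) of Theorem \ref{AP-Cauchy:seq:in:lin:groups}.

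Next I would treat the set condition (iv). By Definition \ref{f:productive:sets}, $\{a_n:n\in\N\}$ is an $f_\omega$-productive set precisely when, for every bijection $\varphi:\N\to\N$, the reindexed sequence $\{a_{\varphi(n)}:n\in\N\}$ is $(f_\omega\circ\varphi)$-productive; the sequence-level equivalence from the previous step, applied to each $\varphi$, converts this into the statement that $\{a_n:n\in\N\}$ is an $f_\omega$-Cauchy productive set. Hence condition (iv) of the corollary matches condition (iv) of the theorem, while condition (i) is verbatim the same in both. The four equivalences of the corollary then follow immediately from those of Theorem \ref{AP-Cauchy:seq:in:lin:groups}.

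I do not expect a genuine obstacle, since the whole argument is a transfer along Lemma \ref{productive:and:Cauchy:productive:coincide}; the only point deserving (routine) attention is verifying that ``productive $\Leftrightarrow$ Cauchy productive'' survives the universal quantifiers over the exponent function $z$ and over the bijection $\varphi$ that are built into $f$-productivity and $f_\omega$-productivity of a set.
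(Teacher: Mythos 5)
Your proposal is correct and is exactly the paper's intended derivation: the corollary is stated without proof immediately after Theorem \ref{AP-Cauchy:seq:in:lin:groups}, to be obtained by combining that theorem with Lemma \ref{productive:and:Cauchy:productive:coincide} (just as the paper explicitly does for the analogous Corollary \ref{weil:NSS:iff:TAP}). Your care in checking that the equivalence ``productive $\Leftrightarrow$ Cauchy productive'' passes through the quantifiers over $z$ and over the bijection $\varphi$ is precisely the routine verification the paper leaves to the reader.
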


\begin{corollary}\label{NACP:iff:no:conv:sequences}\label{char:of:complete:linear:TAP:groups}
A linear (sequentially Weil complete) group contains an $f_\omega$-Cauchy
productive set (an $f_\omega$-
productive set) if and only if it contains a nontrivial convergent
sequence.
\end{corollary}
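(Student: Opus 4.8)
The plan is to derive this corollary directly from the two preceding results: the ``$f_\omega$-Cauchy productive set'' statement from Theorem~\ref{AP-Cauchy:seq:in:lin:groups}, and the ``$f_\omega$-productive set'' statement (in the sequentially Weil complete case) from Corollary~\ref{Weil:complete:linear:group}. Since the two arguments are completely parallel, I would carry out the Cauchy productive case in detail and then remark that the productive case follows verbatim with Theorem~\ref{AP-Cauchy:seq:in:lin:groups} replaced by Corollary~\ref{Weil:complete:linear:group}. In both cases the whole content of the corollary is the equivalence of conditions (i) and (iv) in those results, rephrased in terms of the existence of objects rather than properties of a fixed sequence.

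First I would treat the forward implication. Suppose the linear group $G$ contains an $f_\omega$-Cauchy productive set $A=\{a_n:n\in\N\}$. By Definition~\ref{f:productive:sets}, such an $A$ is faithfully indexed, and its being an $f_\omega$-Cauchy productive set is precisely condition~(iv) of Theorem~\ref{AP-Cauchy:seq:in:lin:groups}. The implication (iv)$\to$(i) of that theorem then yields that $\{a_n:n\in\N\}$ converges to $e$. Because the $a_n$ are pairwise distinct, at most one of them equals $e$, so the sequence is not eventually constant; hence it is a nontrivial convergent sequence in $G$.

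For the reverse implication I would start from a nontrivial convergent sequence $\{x_n:n\in\N\}$ in $G$, say $x_n\to x$. By continuity of the group operations, the translated sequence $\{x^{-1}x_n:n\in\N\}$ converges to $e$ and is still not eventually constant. I would then extract a faithfully indexed subsequence $\{a_n:n\in\N\}$ of $\{x^{-1}x_n:n\in\N\}$ that still converges to $e$. Applying the implication (i)$\to$(iv) of Theorem~\ref{AP-Cauchy:seq:in:lin:groups} to this faithfully indexed null sequence, I obtain that its range $A=\{a_n:n\in\N\}$ is an $f_\omega$-Cauchy productive set, so $G$ contains such a set, as required.

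The only step that is not mere bookkeeping is the extraction of a faithfully indexed subsequence converging to $e$, which I expect to be the main (if minor) obstacle. The point is that a sequence converging to $e$ which is not eventually constant must take infinitely many distinct values: any value $z\neq e$ attained infinitely often would itself be a limit of the sequence, forcing $z=e$ by the Hausdorff separation of the group. Thus the nonidentity terms can be thinned to a subsequence of pairwise distinct points, which remains convergent to $e$ and is faithfully indexed. With this observation in hand, both implications of both halves of the corollary reduce to a single application of the equivalence (i)$\leftrightarrow$(iv) of Theorem~\ref{AP-Cauchy:seq:in:lin:groups}, respectively Corollary~\ref{Weil:complete:linear:group}.
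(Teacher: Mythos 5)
Your proof is correct and follows exactly the route the paper intends: the corollary is stated without proof precisely because it is the equivalence (i)$\leftrightarrow$(iv) of Theorem~\ref{AP-Cauchy:seq:in:lin:groups} (respectively, Corollary~\ref{Weil:complete:linear:group}) restated existentially. Your careful extraction of a faithfully indexed null subsequence from a nontrivial convergent sequence is the one small detail the paper leaves implicit, and your argument for it (a value attained infinitely often must equal the limit in a Hausdorff group) is sound.
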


Theorem \ref{non1-TAP} can be significantly strengthen for linear groups.
(Note that every complete group is trivially sequentially
complete.)

\begin{corollary}
\label{sequentially:complete:liner:TAP:group:is:discrete} A
non-discrete  (sequentially Weil complete) sequential linear group
contains an $f_\omega$-Cauchy productive set (an
$f_\omega$-productive set).
\end{corollary}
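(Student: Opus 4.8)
The plan is to reduce the statement to Corollary~\ref{NACP:iff:no:conv:sequences}, which asserts that a linear group (a sequentially Weil complete linear group) contains an $f_\omega$-Cauchy productive set (an $f_\omega$-productive set) precisely when it contains a nontrivial convergent sequence. Thus, once I produce a single nontrivial convergent sequence in $G$, both halves of the corollary follow at once, the Weil completeness hypothesis being needed only for the parenthetical $f_\omega$-productive conclusion. The whole task therefore collapses to showing that a non-discrete sequential topological group admits a nontrivial sequence converging to $e$.

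To build such a sequence I would first invoke homogeneity: since $G$ is a non-discrete topological group, no point is isolated, so the singleton $\{e\}$ is not open and consequently $G\setminus\{e\}$ is not closed. Here the hypothesis that $G$ be sequential does the real work. In a sequential space a set is closed if and only if it is sequentially closed, so the failure of $G\setminus\{e\}$ to be closed means exactly that it is not sequentially closed. Hence there is a sequence $\{a_n:n\in\N\}$ of elements of $G\setminus\{e\}$ converging to a point lying outside $G\setminus\{e\}$; the only such point is $e$, and so $a_n\to e$ while $a_n\neq e$ for every $n$.

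It remains to check that this sequence is genuinely nontrivial. Since $G$ is Hausdorff and $a_n\to e$, no value $c\neq e$ can occur infinitely often, because a constant subsequence would then force $c=e$; as every $a_n$ differs from $e$, the sequence assumes infinitely many distinct values, and passing to a subsequence yields a faithfully indexed nontrivial sequence converging to $e$. Applying Corollary~\ref{NACP:iff:no:conv:sequences} now gives that $G$ contains an $f_\omega$-Cauchy productive set, and in the sequentially Weil complete case an $f_\omega$-productive set, as required.

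I do not anticipate a serious obstacle: all the substantive content, namely the equivalence between the existence of convergent sequences and of $f_\omega$-(Cauchy) productive sets, is already packaged into Corollary~\ref{NACP:iff:no:conv:sequences}. The only point demanding care is the elementary but essential use of sequentiality to convert non-closedness of $G\setminus\{e\}$ into the existence of a convergent sequence, together with the Hausdorffness argument ensuring that the resulting sequence is nontrivial rather than eventually constant.
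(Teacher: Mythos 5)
Your proof is correct and takes essentially the same route as the paper's: the paper also reduces the statement to Corollary~\ref{char:of:complete:linear:TAP:groups} by noting that a non-discrete sequential group contains a nontrivial convergent sequence. The only difference is that the paper cites this last fact without proof, whereas you spell out the standard argument (non-discreteness makes $G\setminus\{e\}$ non-closed, sequentiality turns this into a sequence of non-identity elements converging to $e$, and Hausdorffness yields a faithfully indexed subsequence).
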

\begin{proof}
A non-discrete sequential group contains a nontrivial convergent sequence, and
we can
apply Corollary \ref{char:of:complete:linear:TAP:groups}.
\end{proof}

\begin{corollary}
\label{linear:complete:corollary}
A non-discrete (Weil complete) metric linear group contains an  $f_\omega$-Cauchy productive set
(an $f_\omega$-productive set).
\end{corollary}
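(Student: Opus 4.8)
The plan is to reduce everything to the characterization already established in Corollary~\ref{char:of:complete:linear:TAP:groups}, which asserts that a linear (sequentially Weil complete) group contains an $f_\omega$-Cauchy productive set (an $f_\omega$-productive set) if and only if it contains a nontrivial convergent sequence. Thus it suffices to verify that a non-discrete metric linear group $G$ contains a nontrivial convergent sequence, after which both assertions follow at once. This mirrors the proof of Corollary~\ref{sequentially:complete:liner:TAP:group:is:discrete}, with the metric hypothesis playing the role that sequentiality played there.

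First I would exploit the homogeneity of the group topology: since left translation by any element of $G$ is a homeomorphism, either every point of $G$ is isolated or none is. As $G$ is non-discrete, no point is isolated; in particular the identity $e$ is not an isolated point of $G$. Next I would invoke the metric structure: in a metric space every non-isolated point is the limit of a sequence of pairwise distinct points. Applying this to $e$ yields a faithfully indexed sequence in $G$ converging to $e$, which is exactly the nontrivial convergent sequence we sought.

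To finish, I would apply Corollary~\ref{char:of:complete:linear:TAP:groups} twice. For the first assertion, $G$ is a linear group possessing a nontrivial convergent sequence, so it contains an $f_\omega$-Cauchy productive set. For the parenthetical assertion, when $G$ is in addition Weil complete it is a fortiori sequentially Weil complete (every complete group is trivially sequentially complete, as remarked before Corollary~\ref{sequentially:complete:liner:TAP:group:is:discrete}), so the same convergent sequence now gives an $f_\omega$-productive set. I do not anticipate any genuine obstacle: the single substantive step is the passage from non-discreteness to the existence of a convergent sequence, and the metric assumption supplies this directly, so the argument is essentially a matter of assembling the cited results in the right order.
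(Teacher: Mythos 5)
Your proof is correct and follows essentially the same route the paper intends: the paper derives this corollary from Corollary~\ref{char:of:complete:linear:TAP:groups} via Corollary~\ref{sequentially:complete:liner:TAP:group:is:discrete} (metric groups being sequential, and Weil completeness implying sequential Weil completeness), and you simply inline the one substantive step by producing the nontrivial convergent sequence directly from metrizability and non-discreteness.
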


\begin{corollary} For a linear sequentially Weil complete
sequential group $G$ the following statements are equivalent:
\begin{itemize}
\item[(i)] $G$ is TAP.
\item[(ii)] $G$ is discrete.
\end{itemize}
\end{corollary}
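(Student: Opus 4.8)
The plan is to derive both implications directly from the corollaries already established for linear groups, so that essentially no new work is required. Since $G$ is TAP precisely when it contains no $f_\omega$-productive set, the task reduces to connecting the existence of such a set with non-discreteness, and both of these connections have already been isolated above.

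For the implication (ii)$\to$(i), I would first note that a discrete group contains no nontrivial convergent sequence: in the discrete topology the singleton $\{e\}$ is a neighborhood of $e$, so any convergent sequence is eventually constant and hence trivial. Because $G$ is a sequentially Weil complete linear group, Corollary~\ref{char:of:complete:linear:TAP:groups} asserts that $G$ contains an $f_\omega$-productive set if and only if it contains a nontrivial convergent sequence. As the discrete group $G$ has none, it contains no $f_\omega$-productive set, and is therefore TAP.

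For the implication (i)$\to$(ii) I would argue by contraposition. Assuming $G$ is not discrete, observe that $G$ is then exactly a non-discrete sequentially Weil complete sequential linear group, so Corollary~\ref{sequentially:complete:liner:TAP:group:is:discrete} applies verbatim and produces an $f_\omega$-productive set in $G$. Consequently $G$ is not TAP, which establishes the remaining direction and completes the equivalence.

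The only point demanding attention is the matching of hypotheses rather than any substantive difficulty. Sequentiality is used solely in the direction (i)$\to$(ii), where it is what guarantees (inside Corollary~\ref{sequentially:complete:liner:TAP:group:is:discrete}) that non-discreteness yields a nontrivial convergent sequence; sequential Weil completeness is what allows one to pass between the Cauchy productive and the genuinely productive versions of the sets. Both ingredients are part of the standing hypotheses on $G$, so there is no real obstacle here: the corollary is in effect a repackaging of the two cited results, and the proof consists of checking that the assumptions on $G$ feed correctly into each of them.
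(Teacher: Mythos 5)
Your proof is correct and matches the paper's intended derivation: the corollary is stated without proof precisely because it follows from Corollary~\ref{char:of:complete:linear:TAP:groups} (for the discrete~$\Rightarrow$~TAP direction, via the absence of nontrivial convergent sequences) and Corollary~\ref{sequentially:complete:liner:TAP:group:is:discrete} (for the contrapositive of TAP~$\Rightarrow$~discrete), exactly as you argue. Your accounting of where sequentiality and sequential Weil completeness are used is also accurate.
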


\begin{theorem}
For a linear group $G$,
the following statements are equivalent:
\begin{itemize}
\item[(i)] $G$ is sequentially  Weil complete and does not contain an $f_\omega$-productive sequence,
\item[(ii)] $G$ is sequentially  Weil complete and
 does not contain an $f_\omega$-productive set,
\item[(iii)] $G$ has no nontrivial convergent sequences.
\end{itemize}
\end{theorem}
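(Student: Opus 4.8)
The plan is to prove the equivalence as the cycle (i)$\to$(ii)$\to$(iii)$\to$(i). Two of the three implications are essentially bookkeeping, and the single substantive step, namely (ii)$\to$(iii), is delivered wholesale by Corollary~\ref{char:of:complete:linear:TAP:groups}; the role of the present theorem is to reorganize that characterization around the distinction between ``sequence'' and ``set'' and to absorb the sequential Weil completeness hypothesis. Note that this hypothesis appears as an \emph{assumption} in (i) and (ii) but will have to be \emph{derived} when we close the cycle at (iii)$\to$(i).

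First I would dispatch (i)$\to$(ii). Here sequential Weil completeness is common to both statements, so only the productivity clause needs attention. If $G$ contained an $f_\omega$-productive set $A=\{a_n:n\in\N\}$, then, applying Definition~\ref{f:productive:sets} to the identity bijection $\varphi=\mathrm{id}_\N$ (for which $f_\omega\circ\varphi=f_\omega$), the sequence $\{a_n:n\in\N\}$ would itself be $f_\omega$-productive, contradicting (i). Thus no $f_\omega$-productive set exists, giving (ii). Next, for (ii)$\to$(iii) I would simply invoke Corollary~\ref{char:of:complete:linear:TAP:groups}: a linear sequentially Weil complete group contains an $f_\omega$-productive set if and only if it contains a nontrivial convergent sequence. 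Since (ii) supplies both the linearity (a standing hypothesis) and the sequential Weil completeness, the absence of an $f_\omega$-productive set forces the absence of a nontrivial convergent sequence, which is (iii).

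It remains to close the cycle with (iii)$\to$(i). Assuming $G$ has no nontrivial convergent sequence, sequential Weil completeness comes for free from Lemma~\ref{G:with:no:conv:seq:is:seq:Weil:complete}. For the productivity clause I would argue by contradiction: let $\{a_n:n\in\N\}$ be an $f_\omega$-productive sequence, which is faithfully indexed by Definition~\ref{def:ap-seqence}. Because $G$ is now known to be sequentially Weil complete, productivity and Cauchy productivity coincide (Lemma~\ref{productive:and:Cauchy:productive:coincide}), so the sequence is $f_\omega$-Cauchy productive; since $f_\omega\ge f_1$, Lemma~\ref{AP:is:null:sequence} then yields that $\{a_n:n\in\N\}$ converges to $e$. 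A faithfully indexed sequence converging to $e$ is a nontrivial convergent sequence, contradicting our assumption. Hence no $f_\omega$-productive sequence exists, establishing (i).

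I expect no serious technical obstacle; the only point requiring care is the \emph{asymmetry} between the set and sequence notions. Because a productive set is formally the stronger object (it demands productivity of every rearrangement), the passage ``no productive sequence $\Rightarrow$ no productive set'' is immediate, whereas the converse direction cannot be obtained by such a soft argument and must be routed through the genuine characterization in Corollary~\ref{char:of:complete:linear:TAP:groups}. Keeping track of where sequential Weil completeness is hypothesis versus conclusion is the other bookkeeping subtlety to watch.
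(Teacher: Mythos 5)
Your proposal is correct and follows essentially the same route as the paper: the identical cycle (i)$\to$(ii)$\to$(iii)$\to$(i), with (i)$\to$(ii) by the trivial observation that an $f_\omega$-productive set is in particular an $f_\omega$-productive sequence, and (ii)$\to$(iii) by Corollary~\ref{char:of:complete:linear:TAP:groups}. The only deviation is in (iii)$\to$(i), where the paper simply cites Lemma~\ref{G:with:no:conv:seq:is:seq:Weil:complete} together with Corollary~\ref{char:of:complete:linear:TAP:groups} (which speaks of \emph{sets}), while you rule out $f_\omega$-productive \emph{sequences} directly via Lemmas~\ref{productive:and:Cauchy:productive:coincide} and~\ref{AP:is:null:sequence}; this is a harmless, slightly more careful filling-in of exactly the detail the paper's terse citation leaves implicit.
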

\begin{proof}
(i)$\to$(ii) holds since every $f_\omega$-productive set is an
$f_\omega$-productive sequence.

(ii)
$\Rightarrow$(iii) follows from Corollary \ref{char:of:complete:linear:TAP:groups}.

(iii)$\to$(i) follows from Lemma
\ref{G:with:no:conv:seq:is:seq:Weil:complete} and Corollary
\ref{char:of:complete:linear:TAP:groups}.
\end{proof}

Let us give an example of a topological group satisfying three equivalent conditions of the above theorem.

\begin{example}\label{ex:B:with:cocount:top}\label{ex:not:NSS:NACP}
Take the Boolean group $B$ of size $\cont$ and consider there the
co-countable topology having as a base of neighborhoods of $0$ all
subgroups of at most countable index. Let $G$ be the completion of
$B$. 
\begin{itemize}
\item[(i)]
{\em $G$ is a complete non-discrete linear group without
nontrivial convergent sequences\/} because every $G_\delta$-subset
of $G$ is open in $G$.
\item[(ii)]
{\em $G$ is not NSS and 
does not contain $f_\omega$-Cauchy productive
sequence (set)\/}. 
Indeed, $G$ is not 
not NSS because it is non-discrete and linear, and 
$G$ does not contain $f_\omega$-Cauchy productive
sequence (set) by Corollary \ref{NACP:iff:no:conv:sequences}. 
\end{itemize}
\end{example}
Example \ref{ex:not:NSS:NACP}
shows that
metrizability can not be dropped from the
assumptions of Theorem \ref{metric:NACP:iff:NSS}.

\begin{remark} 
Let  $G = S(\N)$  be the symmetric group 
discussed in Example
\ref{symmetric:group:example}.
Note that {\em $G$ has a liner metric topology\/}.
This example shows that one cannot replace ``sequentially Weil
complete'' by ``Raikov complete'' in Corollary
\ref{Weil:complete:linear:group}.
\end{remark}

\section{Descriptive properties of groups having $f_1^\star$-productive sequences}

\begin{theorem}\label{size:of:f_1-prductive:groups>c} A topological group that contains an $f_1^\star$-productive sequence has
size at least $\cont$. Furthermore, a metric group having an $f_1^\star$-productive
sequence contains a (subspace homeomorphic to the) Cantor set.
\end{theorem}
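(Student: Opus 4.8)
The plan is to build a topological copy of the Cantor cube $\{0,1\}^\N$ inside $G$ by forming sub-products of a carefully chosen subsequence of the given sequence. Write the $f_1^\star$-productive sequence as $A=\{a_n:n\in\N\}$; since $A$ is faithfully indexed we may discard the at most one index with $a_n=e$ and assume $a_n\ne e$ for all $n$. Every $z\in\{0,1\}^\N$ satisfies $z\le f_1$, so by hypothesis $\{a_n^{z(n)}:n\in\N\}$ is productive, and $\phi(z)=\lim_m\prod_{i=0}^m a_i^{z(i)}$ is a well-defined element of $G$. The naive map $z\mapsto\phi(z)$ is \emph{not} injective in general (in $\T$ the sequence $a_n=2^{-(n+1)}$ is $f_1^\star$-summable, yet $\phi(\chi_{\{0\}})=\tfrac12=\sum_{n\ge 1}2^{-(n+1)}=\phi(\chi_{\{1,2,\dots\}})$), so the heart of the argument—and the main obstacle—is to pass to a rapidly decreasing (lacunary) subsequence on which the analogous map becomes injective.

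The enabling tool is a \emph{uniform} Cauchy estimate: for every neighborhood $U$ of $e$ there is $N\in\N$ with $\prod_{i=l}^m a_i^{z(i)}\in U$ for all $z\in\{0,1\}^\N$ and all $N\le l\le m$. I would prove this by contradiction: if it fails for some $U$, then for each $n$ one may pick $z_n\in\{0,1\}^\N$ and an interval $[l_n,m_n]$ with $l_n\ge n$ and $\prod_{i=l_n}^{m_n}a_i^{z_n(i)}\notin U$; choosing the intervals successively so that they are pairwise disjoint and increasing, glue the $z_n$ into a single $z\in\{0,1\}^\N$ (equal to $z_n$ on the $n$-th interval and $0$ elsewhere). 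For this $z$ the products $\prod_{i=l_n}^{m_n}a_i^{z(i)}$ stay outside $U$ along arbitrarily large indices, so $\{a_i^{z(i)}\}$ violates the criterion of Lemma~\ref{Cauchy:productive:criterion} (equivalently Lemma~\ref{left:cauchy:condition}), contradicting that $A$ is $f_1^\star$-Cauchy productive.

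Next I would construct recursively a subsequence $n_0<n_1<\cdots$ together with symmetric open neighborhoods $W_k$ of $e$ as follows: given $n_k$, use regularity and continuity of the group operations to choose symmetric open $W_k$ with $a_{n_k}\notin\overline{W_kW_k}$ (possible since $a_{n_k}\ne e$); let $N_k$ be the index furnished by the uniform estimate for $W_k$; and pick $n_{k+1}>\max\{n_k,N_k\}$. Define $\psi:\{0,1\}^\N\to G$ by $\psi(y)=\lim_m\prod_{j=0}^m a_{n_j}^{y(j)}$, which exists because it is a sub-product of the productive sequence $A$. For injectivity, take $y\ne y'$ and let $k$ be least with, say, $y(k)=1$ and $y'(k)=0$; the prefixes through index $k-1$ coincide in a common factor $P$, so $\psi(y)=P\,a_{n_k}R_y$ and $\psi(y')=P\,R_{y'}$, where $R_y,R_{y'}$ are the tails $\lim_m\prod_{j=k+1}^m a_{n_j}^{(\cdot)}$. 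Since these tails start at index $n_{k+1}>N_k$, the uniform estimate gives $R_y,R_{y'}\in\overline{W_k}$, whence (by symmetry of $W_k$ and continuity of multiplication) $R_{y'}R_y^{-1}\in\overline{W_k}\,\overline{W_k}\subseteq\overline{W_kW_k}$. If $\psi(y)=\psi(y')$ then $a_{n_k}=R_{y'}R_y^{-1}\in\overline{W_kW_k}$, contradicting the choice of $W_k$. Thus $\psi$ is injective and $|G|\ge|\{0,1\}^\N|=\cont$, proving the first assertion.

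For the second assertion, assume $G$ is metric (hence Hausdorff). The uniform estimate shows that the partial products $P_m(y)=\prod_{j=0}^m a_{n_j}^{y(j)}$ converge to $\psi(y)$ uniformly in $y$ (one has $\psi(y)\in P_m(y)\,\overline{U}$ for all $y$ once $m$ is large relative to $U$); since each $P_m$ depends on only finitely many coordinates of $y$ and is therefore continuous on $\{0,1\}^\N$, the uniform limit $\psi$ is continuous. As $\{0,1\}^\N$ is compact and $G$ is Hausdorff, the continuous injection $\psi$ is a homeomorphism onto its image, which is accordingly a subspace of $G$ homeomorphic to the Cantor set. The only genuinely delicate point is the non-injectivity of the full summing map, circumvented by the lacunary choice of subsequence; the non-commutative bookkeeping of the tails is controlled exactly as in Lemma~\ref{reshuffling}, and everything else is routine.
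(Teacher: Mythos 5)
Your proposal is correct, but it takes a genuinely different route from the paper's proof. The paper runs a Cantor-scheme (binary tree) construction: by induction it builds pairwise disjoint open sets $U_f$, $f\in 2^n$, with nested closures, together with finite partial products $b_{f,n}=\prod_{i=0}^{n-1}a_{\mu_f(i)}$; at each node it uses only that $a_k\to e$ (so that $b_{f,n}a_k\to b_{f,n}\in U_f$) and the Hausdorff separation property (to split the two distinct points $b_{f,n}a_{m_0}$ and $b_{f,n}a_{m_1}$ into the two disjoint open sets of the next level). Distinct branches $f\in 2^{\N}$ thus choose their \emph{own} increasing index sequences $\mu_f$, injectivity of $f\mapsto x_f$ comes for free from disjointness of the scheme, and metrizability enters only at the very end, to shrink the diameters of the $U_f$ so that each $F_f$ is a singleton and $f\mapsto x_f$ is continuous. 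You instead fix a \emph{single} lacunary subsequence and use all $2^{\omega}$ sub-products of it; your key lemma is the uniform Cauchy estimate (uniform over all $z\in\{0,1\}^{\N}$), proved by a sound gliding-hump gluing of disjoint intervals, a lemma that has no counterpart in the paper. That estimate yields injectivity via the first-difference/small-tails argument (the step $a_{n_k}=R_{y'}R_y^{-1}\in\overline{W_k}\,\overline{W_k}\subseteq\overline{W_kW_k}$ is a standard fact about topological groups), and continuity of $\psi$ via uniform convergence of the finite partial products; to be fully precise this uniformity should be read with respect to the left uniformity of $G$ (or a left-invariant metric supplied by the Birkhoff--Kakutani theorem, since the given metric need not be invariant), but that is routine. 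The trade-off is instructive: the paper's node-by-node separation avoids all uniformity considerations and needs nothing beyond $a_k\to e$, while your uniform estimate costs an extra lemma but buys more than the theorem asks for --- your $\psi:2^{\N}\to G$ is a topological embedding in \emph{every} Hausdorff group containing an $f_1^{\star}$-productive sequence, so metrizability is not actually needed for the Cantor-set conclusion in your argument, whereas the paper's proof genuinely uses the metric at that step. (Both arguments, of course, implicitly assume $G$ Hausdorff, without which the cardinality claim fails, e.g.\ for indiscrete groups.)
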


\begin{proof}
Let $A=\{a_n:n\in\N\}$ be an $f_1^\star$-productive sequence in a topological group $G$. By Lemma \ref{AP:is:null:sequence},
\begin{equation}
\label{limit:is:e}
\lim_{k\to\infty} a_k=e.
\end{equation}
If $G$ is metric, we fix a metric on $G$ satisfying $\mathrm{diam}\ G\le 1$.

By induction on $n\in\N$ we define a family $\{U_f:f\in 2^n\}$ of open subsets of $G$ and
a family $\{\mu_f: f\in 2^n\}$ of order-preserving injections from $n$ to
$\N$ satisfying the following properties:
\begin{itemize}
\item[(i$_n$)]
$U_f\cap U_g=\emptyset$ whenever $f,g\in 2^n$ and $f\not=g$,
\item[(ii$_n$)] $b_{f,n}=\prod_{i=0}^{n-1} a_{\mu_f(i)}\in U_f$ for every $f\in 2^n$,
\item[(iii$_n$)] if $f\in 2^n$ and $m< n$, then $\overline{U_f}\subseteq U_{f\restriction_m}$,
\item[(iv$_n$)] if $f\in 2^n$ and $m< n$, then $\mu_{f\restriction_m}={\mu_f}\restriction_m$,
\item[(v$_n$)] if $G$ is metric, then $\mathrm{diam}\ U_f\le 1/2^n$ for every $f\in 2^n$.
\end{itemize}
For $n=0$ define $U_\emptyset=G$. Define also $b_{\emptyset,0}=e$. Then (i$_0$), (ii$_0$),
(iii$_0$), (iv$_0$) and (v$_0$) are trivially satisfied.

Suppose now that 
a family $\{U_f:f\in 2^k\}$ of open subsets of $G$ and
a family $\{\mu_f: f\in 2^k\}$ of order-preserving injections from $k$ to
$\N$
have already been constructed so that the 
properties (i$_k$), (ii$_k$),
(iii$_k$), (iv$_k$) and (v$_k$)
hold
for every integer $k$ with $0\le k\le n$.
Let us define a family $\{U_f:f\in 2^{n+1}\}$ of open subsets of $G$ and a family
$\{\mu_f: f\in 2^{n+1}\}$ of order-preserving injections from $n+1$ to
$\N$ satisfying properties (i$_{n+1}$), (ii$_{n+1}$), (iii$_{n+1}$) and (iv$_{n+1}$) .

Fix $f\in 2^n$.
From \eqref{limit:is:e} it follows that $\lim_{k\to\infty} b_{f,n}a_k=b_{f,n}$.
Since $U_f$ is an open subset of $G$ and $b_{f,n}\in U_f$ by (ii$_n$),
we can find $j\in\N$ such that $b_{f,n}a_k\in U_f$ whenever $k\ge j$. Choose distinct $m_0,m_1\in \N$ such that $m_l\ge j$ and
\begin{equation}
\label{choosing:the:biggest}
m_l>\max\{\mu_f(i):i\in n\} \mbox{ for } l=0,1.
\end{equation}
Define $\mu_{f\hat{\mbox{ }}l}=\mu_f\cup\{\langle n, m_l\rangle\}$ for $l=0,1$,
where $f\hat{\mbox{ }}l=f\cup\{\langle n, l\rangle\}\in2^{n+1}$.
Since \eqref{choosing:the:biggest} holds and $\mu_f$ is an order-preserving injection
from $n$ to $\N$, it follows that each $\mu_{f\hat{\mbox{ }}l}$ is an order-preserving
injection from $n+1$ to $\N$. Clearly, $\mu_{f\hat{\mbox{}}l}\restriction_n=\mu_f$,
which together with (iv$_k$) for all $k\le n$ gives (iv$_{n+1}$).

By our construction,
\begin{equation}
\label{bs:in:Uf}
b_{f,n}a_{m_0}\in U_f
\mbox{ and }
b_{f,n}a_{m_1}\in U_f.
\end{equation}
Since $m_0\not=m_1$, we have $a_{m_0}\not=a_{m_1}$, and thus
$b_{f,n}a_{m_0}\not=b_{f,n}a_{m_1}$. Therefore, we can choose open sets $U_{f\hat{\mbox{ }}0}$
and $U_{f\hat{\mbox{ }}1}$ such that 
(v$_{n+1}$) holds,
$U_{f\hat{\mbox{ }}0}\cap U_{f\hat{\mbox{ }}1}=\emptyset$ and
\begin{equation}
\label{b_{f,n}}
b_{f,n}a_{m_l}\in U_{f\hat{\mbox{ }}l}\subseteq \overline{U_{f\hat{\mbox{ }}l}}
\subseteq U_f
\mbox{ for }
l=0,1.
\end{equation}
In particular, (i$_{n+1}$) holds. Since $a_{m_l}=a_{\mu_{f\hat{\mbox{ }}l}(n)}$,
$$
b_{f,n}a_{m_l}=b_{f,n}a_{\mu_{f\hat{\mbox{ }}l}(n)}=\left(\prod_{i=0}^{n-1} a_{\mu_f(i)}\right) a_{\mu_{f\hat{\mbox{ }}l}(n)}
=
\prod_{i=0}^{n} a_{\mu_{f\hat{\mbox{ }}l}(i)}
=
b_{f\hat{\mbox{ }}l,n+1}
$$
for $l=0,1$. Combining this with \eqref{b_{f,n}}, we obtain (ii$_{n+1}$) and (iii$_{n+1}$). The inductive construction is completed.
\begin{claim}
\label{claim1}
For every $f\in 2^\N$ there exists 
$x_f\in F_f=\bigcap\{\overline{U_{f\restriction_n}}:n\in\N\}\not=\emptyset$.
\end{claim}

\begin{proof}
Define $\mu_f=\bigcup\{\mu_{f\restriction_k}:k\in\N\}$. Since (iv$_k$) holds for
every $k$, and each $\mu_{f\restriction_k}$ is an order-preserving injection from $k+1$ to $\N$,
we conclude that $\mu_f$ is an order-preserving injection from $\N$ to $\N$. Since $A$ is an
$f_1^\star$-productive sequence, there exists a limit
\begin{equation}
\label{defining:x_f}
x_f=\lim_{k\to\infty} \prod_{i=0}^{k-1} a_{\mu_{f(i)}}.
\end{equation}

Fix $n\in\N$. For every $k\in\N$ with $k\ge n$, we have
$$
\prod_{i=0}^{k-1} a_{\mu_{f(i)}}
=
\prod_{i=0}^{k-1} a_{\mu_{f\restriction_k(i)}}
=
b_{f\restriction_k,k}\in U_{f\restriction_k}\subseteq U_{f\restriction_n}
$$
by (ii$_k$) and (iii$_k$). Combining this with \eqref{defining:x_f}, we obtain that $x_f\in \overline{U_{f\restriction_n}}$. Since
$n\in\N$ was chosen arbitrarily, we conclude that $x_f\in F_f\not=\emptyset$.
\end{proof}

Since (i$_n$) and (iii$_n$) hold for all $n\in\N$, one has 
$F_f\cap F_g=\emptyset$ (and so
$x_f\not=x_g$ as well)
whenever $f,g\in 2^\N$ and $f\not=g$.
Combining this with Claim \ref{claim1}, we conclude that $|G|\ge |\{x_f:f\in2^\N\}|=|2^\N|=\cont$.

Assume now that $G$ is metric. 
Since (v$_n$) holds for every $n\in\N$, 
from Claim \ref{claim1}, we conclude that 
each set $F_f$ must become a singleton $\{x_f\}$. This allows us to define a continuous injection $\psi:2^{\N}\to G$ from the
Cantor set $2^\N$ to $G$ given by
$\psi(f)=x_f$ for every $f\in 2^\N$.
\end{proof}

\begin{corollary}\label{|G|<cont:is:NAP}
A topological group of size $<\cont$ does not contain an $f$-productive sequence for any function $f:\N\to(\omega+1)\setminus\{0\}$.
\end{corollary}

\begin{corollary}\label{|G|<cont:is:TAP}
Every topological group of size $<\cont$ is TAP.
\end{corollary}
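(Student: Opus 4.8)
The plan is to deduce this directly from Corollary \ref{|G|<cont:is:NAP}. By definition, $G$ is TAP precisely when it contains no $f_\omega$-productive set, so it suffices to rule out the existence of such a set in any group $G$ with $|G|<\cont$.

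The key observation is that every $f_\omega$-productive set is in particular an $f_\omega$-productive sequence. Indeed, according to Definition \ref{f:productive:sets}, a faithfully indexed set $A=\{a_n:n\in\N\}$ is an $f_\omega$-productive set provided that $\{a_{\varphi(n)}:n\in\N\}$ is $(f_\omega\circ\varphi)$-productive for \emph{every} bijection $\varphi:\N\to\N$. Specializing this to the identity bijection $\varphi=\mathrm{id}_\N$ and using that $f_\omega\circ\mathrm{id}_\N=f_\omega$, we see that $\{a_n:n\in\N\}$ is itself an $f_\omega$-productive sequence.

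Now I would invoke Corollary \ref{|G|<cont:is:NAP}. Since $f_\omega(n)=\omega\neq 0$ for every $n\in\N$, the constant function $f_\omega$ satisfies $f_\omega:\N\to(\omega+1)\setminus\{0\}$, so the corollary applies with $f=f_\omega$ and yields that a group of size $<\cont$ contains no $f_\omega$-productive sequence. Combined with the previous paragraph, it follows that such a group contains no $f_\omega$-productive set either, which is exactly the assertion that $G$ is TAP.

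There is no genuine obstacle here: the statement is an immediate formal consequence of Corollary \ref{|G|<cont:is:NAP}. The only point that warrants a moment of care is the reduction from sets to sequences, namely the remark that the single instance $\varphi=\mathrm{id}_\N$ of the universally quantified bijection in Definition \ref{f:productive:sets} already exhibits an $f_\omega$-productive set as an $f_\omega$-productive sequence, so that the stronger non-existence result for sequences immediately forces the weaker non-existence result for sets.
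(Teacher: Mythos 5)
Your proposal is correct and follows the same route the paper intends: the corollary is stated without proof precisely because it is an immediate consequence of Corollary \ref{|G|<cont:is:NAP} applied to $f=f_\omega$, together with the observation that an $f_\omega$-productive set is in particular an $f_\omega$-productive sequence (via the identity bijection in Definition \ref{f:productive:sets}). Your explicit treatment of the set-to-sequence reduction is exactly the right point of care, and nothing is missing.
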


The following example shows that ``$f$-productive'' can not be
replaced by ``$f$-Cauchy productive'' in Corollary
\ref{|G|<cont:is:NAP}.

\begin{example}\label{NACP:size<c}
Consider the group $\Z$ equipped with the p-adic topology $\tau_p$.
Then $G=(\Z,\tau_p)$ is a countable metric group.
Since $G$ is
linear and non-discrete, it is not NSS. Consequently, $G$ contains an
$f_\omega$-Cauchy productive sequence (even an $f_\omega$-productive set) by Theorem
\ref{metric:NACP:iff:NSS}.
\end{example}

\begin{remark}
Combining Corollary \ref{no:f-product:seq:in:nondiscrete:metr:group} and 
Theorem \ref{size:of:f_1-prductive:groups>c}, 
we obtain
the well-known fact that every non-discrete  Weil complete metric group 
contains a homeomorphic copy of the Cantor set.
\end{remark}

\section{Existence of $f$-productive sequences for various functions $f:\N\to\omega+1$}

In this section we consider the question of existence of $f$-productive sequences for various functions $f:\N\to\omega+1$. The first step in addressing this problem was made already in Lemma \ref{if:f<g}.
For abelian groups
even a stronger version of Lemma \ref{if:f<g} holds.

\begin{proposition} Let $f:\N\to \omega\setminus\{0\}$ and $g:\N\to\omega\setminus\{0\}$ be functions such that $g\le^* kf$ for some positive
integer $k$. Then every $f$-summable sequence
in an abelian topological group
$G$
is $g$-summable in $G$ as well.
\end{proposition}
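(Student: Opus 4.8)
The plan is to pass to the ``starred'' versions of summability supplied by Proposition~\ref{fstar:is:f:for:abel} and then to split the coefficient function into finitely many pieces, each dominated by $f$. Let $A=\{a_n:n\in\N\}$ be an $f$-summable sequence in $G$. By Proposition~\ref{fstar:is:f:for:abel}, $A$ is then $f^\star$-summable; and, again by that proposition, to conclude that $A$ is $g$-summable it suffices to show that $A$ is $g^\star$-summable. So I would fix a function $w:\N\to\N$ with $w\le g$ and aim to prove that the sequence $\{w(n)a_n:n\in\N\}$ is summable.

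Using the hypothesis $g\le^* kf$, choose $i_0\in\N$ with $w(n)\le g(n)\le kf(n)$ for every $n\ge i_0$. The combinatorial core is to write, for each $n\ge i_0$, the number $w(n)$ as a sum of $k$ nonnegative integers, none exceeding $f(n)$. This is achieved by the greedy rule
\begin{equation*}
z_j(n)=\min\bigl\{\,f(n),\ \max\{0,\,w(n)-(j-1)f(n)\}\,\bigr\}\qquad(1\le j\le k),
\end{equation*}
supplemented by $z_j(n)=0$ for $n<i_0$. Since $f(n)\ge 1$, each $z_j$ is a function $\N\to\N$ with $z_j\le f$, and a direct check gives $\sum_{j=1}^k z_j(n)=w(n)$ for every $n\ge i_0$: writing $w(n)=qf(n)+r$ with $0\le r<f(n)$ and $0\le q\le k$, the first $q$ boxes receive $f(n)$ each, the following box (when $q<k$) receives $r$, and all remaining boxes receive $0$.

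Because $A$ is $f^\star$-summable and $z_j\le f$, each of the $k$ sequences $\{z_j(n)a_n:n\in\N\}$ is summable. Applying Lemma~\ref{sum:of:lin:is:lim:of:sum} a total of $k-1$ times then shows that the sequence $\{c_n:n\in\N\}$ with $c_n=\bigl(\sum_{j=1}^k z_j(n)\bigr)a_n$ is summable. By construction $c_n=w(n)a_n$ for all $n\ge i_0$, so $\{c_n:n\in\N\}$ and $\{w(n)a_n:n\in\N\}$ agree outside the finite set $\{0,\dots,i_0-1\}$. As their partial sums differ by the fixed element $\sum_{i=0}^{i_0-1}\bigl(c_i-w(i)a_i\bigr)$ at every index $\ge i_0$, one partial-sum sequence converges if and only if the other does; hence $\{w(n)a_n:n\in\N\}$ is summable. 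Since $w\le g$ was arbitrary, $A$ is $g^\star$-summable, and therefore $g$-summable by Proposition~\ref{fstar:is:f:for:abel}.

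I do not expect a genuine obstacle here; the argument is essentially an amplification of Lemma~\ref{if:f<g} in which the single shift $g\le^* f$ is replaced by the $k$-fold bound $g\le^* kf$. The only points demanding (routine) care are verifying that the greedy decomposition sums exactly to $w(n)$ on the tail while keeping each summand in $\{0,\dots,f(n)\}$, and observing that altering finitely many initial terms leaves summability unchanged. The positivity hypothesis $f\ge 1$ is precisely what makes the required nonnegative-integer decomposition available.
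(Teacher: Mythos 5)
Your proof is correct and takes essentially the same route as the paper's: the identical greedy decomposition of the coefficient function into $k$ pieces each bounded by $f$, combined with Lemma \ref{sum:of:lin:is:lim:of:sum} applied repeatedly and Proposition \ref{fstar:is:f:for:abel} to pass between integer and natural-number coefficients. The only organizational difference is that the paper first invokes Lemma \ref{if:f<g} to reduce the problem to proving $kf$-summability outright, whereas you handle the finitely many exceptional indices arising from $g\le^* kf$ by hand at the end; both devices are routine and equivalent.
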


\begin{proof}Assume that  $\{a_n:n\in\N\}$ is an $f$-summable sequence in $G$. By Lemma \ref{if:f<g} it suffices to prove that it is also
$kf$-summable. Let $\{z_n:n\in\N\}$ be a sequence of natural numbers such that $z_n\leq kf(n)$ for every $n\in\N$.  For
$i\in\{1,\ldots,k\}$ and $n\in\N$ define $$z_{in}=\left\{\begin{array}{ll}
                                       f(n) & \mbox{if $if(n)\leq z_n$}\\
                                       z_n-(i-1)f(n) & \mbox{if $(i-1)f(n)\leq
                                       z(n)<if(n)$}\\
                                       0 & \mbox{if $z_n<(i-1)f(n)$}.
                                    \end{array}
                           \right. $$
Clearly, $z_{in}\leq f(n)$ and $z_n=\sum_{i=1}^kz_{in}$. Since $\{a_n:n\in\N\}$ is $f$-summable,
the sequence $\{z_{in}a_n:n\in\N\}$ is summable for every $i=1,\ldots,k$.
Using Lemma \ref{sum:of:lin:is:lim:of:sum} $k$ many times,
we conclude that the sequence $\{z_n a_n:n\in\N\}$
is summable as well.
This shows that $\{a_n:n\in\N\}$ is
$kf^\star$-summable sequence in $G$. Hence, it is $kf$-summable by Proposition \ref{fstar:is:f:for:abel}.
\end{proof}

\begin{corollary}
\label{bounded:f}
If $f:\N\to\N\setminus\{0\}$ is a bounded function,
then
a
sequence of elements of an abelian topological group
is $f$-summable if and only if it is $f_1$-summable.
\end{corollary}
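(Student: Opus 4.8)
The plan is to prove the two implications separately, each as a one-line consequence of a result established above; no new construction is needed, since both directions are already packaged into Lemma \ref{if:f<g} and the preceding Proposition.

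For the forward implication I would assume the sequence is $f$-summable and observe that, since $f$ takes values in $\N\setminus\{0\}$, we have $f_1(n)=1\le f(n)$ for every $n\in\N$, so $f_1\le f$ and in particular $f_1\le^* f$. Lemma \ref{if:f<g}, applied with our $f$ in its role of $f$ and with $f_1$ in its role of $g$, then gives immediately that the sequence is $f_1$-summable. (Here I use the abelian ``summable'' variant of the lemma, as announced in the text.)

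For the reverse implication I would assume the sequence is $f_1$-summable and exploit the boundedness of $f$: fix a positive integer $k$ with $f(n)\le k$ for all $n\in\N$, so that $f(n)\le k=k\,f_1(n)$ and hence $f\le kf_1$, in particular $f\le^* kf_1$. The preceding Proposition, applied with $f_1$ in the role of its function $f$ and with our $f$ in the role of its function $g$, then yields that the sequence is $f$-summable.

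I expect no genuine obstacle here: the whole argument is bookkeeping that matches the boundedness of $f$ against the hypotheses of the two cited results. The only point requiring a moment's care is to invoke each result with the correct assignment of functions—$f_1\le^* f$ for Lemma \ref{if:f<g}, and $f\le^* kf_1$ for the preceding Proposition—and to recognise that the boundedness hypothesis is exactly what supplies the integer $k$ demanded by the latter, while the positivity of $f$ (its taking values in $\N\setminus\{0\}$) is what supplies the inequality $f_1\le f$ needed for the former.
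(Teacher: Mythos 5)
Your proof is correct and is precisely the argument the paper intends: the corollary is stated immediately after the Proposition as its direct consequence, with the easy direction supplied by Lemma \ref{if:f<g} (via $f_1\le f$) and the substantive direction by the Proposition applied with $f\le^* kf_1$, exactly as you do. The role assignments and hypothesis checks (positivity giving $f_1\le f$, boundedness supplying $k$) are all handled correctly.
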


Returning back to the general case, we prove 
the following
\begin{proposition}
If a topological group $G$ contains an $f$-productive sequence (an $f$-Cauchy productive sequence) for some unbounded function $f:\N\to \omega$, then $G$ contains a $g$-productive sequence (a $g$-Cauchy productive sequence) for every 
function $g:\N\to\omega+1$ such that the set $\{n\in\N: g(n)=\omega\}$ is finite.
\end{proposition}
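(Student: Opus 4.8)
The plan is to realize the desired $g$-(Cauchy) productive sequence as a suitably \emph{reindexed subsequence} of the given $f$-(Cauchy) productive sequence $A=\{a_n:n\in\N\}$. The unboundedness of $f$ will be used to absorb all the \emph{finite} bounds imposed by $g$, while the finitely many coordinates where $g$ equals $\omega$ will be disposed of by a ``fixed head'' argument, relying on the fact that (Cauchy) productivity is a tail property.

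First I would record the elementary observation that, since $f:\N\to\omega$ is unbounded, the set $\{k\in\N:f(k)\ge t\}$ is infinite for every $t\in\N$ (otherwise $f$ would be bounded). Put $S=\{n\in\N:g(n)=\omega\}$ and $M=\max S$ (if $S=\emptyset$ the argument is identical, with an empty head). Using the displayed fact I would build a strictly increasing injection $\rho:\N\to\N$ with $f(\rho(n))\ge g(n)$ for every $n\notin S$: at a stage $n\notin S$ choose $\rho(n)>\rho(n-1)$ inside the infinite set $\{k:f(k)\ge g(n)\}$, and at a stage $n\in S$ choose any $\rho(n)>\rho(n-1)$. Then set $b_n=a_{\rho(n)}$; since $A$ is faithfully indexed (by Definition~\ref{def:ap-seqence}) and $\rho$ is injective, the sequence $B=\{b_n:n\in\N\}$ is faithfully indexed.

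The heart of the argument is to verify that $B$ is $g$-(Cauchy) productive. I would fix $z:\N\to\Z$ with $|z|\le g$ and define $w:\N\to\Z$ by $w(\rho(n))=z(n)$ for $n\notin S$ and $w(k)=0$ for all remaining $k$. The construction of $\rho$ gives $|w|\le f$, so by the hypothesis on $A$ the sequence $\{a_k^{w(k)}:k\in\N\}$ is (Cauchy) productive; write $P_K=\prod_{k=0}^{K}a_k^{w(k)}$. Because $\rho$ is order preserving and $w$ vanishes off $\rho(\N\setminus S)$, a short bookkeeping computation yields, for every $N>M$, the factorization $P_{\rho(N)}=R\cdot\prod_{n=M+1}^{N}b_n^{z(n)}$, where $R=\prod_{n\le M,\,n\notin S}b_n^{z(n)}$ is a fixed element; consequently $\prod_{n=0}^{N}b_n^{z(n)}=QR^{-1}P_{\rho(N)}$ with $Q=\prod_{n=0}^{M}b_n^{z(n)}$ also fixed. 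In the productive case $P_{\rho(N)}\to L$ then forces $\prod_{n=0}^{N}b_n^{z(n)}\to QR^{-1}L$. In the Cauchy case the fixed prefactor $QR^{-1}$ cancels, since for $k,l>M$ one has $\left(\prod_{n=0}^{k}b_n^{z(n)}\right)^{-1}\prod_{n=0}^{l}b_n^{z(n)}=P_{\rho(k)}^{-1}P_{\rho(l)}$, so the left-Cauchyness of $\{P_K\}$ passes (along $\rho$, by Lemma~\ref{Cauchy:productive:criterion}) to the partial products of $\{b_n^{z(n)}\}$. Since $z$ was arbitrary, $B$ is $g$-(Cauchy) productive.

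I expect the main obstacle to be the purely non-commutative bookkeeping: one must keep the factors in the correct order so that the partial products of $\{b_n^{z(n)}\}$ genuinely coincide, up to the fixed prefix $QR^{-1}$, with the subsequence $\{P_{\rho(N)}\}$ of the partial products of $\{a_k^{w(k)}\}$. This is exactly what forces $\rho$ to be order preserving, and it also explains why the finitely many $\omega$-coordinates of $g$ cause no difficulty: they sit at positions $\le M$, so their (arbitrarily large) contributions are swallowed once and for all into the single fixed element $Q$, leaving the tail — and hence convergence — untouched.
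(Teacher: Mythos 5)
Your proof is correct and follows essentially the same route as the paper's: both extract a subsequence $\{a_{\rho(n)}\}$ along strictly increasing indices chosen (via unboundedness of $f$) so that $f(\rho(n))\ge g(n)$ at the positions where $g$ is finite, and both dispose of the finitely many $\omega$-positions by a tail argument. The only difference is one of packaging: the paper delegates the transfer step to Lemma~\ref{if:f<g} (via $g\le^* f\circ\rho$) and leaves the subsequence claim as an assertion, whereas you inline exactly the zero-padding of the exponent function and the fixed-prefix cancellation that those citations conceal.
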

\begin{proof}
By our assumption on $g$, there exists $m\in\N$ such that $g(n)\in\omega$ for all $n\in\N$ with $n\ge m$.
Let $\{a_k:k\in\N\}$ be an $f$-productive sequence (an $f$-Cauchy productive sequence) in $G$ for an unbounded function $f:\N\to \omega$. 
Since $f$ is unbounded, by induction on $n\in\N$ we can choose 
a strictly increasing sequence $\{k_n:n\in\N,n\ge m\}\subseteq\N$ such that
$f(k_n)\ge g(n)$ for every $n\in\N$ with $n\ge m$. Define $h:\N\to\N$ by $h(n)=f(k_n)$ for all $n\in\N$. Since $\{a_k:k\in\N\}$ is $f$-productive ($f$-Cauchy productive), the subsequence $\{a_{k_n}:n\in\N, n\ge m\}$ of the sequence $\{a_k:k\in\N\}$ is $h$-productive ($h$-Cauchy productive). Since $g\le^*h$, from Lemma 
\ref{if:f<g}
we conclude that the sequence $\{a_{k_n}:n\in\N\}$ in $G$ is $g$-productive ($g$-Cauchy productive as well).
\end{proof}

\begin{proposition}
Let $g:\N\to\omega+1$ be a function such that the set $\{n\in\N: g(n)=\omega\}$
is infinite. If a topological group $G$ contains a $g$-productive ($g$-Cauchy productive) sequence, then $G$ contains also an $f_\omega$-productive (an $f_\omega$-Cauchy productive) sequence.
\end{proposition}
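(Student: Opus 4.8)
The plan is to pass to the subsequence of the given sequence supported on the infinite set $M=\{n\in\N:g(n)=\omega\}$ and to verify that this subsequence is $f_\omega$-productive (respectively, $f_\omega$-Cauchy productive). Since $M$ is infinite by hypothesis, enumerate it in strictly increasing order as $M=\{n_k:k\in\N\}$ with $n_0<n_1<\cdots$. Let $\{a_n:n\in\N\}$ be the given $g$-productive ($g$-Cauchy productive) sequence. As this sequence is faithfully indexed, so is its subsequence $\{a_{n_k}:k\in\N\}$, and I claim the latter is the sequence we are after.

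To prove the claim, fix an arbitrary function $w:\N\to\Z$ (the constraint $|w|\le f_\omega$ is vacuous, since $f_\omega$ is constantly $\omega$). I would pull $w$ back to an exponent function for the whole sequence by extending it with zeros: define $z:\N\to\Z$ by $z(n_k)=w(k)$ for each $k\in\N$ and $z(n)=0$ for $n\in\N\setminus M$. Then $|z|\le g$, because $|z(n_k)|=|w(k)|<\omega=g(n_k)$ on $M$ and $|z(n)|=0\le g(n)$ off $M$. By $g$-productivity ($g$-Cauchy productivity) of $\{a_n:n\in\N\}$ and Definition \ref{def:ap-seqence}, the sequence $\{a_n^{z(n)}:n\in\N\}$ is productive (Cauchy productive).

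The key observation is that the indices outside $M$ contribute only the identity, since $a_n^{z(n)}=a_n^{0}=e$ for $n\notin M$, and that the chosen enumeration of $M$ is order-preserving. Hence the factors that survive in a partial product appear in exactly the order $k=0,1,2,\dots$, and the product telescopes:
$$
\prod_{i=0}^{n_K} a_i^{z(i)}=\prod_{k=0}^{K} a_{n_k}^{w(k)}\qquad\text{for every }K\in\N.
$$
In the productive case, this realizes $\bigl\{\prod_{k=0}^{K} a_{n_k}^{w(k)}:K\in\N\bigr\}$ as a subsequence (take $n=n_K$) of the convergent sequence $\bigl\{\prod_{i=0}^{n} a_i^{z(i)}:n\in\N\bigr\}$, so it converges to the same element of $G$; thus $\{a_{n_k}^{w(k)}:k\in\N\}$ is productive. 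In the Cauchy case I would instead apply Lemma \ref{Cauchy:productive:criterion}: by the same telescoping, every tail block $\prod a_{n_k}^{w(k)}$ of the subsequence equals the corresponding tail block $\prod a_i^{z(i)}$ of the whole sequence, so the Cauchy-productive estimate available for $\{a_i^{z(i)}:i\in\N\}$ transfers verbatim, giving that $\{a_{n_k}^{w(k)}:k\in\N\}$ is Cauchy productive. Since $w$ was arbitrary, $\{a_{n_k}:k\in\N\}$ is $f_\omega$-productive ($f_\omega$-Cauchy productive).

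The one step needing care --- and the only place where noncommutativity could interfere --- is the telescoping identity: deleting the identity factors must not reorder the surviving ones. This is guaranteed precisely because $M$ is listed in increasing order, so the factor $a_{n_k}^{w(k)}$ occupies the $k$-th position among the nonidentity terms; with this choice the passage between the two products is purely formal, and what remains is routine subsequence (respectively, Cauchy-block) bookkeeping.
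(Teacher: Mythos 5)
Your proof is correct and takes essentially the same approach as the paper: the paper's proof also passes to the subsequence indexed by the infinite set $\{n\in\N:g(n)=\omega\}$ and asserts that it is $f_\omega$-productive ($f_\omega$-Cauchy productive). The zero-extension of the exponent function, the telescoping identity, and the subsequence/Cauchy-block bookkeeping you supply are exactly the details the paper leaves implicit in that one-line assertion.
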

\begin{proof}
Let $\{a_k:k\in\N\}$ be an $g$-productive sequence (an $g$-Cauchy productive sequence) in $G$. 
Choose a strictly increasing sequence $\{k_n:n\in\N,n\ge m\}\subseteq\{n\in\N: g(n)=\omega\}$.
Since $\{a_k:k\in\N\}$ is $g$-productive ($g$-Cauchy productive), the subsequence $\{a_{k_n}:n\in\N, n\ge m\}$ of the sequence $\{a_k:k\in\N\}$ is $f_\omega$-productive ($f_\omega$-Cauchy productive). 
\end{proof}

Any metric abelian group $G$ of size $<\cont$ does not contain
$f_1$-summable sequences by Corollary \ref{|G|<cont:is:NAP}.
If an abelian group $G$ contains an $f_1$-summable sequence, then it has an
$f$-summable sequence for every bounded function $f:\N\to\omega$ (Corollary \ref{bounded:f}).
\begin{example}
\label{bounded:functions}
Let $G=\{g\in\Z^N: \exists\ k\in\N\ |g|\le k\}$ be the subgroup of the linear metric group $\Z^\N$ (equipped with the Tychonoff product topology).
Then {\em $G$ contains an $f$-summable sequence for every bounded function $f:\N\to\omega$, yet it does not contain  $g$-summable sequences for any unbounded function $g:\N\to\omega$\/}. 
\end{example}
The group of real numbers $\R$ contains $g$-summable sequences for all functions
$g:\N\to\omega$ by Corollary \ref{no:f-product:seq:in:nondiscrete:metr:group},
yet it does not contain any $f_\omega$-summable sequence by 
Corollary \ref{weil:NSS:iff:TAP}. 

This classification demonstrates that there are precisely 
four distinct classes of abelian topological groups related to the existence 
of $f$-summable sequences, and examples witnessing that they are distinct can all be chosen to be metric groups.

\begin{remark}
Example \ref{bounded:functions} shows that:
\begin{itemize}
\item[(i)]
``$f$-Cauchy productive'' cannot be replaced by ``$f$-productive''
in Theorems \ref{non1-TAP} and \ref{AP-Cauchy:seq:in:lin:groups},
\item[(ii)]
Weil completeness cannot be omitted 
in
Corollaries \ref{no:f-product:seq:in:nondiscrete:metr:group},
 \ref{Weil:complete:linear:group} and
\ref{linear:complete:corollary}, and
\item[(iii)]
sequential Weil completeness is essential in Corollaries
\ref{NACP:iff:no:conv:sequences} and 
\ref{sequentially:complete:liner:TAP:group:is:discrete}.
\end{itemize}
\end{remark}

\section{Basic properties of the class of TAP groups}

\begin{fact}[\cite{Sp}]\label{cont:hom:image:of:AP:is:AP} The image
of an $f_\omega$-productive set under a continuous homomorphism is an $f_\omega$-productive set.
\end{fact}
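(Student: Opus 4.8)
The plan is to reduce everything to the single observation that a continuous homomorphism commutes with the formation of partial products and preserves convergence. Let $A=\{a_n:n\in\N\}$ be an $f_\omega$-productive set in $G$ and let $\phi:G\to H$ be a continuous homomorphism; write $b_n=\phi(a_n)$. For any bijection $\varphi:\N\to\N$ and any function $z:\N\to\Z$ (the constraint $|z|\le f_\omega$ being automatic), the homomorphism property gives, for every $n\in\N$,
\[
\prod_{i=0}^{n}\phi(a_{\varphi(i)})^{z(i)}=\phi\left(\prod_{i=0}^{n}a_{\varphi(i)}^{z(i)}\right),
\]
where it is essential that $\phi$ respects the \emph{order} of the factors, so that the argument applies to non-abelian $H$ as well. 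Since $A$ is $f_\omega$-productive and $f_\omega\circ\varphi=f_\omega$, the sequence $\{a_{\varphi(n)}:n\in\N\}$ is $f_\omega$-productive, so $\left\{\prod_{i=0}^{n}a_{\varphi(i)}^{z(i)}:n\in\N\right\}$ converges to some $g\in G$; by continuity of $\phi$ the displayed partial products converge to $\phi(g)\in H$. This is exactly what is needed to conclude that $\{b_n:n\in\N\}$ is productive for the reshuffling $\varphi$ and the exponents $z$, hence $f_\omega$-productive as a sequence for every $\varphi$.

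The one genuine subtlety is that a productive \emph{set} must be faithfully indexed, whereas $\phi$ need not be injective on $A$. By Lemma \ref{AP:is:null:sequence} the sequence $\{a_n:n\in\N\}$ converges to the identity, so $\{b_n:n\in\N\}$ converges to the identity of $H$; unless the image $\phi(A)$ is finite (a degenerate case in which there is no infinite faithfully indexed set to speak of), we may assume $\phi(A)$ is infinite. I would then choose a set $N'\subseteq\N$ of representatives, one for each value taken by $\phi$ on $A$, so that $\phi$ is injective on $A'=\{a_n:n\in N'\}$ and $\phi(A')=\phi(A)$.

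It then remains to verify that $A'$, being an infinite subset of the $f_\omega$-productive set $A$, is itself $f_\omega$-productive; applying the first paragraph to $\phi$, which is now injective on $A'$, yields that the faithfully indexed set $\phi(A')=\phi(A)$ is $f_\omega$-productive, as required. To prove the subset claim, given a reshuffling $\varphi$ and exponents $z$ on $A'$, I would extend them to a reshuffling $\sigma$ of the full index set of $A$ by interleaving the elements of $A\setminus A'$ and assigning them exponent $0$; since $a^0$ equals the identity and contributes nothing, the partial products of this reshuffling of $A$ restrict, along the subsequence of indices landing just after an element of $A'$, to the desired partial products over $A'$. As $A$ is $f_\omega$-productive the former converge, and a subsequence of a convergent sequence converges to the same limit, so the latter converge as well.

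The main obstacle, then, is not the algebra---that is immediate once $\phi$ is a homomorphism and continuous---but the bookkeeping needed to turn a possibly non-injective image into a legitimate faithfully indexed productive set: namely the passage to the representative subset $A'$ together with the verification that subsets of $f_\omega$-productive sets remain $f_\omega$-productive.
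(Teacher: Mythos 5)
The paper does not prove Fact~\ref{cont:hom:image:of:AP:is:AP}: it is imported from \cite{Sp} as a black box, so there is no in-paper argument to compare yours against. Judged on its own, your proof is correct. Its algebraic core --- that a homomorphism respects ordered partial products, $\prod_{i=0}^{n}\phi(a_{\varphi(i)})^{z(i)}=\phi\bigl(\prod_{i=0}^{n}a_{\varphi(i)}^{z(i)}\bigr)$, that $f_\omega\circ\varphi=f_\omega$, and that continuity transports the limit --- is the expected short argument. What you add, correctly, is the bookkeeping forced by Definition~\ref{f:productive:sets}, which requires faithful indexing: when $\phi$ is not injective on $A$ you pass to a set $A'$ of representatives, and you justify that an infinite subset of an $f_\omega$-productive set is again an $f_\omega$-productive set by interleaving the discarded elements with exponent $0$ and reading off the desired partial products as a subsequence of the convergent ones. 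That subset-heredity is exactly the property the paper itself uses without comment in Case~2 of the proof of Theorem~\ref{direct:sum:of:TAP:is:TAP} (``Since $A'\subseteq A$, the bigger set cannot be $f_\omega$-productive''), so your proof in fact supplies a justification the paper leaves implicit. The only caveat is the degenerate case you flag yourself: if $\phi(A)$ is finite, the image simply cannot be an $f_\omega$-productive set in the sense of Definition~\ref{f:productive:sets}, so the Fact as stated requires either the conventions of \cite{Sp} or the tacit assumption that the image remains infinite; this is a defect of the statement's formulation, not a gap in your argument.
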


\begin{lemma}\label{trivial:lemma}
Let $G$ be a TAP group and $A=\{a_n\in
G\setminus\{e\}:n\in\mathbb{N}\}$ a sequence of (not necessarily
distinct) elements of $G\setminus\{e\}$. Then there exists a
bijection $\varphi:\N\to\N$ and a sequence $\{z_n:n\in\mathbb{N}\}$
of integers such that the sequence $\{\prod_{n=0}^k
a_{\varphi(n)}^{z_n}:k\in\N\}$ does not converge.
\end{lemma}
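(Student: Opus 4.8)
The plan is to split the argument according to whether the set of values $V=\{a_n:n\in\N\}$ is infinite or finite, since the TAP hypothesis on $G$ is only available (and only needed) in the infinite case. In both cases the goal is the same: to exhibit a non-convergent \emph{subsequence} of partial products, since a sequence possessing a non-convergent subsequence cannot itself converge (limits being unique in a Hausdorff group). The bijection $\varphi$ and the exponents $z_n$ will always be produced by interleaving a ``working'' list of indices (carrying the interesting exponents) with the remaining indices (carrying exponent $0$); because $a_m^0=e$, inserting zero-exponent factors leaves the running product unchanged, so the working partial products survive intact as a subsequence.

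When $V$ is infinite I would first extract indices $n_0<n_1<\dots$ for which $a_{n_0},a_{n_1},\dots$ are pairwise distinct; this produces a faithfully indexed set $S=\{a_{n_k}:k\in\N\}\subseteq G\setminus\{e\}$. Since $G$ is TAP, $S$ is not an $f_\omega$-productive set, so by Definition \ref{f:productive:sets} there is a bijection $\sigma:\N\to\N$ for which $\{a_{n_{\sigma(k)}}:k\in\N\}$ fails to be $f_\omega$-productive; unravelling this (and noting that the constraint $|w|\le f_\omega$ is vacuous) yields a function $w:\N\to\Z$ such that the partial products $Q_k=\prod_{i=0}^{k}a_{n_{\sigma(i)}}^{w(i)}$ do not converge. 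It remains to realise $\{Q_k\}$ as a subsequence of a genuine bijective enumeration of all of $\N$: I would interleave the two lists when the complement $\N\setminus\{n_k:k\in\N\}$ is infinite, and list the (finitely many) complementary indices first when it is finite; in either case the complementary indices receive exponent $0$, while $n_{\sigma(0)},n_{\sigma(1)},\dots$ receive exponents $w(0),w(1),\dots$. The resulting partial products $\prod_{n=0}^{k}a_{\varphi(n)}^{z_n}$ then contain $\{Q_k\}$ as a subsequence, and so fail to converge.

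When $V$ is finite, some value $v\neq e$ occurs infinitely often, say $a_n=v$ for all $n$ in an infinite set $N\subseteq\N$. The key elementary observation is that $\{v^{k}:k\in\N\}$ does not converge: if $v^{k}\to L$, then the tail $v^{k+1}\to L$ as well, while continuity of left translation by $v$ gives $v^{k+1}=v\,v^{k}\to vL$, whence $vL=L$ and $v=e$, a contradiction. I would then take $\varphi$ to run through $N$ (again interleaving with its complement if necessary), assign exponent $1$ to those positions and exponent $0$ elsewhere; the resulting partial products pass through $v,v^{2},v^{3},\dots$ with repetitions, so once more a non-convergent subsequence is present. Note that this case uses neither the TAP hypothesis nor the infinitude of $V$.

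The interleaving bookkeeping is routine; the one genuinely conceptual step — the part I expect to be the main obstacle — is the infinite case, where one must correctly translate the failure of the $f_\omega$-productive-set condition for the auxiliary faithful set $S$ into an \emph{explicit} non-convergent sequence $\{Q_k\}$ of partial products, and then check that padding with zero exponents neither destroys this divergence nor obstructs $\varphi$ from being a bijection of $\N$. Throughout I rely on the standing convention that topological groups are Hausdorff, which is what makes a non-convergent subsequence an obstruction to convergence of the whole sequence.
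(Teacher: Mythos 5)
Your proof is correct. The paper itself offers no proof of this lemma (it is stated without one; the label suggests the authors regarded it as routine), so there is no official argument to compare against, but your write-up supplies exactly the verification that is needed: the case split on whether the value set $\{a_n:n\in\N\}$ is infinite, the use of the TAP hypothesis on a faithfully indexed subsequence (where the constraint $|w|\le f_\omega$ is indeed vacuous), the observation that powers $v^k$ of a fixed $v\ne e$ cannot converge in a Hausdorff group, and the zero-exponent padding that turns the bad partial products into a subsequence of the partial products along a genuine bijection of $\N$. All steps are consistent with the paper's definitions of productive sequences and $f_\omega$-productive sets, so your argument stands as a complete proof of the lemma.
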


\begin{lemma}\label{char:of:ap:in:K} Let $G=\prod_{t\in T}G_t$, where $G_t$ is
a TAP group for every $t\in T$. Then a faithfully indexed family
$\{a_n:n\in\N\}\subset G$ is $f_\omega$-productive in $G$ if and
only if the set $\{n\in\N:a_n(t)\neq e\}$ is finite for every $t\in
T$.
\end{lemma}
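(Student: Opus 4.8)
The plan is to reduce everything to the fact that in the Tychonoff product $G=\prod_{t\in T}G_t$ a sequence converges if and only if it converges coordinatewise, and that, since the group operation is coordinatewise, the $t$-th coordinate of a partial product is $\left(\prod_{j=0}^m a_{\varphi(j)}^{z(j)}\right)(t)=\prod_{j=0}^m a_{\varphi(j)}(t)^{z(j)}$. Thus $\prod_{j=0}^m a_{\varphi(j)}^{z(j)}$ converges in $G$ exactly when all of these coordinate partial products converge in the respective $G_t$. Throughout, recall that the condition $|z|\le f_\omega$ is automatic, so being $f_\omega$-productive means: for every bijection $\varphi:\N\to\N$ and every $z:\N\to\Z$, the products $\prod_{j=0}^m a_{\varphi(j)}^{z(j)}$ converge.

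For the ``if'' direction, assume $S_t=\{n\in\N:a_n(t)\neq e\}$ is finite for every $t$. Fixing an arbitrary bijection $\varphi$ and function $z:\N\to\Z$, for each $t$ the set $\{j\in\N:a_{\varphi(j)}(t)\neq e\}=\varphi^{-1}(S_t)$ is finite, so $a_{\varphi(j)}(t)^{z(j)}=e$ for all large $j$; hence the $t$-th coordinate partial products are eventually constant and converge. By coordinatewise convergence $\prod_{j=0}^m a_{\varphi(j)}^{z(j)}$ converges in $G$, and as $\varphi,z$ were arbitrary, $\{a_n:n\in\N\}$ is $f_\omega$-productive.

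For the ``only if'' direction I argue by contraposition: assuming some $S_{t_0}$ is infinite, I exhibit a single bijection $\varphi$ and a single $z$ for which the global partial products diverge. Enumerate $S_{t_0}=\{n_0<n_1<\cdots\}$ and put $b_k=a_{n_k}(t_0)\in G_{t_0}\setminus\{e\}$, a sequence of nonidentity (not necessarily distinct) elements of the TAP group $G_{t_0}$. By Lemma \ref{trivial:lemma} there are a bijection $\psi:\N\to\N$ and integers $w_k$ such that the partial products $c_k=\prod_{l=0}^k b_{\psi(l)}^{w_l}$ do not converge in $G_{t_0}$. I then build $\varphi$ so that the indices of $S_{t_0}$ occur in the order $n_{\psi(0)},n_{\psi(1)},\dots$, while every index of $\N\setminus S_{t_0}$ is also listed (placed at the front if this set is finite, interleaved in alternate positions if infinite); I choose $z$ so that the factor $a_{n_{\psi(k)}}$ carries the exponent $w_k$ and every inactive factor carries the exponent $0$.

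The crux is to confirm divergence of the global product at coordinate $t_0$. Since $a_{\varphi(j)}(t_0)=e$ whenever $\varphi(j)\notin S_{t_0}$, the inactive positions contribute the identity there, so the $t_0$-th coordinate of the $m$-th partial product equals $c_{K(m)}$, where $K(m)$ counts the active positions among $0,\dots,m$; as $K(\cdot)$ is nondecreasing and tends to $\infty$, the coordinate sequence $\left(\prod_{j=0}^m a_{\varphi(j)}(t_0)^{z(j)}\right)_m$ is obtained from $(c_k)_k$ by repeating terms. A convergent sequence has every subsequence convergent to the same limit, and $(c_k)$ is a subsequence of this repeated sequence, so divergence of $(c_k)$ forces divergence in the $t_0$-coordinate, hence of $\prod_{j=0}^m a_{\varphi(j)}^{z(j)}$ in $G$. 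This produces a witness that $\{a_n:n\in\N\}$ is not $f_\omega$-productive, completing the contrapositive. I expect the main obstacle to be precisely this lifting step: arranging a genuine bijection of $\N$ whose active part realizes $\psi$ while all inactive indices are still exhausted, and checking that the inserted identity terms cannot rescue convergence.
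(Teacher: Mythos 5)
Your proof is correct and follows essentially the same route as the paper: the ``if'' direction via coordinatewise convergence in the Tychonoff product, and the ``only if'' direction by invoking Lemma \ref{trivial:lemma} in a coordinate $t_0$ for which infinitely many $a_n(t_0)$ are nonidentity. In fact you are more careful than the paper, whose proof applies Lemma \ref{trivial:lemma} directly to the full coordinate sequence $\{a_n(t_0):n\in\N\}$ even though that sequence may contain identity elements (the lemma as stated requires elements of $G_{t_0}\setminus\{e\}$); your lifting step --- applying the lemma only to the subsequence indexed by $S_{t_0}$, then extending the bijection and exponents back to all of $\N$ with exponent $0$ on inactive indices and checking that the inserted identity factors merely repeat terms of the divergent sequence $(c_k)$ --- fills in exactly the point the paper glosses over.
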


\begin{proof} To prove the ``if'' part, take a
bijection $\varphi:\N\to\N$, a sequence
$\{z_i:i\in\mathbb{N}\}\subset\mathbb{Z}$, and assume that
$\{n\in\N: a_n(t)\neq e\}$ is finite for every $t\in T$. It follows
that the sequence $\{\prod_{i=0}^k a_{\varphi(i)}^{z_i}(t):k\in\N\}$
converges to some $g(t)$ for every $t\in T$. Define $g=(g(t))\in G$,
and observe that the sequence $\{\prod_{i=0}^k
a_{\varphi(i)}^{z_i}:k\in\N\}$ converges to $g$.

To prove the converse, assume that there exists $t\in T$ such that
$\{n\in\N: a_n(t)\neq e\}$ is infinite. Since $G_t$ is TAP, by Lemma
\ref{trivial:lemma} there exists a bijection $\varphi:\N\to\N$ and a
sequence $\{z_n:n\in\mathbb{N}\}$ of integers such that the sequence
$\{\prod_{n=0}^ka_{\varphi(n)}^{z_n}(t):k\in\N\}$ does not converge.
Consequently, the sequence
$\{\prod_{n=0}^ka_{\varphi(n)}^{z_n}:k\in\N\}$ does not converge and
so $\{a_n:n\in\N\}$ is not \ap.
\end{proof}

Our next theorem collects some useful properties of the class TAP:

\begin{theorem}\label{direct:sum:of:TAP:is:TAP} 
\begin{itemize}
\item[(i)]
if $H$ is a closed normal subgroup of a topological group $G$ such that 
both $H$ and $G/H$ are TAP, then $G$ is TAP as well. 
\item[(ii)] 
a subgroup of a TAP group is TAP.
\item[(iii)] 
if $\{G_i:i\in I\}$ is a family of TAP groups, then the direct sum 
$\bigoplus_{i\in I} G_i$ is also TAP. In particular, 
the class of TAP groups is closed under taking finite products.
\end{itemize}
\end{theorem}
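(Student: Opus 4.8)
The three parts rest on one elementary principle, which also settles (ii): a subgroup $K\le G$ carries the subspace topology, so a sequence of elements of $K$ converges in $K$ if and only if it converges in $G$ to a point of $K$. For (ii), if $A=\{a_n:n\in\N\}$ were an $f_\omega$-productive set in a subgroup $K$ of a TAP group $G$, then for every bijection $\varphi$ and every $z:\N\to\Z$ the partial products $\prod_{i=0}^k a_{\varphi(i)}^{z(i)}$ lie in $K$ and converge there, hence converge in $G$; thus $A$ would be $f_\omega$-productive in $G$, contradicting that $G$ is TAP. So $K$ is TAP.

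For (i) I would first record a sub-lemma: any infinite subset of an $f_\omega$-productive set $A=\{a_n:n\in\N\}$, re-indexed along the increasing enumeration of the chosen index set $M$, is again $f_\omega$-productive. One proves this by \emph{padding} --- given a bijection and exponents for the subset, extend them to all of $\N$ by the exponent $0$ (making the corresponding factor $e$) on indices outside $M$ --- so that the subset's partial products appear as a subsequence of convergent partial products of $A$. Now let $q:G\to G/H$ be the quotient map; since $H$ is closed, $G/H$ is Hausdorff. Assume $A$ is $f_\omega$-productive in $G$; by Lemma \ref{AP:is:null:sequence}, $a_n\to e$, so $q(a_n)\to e$. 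If infinitely many $a_n$ lie in $H$, the sub-lemma makes that subset $f_\omega$-productive in $G$, and because $H$ is closed every limit of its partial products lies in $H$, so the subset is $f_\omega$-productive in $H$ --- contradicting that $H$ is TAP. Otherwise $q(a_n)\neq e$ for cofinitely many $n$; as $G/H$ is Hausdorff and $q(a_n)\to e$, each non-identity value is attained finitely often, so a greedy choice produces an infinite $M$ along which the $q(a_n)$ are pairwise distinct. Then $\{a_n:n\in M\}$ is $f_\omega$-productive in $G$ by the sub-lemma, and by Fact \ref{cont:hom:image:of:AP:is:AP} its image is a faithfully indexed $f_\omega$-productive set in $G/H$ --- contradicting that $G/H$ is TAP.

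For (iii), put $D=\bigoplus_{i\in I}G_i$ with the subspace topology from $P=\prod_{i\in I}G_i$, and suppose $A=\{a_n:n\in\N\}$ is $f_\omega$-productive in $D$. By the principle above it is $f_\omega$-productive in $P$, so Lemma \ref{char:of:ap:in:K} gives that $S_i=\{n:a_n(i)\neq e\}$ is finite for each $i\in I$. I would then construct recursively a strictly increasing sequence $n_0<n_1<\cdots$ and distinct coordinates $j_0,j_1,\dots$ such that $a_{n_t}(j_t)\neq e$, while $n_t$ exceeds every index in $S_{j_0}\cup\cdots\cup S_{j_{t-1}}$ and $j_t$ avoids the finite combined support of $a_{n_0},\dots,a_{n_{t-1}}$; each step is possible because infinitely many non-identity $a_n$ have arbitrarily large index yet every coordinate is used only finitely often, so some such element must reach a fresh coordinate. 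By design $a_{n_s}(j_t)=e$ for all $s\neq t$. Taking $\varphi=\mathrm{id}$ and $z$ equal to the indicator of $\{n_t:t\in\N\}$, the partial products converge coordinatewise in $P$ to an element $g$ with $g(j_t)=a_{n_t}(j_t)\neq e$ for every $t$; as the $j_t$ are distinct, $g$ has infinite support and hence $g\notin D$, so (by Hausdorffness of $P$) the partial products cannot converge in $D$ --- a contradiction. Thus $D$ is TAP; the finite-product case is $I$ finite (where finiteness of all $S_i$ already forces all but finitely many $a_n$ to be $e$, impossible for a faithfully indexed set) or, alternatively, follows from (i) by induction. The main obstacle is exactly this recursion: one must enforce both separation conditions at once so that each coordinate $j_t$ receives a non-identity value from a single selected factor, guaranteeing an infinite-support limit that escapes the direct sum.
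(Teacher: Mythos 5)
Your proposal is correct, and while it follows the same overall blueprint as the paper (closed TAP subgroups trap partial products; quotient images are handled via Fact~\ref{cont:hom:image:of:AP:is:AP}; direct sums are handled by coordinate analysis), the internal arguments differ in ways worth recording. In (i) the paper's dichotomy is coset-based: either some coset $gH$ meets $A$ in an infinite set --- with the sub-case $g\notin H$ dispatched because $gH$ is closed, $e\notin gH$, and productive sequences are null by Lemma~\ref{AP:is:null:sequence} --- or every coset meets $A$ finitely, whence representatives of distinct cosets map injectively into $G/H$. You split only on whether $A\cap H$ is infinite and, in the complementary case, observe that a null sequence in the Hausdorff group $G/H$ attains each non-identity value only finitely often; this single observation absorbs the paper's middle sub-case and produces the injectively mapped subsequence at once. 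In (iii) the paper re-runs the coordinate argument through Lemma~\ref{trivial:lemma} and then extracts a subfamily of $A$ with pairwise disjoint supports, whereas you quote Lemma~\ref{char:of:ap:in:K} (legitimately, since it precedes the theorem) to get finiteness of every $S_i$ in one stroke, and your ``private coordinate'' recursion with $\varphi=\mathrm{id}$ and $0$--$1$ exponents manufactures the infinite-support limit without passing to a subfamily at all. Finally, a genuine merit of your write-up: you isolate and prove, by zero-exponent padding, the sub-lemma that an infinite subset of an $f_\omega$-productive set (re-indexed increasingly) is again $f_\omega$-productive. The paper uses exactly this fact implicitly and without proof at several places (``no infinite $f_\omega$-productive subset of $G$ can sit in $H$'', ``Since $A'\subseteq A$, the bigger set cannot be $f_\omega$-productive'', ``Then $B$ is $f_\omega$-productive as well''), so your explicit argument fills a small expository gap in the paper's own proof.
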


\begin{proof} (i) Let $G$ be a topological group and $H$ its closed normal
TAP subgroup such that the quotient group $G/H$ is TAP. Take an infinite set $A\subset G$. There are two cases.

{\sl Case 1\/}. {\it There exists $g\in G$ such that $A\cap gH$ is infinite.}
If $g=e$, then $A$ cannot be $f_\omega$-productive, since $H$ is TAP and closed
in $G$, and thus no infinite $f_\omega$-productive subset of $G$ can sit in $H$.
If $g\neq e$, then $A$ cannot be $f_\omega$-productive either, because every faithfully
indexed sequence of elements of $A$ would have to converge to $e$ by
Lemma \ref{AP:is:null:sequence}. Since $e\not\in gH$ and $gH$ is closed (because $H$ is), this is not possible.

{\sl Case 2\/}. {\it $A\cap gH$ is finite for every $g\in G$.}
In this case we can fix a set $A'=\{a_n:m\in\N\}\subset A$ such that $a_mH=a_nH$
if and only if $m=n$. Clearly then $p(a_m)=p(a_n)$ if and only if $m=n$,
where $p:G\to G/H$ is the quotient mapping.
Thus $p(A')$ is infinite. 
Since $G/H$ is TAP, $p(A')$ cannot be  $f_\omega$-productive.
By Fact \ref{cont:hom:image:of:AP:is:AP}, $A'$ is not
$f_\omega$-productive. Since $A'\subseteq A$, the bigger set cannot be $f_\omega$-productive.

In both cases $A$ is not $f_\omega$-productive set in $G$. Since $A$ was chosen arbitrarily, we conclude that $G$ is TAP.
\medskip

 (ii) is obvious.
\medskip

(iii) Let $G=\bigoplus_{i\in I} G_i$, where $I$ is a set and $G_i$ is a
nontrivial TAP group for every $i\in I$. Pick a faithfully indexed
set $A=\{a_n:n\in\N\}\subset G$. We are going to show that $A$ is not
\ap. There are two cases.

{\sl Case 1\/}. {\it There exists $i\in I$ such that the set
$\{n\in\N:a_n(i)\neq e\}$ is infinite}. Since $G_i$ is TAP, by Lemma
\ref{trivial:lemma} there exists a bijection $\varphi:\N\to\N$ and a
sequence $\{z_n:n\in\mathbb{N}\}$ of integers such that the sequence
$\{\prod_{n=0}^k a_{\varphi(n)}^{z_n}(i):k\in\N\}$ does not
converge. Consequently, the sequence
$\{\prod_{n=0}^ka_{\varphi(n)}^{z_n}:k\in\N\}$ does not converge,
and thus, the set $A$ is not \ap.

{\sl Case 2\/}. {\it For every $i\in I$ the set
$\{n\in\N:a_n(i)\neq e\}$ is finite.} Since for every $g\in G$ the set
$s(g)=\{i\in I:g(i)\neq e_i\}$ is finite, one can easily find a
faithfully indexed set $B=\{b_n:n\in\N\}\subset A$ such that the
family $\{s(b_n):n\in\N\}$ is pairwise disjoint. Assume that $A$ is
\ap. Then $B$ is \ap\ as well, and thus the sequence
$\{\prod_{n=0}^kb_n:k\in\N\}$ converges to some $g\in G$. On the
other hand, $s(g)=\bigcup_{n\in\N}s(b_n)$ is infinite and thus
$g\not\in G$. This is a contradiction.
\end{proof}

\begin{corollary}\label{opens:subgroups}
If $N$ is an open normal subgroup of a topological group $G$, then
$G$ is TAP if and only if $N$ is TAP. \end{corollary}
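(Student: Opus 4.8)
The plan is to derive both implications from Theorem \ref{direct:sum:of:TAP:is:TAP}, viewing $N$ as a closed normal subgroup of $G$ whose quotient $G/N$ is discrete. The forward direction is immediate: if $G$ is TAP, then its subgroup $N$ is TAP by Theorem \ref{direct:sum:of:TAP:is:TAP}(ii), and no hypothesis beyond $N\subseteq G$ is needed.

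For the converse, suppose $N$ is TAP. First I would record two standard facts about open subgroups. Since $N$ is open, it is also closed, because its complement $G\setminus N$ is the union of the cosets $gN$ with $g\notin N$, each of which is open; and the quotient topology on $G/N$ is discrete precisely because $N$ is open. As $N$ is normal by assumption, $N$ is a closed normal subgroup of $G$ and $G/N$ is a discrete topological group. To invoke Theorem \ref{direct:sum:of:TAP:is:TAP}(i) it then remains only to check that $G/N$ is TAP.

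The key step is therefore the assertion that \emph{every discrete group is TAP}, which I would prove by contradiction. Suppose a discrete group $D$ contained an $f_\omega$-productive set $\{a_n:n\in\N\}$. By definition this is a faithfully indexed sequence that is $f_\omega$-productive, hence also $f_\omega$-Cauchy productive. Since $f_\omega\ge f_1$, Lemma \ref{AP:is:null:sequence} forces $\{a_n:n\in\N\}$ to converge to the identity of $D$. But in a discrete group a sequence converges to $e$ only if it is eventually equal to $e$, which contradicts the fact that the $a_n$ are pairwise distinct. Hence $D$ admits no $f_\omega$-productive set, so $D$ is TAP; in particular $G/N$ is TAP.

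With $N$ closed and normal, $N$ TAP, and $G/N$ TAP, Theorem \ref{direct:sum:of:TAP:is:TAP}(i) yields that $G$ is TAP, completing the proof. I expect the only genuinely substantive point to be the auxiliary fact that discrete groups are TAP; everything else is the routine verification that an open subgroup is closed with discrete quotient, followed by a direct application of the extension property (i) of Theorem \ref{direct:sum:of:TAP:is:TAP}.
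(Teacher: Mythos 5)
Your proof is correct and follows essentially the same route as the paper's: the forward direction is Theorem \ref{direct:sum:of:TAP:is:TAP}(ii), and the converse applies Theorem \ref{direct:sum:of:TAP:is:TAP}(i) to the closed normal subgroup $N$ and the discrete quotient $G/N$. The only difference is that the paper takes the TAP property of the discrete group $G/N$ as immediate (it follows from Fact \ref{NSSjeTAP}, since discrete groups are trivially NSS), whereas you verify it directly via Lemma \ref{AP:is:null:sequence}; both justifications are valid.
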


\begin{proof}
If $G$ is TAP, then $N$ is TAP by item (ii) of Theorem
\ref{direct:sum:of:TAP:is:TAP}. If $N$ is TAP, the item (i)  of
Theorem \ref{direct:sum:of:TAP:is:TAP} implies that $G$ is TAP, as
the discrete quotient group $G/N$ is TAP.
\end{proof}

The next fact was proved in \cite{Sp}. Its first part also follows
immediately from Theorem \ref{NSS:is:NACP}, and its second part from
Corollary~\ref{no:sequences:no:Cauchy:prod:sequences}.
\begin{fact} [\cite{Sp}]
\label{NSSjeTAP}\label{no:conv:seq:is:TAP}
(a) An NSS group is TAP.

(b) Every topological group without nontrivial convergent sequences is TAP.
\end{fact}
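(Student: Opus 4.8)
The plan is to prove both parts by contradiction, in each case extracting from a hypothetical $f_\omega$-productive set a single sequence that is already forbidden by one of the earlier results. The common first step is this: suppose $G$ is not TAP. Then, by the definition of TAP, $G$ contains an $f_\omega$-productive set $A=\{a_n:n\in\N\}$. Applying Definition \ref{f:productive:sets} with the identity bijection $\varphi=\mathrm{id}_\N$ and using $f_\omega\circ\mathrm{id}_\N=f_\omega$, the underlying faithfully indexed sequence $\{a_n:n\in\N\}$ is itself $f_\omega$-productive. Everything else is a matter of choosing the right exponent function and invoking the appropriate earlier theorem.

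For part (a), I would pass from ``productive'' to ``Cauchy productive''. Unfolding Definition \ref{def:ap-seqence}, for every $z:\N\to\Z$ with $|z|\le f_\omega$ the partial products of $\{a_n^{z(n)}:n\in\N\}$ converge, hence form a left Cauchy sequence; thus $\{a_n:n\in\N\}$ is $f_\omega$-Cauchy productive in $G$ (this is exactly the implication ``$f$-productive $\Rightarrow$ $f$-Cauchy productive'' recorded earlier). This directly contradicts Theorem \ref{NSS:is:NACP}, which guarantees that an NSS group contains no $f_\omega$-Cauchy productive sequence. Hence no such $A$ exists and $G$ is TAP.

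For part (b), I would instead specialise the exponent function inside Definition \ref{def:ap-seqence}. Taking the constant function $z\equiv 1$, which trivially satisfies $|z|\le f_\omega$, the sequence $\{a_n^{z(n)}:n\in\N\}=\{a_n:n\in\N\}$ is productive, and in particular its partial products converge and so form a left Cauchy sequence; thus $\{a_n:n\in\N\}$ is Cauchy productive. Because $A$ is faithfully indexed, the terms $a_n$ are pairwise distinct, so at most one of them equals $e$, and $\{a_n:n\in\N\}$ is therefore a \emph{nontrivial} Cauchy productive sequence. This contradicts Corollary \ref{no:sequences:no:Cauchy:prod:sequences}, which asserts that a group without nontrivial convergent sequences admits no nontrivial Cauchy productive sequence. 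Again $G$ must be TAP.

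I do not expect any serious obstacle: the substantive content lives in Theorem \ref{NSS:is:NACP} and Corollary \ref{no:sequences:no:Cauchy:prod:sequences}, and the fact reduces to them. The only points requiring care are the two elementary reductions from a productive \emph{set} to a suitable productive \emph{sequence} --- namely inserting the identity bijection and, for part (b), the constant exponent $z\equiv 1$ --- together with the observation that faithful indexing forces the extracted sequence to be nontrivial, so that Corollary \ref{no:sequences:no:Cauchy:prod:sequences} genuinely applies.
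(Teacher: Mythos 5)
Your proof is correct and follows exactly the route the paper itself indicates: part (a) is deduced from Theorem \ref{NSS:is:NACP} and part (b) from Corollary \ref{no:sequences:no:Cauchy:prod:sequences}, with the only work being the reduction from an $f_\omega$-productive set to a suitable (Cauchy) productive sequence. Your added details---the identity bijection, the passage from productive to Cauchy productive, the constant exponent $z\equiv 1$, and the observation that faithful indexing makes the sequence nontrivial---are all sound and merely spell out what the paper calls ``immediate.''
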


Recall that a topological group is called {\em NSnS} provided that there exists a neighborhood $U$ of the neutral element $e$ that contains no closed normal subgroup beyond $\{e\}$.

The following example demonstrates that, compared to NSS property, the weaker NSnS property does not imply the TAP property.

\begin{example}\label{S(X):nonTAP}
For every infinite set  $X$,  the permutation group  $G = S(X)$,
equipped with the pointwise convergence topology,
is not a TAP group,
because it contains a subgroup
isomorphic to an infinite product of nontrivial topological groups.
Note that even if $G$ fails to be NSS, it is NSnS (as it has no
proper closed normal subgroups at all).
\end{example}

The converse implication in Fact \ref{NSSjeTAP}(a) does not hold in general \cite{Sp}. The main goal of the rest of this paper is to
investigate classes of groups in which the converse implication in (a)  does hold.
This usually happens when some sort of compactness condition is imposed on the group.

\section{TAP property in $\sigma$-compact abelian groups}

\begin{lemma}\label{finitely:generated:are:discrete} Let $F(X)$ be a free (abelian) topological group over a Tychonoff
space $X$. Then every finitely generated subgroup of $F(X)$ is discrete and closed in $F(X)$.
\end{lemma}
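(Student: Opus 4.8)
The plan is to reduce the statement to the case of the subgroup $K=\langle x_1,\dots,x_n\rangle$ generated by the finite set $S=\{x_1,\dots,x_n\}$ of \emph{distinct} points of $X$ that occur in reduced words representing a fixed finite generating set of the given finitely generated subgroup $H$. Since each generator of $H$ is a word in the letters of $S\cup S^{-1}$, we have $H\subseteq K$, and the subspace topology $H$ inherits from $F(X)$ coincides with the one it inherits from $K$. Hence it suffices to prove that $K$ is discrete: then $H$, being a subspace of the discrete group $K$, is itself discrete. Discreteness also yields closedness for free, via the folklore fact that a discrete subgroup of a Hausdorff topological group is closed (given a symmetric neighbourhood $V$ of $e$ with $V\cdot V\cap K=\{e\}$, every point of $\overline K$ has a unique representative in each coset-neighbourhood, forcing it into $K$); as $F(X)$ (respectively $A(X)$) is Hausdorff, $K$ discrete gives $K$ closed, and then $H$ closed.

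Recall that the underlying abstract group of $F(X)$ is the free group on the set $X$ and that the canonical map $X\hookrightarrow F(X)$ is an embedding, so $S$ is a free basis and $K$ is the abstract free group of rank $n$ on $x_1,\dots,x_n$ (in the abelian case $A(X)$, $K$ is free abelian of rank $n$). To prove discreteness I would construct a continuous homomorphism $\Phi\colon F(X)\to M$ into a Hausdorff group $M$ that is injective on $K$ and maps $K$ onto a \emph{discrete} subgroup of $M$; then $\Phi^{-1}(W)\cap K=\{e\}$ for a neighbourhood $W$ of the identity of $M$ with $W\cap\Phi(K)=\{e\}$, which witnesses discreteness of $K$. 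By the universal property of the free (abelian) topological group, such a $\Phi$ is induced by an arbitrary continuous map $f\colon X\to M$, so the whole problem reduces to building $f$ with appropriate values on $S$.

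For the abelian group $A(X)$ this is immediate: complete regularity of $X$ provides continuous functions $f_i\colon X\to\R$ with $f_i(x_j)=\delta_{ij}$, and $f=(f_1,\dots,f_n)\colon X\to\R^n$ extends to a continuous homomorphism $\hat f\colon A(X)\to\R^n$ sending $\sum_i m_i x_i$ to $(m_1,\dots,m_n)$; thus $\hat f$ carries $K$ isomorphically onto the lattice $\Z^n$, which is discrete in $\R^n$. For the non-abelian group $F(X)$ I would fix a connected Lie group $M$, e.g.\ $\mathrm{SL}_2(\R)$, containing elements $g_1,\dots,g_n$ that freely generate a discrete free subgroup of rank $n$ (such Schottky generators exist by a classical ping-pong argument), together with continuous paths $\gamma_i\colon[0,1]\to M$ satisfying $\gamma_i(0)=e$ and $\gamma_i(1)=g_i$. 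Using the same functions $f_i$, the map $f(x)=\prod_{i=1}^n \gamma_i(f_i(x))$ is continuous and satisfies $f(x_i)=g_i$, so the induced continuous homomorphism $\hat f\colon F(X)\to M$ restricts to the isomorphism $K\to\langle g_1,\dots,g_n\rangle$ onto a discrete subgroup, as required.

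The main obstacle is the non-abelian construction: one must invoke the existence of $n$ elements of a connected group that freely generate a \emph{discrete} free subgroup of rank $n$, and arrange the map $f$ so that the universal property applies with the prescribed values on $S$; by contrast the abelian case and the passage from discreteness to closedness are routine. I would also record explicitly the elementary fact that a discrete subgroup of a Hausdorff topological group is closed, together with its short proof, since it is the only ingredient converting discreteness into the desired closedness.
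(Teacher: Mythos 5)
Your proof is correct, and while it shares the paper's skeleton (reduce to $K=\langle x_1,\dots,x_n\rangle$ for distinct $x_1,\dots,x_n\in X$, then combine Tychonoff separating functions with the universal property of the free topological group), it diverges from the paper's argument in two genuine ways. First, closedness: the paper proves it directly, taking $a\notin K$, observing that its reduced form must contain a letter $y_m\notin\{x_1,\dots,x_n\}$, and building a continuous $f:X\to\R$ with $f(y_m)=1$ and $f=0$ at the $x_i$ and at the remaining letters of $a$, so that the extension $\hat f$ vanishes on $K$ but not at $a$; you instead get closedness for free from discreteness via the folklore fact that a discrete subgroup of a Hausdorff group is closed, which is more economical and avoids a second separation argument. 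Second, and more substantively, the non-abelian case: all of the paper's witnesses are homomorphisms into $\R$, hence factor through the abelianization of $F(X)$ and cannot detect elements whose exponent sums vanish; for instance the commutator $x_1x_2x_1^{-1}x_2^{-1}$ lies in the paper's neighbourhood $U\cap\langle x_1,x_2\rangle$, and $yx_1y^{-1}\notin\langle x_1\rangle$ cannot be separated from $\langle x_1\rangle$ by any real-valued $\hat f$. So, read literally, the paper's proof establishes the lemma only for the abelian group $A(X)$, whereas your $\mathrm{SL}_2(\R)$ device (a continuous map sending each $x_i$ to a Schottky generator of a rank-$n$ discrete free subgroup, interpolated along paths) is exactly what is needed to cover $F(X)$; that is what your extra machinery (ping-pong, path-connectedness of a connected Lie group) buys. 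One small point to patch in your construction: for $f(x)=\prod_{i=1}^n\gamma_i(f_i(x))$ to make sense you need each $f_i$ to take values in $[0,1]$ (or the paths $\gamma_i$ to be defined on all of $\R$), which complete regularity of $X$ of course provides.
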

\begin{proof}
Fix faithfully indexed $\{x_1,\ldots,x_n\}\subset X$. Since $X$ is Tychonoff, for every $i\in\{1,\ldots,n\}$ there exists continuous
function $f_i:X\to\R$ such that $f_i(x_i)=1$ and $f_i(x_j)=0$ for $j\neq i$. Let $\hat{f_i}$ denote the unique continuous homomorphism
extending $f_i$. Then $U=\bigcap_{i=1}^n\hat{f}_i^{-1}((-\frac{1}{2},\frac{1}{2}))$ is an open set in $F(X)$ such that $U\cap\langle\{x_1,\ldots,x_n\}\rangle=0$. Thus the subgroup
$\langle\{x_1,\ldots,x_n\}\rangle$ of $F(X)$ is discrete. To show that it is closed, take $a\in F(X)\setminus\langle\{x_1,\ldots,x_n\}\rangle$. Then $a=\prod_{i=0}^ky_i^{z_i}$, where $z_i$ is a nonzero integer and $y_i\in X$ for every $i=0,\ldots,k$, and $y_m\not\in\{x_1,\ldots,x_n\}$ for some $m\in\{0,\ldots,k\}$. Since $X$ is Tychonoff, there exists a continuous function $f:X\to\R$ such that $f(x)=0$ for every $x\in\{x_1,\ldots,x_n\}\cup\{y_0,\ldots,y_k\}\setminus\{y_m\}$, and $f(y_m)=1$. Let $\hat{f}:F(X)\to\R$ be the unique continuous
homomorphism extending $f$. Obviously, $f^{-1}(\R\setminus\{0\})$ is an open set separating $a$ from $\langle\{x_1,\ldots,x_n\}\rangle$.
Since every finitely generated subgroup of $F(X)$ is a subgroup of $\langle K\rangle$ for some finite $K\subset X$, the proof is complete.
\end{proof}

\begin{proposition}\label{prop:free:guy:is:TAP}
Let $F(X)$ be the free (abelian) topological group over a  space $X$ (in the sense of Markov). If $X$ is compact, then $F(X)$ is TAP, Raikov complete and $\sigma$-compact.
\end{proposition}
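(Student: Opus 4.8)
The plan is to establish the three assertions in the order $\sigma$-compactness, Raikov completeness, and then the TAP property, which is where essentially all the work lies.

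For $\sigma$-compactness I would invoke Graev's classical description of $F(X)$: writing $F_n(X)$ for the set of elements represented by reduced words of length at most $n$, each $F_n(X)$ is the image of the compact power $(X\sqcup X^{-1}\sqcup\{e\})^n$ under the continuous multiplication map, hence compact, and $F(X)=\bigcup_{n\in\N}F_n(X)$. Thus $F(X)$ is $\sigma$-compact, and in fact a $k_\omega$-space with this exhaustion. The single consequence I really need is that every compact subset of $F(X)$ — in particular every convergent sequence together with its limit — is contained in some $F_N(X)$; equivalently, \emph{a sequence of unbounded reduced length cannot converge}. Raikov completeness is then the well-known fact that a free (abelian) topological group over a compact space is complete, being a $k_\omega$-group (and $k_\omega$ topological groups are Raikov complete).

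To prove $F(X)$ is TAP I take an arbitrary faithfully indexed infinite set $A=\{a_n:n\in\N\}\subseteq F(X)$ and show it is not $f_\omega$-productive; discarding at most one index I may assume $a_n\neq e$ for all $n$. Suppose for contradiction that $A$ is $f_\omega$-productive. By Lemma~\ref{AP:is:null:sequence}, $a_n\to e$, so by the $k_\omega$ remark above all $a_n$ lie in a single $F_N(X)$; hence the reduced lengths $\ell(a_n)$ are bounded and the supports $\supp(a_n)\subseteq X$ have bounded size. If the total support $\bigcup_n\supp(a_n)$ were finite, all $a_n$ would lie in one finitely generated subgroup, which is discrete and closed by Lemma~\ref{finitely:generated:are:discrete}; but a convergent sequence of distinct points in a discrete group is eventually constant, contradicting faithful indexing. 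So $\bigcup_n\supp(a_n)$ is infinite, and I split into two cases, in each exhibiting the identity bijection and a single exponent function $z:\N\to\Z$ for which the partial products diverge (which suffices to show $A$ is not an $f_\omega$-productive set). \textbf{Case B:} no point of $X$ lies in infinitely many supports. Then, since the total support is infinite and each support is finite, a greedy choice yields a subsequence $\{a_{n_k}\}$ with pairwise disjoint supports; taking $z$ equal to $1$ on $\{n_k\}$ and $0$ elsewhere, the partial products $\prod_{n_k\le m}a_{n_k}$ are reduced with no cancellation across the disjoint blocks, so their length is at least the number of factors and tends to $\infty$, whence they cannot converge. \textbf{Case A:} some point $x$ lies in $\supp(a_n)$ for infinitely many $n$. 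In the abelian group this is immediate: writing $c_n\in\Z$ for the coefficient of $x$ in $a_n$, set $z(n)=\sgn(c_n)$ when $x\in\supp(a_n)$ and $z(n)=0$ otherwise; the $x$-coefficient of $\sum_{i\le m}z(i)a_i$ then equals $\sum|c_i|$ over the relevant $i\le m$, which exceeds the number of such indices and tends to $\infty$, forcing unbounded length and hence divergence.

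The hard part will be Case A in the \emph{nonabelian} free group, where the coefficient of $x$ is detected by no continuous homomorphism and cancellation can shorten products (genuine null sequences such as $a_n=t_ns_n^{-1}$ with $t_n,s_n\to u$ really do occur). Here I would first exploit the boundedness of $\ell(a_n)$ together with the compactness of $X$ to run a pigeonhole argument: infinitely many $a_n$ share one reduced-word pattern — the same length and the same positions carrying $x^{\pm1}$ — differing only in their non-$x$ letters, which range over a compact power of $X$. One then passes to a subsequence and chooses an ordering of the factors and exponents so that no cancellation occurs at the junctions between consecutive factors, making the reduced length of the partial products grow without bound and contradicting the $k_\omega$ convergence criterion exactly as before. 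This cancellation bookkeeping in reduced words is the only delicate point; everything else follows directly from the $k_\omega$ structure and from Lemmas~\ref{AP:is:null:sequence} and~\ref{finitely:generated:are:discrete}. In all cases $A$ fails to be $f_\omega$-productive, so $F(X)$ is TAP.
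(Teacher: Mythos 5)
Your overall strategy is the same as the paper's: reduce to the dichotomy between sets lying in a finitely generated subgroup (handled by Lemma~\ref{finitely:generated:are:discrete}, exactly as in the paper's Case~1) and sets with infinite total support, where one exhibits a choice of exponents whose partial products have unbounded reduced length and therefore cannot converge, since compact subsets of $F(X)$ have bounded length. For the \emph{abelian} group $A(X)$ your argument is complete, and in fact more careful than the paper's own Case~2: the paper takes an arbitrary faithful enumeration and the unsigned partial sums $s_n=\sum_{i\le n}a_i$, claiming they escape every $B_m$, but this can fail because of cancellation --- for $a_i=x_i-x_{i-1}$ with $x_i\to x_0$ in $X$ one gets $s_n=x_n-x_0\in B_2$, a \emph{convergent} sequence. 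Your sign choice $z(n)=\sgn(c_n)$ in Case~A, and your disjoint-support subsequence in Case~B, are precisely the repairs that this step needs.

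The genuine gap is nonabelian Case~A, which you leave as a sketch whose key claim is, as stated, unachievable. You propose to pigeonhole to a common word pattern and then choose ``an ordering of the factors and exponents so that no cancellation occurs at the junctions between consecutive factors.'' Take $x\in X$ and distinct $s_n,t_n\to u$ in $X$ (all different from $x$), and put $a_n=xs_nt_n^{-1}x^{-1}$. This is a faithfully indexed null sequence of reduced words of length $4$, all containing $x$; every nonzero power $a_n^{z}$ begins with $x$ and ends with $x^{-1}$, so for \emph{every} subsequence, ordering, and choice of nonzero exponents, cancellation occurs at every junction. What is true --- and what the proof actually needs --- is that the cancellation is uniformly bounded: refine the pigeonhole so that along the chosen subsequence each letter position is either constant or ``fresh'' (injective in $n$, with all fresh letters pairwise distinct and distinct from the constants; some position must be fresh, else all $a_n$ coincide). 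A fresh letter can never cancel against a constant or against another factor's fresh letter, so each factor deposits at least one permanent letter into all subsequent partial products, whence the reduced lengths tend to infinity and convergence is impossible. Until this (or equivalent) bookkeeping is actually carried out, your proposal does not prove the statement for the nonabelian free topological group $F(X)$; to be fair, the paper's own proof is written entirely in additive notation, so it too really only addresses $A(X)$.
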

\begin{proof}
It is clear that $F(X)$ is $\sigma$-compact.  The fact that $F(X)$ is Raikov complete can be found in \cite[Corollary 7.4.12]{Archang}.
To show that $F(X)$ is \IANS, take an infinite set $A\subset F(X)$. There are two cases.

{\sl Case 1\/}. {\it There exist finite set $K\subset X$ such that $A\subset \langle K\rangle$ \/}. By Lemma
\ref{finitely:generated:are:discrete}, $\langle K\rangle$ is discrete and closed in $F(X)$. Thus $A$ is not \ap\ set in $\langle
K\rangle$. Since the latter group is closed in $F(X)$, $A$ is not an \ap\ set in $F(X)$ as well.

{\sl Case 2\/}. {\it For every finite set $K\subset X$ we have $A\not\subset \langle K\rangle$\/}. Every compact subset of $F(X)$, in particular, every convergent sequence, is contained in some $B_n$, where $B_n=\{\sum_{i=1}^nz_ix_i:z_1,\ldots,z_n\in\mathbb{Z},x_1,\ldots,x_n\in X\}$; see \cite[Corollary 7.4.4]{Archang}. Let $A=\{a_i:i\in\mathbb{N}\}$ be a (faithful) enumeration of $A$. Put $s_n=\sum_{i=1}^na_i$. Then $\{s_n:n\in\mathbb{N}\}\not\subset B_m$
for any $m\in\mathbb{N}$. Thus the sequence $\{s_n:n\in\mathbb{N}\}$ can not converge. Consequently, $A$ is not \ap.
\end{proof}

\begin{theorem}
Let $X$ be a compact non-metrizable space and $A(X)$ a free abelian topological group over $X$. Then $A(X)$ is complete $\sigma$-compact \IANS\ group that is not NSS.
\end{theorem}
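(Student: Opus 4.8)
The first three properties require no new work. Applying Proposition~\ref{prop:free:guy:is:TAP} to the free \emph{abelian} topological group over the compact space $X$, we obtain at once that $A(X)$ is \IANS, Raikov complete and $\sigma$-compact. Thus the only genuinely new assertion is that $A(X)$ fails to be NSS, and this is where the non-metrizability of $X$ must enter. The plan is to show directly that \emph{every} neighbourhood of $0$ in $A(X)$ contains a nontrivial subgroup, which is exactly the negation of the NSS property as defined above.

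The first ingredient is the standard description of the topology of a free abelian topological group: a base of neighbourhoods of $0$ in $A(X)$ is formed by the sets $V_{\rho,\eps}=\{g\in A(X):\hat\rho(g)<\eps\}$, where $\rho$ ranges over the continuous pseudometrics on $X$, $\hat\rho$ denotes the Graev extension of $\rho$ to a group seminorm on $A(X)$, and $\eps>0$ (see \cite{Archang}). Consequently it suffices to exhibit, for each such $\rho$ and each $\eps>0$, a nontrivial subgroup of $A(X)$ contained in $V_{\rho,\eps}$.

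The second ingredient is a compactness observation. A continuous pseudometric $\rho$ on $X$ that separated points would be a continuous metric; but then the identity map from $X$ with its given (compact) topology onto $X$ equipped with the coarser metric topology induced by $\rho$ would be a continuous bijection from a compact space onto a Hausdorff space, hence a homeomorphism, forcing $X$ to be metrizable. Since $X$ is non-metrizable, every continuous pseudometric $\rho$ on $X$ therefore identifies some pair of distinct points: there exist $x_\rho\neq y_\rho$ in $X$ with $\rho(x_\rho,y_\rho)=0$.

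Combining the two ingredients finishes the argument. Fixing $\rho$ and $\eps$, set $g=x_\rho-y_\rho$; since $x_\rho\neq y_\rho$ are independent generators of the free abelian group $A(X)$, the element $g$ is nonzero, and the Graev seminorm of its multiples is computed by the obvious optimal matching of the $|n|$ copies of $x_\rho$ against the $|n|$ copies of $y_\rho$, giving $\hat\rho(ng)=|n|\,\rho(x_\rho,y_\rho)=0$ for every $n\in\Z$. Hence the nontrivial cyclic subgroup $\langle g\rangle$ lies inside $V_{\rho,\eps}$, so no neighbourhood of $0$ can witness the NSS property and $A(X)$ is not NSS. I expect the only delicate point to be the justification of the neighbourhood base $\{V_{\rho,\eps}\}$, namely that the free abelian topology is genuinely generated by Graev extensions of continuous pseudometrics rather than being strictly finer, together with the accompanying seminorm computation $\hat\rho(n(x_\rho-y_\rho))=|n|\,\rho(x_\rho,y_\rho)$; once these are in place, the compactness step and the choice of $g$ are routine. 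As a consistency check, for $X$ compact \emph{metrizable} a generating continuous metric $\rho_0$ makes $V_{\rho_0,1/2}$ contain no nontrivial subgroup, since $\hat\rho_0(ng)\to\infty$ for every $g\neq 0$; thus metrizability of $X$ is exactly the dividing line.
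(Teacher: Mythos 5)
Your proposal is correct, and its first half (deducing TAP, Raikov completeness and $\sigma$-compactness from Proposition~\ref{prop:free:guy:is:TAP}) is identical to the paper's; but your treatment of the failure of NSS takes a genuinely different route. The paper disposes of it in one line of soft reasoning: every NSS \emph{abelian} group admits a weaker metrizable group topology (from a neighborhood $U$ witnessing NSS build symmetric neighborhoods $V_{n+1}+V_{n+1}\subseteq V_n\subseteq U$; the intersection $\bigcap_n V_n$ is a subgroup inside $U$, hence trivial, and Birkhoff--Kakutani then produces a coarser metrizable group topology). Since $X$ embeds as a subspace of $A(X)$, and a coarser Hausdorff topology on a compact space coincides with the original one, NSS for $A(X)$ would force $X$ to be metrizable --- contradiction. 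You instead work inside $A(X)$ with a concrete description of its topology: Tkachenko's theorem that the Graev extensions $\hat\rho$ of continuous pseudometrics $\rho$ on $X$ generate the topology of $A(X)$. This is indeed a theorem available in \cite{Archang}, so the point you flag as delicate is legitimate, but it is a substantially heavier tool than anything the paper's proof needs. What your route buys is explicitness: in every basic neighborhood $V_{\rho,\varepsilon}$ you exhibit a concrete nontrivial cyclic subgroup $\langle x_\rho-y_\rho\rangle$ on which $\hat\rho$ vanishes identically; note that for this you only need the trivial estimate $\hat\rho\bigl(n(x_\rho-y_\rho)\bigr)\le |n|\,\rho(x_\rho,y_\rho)=0$, not the exact optimal-matching equality you invoke, so that step can be weakened with no loss. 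Both proofs ultimately pivot on the same compactness principle --- a coarser metric/Hausdorff structure on a compact space must be the original one --- but the paper applies it once, to the restriction to $X$ of a hypothetical weaker metric topology on $A(X)$, whereas you apply it pseudometric-by-pseudometric on $X$ itself. The paper's argument is shorter and uses only general structure of NSS abelian groups; yours is more constructive, locating the failure of NSS in explicit cyclic subgroups, at the cost of importing the Graev--Tkachenko machinery.
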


\begin{proof} Since every NSS abelian group has a weaker metric topology and $X$ is a subspace of $A(X)$, it follows that $A(X)$ is not NSS. It
remains to use Proposition \ref{prop:free:guy:is:TAP}.
\end{proof}

\section{TAP property in (locally) compact, $\omega$-bounded, initially $\omega_1$-compact and countably compact groups}

\begin{theorem}
\label{totally:diconnected:locally:compact:TAP:is:discrete} For a totally disconnected locally compact group $G$  the following are equivalent:
\begin{itemize}
  \item[(a)]  G  is discrete;
  \item[(b)] G is NSS;
  \item[(c)] G is TAP;
  \item[(d)] G has no nontrivial convergent sequences.
\end{itemize}
\end{theorem}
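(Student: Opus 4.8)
The plan is to fix, by van Dantzig's theorem, a compact open subgroup $K$ of $G$; since $K$ is compact and totally disconnected it is profinite, hence a \emph{linear} group, and being compact it is (sequentially) Weil complete. All of the work will be transported between $G$ and $K$ through this subgroup. I would prove the four conditions equivalent along the cycle (a)$\to$(b)$\to$(c)$\to$(d)$\to$(a).

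The three ``descending'' implications are short. For (a)$\to$(b), if $G$ is discrete then $\{e\}$ is an open neighbourhood of the identity containing no nontrivial subgroup, so $G$ is NSS. The implication (b)$\to$(c) is exactly Fact~\ref{NSSjeTAP}. For (c)$\to$(d) I argue contrapositively: suppose $G$ has a nontrivial convergent sequence $x_n\to x$. Then $x_nx^{-1}\to e$, and after discarding repetitions we obtain a nontrivial sequence of distinct elements converging to $e$; since $K$ is a neighbourhood of $e$, all but finitely many of its terms lie in $K$, giving a nontrivial convergent sequence inside $K$. As $K$ is a sequentially Weil complete linear group, Corollary~\ref{char:of:complete:linear:TAP:groups} yields an $f_\omega$-productive set in $K$, so $K$ is not TAP; by Theorem~\ref{direct:sum:of:TAP:is:TAP}(ii) a subgroup of a TAP group is TAP, whence $G$ cannot be TAP either.

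It remains to prove (d)$\to$(a), again contrapositively: assume $G$ is not discrete. Then $K$ cannot be finite, for a finite open subgroup would make $\{e\}$ open and $G$ discrete; hence $K$ is an infinite compact group. At this point I would invoke the key fact that \emph{every infinite compact group contains a nontrivial convergent sequence}; applied to the profinite group $K$ it produces a nontrivial convergent sequence in $K$, and therefore in $G$, contradicting (d). This closes the cycle and establishes the theorem.

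The main obstacle is precisely this last fact for the profinite group $K$. When $K$ is metrizable it is immediate: an infinite compact metrizable group is non-discrete and first countable, so it has a non-isolated point and hence a nontrivial convergent sequence. The difficulty lies in the non-metrizable case: although such a $K$ always admits an infinite metrizable profinite quotient $K/N$ (take $N$ to be the intersection of a strictly decreasing chain of open normal subgroups, so that the index of the levels tends to infinity), a convergent sequence in $K/N$ need not lift to a convergent sequence in $K$, since $K$ itself is not sequentially compact and the relevant group extension need not split. For this reason I would quote the existence of a nontrivial convergent sequence in an infinite compact group as a known result from the structure theory of compact groups, rather than reprove it, and let Corollary~\ref{char:of:complete:linear:TAP:groups} do the remaining bookkeeping.
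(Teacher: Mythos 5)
Your proposal is correct and takes essentially the same approach as the paper: both rely on van Dantzig's compact open subgroup, Fact~\ref{NSSjeTAP} for (b)$\to$(c), Corollary~\ref{char:of:complete:linear:TAP:groups} for (c)$\to$(d), and the quoted classical fact that infinite compact groups contain nontrivial convergent sequences for (d)$\to$(a). The only cosmetic difference is that you apply the linear-complete characterization to the compact open subgroup $K$ and then invoke heredity of TAP (Theorem~\ref{direct:sum:of:TAP:is:TAP}(ii)), whereas the paper observes that the whole group $G$ is itself a complete linearly topologized group and applies the corollary to $G$ directly.
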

\begin{proof} The implication (a)$\to$(b) is obvious, the implication (b)$\to$(c) is Fact \ref{NSSjeTAP}. Since  totally disconnected locally compact
groups are complete linearly topologized, the implication (c)$\to$(d) follows from Theorem \ref{char:of:complete:linear:TAP:groups}. Finally, the implication
(d)$\to$(a) follows from the well known fact that a totally disconnected locally compact group must have an open compact subgroup. Let $K$ denote this open compact subgroup of $G$. Then $G$ is discrete if and only if $K$ is discrete (i.e., infinite). Since infinite compact groups have nontrivial convergent sequences, we conclude that  (d)$\to$(a). This proves the corollary.
\end{proof}

In the sequel we apply several times the  following theorem of Davis \cite{Dav}: 

\begin{theorem}\label{theorem:Davis}
Every locally compact group $G$ is {\em homeomorphic} to a product $K \times \R^n \times D$, where $K$ is a compact subgroup of $G$, $n\in \N$ and $D$ is a discrete space.
\end{theorem}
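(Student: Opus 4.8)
The statement is a classical structure theorem, and my plan is to derive it from three external pillars: the reduction to an open compactly generated subgroup, the Gleason--Yamabe approximation of locally compact groups by Lie groups, and the Cartan--Iwasawa--Malcev theorem on maximal compact subgroups of connected Lie groups, glued together by a bundle-triviality argument.

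First I would peel off a discrete factor. Let $V$ be a compact symmetric identity neighborhood and $H=\bigcup_{k\in\N}V^{k}$ the open (hence closed) subgroup it generates. Then $G$ is the disjoint union of the open cosets $gH$, each homeomorphic to $H$ by left translation, so choosing coset representatives gives a homeomorphism $G\cong H\times D_{1}$ with $D_{1}=G/H$ discrete. This reduces the problem to the compactly generated group $H$. Applying the Gleason--Yamabe theorem to the compactly generated $H$, I obtain a compact normal subgroup $N\trianglelefteq H$ such that $L=H/N$ is a Lie group; let $\pi\colon H\to L$ be the quotient map.

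Next I would split off the component group. The identity component $L_{0}$ is open and normal in $L$, so $H^{\circ}=\pi^{-1}(L_{0})$ is an open normal subgroup of $H$ with $H/H^{\circ}\cong L/L_{0}$ discrete; the same coset argument yields $H\cong H^{\circ}\times D_{2}$ with $D_{2}$ discrete. Now $H^{\circ}/N\cong L_{0}$ is a \emph{connected} Lie group, so by the Cartan--Iwasawa--Malcev theorem it has a maximal compact subgroup $K_{0}$ and $L_{0}$ is homeomorphic to $K_{0}\times\R^{n}$, with $L_{0}/K_{0}$ diffeomorphic to $\R^{n}$. Pulling back, I set $M=(\pi|_{H^{\circ}})^{-1}(K_{0})$; as an extension of the compact group $K_{0}$ by the compact group $N$ it is a compact subgroup of $H^{\circ}$, and $H^{\circ}/M\cong L_{0}/K_{0}\cong\R^{n}$.

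The remaining step, which I expect to be the crux, is to trivialize the quotient map $q\colon H^{\circ}\to H^{\circ}/M\cong\R^{n}$. Right translation makes $q$ a free $M$-action whose orbit space is $H^{\circ}/M$, i.e.\ a principal $M$-bundle; since $M$ is compact, $q$ admits local cross-sections (the classical local-triviality result for quotients of a locally compact group by a compact subgroup), so it is a locally trivial, and therefore numerable, principal bundle. Its base $\R^{n}$ is paracompact and contractible, whence the bundle is globally trivial and $H^{\circ}\cong M\times\R^{n}$. Assembling, $G\cong H\times D_{1}\cong H^{\circ}\times D_{2}\times D_{1}\cong M\times\R^{n}\times(D_{1}\times D_{2})$, and $D_{1}\times D_{2}$ is discrete while $M$ is a compact subgroup of $G$, as required. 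The genuine difficulty is thus imported wholesale from the deep results of Gleason--Yamabe and from the existence of local sections over $H^{\circ}/M$; the coset decompositions and the final bundle triviality over a contractible base are routine.
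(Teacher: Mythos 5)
The paper itself gives no proof of this statement---it is imported verbatim as a theorem of Davis \cite{Dav}---so your argument has to stand on its own, and it has one genuine gap: the reduction step misapplies the Gleason--Yamabe theorem. That theorem does \emph{not} say that every compactly generated locally compact group admits a compact normal subgroup with Lie quotient; it says that \emph{some} open subgroup (which can be taken to be almost connected, i.e.\ with compact group of components) has arbitrarily small compact normal subgroups with Lie quotients. Your $H=\bigcup_{k}V^{k}$ is merely open and compactly generated, and for such groups the conclusion you claim is false. Concretely, take $G=SL_2(\Q_p)$: it is locally compact and is generated by the compact symmetric identity neighborhood $V=SL_2(\Zp)\cup\{\mathrm{diag}(p,p^{-1}),\mathrm{diag}(p^{-1},p)\}$, so your construction can produce $H=G$. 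But every normal subgroup of $SL_2(\Q_p)$ is either central (hence contained in the finite group $\{\pm I\}$, since $PSL_2(\Q_p)$ is simple) or the whole group, and the quotients $SL_2(\Q_p)$ and $PSL_2(\Q_p)$ are totally disconnected and non-discrete, hence not Lie groups. So the compact normal $N\trianglelefteq H$ with $H/N$ Lie need not exist, and everything downstream of that step collapses.

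The repair is the classical one and is not expensive: let $G_0$ be the identity component, use van Dantzig's theorem to find a compact open subgroup of the totally disconnected group $G/G_0$, and let $H$ be its preimage in $G$. This $H$ is open and \emph{almost connected}, which is exactly the hypothesis under which Yamabe's theorem produces a compact normal $N\trianglelefteq H$ with $H/N$ a Lie group (automatically with finitely many components). From there your argument is essentially the standard deduction and works: pull back a maximal compact subgroup $K_0$ of $H/N$ (Cartan--Iwasawa--Malcev) to a compact subgroup $M\leq H$ with $H/M\cong\R^{n}$, obtain local cross sections for $H\to H/M$, and conclude that this principal $M$-bundle over the contractible paracompact base $\R^{n}$ is trivial, so $H\cong M\times\R^{n}$ and $G\cong M\times\R^{n}\times D$. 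One further caution: your parenthetical claim that quotients of locally compact groups by compact subgroups ``classically'' admit local cross sections is an overstatement---this is Mostert's theorem and needs the quotient space to be finite dimensional---but since your base is $\R^{n}$ the application is legitimate.
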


\begin{theorem}\label{zerodim:TAP:compact:is:finite}
A a non-discrete locally compact group $G$ contains an $f$-productive set for every  for every function $f:\N\to \omega\setminus\{0\}$. Moreover, if $G$ is 
zero-dimensional, then $G$ contains an $f_\omega$-productive set (i.e., $G$ is not TAP).
\end{theorem}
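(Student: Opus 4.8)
The plan is to use the structural decomposition provided by Davis' theorem (Theorem~\ref{theorem:Davis}) to reduce the problem to building productive sequences inside a well-understood factor. First I would apply Theorem~\ref{theorem:Davis} to write $G$ homeomorphically as $K\times\R^n\times D$, where $K$ is compact, $n\in\N$, and $D$ is discrete. Since $G$ is assumed non-discrete, at least one of the ``continuous'' factors must be non-discrete: either $n\ge 1$, or $K$ is infinite (hence non-discrete, as an infinite compact group cannot be discrete). In either case $G$ contains, as a topological subspace, a non-discrete metrizable piece --- a copy of $\R$ when $n\ge 1$, or a non-trivial convergent sequence coming from the infinite compact factor $K$.

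For the first assertion I would work with the non-discrete \emph{metric} behaviour near the identity. The key point is that a non-discrete locally compact group is, near $e$, rich enough to apply Lemma~\ref{building:f-Cauchy:sequences:in:a:metric:group}: one can select a base $\{U_n:n\in\N\}$ of symmetric open neighborhoods of $e$ with $U_{n+1}^3\subseteq U_n$ and then, using non-discreteness, choose a faithfully indexed sequence $\{a_n:n\in\N\}$ with $a_n^z\in U_n$ for all $|z|\le f(n)$, exactly as in the proof of Theorem~\ref{non1-TAP}. Local compactness should moreover let me arrange that the partial products actually converge (not merely that the sequence is $f$-Cauchy productive): the relevant partial products live in a fixed compact neighborhood of $e$, so any $f$-Cauchy productive sequence built there is automatically $f$-productive because the compact piece is complete. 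This upgrades the Cauchy-productive set of Theorem~\ref{non1-TAP} to a genuine $f$-productive set, yielding the first claim for every $f:\N\to\omega\setminus\{0\}$.

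For the ``moreover'' part, the extra hypothesis that $G$ is zero-dimensional (equivalently, totally disconnected, forcing $n=0$ in the Davis decomposition) means $G$ is a non-discrete totally disconnected locally compact group, hence admits a compact open subgroup $N$ and is linearly topologized. By Corollary~\ref{opens:subgroups} it suffices to produce an $f_\omega$-productive set inside $N$, and $N$ is a non-discrete compact, hence complete, linear group. Being infinite and compact, $N$ contains a non-trivial convergent sequence; by Corollary~\ref{char:of:complete:linear:TAP:groups} (the sequentially Weil complete linear case), $N$ therefore contains an $f_\omega$-productive set, and pulling this back through $N\hookrightarrow G$ shows $G$ is not TAP.

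The main obstacle I anticipate is the passage from $f$-\emph{Cauchy} productive to $f$-\emph{productive} in the first assertion, since local compactness does not make the whole group (Weil) complete --- Corollary~\ref{no:f-product:seq:in:nondiscrete:metr:group} requires completeness, which $G$ need not have. The fix is to localize: I would confine the entire construction of $\{a_n:n\in\N\}$ to a \emph{fixed} compact neighborhood $V$ of $e$ (using local compactness to choose $V$ with compact closure), so that all partial products $\prod_{i=l+1}^m a_i^{z(i)}$ remain in a compact, and hence complete, set. Convergence of the partial products then follows from compactness rather than from completeness of $G$ itself, letting Lemma~\ref{building:f-Cauchy:sequences:in:a:metric:group} deliver productivity outright.
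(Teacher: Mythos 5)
Your ``moreover'' part is correct and is essentially the paper's own argument: the paper cites Theorem \ref{totally:diconnected:locally:compact:TAP:is:discrete}, whose proof is exactly your chain (van Dantzig's compact open subgroup, totally disconnected locally compact groups are complete and linearly topologized, infinite compact groups have nontrivial convergent sequences, then Corollary \ref{char:of:complete:linear:TAP:groups}). Your remark that local compactness upgrades $f$-Cauchy productive to $f$-productive is also sound (a left Cauchy sequence with a cluster point converges), though unnecessary: the paper simply uses that locally compact groups are Weil complete, so Corollary \ref{no:f-product:seq:in:nondiscrete:metr:group} applies directly.

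The genuine gap is in the first assertion, in the non-metrizable case. Lemma \ref{building:f-Cauchy:sequences:in:a:metric:group} requires the sets $\{U_n:n\in\N\}$ to form a \emph{base} at the identity, and by the Birkhoff--Kakutani theorem a topological group with a countable base at $e$ is metrizable; so your claim that a non-discrete locally compact group is ``rich enough near $e$'' to run this construction is false for, say, $G=\T^{\omega_1}$. One can of course choose a decreasing chain with $U_{n+1}^3\subseteq U_n$, but if it is not a base then a sequence with $a_n\in U_n$ need not converge to $e$, hence by Lemma \ref{AP:is:null:sequence} cannot even be $f_1$-Cauchy productive; and the proof of Lemma \ref{building:f-Cauchy:sequences:in:a:metric:group} uses the base property essentially (to find $U_k\subseteq U$ for an arbitrary neighborhood $U$). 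Your appeal to Theorem \ref{theorem:Davis} does not repair this: the decomposition $K\times\R^n\times D$ is only a \emph{homeomorphism}, so the ``copy of $\R$'' or the convergent sequence in $K$ is merely a subspace, not a subgroup, and Theorem \ref{non1-TAP} cannot be applied to it. The paper bridges exactly this gap by producing an infinite compact \emph{metrizable subgroup}: if $G$ is not metrizable then the compact subgroup $K$ from Davis' theorem is not metrizable, and either $K$ has a non-torsion element, in which case Rudin's theorem gives an infinite compact metrizable subgroup of the closure of the cyclic subgroup it generates, to which Theorem \ref{non1-TAP} applies; or $K$ is torsion, hence zero-dimensional, and one falls back on the ``moreover'' case. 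Without some such step your argument only proves the first assertion for metrizable $G$.
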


\begin{proof}
If $G$ is zero-dimensional, then Theorem \ref{totally:diconnected:locally:compact:TAP:is:discrete} implies that  $G$ is not TAP, i.e., 
$G$ contains an $f_\omega$-productive set.

In the general case we consider two cases. If $G$ is metrizable, then the assertion follows from Theorem \ref{non1-TAP}, since locally compact groups are Weil complete. 
Assume $G$ is not metrizable. By Theorem \ref{theorem:Davis} $G$ is homeomorphic to a product $K \times \R^n \times D$, where $K$ is a compact subgroup of $G$, $n\in \N$ and $D$ is a discrete space. Our assumption on $G$ implies that the compact subgroup $K$ is not metrizable. If $K$ has a non-torsion element $a$, then the closed subgroup $N$ generated by $a$ is a compact abelian group, so by Rudin's theorem $N$ contains an infinite compact metrizable subgroup $M$. To $M$ we can apply again Theorem \ref{non1-TAP} to find
an $f_\omega$-productive set in $M$. If $K$ is a torsion group, then $K$ is zero-dimensional, so $K$ contains an $f_\omega$-productive set by the first part of the argument. 
\end{proof}

This theorem implies that a locally compact group is discrete iff it does not contain $f_1$-productive sequence. Moreover, 
 every infinite compact group contains an $f$-productive sequence for every  for every function $f:\N\to \omega\setminus\{0\}$.

Our next example shows that compactness cannot be weakened to precompactness in Corollary \ref{zerodim:TAP:compact:is:finite}. It also shows that sequential completeness cannot be omitted in Corollary \ref{sequentially:complete:liner:TAP:group:is:discrete}, and completeness cannot be dropped in Corollary \ref{linear:complete:corollary}.

\begin{example}\label{question1}
\rm{ Let $p$ be a prime number and let $\tau_p$ denote the $p$-adic topology of $\Z$. Then {\em the group $G=(\Z,\tau_p)$ is an infinite precompact (thus non-discrete) linear (thus, not NSS) metric TAP group\/}. Indeed, $G$ is TAP by Corollary \ref{|G|<cont:is:TAP}. }
\end{example}

Let us recall that the Lie groups are precisely the locally Euclidean topological groups. Another their equivalent description involving the NSS property is this: 
a locally compact group $G$ is a Lie group if and only if it is NSS. 

\begin{corollary}\label{locally:compact:abelian:TAPgroup} Let $G$ be a locally compact abelian TAP group.
  Then $G \cong \R^m\times \T^n \times D$ for some $m, n\in \N$ and some discrete abelian group $D$. In particular, $G$ is a Lie group.
\end{corollary}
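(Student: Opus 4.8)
The plan is to peel off a compact factor, resolve the compact abelian case by Pontryagin duality, and then reassemble. First I would apply the standard structure theorem for locally compact abelian groups to find an open subgroup $O\cong\R^m\times K$ of $G$, where $K$ is compact. Since $O$ is open in the abelian group $G$ and $G$ is TAP, Corollary \ref{opens:subgroups} gives that $O$ is TAP, and then Theorem \ref{direct:sum:of:TAP:is:TAP}(ii) shows that the closed subgroup $K$ of $O$ is TAP as well. Thus the whole problem reduces to the following claim, which is the heart of the matter: \emph{a compact abelian TAP group $K$ is a Lie group}, that is, $K\cong\T^n\times F$ with $F$ finite.

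To prove this claim I would argue by contradiction through duality. If $K$ is not a Lie group, then its Pontryagin dual $A=\widehat{K}$ is not finitely generated. The crucial step is to produce an \emph{infinite torsion quotient} of $A$: dualizing a surjection $A\to T$ onto an infinite torsion group $T$ yields a closed subgroup $L=\widehat{T}$ of $K$ that is infinite (since $T$ is) and totally disconnected, hence zero-dimensional (since $T$ is torsion). As an infinite compact, and therefore non-discrete locally compact, zero-dimensional group, $L$ fails to be TAP by Theorem \ref{zerodim:TAP:compact:is:finite}; since $L$ is a closed subgroup of $K$, Theorem \ref{direct:sum:of:TAP:is:TAP}(ii) then forces $K$ to fail TAP, a contradiction. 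To obtain the infinite torsion quotient I would pick a maximal independent subset of $A$: when the torsion-free rank $r_0(A)$ is finite it generates a free subgroup $B\cong\Z^{r_0(A)}$ with $A/B$ torsion, and $A/B$ must be infinite, for otherwise $A$ would be finitely generated; when $r_0(A)$ is infinite one passes to $A/t(A)$, embeds it into a $\Q$-vector space, and extracts an infinite torsion quotient coordinatewise. \textbf{This purely algebraic structural fact — that a non–finitely-generated abelian group always admits an infinite torsion quotient — is the step I expect to require the most care}, chiefly in the infinite-rank case.

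Finally I would reassemble the group. With $K\cong\T^n\times F$ we get $O\cong\R^m\times\T^n\times F$, whose identity component $O^0=\R^m\times\T^n$ is open in $O$ and hence open in $G$; being connected and open, it coincides with the identity component $G^0$ of $G$. Consequently $D:=G/G^0$ is discrete, and we have a short exact sequence $\{0\}\to G^0\to G\to D\to\{0\}$. Since $G^0=\R^m\times\T^n$ is divisible, this sequence splits algebraically, and because $G^0$ is open the splitting is automatically topological; therefore $G\cong\R^m\times\T^n\times D$ with $D$ a discrete abelian group. As $\R^m\times\T^n$ is a Lie group and $D$ is discrete, $G$ is locally Euclidean, i.e.\ a Lie group, which yields the final assertion.
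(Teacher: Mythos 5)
Your proposal is correct, but at the crucial step it takes a genuinely different route from the paper. The outer skeleton is the same in both: the structure theorem produces an open subgroup $\R^m\times K$ with $K$ compact, heredity of TAP (Corollary \ref{opens:subgroups}, Theorem \ref{direct:sum:of:TAP:is:TAP}(ii)) passes TAP down to $K$, and divisibility of $\T^n$ splits off the discrete factor at the end. The difference is in the heart of the matter, that a compact abelian TAP group is Lie: the paper simply invokes Corollary \ref{compact:abelian:group:are:Lie}, which is only proved in a later section as a consequence of the classification of compact abelian groups containing no infinite products (Theorem \ref{compact:abelian:groups:without:infinite:products}), so the paper's proof rests on a forward reference to that machinery. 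You instead prove the compact case directly by Pontryagin duality: if $K$ is not Lie then $A=\widehat{K}$ is not finitely generated, an infinite torsion quotient of $A$ dualizes to an infinite closed totally disconnected subgroup $L$ of $K$, and $L$ fails TAP by Theorem \ref{zerodim:TAP:compact:is:finite} (equivalently Theorem \ref{totally:diconnected:locally:compact:TAP:is:discrete}), both of which precede the corollary; heredity of TAP then gives the contradiction. In effect you reprove, in the abelian case, the characterization of compact Lie groups quoted from \cite{DSS} as Theorem \ref{second:characterization} (``all closed totally disconnected subgroups are finite''), which the paper itself only deploys later for the non-abelian Theorem \ref{compact:TAP:iff:NSS:iff:Lie}. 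What your route buys is self-containedness: no forward references and no infinite-products theory. What the paper's route buys is economy of means, since the same infinite-products machinery also yields its stronger later results.

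One caveat on the step you yourself flagged. The algebraic fact is true, but your sketch for the infinite-rank case needs adjusting: if $C=A/t(A)$ is free of infinite rank, the ``coordinatewise'' quotient of $C$ by the subgroup generated by a maximal independent family is trivial, so that extraction, read literally, can fail. The uniform fix is to quotient by twice that subgroup: let $E=\langle x_i:i\in I\rangle$ be generated by a maximal independent family of infinite-order elements of $A$, so $A/E$ and hence $A/2E$ is torsion; then $A/2E$ contains $E/2E\cong(\Z/2\Z)^{(I)}$, hence is infinite when $I$ is infinite, while for finite $I$ the finiteness of $A/2E$ would force $A$ to be finitely generated. With that repair, and the case $r_0(A)=0$ (where $A$ itself is infinite torsion), your key algebraic lemma holds in full, and the rest of your argument, including the splitting $G\cong G^0\times D$ via divisibility of $G^0=\R^m\times\T^n$ and openness of $G^0$, is sound.
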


\begin{proof}
It is known that $G$ has the form $G \cong \R^m\times G_0$, where the group $G_0$ contains an open compact subgroup $K$ \cite{DPS}.
Since subgroup of a TAP group is TAP, from Corollary \ref{compact:abelian:group:are:Lie} we conclude that $K$ is a Lie group, i.e.,
$ K\cong \T^n \times F$ for some $n\in \N$ and some finite group $F$. In the sequel we identify $\T^n$ with an open subgroup of $K$, so that
$\T^n$ is an open subgroup of $G_0$. Since $\T^n$ is divisible, we can write $G_0=\T^n \times D$ for some discrete abelian subgroup $D$.
Thus $G \cong \R^m\times \T^n \times D$. In particular, $G$ is a Lie group.
\end{proof}

In the sequel we need the following two results proved in \cite{DSS}.

\begin{lemma}
\label{omega-bounded:lemma} {\rm \cite{DSS}}
Let $G$ be a topological group such that the closure of every countable subgroup of $G$ is a compact Lie group. Then $G$ is a compact Lie group.
\end{lemma}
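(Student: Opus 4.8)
The plan is to prove the statement in three stages: first establish that $G$ is precompact, then identify its compact Raikov completion $\tilde G$ with $G$ itself by forcing $\tilde G$ to be a Lie group, and finally read off the conclusion from the hypothesis applied to a countable dense subgroup.

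First I would show that $G$ is totally bounded. Suppose not. Then there is a symmetric neighborhood $V$ of the identity $e$ such that no finite family of left translates $gV$ covers $G$, so by induction I can pick $g_n \in G \setminus \bigcup_{i<n} g_i V$, which guarantees $g_i^{-1}g_j \notin V$ for all $i<j$. Let $H = \langle g_n : n \in \N\rangle$; this is a countable subgroup, so by hypothesis $\overline{H}$ is a compact Lie group and in particular is totally bounded. Choosing a symmetric neighborhood $W$ with $WW \subseteq V$, compactness lets me cover $\overline{H}$ by finitely many translates $xW$, so two of the $g_n$ fall in a common $xW$, whence $g_i^{-1}g_j \in WW \subseteq V$ — contradicting $g_i^{-1}g_j \notin V$. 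Thus $G$ is precompact, and its Raikov completion $\tilde G$ is compact. Since the closure of any countable subgroup $H \le G$ is compact, it is closed in $\tilde G$, so $\overline{H}^{G} = \overline{H}^{\tilde G}$; I shall use this identification freely.

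The heart of the argument is to show that $\tilde G$ is a Lie group. I would use the standard structure facts that a compact group is a Lie group if and only if it is NSS, that compact Lie groups are metrizable and second countable, and that a non-Lie compact group is a strict inverse limit of its Lie quotients, hence admits a strictly decreasing chain $N_1 \supsetneq N_2 \supsetneq \cdots$ of closed normal subgroups with each $\tilde G / N_n$ a Lie group and $\bigcap_n N_n$ carrying nontrivial subgroups inside every neighborhood of $e$. Assuming for contradiction that $\tilde G$ is not NSS, I would use the density of $G$ in $\tilde G$ to select, for each $n$, a nontrivial element $a_n \in G$ lying in the small normal subgroup $N_n$, arranged so that the countable subgroup $H = \langle a_n : n \in \N\rangle \le G$ has the property that every neighborhood of $e$ meets a nontrivial subgroup of $\overline{H}^{\tilde G}$. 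This would make $\overline{H}^{G} = \overline{H}^{\tilde G}$ fail to be NSS, contradicting the hypothesis that it is a compact Lie group. \emph{This coordination step is the main obstacle}: without a countable base at $e$ one cannot naively diagonalize, so the delicate point is to choose the chain $\{N_n\}$ and the witnesses $a_n \in G$ so that their smallness genuinely survives into the closure of a single countable subgroup rather than being spread across an uncountable net.

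Once $\tilde G$ is known to be a compact Lie group, it is second countable and separable, so using the density of $G$ I can pick a countable subgroup $D \le G$ that is dense in $\tilde G$. Then the closure of $D$ in the subspace $G$ satisfies $\overline{D}^{G} = \overline{D}^{\tilde G} \cap G = \tilde G \cap G = G$. By hypothesis $\overline{D}^{G}$ is a compact Lie group, so $G$ is compact; being compact it is closed in $\tilde G$, whence $G = \tilde G$. Therefore $G$ is a compact Lie group, as required.
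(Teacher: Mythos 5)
The paper itself offers no proof of this lemma: it is quoted verbatim from \cite{DSS} (listed as ``work in progress''), so your proposal can only be judged on its own merits, not against an in-paper argument. Two of your three stages are correct and complete. The total-boundedness argument (picking $g_n\in G\setminus\bigcup_{i<n}g_iV$ and trapping the $g_n$ in the compact closure of the countable subgroup they generate) is sound, as is the final stage: once $\tilde G$ is known to be a compact Lie group it is second countable, so $G$ contains a countable subgroup $D$ dense in $\tilde G$, whence $G=\overline{D}^{G}$ is a compact Lie group by hypothesis and $G=\tilde G$.

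The step you flag as ``the main obstacle'' is, however, a genuine gap, and the route you sketch for it cannot be repaired in the form stated. The problem is not only the absence of a countable base at $e$: the inference ``$G$ is dense in $\tilde G$, so I can select a nontrivial $a_n\in G$ lying in the small closed normal subgroup $N_n$'' is simply false. A dense subgroup need not meet a given nontrivial closed normal subgroup at all: for instance, if $f:\R\to\R$ is a discontinuous $\Q$-linear map with $f(1)=0$, then $G=\{(x,f(x))\bmod\Z^2 : x\in\R\}$ is a dense subgroup of $\T^2$ meeting the closed subgroup $\{0\}\times\T$ only in the identity. So density gives you elements of $G$ \emph{near} $N_n$, never \emph{inside} it, and the plan of planting small subgroups of $G$ into the closure of a single countable subgroup breaks down even when $\tilde G$ is metrizable.

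The middle stage can be completed by pushing $G$ into a metrizable quotient instead of pulling small subgroups into $G$. Suppose $\tilde G$ is not Lie. The family $\mathcal N$ of closed normal subgroups $N$ of $\tilde G$ with $\tilde G/N$ a Lie group is downward directed (a closed subgroup of a product of two Lie groups is Lie) and has trivial intersection, by the structure theory of compact groups; since $\tilde G$ is not Lie, $\mathcal N$ has no minimal element, so there is a strictly decreasing chain $N_1\supsetneq N_2\supsetneq\cdots$ in $\mathcal N$. Put $N_\omega=\bigcap_n N_n$ and $L=\tilde G/N_\omega$ with quotient map $q$. Then $L$ embeds into $\prod_n \tilde G/N_n$, hence is compact metrizable, and $L$ is not Lie: the subgroups $N_n/N_\omega$ are nontrivial and closed, and by compactness (their intersection being trivial) every neighborhood of the identity of $L$ contains one of them, so $L$ is not NSS. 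Since $L$ is second countable and $q(G)$ is dense in $L$, there is a countable subgroup $D\le G$ with $q(D)$ dense in $L$. By hypothesis $\overline{D}^{G}=\overline{D}^{\tilde G}$ is a compact Lie group, so its image $q(\overline{D})$ is again a compact Lie group; being compact it is closed in $L$, and it contains the dense set $q(D)$, so $q(\overline{D})=L$ and $L$ is Lie --- a contradiction. With this replacement of your coordination step, your three-stage outline becomes a complete proof.
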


Recall that a topological group $G$ is {\em $\omega$-bounded\/} if the closure of every countable subset of $G$ is compact.

\begin{theorem}
\label{second:characterization} {\rm \cite{DSS}}
For every topological group $G$, the following conditions are equivalent:
\begin{itemize}
\item[(i)]
$G$ is a compact Lie group;
\item[(ii)]
$G$ is an $\omega$-bounded group such that all closed totally disconnected subgroups of $G$ are finite.
\end{itemize}
\end{theorem}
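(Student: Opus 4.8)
The equivalence splits into the trivial direction (i)$\to$(ii) and the substantial direction (ii)$\to$(i). For (i)$\to$(ii): a compact group is $\omega$-bounded because the closure of any subset is a closed subset of a compact space and hence compact; and a closed totally disconnected subgroup $D$ of a compact Lie group is itself a compact Lie group, necessarily $0$-dimensional, hence discrete, hence finite. So the forward direction is immediate.

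For (ii)$\to$(i) the plan is to invoke Lemma \ref{omega-bounded:lemma}, which reduces the problem to proving that the closure $\overline{H}$ of an arbitrary countable subgroup $H$ of $G$ is a compact Lie group. Since $G$ is $\omega$-bounded, $K:=\overline{H}$ is compact; and since $K$ is closed in $G$, every closed totally disconnected subgroup of $K$ is a closed totally disconnected subgroup of $G$ and is therefore finite by (ii). Thus everything comes down to the following purely structural claim about compact groups: \emph{if every closed totally disconnected subgroup of a compact group $K$ is finite, then $K$ is a Lie group.} (Recall that for locally compact groups being a Lie group is equivalent to being NSS, so this claim can also be read as an NSS criterion.)

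I would prove this claim in contrapositive form: assuming $K$ is a compact group that is \emph{not} a Lie group, produce an infinite closed totally disconnected subgroup. Writing $K_0$ for the identity component, I distinguish two cases. If $K/K_0$ is infinite, I use the fact that every compact group admits a closed totally disconnected subgroup $D\le K$ with $K=K_0D$; then $D$ maps onto the infinite group $K/K_0$, so $D$ is the desired infinite closed totally disconnected subgroup. If $K/K_0$ is finite, then $K_0$ is a compact connected group that is not a Lie group, hence infinite-dimensional, and I build the subgroup inside $K_0$ via the structure theorem for compact connected groups: $K_0$ is the image, under a homomorphism with central kernel, of a product $A\times\prod_{j\in J}S_j$ with $A$ compact connected abelian and each $S_j$ a compact simple Lie group. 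If $J$ is infinite, I pick in infinitely many factors a torsion element of a maximal torus whose order is coprime to the finite center $Z(S_j)$; the closed subgroup these generate is profinite and survives the central quotient. If instead $A$ is infinite-dimensional, Pontryagin duality turns the task into finding an infinite torsion quotient of the torsion-free discrete group $\widehat{A}$ of infinite rank: choosing a maximal $\Z$-independent (free) subgroup $F\le\widehat{A}$, the quotient $\widehat{A}/2F$ is torsion and contains the infinite group $F/2F$, and its dual is an infinite profinite subgroup of $A$, which (again after controlling the central kernel) yields an infinite closed totally disconnected subgroup of $K_0$.

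The easy half and the reduction through Lemma \ref{omega-bounded:lemma} are routine; the real content, and the main obstacle, is the structural claim. Its delicate point is not the mere existence of small totally disconnected pieces but obtaining a genuine infinite closed totally disconnected \emph{subgroup} of $K$ rather than a quotient: this forces the use of the component-splitting $K=K_0D$ in the disconnected case and, in the connected case, careful bookkeeping to guarantee that the chosen torsion elements (respectively the profinite subgroup of $A$) are not collapsed by the central covering homomorphism. Once the claim is established, $K=\overline{H}$ is a compact Lie group for every countable subgroup $H$, and Lemma \ref{omega-bounded:lemma} completes the proof that $G$ is a compact Lie group.
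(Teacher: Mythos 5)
Before anything else: the paper does not actually prove Theorem \ref{second:characterization} --- it is imported from \cite{DSS} (listed as work in progress), so there is no in-paper argument to compare yours against; I can only assess your proof on its own merits. Most of your skeleton is sound: the easy direction (i)$\to$(ii), the reduction of (ii)$\to$(i) to the compact case via Lemma \ref{omega-bounded:lemma} and $\omega$-boundedness, Case 1 of the structural claim (Lee's supplement theorem giving a closed profinite $D$ with $K=K_0D$, which maps onto the infinite group $K/K_0$), and the semisimple subcase (torsion elements of order coprime to $|Z(S_j)|$ in infinitely many factors generate a profinite subgroup meeting the central kernel trivially) are all correct. The genuine gap is in your connected case: the assertion ``$K_0$ is a compact connected group that is not a Lie group, \emph{hence infinite-dimensional}'' is false. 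A $p$-adic solenoid $\Sigma_p=\widehat{\Z[1/p]}$ is a one-dimensional compact connected abelian group that is not a Lie group (its dual $\Z[1/p]$ has free rank $1$ but is not finitely generated). Consequently your two subcases --- $J$ infinite, or $A$ infinite-dimensional --- do not exhaust the ways $K_0$ can fail to be Lie: you miss the case where $J$ is finite and $A$ is finite-dimensional but non-Lie. This omission is not cosmetic, because in exactly that case your witness for the infinitude of $\widehat{A}/2F$, namely the subgroup $F/2F$, is the finite group $(\Z/2\Z)^r$ with $r$ the (finite) rank of $\widehat{A}$, so your argument produces nothing.

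The repair stays inside your framework. The dual formulation of ``$A$ is not a Lie group'' is ``$\widehat{A}$ is not finitely generated,'' and under that hypothesis $\widehat{A}/2F$ is always an infinite torsion group: if the rank of $\widehat{A}$ is infinite this follows from $F/2F\hookrightarrow\widehat{A}/2F$ as you say, while if the rank is finite then $\widehat{A}/F$ must be infinite (otherwise $\widehat{A}$ would be a finite extension of the finitely generated group $F$, hence finitely generated), and $\widehat{A}/2F$ surjects onto $\widehat{A}/F$. Its dual, which is the annihilator of $2F$ in $A$, is then an infinite closed profinite subgroup of $A$, covering the solenoid case as well. Two further simplifications are available: you may take $A$ to be the identity component of the center of $K_0$, which is an honest closed subgroup of $K_0$ (the central kernel of the structure map meets $A\times\{e\}$ trivially), so no ``control of the central kernel'' is needed on the abelian side at all; and the dichotomy then reads simply ``either $Z_0(K_0)$ is not Lie or the semisimple part has infinitely many factors.'' With that one observation about $\widehat{A}/2F$ inserted, your proof of the structural claim, and hence of the theorem, goes through.
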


\begin{theorem}
\label{compact:TAP:iff:NSS:iff:Lie}
 For a locally compact group $G$ the following are equivalent:
\begin{itemize}
\item[(i)] $G$ is NSS;
\item[(ii)] $G$ is a Lie group.
\item[(iii)] $G$ is TAP.
\end{itemize}
\end{theorem}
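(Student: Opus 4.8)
The plan is to take the equivalence (i)$\Leftrightarrow$(ii) as the classical Gleason--Yamabe characterization of Lie groups among locally compact groups, which was already quoted above (a locally compact group is a Lie group if and only if it is NSS), and then to dispatch (ii)$\Rightarrow$(iii) immediately: a Lie group is locally Euclidean, hence NSS, so it is TAP by Fact \ref{NSSjeTAP}(a). Thus the entire content of the theorem lies in the implication (iii)$\Rightarrow$(ii), which I would establish by reducing the general locally compact case to the compact case.

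First I would invoke Davis' Theorem \ref{theorem:Davis} to write $G$ as homeomorphic to $K\times\R^n\times D$, where $K$ is a compact subgroup of $G$, $n\in\N$, and $D$ is discrete. Since $G$ is TAP and subgroups of TAP groups are TAP by Theorem \ref{direct:sum:of:TAP:is:TAP}(ii), the compact subgroup $K$ is itself a TAP group. The goal of this step is therefore reduced to showing that a \emph{compact} TAP group is a compact Lie group.

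To handle the compact case I would appeal to the characterization in Theorem \ref{second:characterization}: a compact group is a Lie group precisely when it is $\omega$-bounded and all its closed totally disconnected subgroups are finite. Compactness of $K$ supplies $\omega$-boundedness for free, so it remains only to verify the second clause. Let $T$ be a closed totally disconnected subgroup of $K$. Then $T$ is compact, totally disconnected, and TAP (again by Theorem \ref{direct:sum:of:TAP:is:TAP}(ii)); by Theorem \ref{totally:diconnected:locally:compact:TAP:is:discrete} a totally disconnected locally compact TAP group is discrete, and a compact discrete group is finite. Hence every such $T$ is finite, and Theorem \ref{second:characterization} yields that $K$ is a compact Lie group.

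Finally I would transport this conclusion back to $G$. A compact Lie group is a finite-dimensional manifold, so $K\times\R^n\times D$ is locally Euclidean, and the homeomorphism furnished by Davis' theorem makes $G$ locally Euclidean as well; a locally Euclidean topological group is a Lie group, which gives (ii). The main obstacle I anticipate is precisely the passage through Davis' theorem: one must be careful that it provides only a homeomorphism, not an isomorphism, so the group-theoretic inputs (the TAP property and compactness) must be applied to the genuine compact \emph{subgroup} $K$, while the product decomposition is used solely at the very end to deduce the local Euclidean structure of $G$.
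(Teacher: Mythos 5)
Your proposal is correct and follows essentially the same route as the paper: Davis' decomposition $G\approx K\times\R^n\times D$, inheritance of TAP by the compact subgroup $K$, finiteness of closed totally disconnected subgroups, and Theorem \ref{second:characterization} to conclude $K$ (hence $G$) is a Lie group. The only cosmetic difference is that you invoke Theorem \ref{totally:diconnected:locally:compact:TAP:is:discrete} directly for the finiteness step, whereas the paper cites Theorem \ref{zerodim:TAP:compact:is:finite}, whose relevant (zero-dimensional) case is itself proved from that same theorem.
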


\begin{proof} The implication  (i) $\to $ (iii) is Fact \ref{NSSjeTAP}
and the implication  (ii) $\to $ (i) is a well known fact.

 (iii) $\to $ (ii) According to Theorem \ref{theorem:Davis} there exists a compact  subgroup $K$ of $G$ such 
that $G$ is homeomorphic to a product $K \times \R^n \times D$, where  $n\in \N$ and $D$ is a discrete space. 
Since  Lie groups are precisely the locally Euclidean topological groups,
 it is clear that $G$ is a Lie group whenever the compact group $K$ is a Lie group. As a subgroup of the TAP group $G$, $K$ is also TAP, so by Corollary \ref{zerodim:TAP:compact:is:finite}, every closed totally disconnected subgroup of $K$ must be finite. Applying the implication (ii)$\to$(i) of Theorem \ref{second:characterization}, we conclude that $K$ must be a Lie group. This implies that the whole group $G \approx K \times \R^n \times D$ is a Lie group.
\end{proof}

\begin{theorem}
\label{omega:bounded:abelian:group:TAP:iff:NSS}
 For an $\omega$-bounded group $G$ the following statements are equivalent:
\begin{itemize}
\item[(i)] $G$   is TAP;
\item[(ii)] $G$ is a compact Lie group;
\item[(iii)] $G$  is NSS.
\end{itemize}
\end{theorem}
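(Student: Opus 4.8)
The plan is to establish the cyclic chain of implications (iii)$\to$(i)$\to$(ii)$\to$(iii), so that all three conditions become equivalent. Two of these links are available directly from earlier results and require no real work. For (iii)$\to$(i) I would simply invoke Fact \ref{NSSjeTAP}(a), which asserts that every NSS group is TAP; note that this statement holds for arbitrary topological groups, so no compactness hypothesis is needed. For (ii)$\to$(iii), a compact Lie group is in particular locally compact, and so the implication (ii)$\to$(i) of Theorem \ref{compact:TAP:iff:NSS:iff:Lie} (or the well-known fact that Lie groups, being locally Euclidean, have no small subgroups) shows that it is NSS. Thus the entire content of the theorem is concentrated in the implication (i)$\to$(ii), which I treat next.

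Assume that $G$ is $\omega$-bounded and TAP. The strategy is to reduce to the locally compact situation already resolved in Theorem \ref{compact:TAP:iff:NSS:iff:Lie} and then to globalize with the help of Lemma \ref{omega-bounded:lemma}. First I would fix an arbitrary countable subgroup $C$ of $G$. Since $G$ is $\omega$-bounded, the closure $\overline{C}$ is compact, and being the closure of a subgroup it is itself a subgroup of $G$. By item (ii) of Theorem \ref{direct:sum:of:TAP:is:TAP}, every subgroup of a TAP group is TAP, so $\overline{C}$ is a \emph{compact} TAP group. Applying the equivalence (iii)$\leftrightarrow$(ii) of Theorem \ref{compact:TAP:iff:NSS:iff:Lie} to the (locally) compact group $\overline{C}$, I conclude that $\overline{C}$ is a compact Lie group. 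Having shown that the closure of \emph{every} countable subgroup of $G$ is a compact Lie group, I would then invoke Lemma \ref{omega-bounded:lemma} verbatim to conclude that $G$ itself is a compact Lie group, completing (i)$\to$(ii) and closing the cycle.

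The main obstacle is precisely this step (i)$\to$(ii): the difficulty lies entirely in passing from the local, countable/compact picture to the global one, and this is exactly where the two imported results of \cite{DSS} do the heavy lifting. The verification itself is short once those tools are in place, but it is worth recording an alternative, essentially equivalent route that bypasses Lemma \ref{omega-bounded:lemma} and instead checks hypothesis (ii) of Theorem \ref{second:characterization}. In that variant one argues directly that $G$ has no infinite closed totally disconnected subgroup: if $H$ were such a subgroup, then for a countably infinite $S\subseteq H$ the closure $\overline{\langle S\rangle}$ is compact (by $\omega$-boundedness), infinite, totally disconnected as a subspace of $H$, and hence non-discrete, so it fails to be TAP by Theorem \ref{totally:diconnected:locally:compact:TAP:is:discrete}; this contradicts the fact that, as a subgroup of the TAP group $G$, it must be TAP. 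Theorem \ref{second:characterization} then yields that $G$ is a compact Lie group. I expect the first route, through Lemma \ref{omega-bounded:lemma}, to be the more economical one, since it reuses the locally compact equivalence \ref{compact:TAP:iff:NSS:iff:Lie} with no further case analysis.
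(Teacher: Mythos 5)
Your proposal is correct and follows essentially the same route as the paper: the paper also proves the cycle via (i)$\to$(ii) by taking the closure $K=\overline{D}$ of an arbitrary countable subgroup, noting $K$ is compact (by $\omega$-boundedness) and TAP (as a subgroup of a TAP group), concluding $K$ is a Lie group by Theorem \ref{compact:TAP:iff:NSS:iff:Lie}, and then invoking Lemma \ref{omega-bounded:lemma}, with (ii)$\to$(iii) and (iii)$\to$(i) dispatched exactly as you do. Your alternative route through Theorem \ref{second:characterization} is a valid variant but, as you anticipate, the paper uses the more economical first argument.
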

\begin{proof}
(i)$\to$(ii) Let $D$ be a countable subgroup of $G$. Since $G$ is $\omega$-bounded,
$K=\overline{D}$ is a compact subgroup of $G$. Being a closed subgroup of a TAP group $G$,
$K$ is TAP. By Theorem \ref{compact:TAP:iff:NSS:iff:Lie}, $K$ is a Lie group. Therefore, $G$ satisfies the assumption of Lemma
\ref{omega-bounded:lemma}. Applying this lemma, we conclude that $G$ is a
compact Lie group.

The implication (ii)$\to$(iii) is well known, and the implication
(iii)$\to$(i) is Fact \ref{NSSjeTAP}. 
\end{proof}

\begin{corollary}
\label{tot:disc:Obounded:TAP:iff:finite}
An infinite totally disconnected $\omega$-bounded group contains an $f_\omega$-productive set.
\end{corollary}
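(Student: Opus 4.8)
The statement asserts that an infinite totally disconnected $\omega$-bounded group $G$ contains an $f_\omega$-productive set, which by the definition of TAP is exactly the assertion that $G$ fails to be TAP. My plan is therefore to argue by contradiction: I would assume that $G$ \emph{is} TAP and derive a contradiction with the hypothesis that $G$ is infinite. Since the two immediately preceding results already carry all the structural weight, the corollary should reduce to a short deduction.

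First I would invoke the implication (i)$\to$(ii) of Theorem \ref{omega:bounded:abelian:group:TAP:iff:NSS}. Because $G$ is $\omega$-bounded and, by our contradiction hypothesis, TAP, that theorem forces $G$ to be a compact Lie group. In particular $G$ is locally compact, a fact I will need in order to bring the earlier locally compact machinery to bear.

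Next I would combine total disconnectedness with this Lie structure. Since $G$ is now known to be a compact (hence locally compact) group that is totally disconnected, Theorem \ref{totally:diconnected:locally:compact:TAP:is:discrete} applies to it, and its implication (c)$\to$(a) tells us that a totally disconnected locally compact TAP group is discrete. Thus $G$ is discrete. Being simultaneously compact and discrete, $G$ is finite, contradicting the assumption that $G$ is infinite. Consequently $G$ cannot be TAP, i.e.\ $G$ contains an $f_\omega$-productive set, as required.

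I do not expect a genuine obstacle here, as the real content lives in the two cited theorems; the only point meriting a moment's care is verifying that the detour through local compactness is legitimate, namely that once $G$ is identified as a compact Lie group it is indeed both locally compact and (by hypothesis) totally disconnected, so that Theorem \ref{totally:diconnected:locally:compact:TAP:is:discrete} is genuinely applicable. Should one wish to avoid this bookkeeping, an equally direct route is to observe that a totally disconnected Lie group must be discrete, since a locally Euclidean group all of whose connected subsets are singletons has Euclidean dimension $0$; in the compact case this again yields finiteness and the same contradiction.
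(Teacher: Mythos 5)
Your proof is correct and follows essentially the same route as the paper: assume $G$ is TAP, invoke Theorem \ref{omega:bounded:abelian:group:TAP:iff:NSS} to get compactness, and then use the totally disconnected locally compact machinery to reach a contradiction. The only cosmetic difference is that the paper cites Theorem \ref{zerodim:TAP:compact:is:finite} to produce an $f_\omega$-productive set directly (contradicting TAP), whereas you cite its source, Theorem \ref{totally:diconnected:locally:compact:TAP:is:discrete}, to conclude $G$ is discrete and hence finite (contradicting infiniteness); these are interchangeable steps of the same argument.
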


\begin{proof}
Indeed, let $G$ be a group satisfying the assumption of our corollary. Assume that $G$ does not have an $f_\omega$-productive set.
That is, $G$ is TAP. Then $G$ is compact by Theorem \ref{omega:bounded:abelian:group:TAP:iff:NSS}, and Corollary \ref{zerodim:TAP:compact:is:finite}
yields that $G$ has an $f_\omega$-productive set, giving a contradiction.
\end{proof}

\begin{corollary}
An infinite $\omega$-bounded group has an $f$-productive set for every  for every function $f:\N\to \omega\setminus\{0\}$. 
\end{corollary}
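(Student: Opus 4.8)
The plan is to reduce the statement to the compact case that has already been settled, using $\omega$-boundedness precisely to manufacture an infinite compact subgroup inside $G$. The key observations are that an infinite $\omega$-bounded group always contains an infinite compact subgroup, that Theorem~\ref{zerodim:TAP:compact:is:finite} supplies $f$-productive sets in such subgroups, and that $f$-productivity of a set transfers upward from a topological subgroup to the ambient group.

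First I would produce a countably infinite subgroup of $G$. Since $G$ is infinite, we may pick a countably infinite subset $S\subseteq G$; the subgroup $D=\langle S\rangle$ it generates is countable (a subgroup generated by countably many elements is countable) and contains $S$, hence is infinite. By the definition of $\omega$-boundedness, $K=\overline{D}$ is a compact subgroup of $G$, and $K\supseteq D$ is infinite. As an infinite compact group, $K$ is in particular a non-discrete locally compact group.

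Next I would apply Theorem~\ref{zerodim:TAP:compact:is:finite} to $K$: for the given function $f:\N\to\omega\setminus\{0\}$ the non-discrete locally compact group $K$ contains an $f$-productive set $A$. It then remains to check that $A$ is $f$-productive in $G$ as well. This is where one must be slightly careful, although the argument is routine. The subgroup $K$ carries the subspace topology inherited from $G$, and every partial product $\prod_{i=0}^n a_{\varphi(i)}^{z(i)}$ occurring in the definition of $f$-productivity is computed inside $K$ and, by hypothesis, converges there to an element of $K$. Since convergence in the subspace $K$ implies convergence in $G$ to the same limit, which lies in $K\subseteq G$, the set $A$ is $f$-productive in $G$.

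I expect no genuine obstacle: the only point requiring attention is the transfer of $f$-productivity from the closed subgroup $K$ to $G$, and this follows immediately from the fact that $K$ is a topological subgroup, so that its multiplication and topology agree with those induced from $G$. The same scheme, incidentally, makes transparent why Corollary~\ref{tot:disc:Obounded:TAP:iff:finite} cannot be matched here for $f_\omega$ in general, since the ambient group $G$ itself need not be compact.
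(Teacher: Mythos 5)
Your proof is correct and follows essentially the same route as the paper: pass to the closure of a countable infinite subgroup, which is an infinite (hence non-discrete) compact group by $\omega$-boundedness, apply Theorem~\ref{zerodim:TAP:compact:is:finite} there, and transfer the $f$-productive set to $G$. In fact you fill in two details the paper glosses over (taking the \emph{subgroup} generated by a countable set before closing, and the upward transfer of productivity from the compact subgroup to the ambient group), both handled correctly.
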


\begin{proof}
Let $G$ be an $\omega$-bounded group, and let $A$ be a countably infinite subset of $G$. Then $A$ is contained in a compact subgroup $K$ of $G$. Applying Theorem \ref{zerodim:TAP:compact:is:finite} to $K$, we obtain a an $f$-productive set in $K$ for every  for every function $f:\N\to \omega\setminus\{0\}$. 
\end{proof}

Call a group $G$ {\em locally $\omega$-bounded\/} if $G$ has a neighborhood $U$ of its identity element such that the closure (in $G$) of each countable subset of $U$ is compact.

One may wonder if a common generalization of Theorems \ref{compact:TAP:iff:NSS:iff:Lie} and \ref{omega:bounded:abelian:group:TAP:iff:NSS} is possible.

\begin{question} Is every locally $\omega$-bounded TAP group an NSS group?
\end{question}

Let us recall two examples from \cite{Sp}.
\begin{example}
\label{example3}
\begin{itemize}
\item[(i)] An infinite pseudocompact group without nontrivial convergent sequences is TAP but not NSS.
\item[(ii)] Let $G$ be any consistent example of a countably compact abelian group groups without nontrivial convergent sequences. Then $G$ is TAP but not NSS.
Since totally disconnected examples with this property exist, this shows that ``$\omega$-bounded" cannot be weakened to ``countably compact" in Corollary \ref{tot:disc:Obounded:TAP:iff:finite}. 
\end{itemize}
\end{example}

Since examples of groups as in Example \ref{example3}(ii) exist under MA, we conclude that countable compactness alone cannot help to establish NSS under the assumption of TAP.
Nevertheless, Example \ref{example3} leaves open the question whether this remains true in ZFC:

\begin{question}\label{question3}
Does there exist a ZFC example of a countably compact TAP group that is not NSS?
\end{question}

Our next theorem shows that any totally disconnected countably compact TAP group {\em must\/} be without nontrivial convergent sequences.

\begin{theorem}\label{tot:disc:cc:TAP:iff:no:conv:seq} For a totally disconnected countably compact group $G$ the following are equivalent:
\begin{itemize}
\item[(a)] $G$ is TAP;
\item[(b)] $G$ has no nontrivial convergent sequences.
\end{itemize}
\end{theorem}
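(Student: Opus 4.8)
The equivalence splits into a trivial implication and a substantive one. The implication (b)$\to$(a) is immediate from Fact \ref{no:conv:seq:is:TAP}(b), since a group without nontrivial convergent sequences is automatically TAP. For (a)$\to$(b) I would argue by contraposition: assuming that $G$ carries a nontrivial convergent sequence, I will produce an $f_\omega$-productive set, thereby showing that $G$ is not TAP. After translating by the limit I may assume the sequence $\{x_n:n\in\N\}$ is faithfully indexed, consists of elements distinct from $e$, and converges to $e$. Since a subgroup of a TAP group is TAP by Theorem \ref{direct:sum:of:TAP:is:TAP}(ii), it suffices to exhibit a \emph{subgroup} of $G$ that fails to be TAP; this lets me replace $G$ by the closed (hence again totally disconnected and countably compact) subgroup $\overline{\langle x_n:n\in\N\rangle}$ generated by the sequence, which is moreover separable.

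The first structural observation is that countable compactness supplies the completeness needed to pass between the Cauchy and the convergent versions of productivity. Indeed, a countably compact group is precompact, and any left Cauchy sequence, having a cluster point by countable compactness, must converge to that cluster point; hence $G$ is sequentially Weil complete. By Lemma \ref{productive:and:Cauchy:productive:coincide} it therefore suffices to build an $f_\omega$-\emph{Cauchy} productive set, and, more importantly, the linear-group criterion of Corollary \ref{char:of:complete:linear:TAP:groups} becomes available for any totally disconnected, linearly topologized piece that I can locate inside $G$.

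The heart of the matter is to manufacture such a linearly topologized piece carrying a nontrivial convergent sequence. The plan is to pass to the Raikov completion $\widehat G$, which is compact by precompactness, and to its maximal profinite quotient $P=\widehat G/c(\widehat G)$, where $c(\widehat G)$ denotes the connected component of the identity; let $q:\widehat G\to P$ be the quotient map. The sequence $\{q(x_n):n\in\N\}$ is a null sequence in the profinite (hence linear) group $P$, and $q(G)$ is a dense, countably compact, totally disconnected, linearly topologized subgroup of $P$. In the principal case, where $\{q(x_n):n\in\N\}$ has infinitely many distinct terms, $q(G)$ carries a nontrivial convergent sequence, so Corollary \ref{char:of:complete:linear:TAP:groups} (using that $q(G)$ is linear and sequentially Weil complete) shows that $q(G)$ is not TAP. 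The remaining case, in which $q(x_n)$ is eventually trivial, pushes a tail of the sequence into $c(\widehat G)$, and here total disconnectedness of $G$ must be exploited, since $G$ meets the connected subgroup $c(\widehat G)$ only in a totally disconnected subgroup.

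The main obstacle, and the step requiring the most care, is precisely the transfer of the failure of TAP from the profinite \emph{quotient} $P$ back into $G$ itself. Unlike the locally compact setting --- where van Dantzig's theorem hands one an open compact (profinite) subgroup and Theorem \ref{totally:diconnected:locally:compact:TAP:is:discrete} applies directly --- countable compactness does not force a totally disconnected group to be linearly topologized, so the comfortable ``small open subgroup'' picture is not available globally; the linear structure is visible only after quotienting by the connected component of the completion. To close the argument I would either lift a convergent sequence from $P$ to an honest \emph{compact}, hence profinite, subgroup of $G$ (so that Theorem \ref{totally:diconnected:locally:compact:TAP:is:discrete} applies inside $G$), controlling the kernel direction $c(\widehat G)\cap G$ by using that it is a totally disconnected subgroup of a connected compact group, or directly verify the $f_\omega$-Cauchy productivity of $\{x_n:n\in\N\}$ in $G$ by testing the criterion of Lemma \ref{left:cauchy:condition} against an arbitrary neighborhood of $e$, the difficulty there being exactly to absorb the uncontrolled $c(\widehat G)$-component of the partial products. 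This completeness-versus-compactness gap, together with the lifting past the connected component, is where the real work of the theorem lies.
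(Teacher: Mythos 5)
Your preliminary reductions are sound --- (b)$\to$(a) is indeed immediate from Fact \ref{no:conv:seq:is:TAP}(b), and your argument that countable compactness gives sequential Weil completeness (a left Cauchy sequence has a cluster point, to which it must then converge) is correct. But the proof has a genuine gap exactly where you admit it does: you never transfer the failure of TAP from the profinite quotient $P=\widehat G/c(\widehat G)$ back to $G$ itself. This transfer is not a deferrable technicality. It would require an implication of the form ``if a quotient $G/N$ is not TAP, then $G$ is not TAP,'' i.e., a way to \emph{lift} an $f_\omega$-productive set through a quotient map, and no such tool exists in the paper (Theorem \ref{direct:sum:of:TAP:is:TAP}(i) goes only in the opposite direction, and Fact \ref{cont:hom:image:of:AP:is:AP} pushes productive sets forward, not backward). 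Your alternative route --- verifying $f_\omega$-Cauchy productivity of $\{x_n:n\in\N\}$ directly in $G$ via Lemma \ref{left:cauchy:condition} --- also cannot succeed as stated: a null sequence in a non-linear group need not be $f_\omega$-Cauchy productive at all (in $\R$ no sequence is, by Theorem \ref{NSS:is:NACP}), so without some structural control on $G$ the ``uncontrolled $c(\widehat G)$-component'' you mention is a real obstruction, not a bookkeeping issue. Your Case 2 (the tail of the sequence falling into $c(\widehat G)\cap G$) is likewise left unresolved.

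The missing ingredient is a single known fact, which you explicitly (and incorrectly) deny: a totally disconnected countably compact group \emph{does} have a linear topology. This is the nontrivial external result the paper's one-line proof invokes (it goes back to Dikranjan's theorem that hereditarily disconnected countably compact groups are zero-dimensional; for such a group the compact completion is then zero-dimensional, i.e., profinite, so $G$ inherits a base of open subgroups). In particular the connected component $c(\widehat G)$, which your whole construction is designed to circumvent, is trivial in this setting --- there is no quotient to pass to and nothing to lift. Once one knows that $G$ is linear and sequentially Weil complete, Corollary \ref{char:of:complete:linear:TAP:groups} applied directly to $G$ yields both implications simultaneously: $G$ contains an $f_\omega$-productive set if and only if it contains a nontrivial convergent sequence. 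That is the paper's entire proof; your contrast with the locally compact case (van Dantzig) is thus misleading, since the countably compact totally disconnected case enjoys exactly the same ``linear'' picture.
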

\begin{proof}
Since countably compact groups  are sequentially complete and totally disconnected countably compact groups have linear topology, the conclusion follows from Theorem
\ref{char:of:complete:linear:TAP:groups}.
\end{proof}

We do not even know if strengthening countable compactness to initial $\omega_1$-compactness would help to get NSS property from TAP:
\begin{question}\label{question4}
Does there exist an example of an initially $\omega_1$-compact TAP group  that is not NSS?
\end{question}

\section{A characterization of TAP subgroups of  $\Z_p$}

In this section we substantially strengthen Example \ref{question1} by characterizing subgroups of $\Z_p$ that are TAP; see Corollary \ref{characterization:of:subgroups:of:Z_p:that:are:TAP}.

\begin{proposition}
\label{non-TAP:subgroups:of:Z_p}
Let $p$ be a prime. If $H$ is an infinite non-TAP subgroup of $\Z_p$, then $H$ is an open subgroup of $\Z_p$, so $H=p^n\Z_p$ for some $p$.
\end{proposition}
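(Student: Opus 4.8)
The plan is to analyze the structure of subgroups of $\Z_p$, using the key feature that $\Z_p$ is a totally disconnected compact (hence locally compact, sequentially complete, linearly topologized) abelian group. The subgroups of $\Z_p$ are well understood: the nonzero closed subgroups are exactly $p^n\Z_p$ for $n\in\N$, each of which is open (since $p^n\Z_p$ has finite index $p^n$ and $\Z_p$ is compact). So the heart of the matter is to show that an infinite non-TAP subgroup $H$ must in fact be closed, whence it coincides with some $p^n\Z_p$.

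First I would observe that $H$ inherits a linear group topology from $\Z_p$, since the neighborhoods $p^k\Z_p\cap H$ of $0$ are open subgroups of $H$. Being non-TAP, $H$ contains an $f_\omega$-productive set, and in particular a nontrivial $f_\omega$-Cauchy productive set. By Corollary \ref{NACP:iff:no:conv:sequences} (applied to the linear group $H$), $H$ must therefore contain a nontrivial convergent sequence; let $\{a_n:n\in\N\}$ be a faithfully indexed sequence in $H$ converging to $0$. Because the topology on $\Z_p$ is linear with the basic neighborhoods $p^k\Z_p$, convergence of $a_n\to 0$ means that for every $k$ eventually $a_n\in p^k\Z_p$, i.e.\ the $p$-adic valuations $v_p(a_n)$ tend to infinity.

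Next I would exploit completeness. Since $\Z_p$ is compact (hence Weil complete) and linear, Corollary \ref{char:of:complete:linear:TAP:groups} tells me that $H$ non-TAP is equivalent to $H$ containing a nontrivial convergent sequence, and moreover the $f_\omega$-productive set gives genuine (not merely Cauchy) convergence of all the rearranged partial sums $\sum a_{\varphi(n)}^{z(n)}$ to elements of $H$. The crucial point is then to show that this forces $H$ to be closed in $\Z_p$: I would argue that if $\{a_n\}\subseteq H$ is a null sequence with $v_p(a_n)\to\infty$, then by passing to a subsequence I may assume the valuations are strictly increasing, say $v_p(a_n)=k_n$ with $k_0<k_1<\cdots$. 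The partial sums $s_m=\sum_{n=0}^m a_n$ form a Cauchy sequence converging in $\Z_p$ to some limit $s$; since $H$ is $f_\omega$-summable on this set, $s\in H$. Varying the coefficients $z(n)\in\Z$ over all of $\Z$ and rearranging, the set of attainable limits $\sum z(n)a_n$ is a compact subgroup of $\Z_p$ contained in $H$. I would identify this limit set: because the valuations $k_n$ are strictly increasing, the map sending $(z(n))$ to $\sum z(n)a_n$ surjects onto an open subgroup $p^{k_0}\Z_p$ (the $p$-adic digit expansions can be matched term by term). Hence $H\supseteq p^{k_0}\Z_p$, so $H$ contains an open subgroup and is itself open, giving $H=p^n\Z_p$.

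The main obstacle I anticipate is the digit-matching step: showing that summing integer multiples $z(n)a_n$ of a null sequence with strictly increasing valuations reproduces an entire open subgroup $p^{k_0}\Z_p$, rather than some sparse non-open subgroup. This requires a careful greedy/inductive argument in the $p$-adic expansion, choosing the coefficients $z(n)$ one valuation-block at a time so that the partial sums realize any prescribed target in $p^{k_0}\Z_p$; one must check the remaining tail lies in higher and higher powers of $p$ so the construction converges and the limit is exactly the target. The subtlety is ensuring the leading terms can be adjusted freely even though the $a_n$ are fixed elements whose $p$-adic digits above the valuation are not controlled; I would handle this by using that each $a_n=p^{k_n}u_n$ with $u_n$ a $p$-adic unit, so $z(n)a_n$ ranges over all of $p^{k_n}\Z_p$ as $z(n)$ ranges over $\Z_p$ — and the integers are dense enough in the relevant quotients to realize any digit pattern, completing the identification of the limit subgroup with $p^{k_0}\Z_p$.
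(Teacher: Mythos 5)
Your proposal is correct and follows essentially the same route as the paper: take the $f_\omega$-summable set witnessing non-TAP, which is a null sequence, pass to a subsequence with strictly increasing valuations, and then run exactly the greedy digit-matching induction (the paper's Claim~\ref{claim:2}, using density of $\Z$ in $\Z_p$ and the unit factor of each $a_n$) to realize every $\eta\in p^{k_0}\Z_p$ as a convergent sum $\sum z(n)a_n\in H$, so that $H$ contains the open subgroup $p^{k_0}\Z_p$ and hence equals some $p^n\Z_p$. The only difference is cosmetic: your initial detour through Corollary~\ref{NACP:iff:no:conv:sequences} is unnecessary (Lemma~\ref{AP:is:null:sequence} already makes the witnessing set itself a null sequence, which is what you actually use), and the interim claim that the set of attainable limits is a compact subgroup is not needed.
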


\begin{proof} First, let us recall, that $\Z_p$ is also a unitary ring, with unit 1 and having as only non-zero ideals the subgroups $p^n\Z_p$ of $\Z_p$ ($n\in \N$). Every $\xi \in \Z_p \setminus p\Z_p$ is an invertible element of the ring $\Z_p$. Any non-zero $\eta\in \Z_p$ can be written as $\eta = p^n \xi$ with $\xi \in  \Z_p \setminus p\Z_p$ and uniquely determined $n\in \N$, that is usually denoted by $v_p(\eta)$.

\begin{claim}
\label{claim:2}
If $\eta, \alpha \in \Z_p$ and $k\in \N$ are such that $v_p(\eta) \geq v_p(\alpha)$ and $k > v_p(\alpha)$, then there exists $z\in \Z$ such that $v_p(\eta - z \alpha) \geq k$.
\end{claim}

\begin{proof}
Since $\Z=\langle 1 \rangle$ is dense in $\Z_p$ and $U= p^k\Z_p$ is open, there exist $c,a \in \Z$ such that $\eta - c\in U$ and $\alpha - a \in U$. Let $c= p^sc_1$ and $a= p^t a_1$, with $c_1, a_1\in \Z\setminus p\Z$. Then $t= v_p(a) = v_p(\alpha)< k$ and $ s= v_p(c) = v_p(\eta) \geq t$. As $p$ does not divide $a_1$, there exists $z\in \Z$ such that $a_1z - p^{s-t}c_1 \in p^{k-t}\Z$. Multiplying by $p^t$ we get
$$
za - c =  az - p^sc_1 \in  p^{k}\Z.
$$
Now
$$
\eta - z\alpha= (\eta - c) + (c-za) + (za - z\alpha) \in p^k \Z_p,
$$
i.e., $v_p(\eta - z\alpha)\geq k$.
\end{proof}

Let $A = (\alpha_i)$ be an $f_\omega$-summable set of $H$ witnessing that $H$ is not TAP. Since $A$ must be a null sequence, by taking eventually a subsequence we
may assume, without loss of  generality, that the sequence $v_p(\alpha_n)$ strictly increases. Let $m= v_p(\alpha_0)$. We shall
prove that $p^m \Z_p \subseteq H$. Take any $\eta \in p^m \Z_p$. Then $v_p(\eta) \geq m= v_p(\alpha_0)< v_p(\alpha_1)$. By Claim
\ref{claim:2} there exists $z_0 \in \Z$ such that $\eta_1= \eta - z_0 \alpha_1$ satisfies $v_p(\eta_1)\geq v_p(\alpha_1)$. Applying again Claim \ref{claim:2} to $\eta_1$ and $\alpha_1$, we find $z_1\in\Z$ such that $v_p(\eta_1 - z_1)\geq v_p(\alpha_2)$, etc. We build by induction a sequence $\{z_i:i\in \N\}$ of integers such that, with $x_k = \sum_{i=1}^kz_i\alpha_i$,
\begin{equation}
\label{eq:dagger1}
v_p(\eta - x_k)\geq v_p(\alpha_{k+1})
\end{equation}
holds for every $k$. By the definition of $f_\omega$-summable set this sequence is convergent in $H$. Let $h= \lim_k x_k\in H$. Since the sequence $\eta-x_k$ converges to 0 by \eqref{eq:dagger1}, we deduce that $\eta = h \in H$. This proves the inclusion  $p^m \Z_p \subseteq H$. Since  $p^m \Z_p$ is open, this implies also that $H$ is open (hence clopen
and consequently compact). Since the quotient $\Z_p /p^m\Z_p$ is isomorphic to the cyclic group $\Z(p^m)$, this implies that $H= p^n \Z_p$ for some natural number $n\leq m$.
\end{proof}

\begin{corollary}
\label{characterization:of:subgroups:of:Z_p:that:are:TAP}
Let $p$ be a prime. A non-zero subgroup $G$ of $\Z_p$ is TAP if and only if $G\not=p^n \Z_p$ for every $n\in\N$.
\end{corollary}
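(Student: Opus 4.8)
The plan is to obtain both implications directly from Proposition~\ref{non-TAP:subgroups:of:Z_p}, after the preliminary observation that $\Z_p$ is torsion-free, so that every non-zero subgroup $G$ of $\Z_p$ is automatically infinite.

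For the implication ``if $G$ is TAP, then $G\neq p^n\Z_p$ for every $n$'', I would argue by contraposition. Suppose $G$ is not TAP. Since $G$ is infinite, Proposition~\ref{non-TAP:subgroups:of:Z_p} applies verbatim and produces an $n\in\N$ with $G=p^n\Z_p$, which is exactly the negation of the right-hand side. Thus the contrapositive, and hence this implication, is established.

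For the converse, suppose $G=p^n\Z_p$ for some $n\in\N$. The map $\xi\mapsto p^n\xi$ is a topological group isomorphism of $\Z_p$ onto $p^n\Z_p$, and TAP is clearly invariant under topological isomorphism (an isomorphism carries an $f_\omega$-productive set to an $f_\omega$-productive set, cf.\ Fact~\ref{cont:hom:image:of:AP:is:AP}). Hence it suffices to show that $\Z_p$ itself is not TAP. Now $\Z_p$ is an infinite compact totally disconnected group, so it is a non-discrete zero-dimensional locally compact group. By the ``moreover'' part of Theorem~\ref{zerodim:TAP:compact:is:finite} (equivalently, by the equivalence (a)$\leftrightarrow$(c) of Theorem~\ref{totally:diconnected:locally:compact:TAP:is:discrete} applied to the non-discrete totally disconnected locally compact group $\Z_p$), the group $\Z_p$ contains an $f_\omega$-productive set and is therefore not TAP. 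Consequently $G\cong\Z_p$ is not TAP either.

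Since Proposition~\ref{non-TAP:subgroups:of:Z_p} already carries all the technical weight, there is no genuine obstacle in this corollary; the only points requiring care are the remark that torsion-freeness upgrades ``non-zero'' to ``infinite'' (so that the Proposition is applicable in the first implication) and the identification $p^n\Z_p\cong\Z_p$ that reduces the converse to the single statement that $\Z_p$ is not TAP.
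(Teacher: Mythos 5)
Your proof is correct and is essentially the paper's intended derivation: the paper states this corollary without proof, precisely because it follows immediately from Proposition~\ref{non-TAP:subgroups:of:Z_p} (applicable since torsion-freeness of $\Z_p$ makes every non-zero subgroup infinite) together with the fact that $p^n\Z_p\cong\Z_p$ is not TAP. One small slip in labelling: your first paragraph (not TAP $\Rightarrow$ $G=p^n\Z_p$) is actually the contrapositive of the ``if'' direction, and your second paragraph proves the ``only if'' direction, so the two directions are swapped relative to how you announce them --- but both implications are established, and the equivalence stands.
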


\begin{corollary} Let $p$ be a prime.  Every non-TAP subgroup of $\Z_p$ is isomorphic to $\Z_p$ (so is compact).
\end{corollary}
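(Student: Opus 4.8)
The plan is to deduce this corollary almost directly from Proposition~\ref{non-TAP:subgroups:of:Z_p}, so the only real content is an elementary topological identification. First I would record that every non-TAP subgroup $H$ of $\Z_p$ is necessarily infinite: any finite subgroup has cardinality $<\cont$ and is therefore TAP by Corollary~\ref{|G|<cont:is:TAP} (in particular the trivial subgroup is TAP). Hence a subgroup witnessing the failure of the TAP property cannot be finite. With $H$ known to be infinite and non-TAP, Proposition~\ref{non-TAP:subgroups:of:Z_p} applies and yields $H=p^n\Z_p$ for some $n\in\N$.

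It then remains to check that $p^n\Z_p$, carrying the subspace topology inherited from $\Z_p$, is topologically isomorphic to $\Z_p$ itself. For this I would use the multiplication-by-$p^n$ map $\varphi:\Z_p\to p^n\Z_p$, $\varphi(x)=p^n x$. This $\varphi$ is an algebraic isomorphism onto $p^n\Z_p$: it is injective because $\Z_p$ is an integral domain and $p^n\neq 0$, and it is surjective onto $p^n\Z_p$ by the very definition of that subgroup. It is continuous because the ring multiplication of $\Z_p$ is continuous. Since $\Z_p$ is compact and $p^n\Z_p$ is Hausdorff, $\varphi$ is a continuous bijection from a compact space onto a Hausdorff space, hence a homeomorphism; thus $\varphi$ is a topological isomorphism and $H=p^n\Z_p\cong\Z_p$. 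Compactness of $H$ then follows at once, either because topological isomorphisms preserve compactness or because $p^n\Z_p$ was already shown to be clopen, hence compact, in $\Z_p$ in the proof of Proposition~\ref{non-TAP:subgroups:of:Z_p}.

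I do not expect a genuine obstacle here: all the substantive work has been carried out in Proposition~\ref{non-TAP:subgroups:of:Z_p}, and what remains is routine. The one point deserving a moment's care is that the isomorphism must be topological rather than merely algebraic, so that compactness transfers; this is exactly what the compact-to-Hausdorff argument above guarantees.
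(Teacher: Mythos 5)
Your proposal is correct and follows exactly the route the paper intends: the corollary is stated without proof as an immediate consequence of Proposition~\ref{non-TAP:subgroups:of:Z_p}, and your argument simply fills in the routine details (finite subgroups are TAP by Corollary~\ref{|G|<cont:is:TAP}, so a non-TAP subgroup is infinite and hence equals some $p^n\Z_p$, which is topologically isomorphic to $\Z_p$ via multiplication by $p^n$). The compact-to-Hausdorff homeomorphism argument is a clean way to see that the isomorphism is topological, matching the paper's parenthetical observation in the proof of the proposition that $p^n\Z_p$ is clopen, hence compact.
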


The above corollary provides many examples of precompact TAP  subgroups of $\Z_p$ of size $\cont$. Indeed, since $\Z_p$ is $q$-divisible for every prime $q\ne p$,   any free subgroup of $\Z_p$ of size $\cont$ will be non-isomorphic to $\Z_p$, hence a TAP group of size $\cont$. 

As an application of our results, we offer a solution of a question from \cite{Sp}.

\begin{example}
Let $p$ be a prime number and $X=\{0\}\cup \{1/n:n\in \N\}$ a convergent sequence. Furthermore, let $G$ be any TAP subgroup of $\Z_p$ (for example, the cyclic group $\Z$ from Example \ref{question1} will do). Then {\em $G$ is a precompact metric TAP group and $X$ is a compact $G^{\star\star}$-regular space such that $C_p(X,G)$ is not TAP.\/}
Indeed, since
$$
\{0\}\times\prod_{n\in\N} p^nG = \{f\in C_p(X,G):
f(0)=0
\mbox{ and }
f({1}/{n})\in p^n G
\mbox { for every }
n\in\N\}\subseteq C_p(X,G)
$$
and each $p^nG$ is nontrivial, $C_p(X,G)$ is not TAP.
\end{example}

This example significantly strengthens \cite[Theorem 6.8]{Sp} (since in that theorem $X$ was only countably compact and $G$ was not metrizable) and answers negatively \cite[Question 11.1]{Sp}.

\section{Groups containing no infinite products }

Here we consider the property of containing infinite products (of nontrivial groups) which is weaker than TAP, i.e., we have the implications

\medskip

\begin{center}
NSS  $\;\;\buildrel{(1)}\over\Longrightarrow \;\;$ TAP $ \;\;\buildrel{(2)}\over\Longrightarrow\;\;$  the group contains no infinite products
\end{center}
\medskip

To the invertibility of (1) was dedicated the first part of the paper, this section is dedicated to the discussion of when (2) is invertible.

\begin{example}\label{example1}
Let $p$ be a prime number and let $\J_p$ denote the group of $p$-adic integers. Then the group $\J_p$ is not TAP by Corollary \ref{linear:complete:corollary}.

 Note that the group $\J_p$ does not even contain a direct product $A \times B$ of two nontrivial subgroups equipped with the product topology.  Indeed, assume that $\J_p$ contains the direct product $P= A \times B$ of two nontrivial subgroups $A$, $B$, each equipped with induced topology and $P$ equipped with the product topology. Then the completion $\widetilde{A}$ of $A$, being isomorphic to a non-zero closed subgroup of $\J_p$, is isomorphic to $\J_p$ itself. Analogously, $\widetilde{B} \cong \J_p$ and $\widetilde{P}\cong \J_p$. On the other hand, $\widetilde{P}= \widetilde{A} \times \widetilde{B} \cong \J_p^2$. To get a contradiction, it remains to see that $\J_p^2\not \cong \J_p$. Indeed, any isomorphism $\J_p^2\cong \J_p$ produces in obvious way isomorphisms $p\J_p^2\cong p\J_p$ and $\J_p^2/p\J_p^2\cong \J_p/p\J_p$. This  leads to a contradiction as $\J_p^2/p\J_p^2\cong \Z(p)^2$ and $\J_p/p\J_p\cong \Z(p)$.
\end{example}

\begin{example} Some non-NSS groups contain infinite products of nontrivial groups. For example the infinite permutation groups $S(X)$ have this property being totally minimal at the same time. (Take a partition $X = \bigcup_{n=1} X_n$ into infinite subsets. Then the infinite product $\prod_n S(X_n)$ is isomorphic to a subgroup of $S(X)$.)
\end{example}

The next theorem gives a complete characterization of the compact abelian groups that contain no infinite products of nontrivial groups.

\begin{theorem} For a compact abelian group $G$ the following are equivalent:
\label{compact:abelian:groups:without:infinite:products}
\begin{itemize}
\item[(a)] $G $ contains no infinite products of nontrivial groups;
\item[(b)] $d=\dim G$ is finite and for every continuous surjective homomorphism $f: G \to \T^d$ one has $\ker f \cong F \times \prod_{k=1}^n\J_{p_k}$, where $F$ is a finite abelian group and $p_1, \ldots, p_n$ not necessarily distinct primes;
\item[(c)] there exists a continuous surjective homomorphism $f: G \to \T^d$ such that $\ker f \cong F \times \prod_{k=1}^n\J_{p_k}$, where $F$ is a finite abelian group and $p_1, \ldots, p_n$ not necessarily distinct primes.
\end{itemize}
\end{theorem}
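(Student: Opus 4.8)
The plan is to exploit Pontryagin duality together with the structure theory of compact abelian groups. Recall that for a compact abelian group $G$, the dual $\widehat{G}$ is a discrete abelian group, and $G$ contains an infinite product $\prod_n H_n$ of nontrivial (topological) subgroups precisely when $\widehat{G}$ contains an infinite direct sum $\bigoplus_n K_n$ of nontrivial subgroups as a (topological, but duals are discrete) direct summand, or more precisely when $\widehat{G}$ surjects onto such a direct sum. First I would translate condition (a) into a statement about the discrete dual $\widehat{G}$: the group $G$ contains no infinite product of nontrivial groups if and only if $\widehat{G}$ has no infinite independent family of elements generating a direct summand, which for discrete abelian groups is closely tied to $\widehat{G}$ having finite torsion-free rank and bounded ``spread'' in its torsion part.

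\textbf{The implications (b)$\to$(c) and (c)$\to$(a).}
The implication (b)$\to$(c) is immediate: (b) asserts the structure of $\ker f$ for \emph{every} such $f$, and since $\dim G = d$ is finite there exists at least one continuous surjection $f\colon G\to\T^d$ (dualize a maximal independent family of characters of infinite order), so (c) follows by specialization. For (c)$\to$(a) I would argue that if $\ker f\cong F\times\prod_{k=1}^n\J_{p_k}$, then $G$ is an extension of the compact group $\T^d$ by a group of the stated form, and neither $\T^d$, nor $F$, nor a finite product of the $\J_{p_k}$ contains an infinite product of nontrivial groups. The key computation here reuses the argument of Example \ref{example1}: each $\J_{p_k}$ fails to contain even a product of two nontrivial subgroups, and a finite product of such groups together with $\T^d$ and $F$ inherits this obstruction. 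One must check that an infinite product inside $G$ would push down, via $f$, to an infinite product in $\T^d$ modulo finitely many coordinates, forcing all but finitely many factors into $\ker f$, contradicting the structure of the kernel.

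\textbf{The main obstacle: the implication (a)$\to$(b).}
This is where the real work lies. Assuming $G$ contains no infinite product of nontrivial groups, I would first show $d=\dim G<\infty$: if $\dim G$ were infinite, duality gives that $\widehat{G}$ has infinite torsion-free rank, and one can extract from an infinite independent family of characters a continuous surjection of $G$ onto $\T^\omega$, which manifestly contains $\prod_n\T$, a contradiction. Having fixed $d<\infty$, take \emph{any} continuous surjection $f\colon G\to\T^d$; the kernel $N=\ker f$ is a compact abelian group with $\dim N=0$, hence $N$ is totally disconnected, i.e. profinite. The task reduces to showing that a profinite abelian group $N$ containing no infinite product of nontrivial groups must be isomorphic to $F\times\prod_{k=1}^n\J_{p_k}$. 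For this I would pass to the discrete dual $\widehat{N}$, which is a torsion abelian group (since $N$ is totally disconnected), decompose it into $p$-primary components $\bigoplus_p \widehat{N}_p$, and argue that the no-infinite-product condition forces only finitely many primes to occur and forces each $p$-component $\widehat{N}_p$ to have finite rank of a controlled type — specifically to be a finite direct sum of cyclic groups and Pr\"ufer groups $\Z(p^\infty)$, whose duals are finite cyclic groups and copies of $\J_p$ respectively. The delicate step is ruling out an unbounded infinite elementary component (such as $\bigoplus_\omega \Z(p)$ in $\widehat{N}_p$, dualizing to $\prod_\omega\Z(p)\subseteq N$) and an infinite-rank divisible or reduced part; each such configuration yields an infinite product in $N\subseteq G$. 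I expect the careful bookkeeping of the torsion dual — separating divisible (Pr\"ufer) summands from reduced cyclic summands and bounding the total number across all primes — to be the principal technical hurdle, and I would lean on the standard structure theorem for torsion abelian groups together with the duality $\widehat{\Z(p^\infty)}\cong\J_p$ and $\widehat{\Z(p^k)}\cong\Z(p^k)$ to assemble the final form $F\times\prod_{k=1}^n\J_{p_k}$.
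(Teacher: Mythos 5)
There are genuine gaps, and the most serious one recurs twice: you conflate quotients of $G$ with subgroups of $G$ under Pontryagin duality. In your argument that $\dim G<\infty$, you say that a continuous surjection $G\to\T^\omega$ ``manifestly contains $\prod_n\T$'' --- but condition (a) concerns subgroups of $G$, and a quotient of $G$ is not a subgroup; the existence of $\T^\omega$ as a quotient does not by itself place any infinite product inside $G$. The paper has to work for this step: it takes a closed subgroup $N\cong\J_p^\omega$ of $\T^\omega$, pulls it back to $L=f^{-1}(N)$, and uses that $\widehat{N}\cong\bigoplus_\omega\Z(p^\infty)$ is divisible, hence splits off as a direct summand of $\widehat{L}$; only after dualizing this splitting does one get $L\cong L_1\times\J_p^\omega$ as an actual subgroup of $G$. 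The same conflation appears in your analysis of the profinite kernel: a subgroup $\bigoplus_\omega\Z(p)$ of $\widehat{N}_p$ dualizes to a \emph{quotient} $\prod_\omega\Z(p)$ of $N_p$, not to a subgroup of $N$. To produce subgroups of $N$ you would need infinite direct sums as quotients of $\widehat{N}_p$, which requires basic-subgroup/purity arguments (e.g.\ $A/pA\cong B/pB$ for $B$ basic in $A$) that you do not supply; also, ``the standard structure theorem for torsion abelian groups'' you lean on does not exist in the generality you need. The paper sidesteps all of this by working directly inside $N_p$: the socle $N_p[p]$ is a \emph{closed subgroup} topologically isomorphic to the product $\Z(p)^{r_p(N_p)}$, so infinite $p$-rank immediately produces an infinite product in $G$, and the rest follows from a quoted structure result giving a closed subgroup $R_p\cong\J_p^{\kappa_p}$ with controlled quotient.

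The implication (c)$\to$(a) also has a missing idea. Your dichotomy --- either the product pushes down to an infinite product in $\T^d$ ``modulo finitely many coordinates,'' or all but finitely many factors lie in $\ker f$ --- is not valid as stated: a factor $A_n$ can meet $\ker f$ in a proper nontrivial subgroup, so it is neither contained in $\ker f$ nor mapped injectively into $\T^d$. Moreover, your inheritance claim (that a finite product of groups, each containing no product of two nontrivial subgroups, contains no infinite products) is false as stated: $\J_p\times\J_q$ is itself a product of two nontrivial subgroups. The paper's proof supplies exactly what is missing here: first replace all factors by their closures (using completions) so they are compact; observe that $N=\ker f\cong F\times\prod_{k=1}^n\J_{p_k}$ satisfies the ascending chain condition on closed subgroups because its dual $F\times\bigoplus_{k=1}^n\Z(p_k^\infty)$ satisfies the descending chain condition; partition the index set into infinitely many infinite blocks $B_n=\prod_{m\in I_n}A_m$; if every $B_n$ met $N$ nontrivially, the subgroups $N_1\times\cdots\times N_n$ would form a strictly ascending chain of closed subgroups of $N$, a contradiction, so some block $B_n$ (itself an infinite product) meets $N$ trivially and hence maps injectively --- by compactness, topologically isomorphically --- into $\T^d$, contradicting that $\T^d$ is NSS. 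Without this blocking/chain-condition argument (or a substitute), your sketch of (c)$\to$(a) does not close.
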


\begin{proof} (a) $\to$ (b) Obviously, $G$ does not contain the infinite power $\J_p^\omega$ for all primes $p$. Let us see that this implies $d=\dim G< \infty$.
Since $\dim G$ coincides with the free-rank of  the Pontryagin dual $\widehat{G}$, it suffices to see that $r_0(\widehat{G})< \infty$. Arguing by
contradiction we assume that $r_0(\widehat{G})$ is infinite, hence there exists an injective homomorphism $\bigoplus_\omega \Z \to \widehat{G}$. Taking duals we obtain a
a continuous surjective homomorphism $f: G \to \T^\omega$. Since the subgroup $\Z(p^\infty)^\omega$ of $\T^\omega$ contains a copy of the group $\J_p$, we deduce that $\T^\omega$ contains a subgroup $N\cong \J_p^\omega$. Consider the
inverse image $L= f^{-1}(N)$ of $N$ and the surjective restriction $h = f\restriction_L : L \to N$. Taking the Pontryagin duals we obtain an
injective homomorphism $\xi=\widehat{h}: \widehat{N} \to \widehat{L}$ with $\widehat{N} \cong \bigoplus_\omega\Z(p^\infty)$
divisible. Hence the subgroup $\xi(\widehat{N})$ of $\widehat{L}$ splits. Therefore, taking once more the Pontryagin duals, we conclude that $L \cong L_1 \times \J_p^\omega$, i.e., $L$ (hence, $G$ itself) contains the infinite power $\J_p^\omega$ for all primes $p$. This contradiction with (a) proves that $d=\dim G$ is finite.

Now fix an arbitrary continuous surjective homomorphism $f: G \to \T^d$ and let  $N = \ker f$. Since $\dim G = \dim \T^d = d$, it follows by Yamanashita's theorem that $\dim N = 0$. Then $ N = \prod_p N_p$, where each $N_p$ is a pro-$p$-group (\cite[Example 4.1.3(a)]{DPS}). By our hypothesis (a) only finitely many of the groups $N_p$ are non-zero.

Our next step is to determine the structure of the single groups $N_p$. It is known that there exists a closed subgroup $R_p\cong \J_p^{\kappa_p}$ of $N_p$ such that
$\kappa_p$ is a cardinal, $N_p/R_p$ is a product of finite cyclic $p$-groups (in particular, $ t(N_p/R_p)$ is dense in $N_p/R_p$) and the quotient map $\eta: N_p \to N_p/R_p$ satisfies
$$
\eta(t(N_p) )= t(N_p/R_p)\eqno(*)
$$ 
(see \cite{DAGB}). Since $N_p$
is reduced, it contains no copies of $\Z(p^\infty)$. By our hypothesis (a), $N_p[p] \cong \Z(p)^{r_p(N_p)}$ is finite. Thus $r_p(N_p)$ is finite. Then the $p$-group $t(N_p)$ has the form
$t(N_p)\cong \Z(p^\infty)^m\times F_p$, for some $m\in \N$ and a finite $p$-group $F_p$ (\cite{Fu}). Since $t(N_p)$ is reduced as a subgroup $N_p$, it contains no copies of $\Z(p^\infty)$. This yields that $m=0$ and $t(N_p)=F_p$ is finite. Let $k= |t(N_p)|$. Then $kN_p$ is torsion-free and contains $kR_p\cong \J_p^{\kappa_p}$. Since $N_p$ does not contain infinite products, this implies that $\kappa_p$ is finite. By (*), $N_p/R_p= t(N_p/R_p)$ is finite. Hence, $N_p$ is a finitely generated $\J_p$-module of rank $\kappa_p$. Thus $N_p\cong  \J_p^{\kappa_p} \times B_p$ for some some finite $p$-group $B_p$.  This proves (b).

Obviously (b) $\to$ (c).

To prove (c) $\to$ (a) assume for a contradiction that there exists
a subgroup $A$ of $G$ such that  $A\cong P= \prod_n A_n$, where each
$A_n$ is a  nontrivial subgroup of $G$ and $P$ is equipped with the
product topology. Let $j: P \to A$ be this isomorphism. Then we can
extend $j$ to an isomorphism
 $\widetilde{j}:  \prod_n \widetilde{A_n} =\widetilde{P} \to  \widetilde{A} $ of  the completions. Since $G$ is compact, we can identify, without loss of  generality, the completion $\widetilde{A} $ of $A$ with its closure $\overline A$ in $G$. Analogously, we can identify $\widetilde{A_n}$ with the closure $\overline A_n$ of $A_n$ in $G$. In other words, we can replace the initial subgroup $A$ of $G$, isomorphic to an infinite product of nontrivial subgroup, with a {\em closed} subgroup $A$ of $G$, isomorphic to an infinite product of nontrivial closed subgroups of $G$ (equipped with the product topology).  Let $N = \ker f \cong F \times \prod_{k=1}^n\J_{p_k}$, as in (c). Next we note that the group $F \times \prod_{k=1}^n\J_{p_k}$ satisfies the ascending chain condition on closed subgroups since its Pontryagin dual $F \times \bigoplus_{k=1}^n\Z(p_k^\infty)$ satisfies the descending chain condition on subgroups \cite{Fu}.
Let $\bigcup_{n=1}^\infty I_n$ be a partition of $\N$ into infinite subsets $I_n$. For every $n\in \N$ let $B_n = \prod_{m\in I_n}A_m$. Assume that $N_n= B_n \cap N\ne 0$ for all $n\in \N$. Then the subgroups $L_n= N_1 \times N_2 \times \ldots \times N_n$ of $N$ form an infinite ascending chain, a contradiction. Hence some $N_n = 0$. Then the homomorphism
$f: G \to \T^d$ sends the infinite product $B_n$ monomorphically in $\T^d$. Since $B_n$ is compact, $f\restriction_{B_n}: B_n \to f(B_n)$ is an isomorphism, so
$\T^d$ contains infinite products. This contradicts the fact that $\T^d$ is NSS, since no  infinite product of groups can be NSS.
\end{proof}

In the next remark we discuss the relation of the property ``the group $G$ contains no infinite products" with the lattice theoretic properties of the lattice 
$\mathcal L(G)$ of closed subgroups of $G$. 

\begin{remark}
(a) We say that a topological group $G$ satisfies the {\em descending chain condition on closed subgroups } (briefly, D.C.C.) if every descending chain of closed subgroups of $G$ stabilizes. If $G$ is a compact abelian group, the lattice $\mathcal L(G)$
 is anti-isomorphic to the lattice $\mathcal L(\widehat{G} )$, where
  $ \widehat{G} $ is the discrete dual group of $G$. That is why, $G$ satisfies A.C.C. (resp., D.C.C) if and only if $\widehat{G} $ satisfies D.C.C. (resp., A.C.C.). It is well known, that $\widehat{G} $ is A.C.C. if and only if $\widehat{G} $ is finitely generated, while $A$ is D.C.C. if and only if $\widehat{G}  \cong F \times \bigoplus_{k=1}^n\Z(p_k^\infty)$ \cite{Fu}. So, $G$ is D.C.C. if and only if $G\cong \T^n \times F$ if and only if $G$ is NSS.

(b) One can wish to modify the argument of the proof of (c) $\to$ (a) to show that whenever $G$ is a compact topological group such that for some closed normal subgroup $N$ of $G$ satisfying D.C.C. the quotient group $G/N$ satisfies A.C.C., then the group $G$ does not contain of infinite direct products. This can be done as follows. Assume that an infinite
product $A\cong P= \prod_n A_n$ is contained in $G$ and arguing as in the above proof suppose that $A$ and $A_n$ are closed subgroups of $G$. Define $B_n$, $N_n$ and $L_n$ as above. Note that the direct sum $\bigoplus _n N_n= \bigcup _n L_n$ of the subgroups $N_n$ is contained in $N$ and as a subgroup of $P$ carries the product topology. So its closure in $G$ coincides with the product $\prod_n N_n$ and it must be contained in $N$ as $N$ is a closed subgroup. But then $N$ cannot satisfy $D.C.C.$, a contradiction. Hence $N_n$ is trivial for some $n$ and the argument can be concluded as above. The careful reader will notice that we did not use commutativity of $G$ in this argument. In case $G$ is also abelian, one can carry out the following shorter argument. According to item (a), $N\cong \T^n \times F$ is a Lie group. It is well known that the group $\T^n$ splits in every compact abelian group. Hence,
the subgroup $\T^n$ of $G$ splits, i.e., there exists a closed subgroup $L$ of $G$ (containing $F$), such that $G \cong \T^n \times L$. The quotient group $G/\T^n \cong L$
contains a finite subgroup (isomorphic to) $F$, such that $L/F\cong G/N$ is an A.C.C. group. Since $F$ is finite, we conclude that $L$ itself is A.C.C. So $G$ splits
into a direct product of a torus $\T^n$ and an A.C.C. group $F_1 \times \prod_{k=1}^n\J_{p_k}$, where $F_1$ is a finite group (containing $F$). Hence, $G$ is a direct product
of a Lie group $\T^n \times F_1$ and a group of the form $\prod_{k=1}^n\J_{p_k}$.
\end{remark}

The above theorem implies that the compact abelian group containing no infinite products are metrizable. The next corollary shows a more general property:

\begin{corollary}\label{no:infinite:products>metrizable}
Every locally compact abelian group containing no infinite products is metrizable.
\end{corollary}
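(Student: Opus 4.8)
The plan is to reduce the statement to the compact abelian case already settled in Theorem~\ref{compact:abelian:groups:without:infinite:products}, and then read metrizability directly off the explicit structure furnished there. First I would invoke the structure theorem for locally compact abelian groups used in the proof of Corollary~\ref{locally:compact:abelian:TAPgroup} (see \cite{DPS}): our group decomposes as $G\cong\R^m\times H$, where $m\in\N$ and $H$ contains an open compact subgroup $K$. Since $\R^m$ is second countable and $K$ is open in $H$, the neighborhood filter at the identity of $H$ coincides with that of $K$; hence $H$ is first countable (equivalently, by Birkhoff--Kakutani, metrizable) if and only if $K$ is, and consequently $G$ is metrizable if and only if the compact group $K$ is metrizable. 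This reduces everything to proving that $K$ is metrizable.

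Next I would use that the hypothesis is inherited by subgroups. If $K$ contained a copy of an infinite product $\prod_n A_n$ of nontrivial groups (with the product topology), then so would $G$; thus $K$ is a compact abelian group containing no infinite products, and Theorem~\ref{compact:abelian:groups:without:infinite:products} applies to it. We obtain that $d=\dim K$ is finite and that there is a continuous surjective homomorphism $f\colon K\to\T^d$ whose kernel $N=\ker f$ satisfies $N\cong F\times\prod_{k=1}^n\J_{p_k}$ for some finite abelian group $F$ and (not necessarily distinct) primes $p_1,\dots,p_n$.

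Finally I would verify metrizability of $K$ by a dual cardinality count, since a compact abelian group is metrizable precisely when its Pontryagin dual is countable. Applying the (exact) duality functor to the short exact sequence $N\hookrightarrow K\twoheadrightarrow\T^d$ produces a short exact sequence $\Z^d\hookrightarrow\widehat{K}\twoheadrightarrow\widehat{N}$. Here $\widehat{\T^d}\cong\Z^d$ is countable, while $\widehat{N}\cong\widehat{F}\times\bigoplus_{k=1}^n\Z(p_k^\infty)$ is a finite product of countable groups, hence countable. An extension of a countable group by a countable group is countable, so $\widehat{K}$ is countable and $K$ is metrizable; combined with the first paragraph this shows $G$ is metrizable.

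The two heredity/reduction observations are routine, and the whole content sits in the appeal to Theorem~\ref{compact:abelian:groups:without:infinite:products}, which supplies both the finiteness of $\dim K$ and the manifestly metrizable (indeed second countable) description of the kernel. I expect the only delicate point to be bookkeeping the metrizability transfer cleanly --- in particular justifying that metrizability of the open compact piece $K$ already forces metrizability of $G\cong\R^m\times H$ --- rather than any genuine estimate; once the structure of $K$ is available, its dual is visibly countable.
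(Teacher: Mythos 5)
Your proposal is correct and follows essentially the same route as the paper: reduce to the compact case via the structure theorem $G\cong\R^m\times H$ with $H$ containing an open compact subgroup $K$, then apply Theorem~\ref{compact:abelian:groups:without:infinite:products} to $K$. The only difference is that you spell out, via Pontryagin duality and countability of $\widehat{K}$, the step the paper leaves implicit (that the theorem forces compact abelian groups without infinite products to be metrizable), which is a welcome filling-in of detail rather than a different argument.
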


\begin{proof}
Note that a LCA group $G$ has the form $G = \R^n \times G_0$, where $G_0$ contains an open compact group $K$.  Since compact abelian group containing no infinite products are metrizable by the first part of the proof, we conclude that $G$ is metrizable as well.
\end{proof}

Next we discuss the connection between TAP and NSS for compact abelian group containing no infinite products.

\begin{corollary}\label{infinite:products}
Let $G$ be a compact abelian group containing no infinite products. Then there exists a finite set of primes $\P_G$ and  $k_p\in \N\setminus \{0\}$ for $p\in \P_G$, such that 
$G$ contains a closed subgroup $ N \cong \prod_{p\in \P_G}^n\J_p^{k_p}$,
 such that $G/N$ is NSS (i.e., $G/N$ is a Lie group). In this setting the following are equivalent:
\begin{itemize}
\item[(a)] $G$ is NSS (i.e., $G$ is a Lie group);
\item[(b)]  $G$ is TAP;
\item[(c)]  $\P_G= \emptyset$
\end{itemize}
\end{corollary}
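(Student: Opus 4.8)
The plan is to read everything off Theorem~\ref{compact:abelian:groups:without:infinite:products}, which already carries the structural content. Since $G$ is a compact abelian group containing no infinite products, condition~(a) of that theorem holds for $G$, hence so does condition~(c): there is a continuous surjective homomorphism $f\colon G\to\T^d$ with $\ker f\cong F\times\prod_{k=1}^n\J_{p_k}$, where $F$ is finite and $p_1,\dots,p_n$ are primes. First I would collect the factors sharing a prime: letting $\P_G$ be the (finite) set of distinct primes among $p_1,\dots,p_n$ and $k_p\ge 1$ the number of times $p$ occurs, one gets $\prod_{k=1}^n\J_{p_k}\cong\prod_{p\in\P_G}\J_p^{k_p}$. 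Transporting the direct factor $\prod_{k=1}^n\J_{p_k}$ of $\ker f$ back through the isomorphism produces a closed subgroup $N$ of $\ker f$ with $N\cong\prod_{p\in\P_G}\J_p^{k_p}$ and $\ker f/N\cong F$. As $N$ is closed in $\ker f$ and $\ker f$ is closed in $G$, $N$ is the desired closed subgroup of $G$.

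Next I would check that $G/N$ is NSS. The cleanest argument is via Pontryagin duality. The short exact sequence $0\to\ker f/N\to G/N\to G/\ker f\to 0$ reads $0\to F\to G/N\to\T^d\to 0$, and dualizing gives $0\to\Z^d\to\widehat{G/N}\to\widehat F\to 0$ with $\widehat F$ finite. Thus $\widehat{G/N}$ is finitely generated, so $G/N$ is a compact Lie group, equivalently an NSS group. This proves the first assertion of the corollary.

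For the equivalences I would close the cycle (a)$\to$(b)$\to$(c)$\to$(a). The implication (a)$\to$(b) is Fact~\ref{NSSjeTAP}. For (b)$\to$(c) I argue by contraposition: if $\P_G\neq\emptyset$, choose $p\in\P_G$; then $N$, and hence $G$, contains a copy of $\J_p$, which fails to be TAP by Example~\ref{example1} (via Corollary~\ref{linear:complete:corollary}). Since subgroups of TAP groups are TAP by Theorem~\ref{direct:sum:of:TAP:is:TAP}(ii), $G$ cannot be TAP. Finally, for (c)$\to$(a), if $\P_G=\emptyset$ then $\prod_{p\in\P_G}\J_p^{k_p}$ is trivial, so $N=\{e\}$ and $G=G/N$ is NSS by the first part.

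I expect the only non-formal step to be the verification that $G/N$ is NSS; the rest is bookkeeping on top of Theorem~\ref{compact:abelian:groups:without:infinite:products} together with the stability properties of the class TAP. The duality computation is the natural place for care, but it reduces to the standard fact that a compact abelian group is a Lie group exactly when its discrete dual is finitely generated.
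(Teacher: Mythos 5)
Your proof is correct and takes essentially the same route as the paper: both extract the decomposition $\ker f\cong F\times\prod_{k=1}^n\J_{p_k}$ from Theorem~\ref{compact:abelian:groups:without:infinite:products}, group the $p$-adic factors by prime to obtain $N$ and $\P_G$, and establish the cycle (a)$\to$(b)$\to$(c)$\to$(a) by the identical arguments (Fact~\ref{NSSjeTAP}; heredity of TAP together with the failure of TAP in $\J_p$; triviality of $N$ when $\P_G=\emptyset$). The only divergence is in verifying that $G/N$ is a Lie group: the paper asserts $G/N\cong F\times\T^d$ outright (slightly imprecise, since the extension $0\to F\to G/N\to\T^d\to 0$ need not split), whereas your duality argument --- that $\widehat{G/N}$ is finitely generated --- is the cleaner, fully correct justification of the same conclusion.
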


\begin{proof} 
According to Theorem \ref{compact:abelian:groups:without:infinite:products}(c), $d=\dim G< \infty$ and there exists a continuous surjective homomorphism $f: G \to \T^d$ such that $\ker f \cong F \times \prod_{k=1}^n\J_{p_k}$, where $F$ is a finite abelian group and $p_1, \ldots, p_n$ not necessarily distinct primes. Write the product $N=\prod_{k=1}^n\J_{p_k}$
as $N=\prod_{p\in \P_G}^n\J_p^{k_p}$ for an appropriate finite set $\P_G$ of primes and naturals $k_p\in \N\setminus \{0\}$. Note that $G/(F\times N)\cong \T^d$, hence
$G/N \cong F \times \T^d$ is a Lie group.  This concludes the proof of the first assertion. 

Obviously, (a) $\to $ (b).  (b) $\to $ (c). If $G$ is TAP, then $N$ is necessarily finite since no group $\J_p$ is TAP, while a subgroup of a TAP group is TAP. Since $N$ is finite if and only if $\P = \emptyset$, we are done. (b) $\to $ (a). From (c) we deduce that $N$ is finite. Since $G/N$ is a Lie group, we deduce that $G$ is a Lie group as well. Hence $G$ is NSS.
\end{proof}

\begin{corollary}
\label{compact:abelian:group:are:Lie}
 Let $G$ be a compact abelian group. Then $G$ is TAP if and only if $G$ is NSS (i.e., $G$ is a Lie group).
\end{corollary}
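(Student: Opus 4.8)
The easy direction is NSS $\Rightarrow$ TAP: this holds for every topological group by Fact \ref{NSSjeTAP}(a) and uses neither commutativity nor compactness. So the plan is to concentrate entirely on the converse, assuming that $G$ is a compact abelian TAP group and deducing that $G$ is NSS (equivalently, a Lie group).

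My strategy is to route the argument through Corollary \ref{infinite:products}, whose standing hypothesis is that $G$ contains no infinite products of nontrivial groups. Thus the first step is to verify that a TAP group cannot contain such a product. I would argue directly: if $G$ contained a topological copy of $\prod_{n\in\N} A_n$ with every $A_n$ nontrivial, I would pick a nontrivial $a_n\in A_n$ and let $b_n$ be the element equal to $a_n$ in the $n$-th coordinate and trivial in all others. For any bijection $\varphi:\N\to\N$ and any $z:\N\to\Z$, each fixed coordinate of a partial product $\prod_{i=0}^k b_{\varphi(i)}^{z_i}$ receives a nontrivial contribution from at most one factor, so these partial products stabilize coordinatewise and hence converge in the product topology. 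Therefore $\{b_n:n\in\N\}$ is an $f_\omega$-productive set and $\prod_{n\in\N}A_n$ is not TAP (this is precisely the easy ``if'' direction underlying Lemma \ref{char:of:ap:in:K}). Since TAP is inherited by subgroups by Theorem \ref{direct:sum:of:TAP:is:TAP}(ii), the group $G$ would fail to be TAP, a contradiction. Hence the compact abelian TAP group $G$ contains no infinite products.

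With the hypothesis secured, I would simply invoke Corollary \ref{infinite:products}: for a compact abelian group containing no infinite products, the conditions ``$G$ is NSS'' and ``$G$ is TAP'' are equivalent (both being equivalent to $\P_G=\emptyset$). Since $G$ is TAP by assumption, it is NSS, i.e.\ a Lie group, which finishes the converse and the proof.

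The point worth stressing is that essentially all of the genuine work has already been carried out upstream, so at this stage there is no serious obstacle left. The substantive content lives in Theorem \ref{compact:abelian:groups:without:infinite:products} and its Corollary \ref{infinite:products}: the Pontryagin-duality decomposition of a finite-dimensional compact abelian group as an extension of $\T^d$ by a zero-dimensional kernel of the form $F\times\prod_{k}\J_{p_k}$, together with the observation that TAP forces the $p$-adic factors to vanish (because $\J_p$ is not TAP by Corollary \ref{linear:complete:corollary}, and TAP passes to subgroups). Were one to attempt the corollary from scratch, this structural analysis of the kernel would be the only hard part; given the machinery already established, the corollary reduces to the two bookkeeping steps above.
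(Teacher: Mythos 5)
Your proof is correct and takes essentially the same route as the paper, whose entire proof reads: ``Since both TAP and NSS imply that $G$ contains no infinite products, Corollary \ref{infinite:products} applies.'' The only difference is that you spell out the coordinatewise-stabilization argument showing a TAP group cannot contain an infinite product of nontrivial groups (together with heredity of TAP to subgroups), a step the paper treats as known.
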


\begin{proof} Since both TAP and NSS imply that $G$ contains no infinite products, Corollary \ref{infinite:products} applies. \end{proof}

In the next remark we discuss uniqueness of the set $\P_G$ and the multiplicities $k_p$, as well as of the subgroups $N$.  

\begin{remark}\label{The:filter}
The closed subgroup $N$ in Corollary \ref{infinite:products} is not uniquely determined. 
(Indeed, take $G = \Z_p \times \T$. Then both $N_1= \Z_p \times \{0\}$ and $N_2= pN_1 + \langle (1, a)\rangle$, where $a\in \T$ has $o(a)=p$, satisfy $G/N_1\cong G/N_2\cong \T$.)
The family $\mathfrak J_G$ of all subgroups $N$ of $G$ with this property has following properties:
\begin{itemize}
\item[(a)]  the set $\P_G$ and the multiplicities $k_p$ are uniquely determined by $G$ (i.e., all $N \in \mathfrak J_G$ have the same $\P_G$ and multiplicities $k_p$);
\item[(b)]  if $N_1, N_2 \in{\mathfrak{J}}_G$, then  $N_1\cong N_2$, $N_1\cap N_2 \in \mathfrak{J}_G$ 
 (i.e., $\mathfrak{J}_G$ ``behaves" as a filter), and
$N_1 \cap N_2$ is a finite-index subgroup of $N_1$ and $ N_2$.
\end{itemize}
Inspired by the above corollary and item (ii) above, we shall call any member $N$ of the family $\mathfrak J_G$ {\em TAP-defect} of $G$ (as $G/N$ is TAP, but $G/L$ is not a TAP
group for any closed subgroup $L$ of $N$ of infinite index).
\end{remark}

The next somewhat surprising result shows that a relatively innocent looking property as TAP may have a very strong impact on the algebraic structure of a minimal group.

\begin{proposition}\label{almost:torsion:free}
Let $G$ be a minimal abelian group.  If $G$  contain no infinite
products of nontrivial groups (in particular, if $G$ is TAP), then
$G$ is almost torsion-free.
\end{proposition}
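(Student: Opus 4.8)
The plan is to reduce the question to the completion of $G$ and then exploit minimality through essentiality. Recall that $G$ is \emph{almost torsion-free} when its $p$-rank $r_p(G)=\dim_{\mathbb F_p}G[p]$ is finite for every prime $p$, where $G[p]=\{x\in G:px=0\}$; so the task is to bound each $p$-socle. By the Prodanov--Stoyanov theorem every minimal abelian group is precompact, hence its completion $K=\widetilde G$ is a compact abelian group; moreover, minimality of the dense subgroup $G$ of $K$ is equivalent to $G$ being \emph{essential} in $K$, i.e.\ $G\cap N\neq\{0\}$ for every nontrivial closed subgroup $N$ of $K$ (see \cite{DPS}). The hypothesis covers the TAP case automatically, since a TAP group contains no infinite products by implication (2) above; so it suffices to treat groups containing no infinite products.

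The key step is to show that essentiality forces $G$ to contain the entire $p$-socle $K[p]$ of the completion. Indeed, for each nonzero $v\in K[p]$ the cyclic group $\langle v\rangle$ has prime order $p$, hence is finite and therefore closed in the Hausdorff group $K$. Essentiality yields $G\cap\langle v\rangle\neq\{0\}$, and since $\langle v\rangle$ has no proper nontrivial subgroup we obtain $v\in G$. Thus $K[p]\subseteq G$, whence $G[p]=K[p]$ as topological groups; in particular $G$ contains $K[p]$ as a \emph{compact} subgroup.

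To finish, I would argue by contradiction. Suppose $r_p(G)=\infty$ for some prime $p$. Then $K[p]=G[p]$ is an infinite compact group annihilated by $p$, so its Pontryagin dual is an infinite discrete $\mathbb F_p$-vector space, and hence $K[p]$ is topologically isomorphic to $\Z(p)^{\kappa}$ for some infinite cardinal $\kappa$. Splitting off a countable subset of the index set exhibits $\Z(p)^{\omega}$ as a topological direct factor of $K[p]\subseteq G$, that is, as a closed subgroup carrying the product topology. This is an infinite product $\prod_{n\in\N}\Z(p)$ of nontrivial groups lying inside $G$, contradicting the hypothesis. Therefore $r_p(G)<\infty$ for every prime $p$, i.e.\ $G$ is almost torsion-free.

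The main obstacle I anticipate is the essentiality argument of the second paragraph: the crucial and slightly surprising point is that an essential subgroup must contain every finite (hence closed) subgroup of prime order, which upgrades the potentially ill-behaved dense subspace $G[p]$ into the full compact socle $K[p]$. Once compactness of the socle is secured, the appearance of an infinite product is a routine consequence of the structure theory of compact abelian groups, so I expect no further difficulty.
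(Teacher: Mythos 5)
Your proof is correct and follows essentially the same route as the paper's: minimality plus the Prodanov--Stoyanov theorem reduces the problem to essentiality of $G$ in its compact completion $K$, essentiality forces $K[p]\subseteq G$, and since $K[p]$ is topologically a power of $\Z(p)$, the no-infinite-products hypothesis makes $K[p]$ finite, bounding $r_p(G)$. The paper's proof is simply a terser version of yours---it cites Theorem \ref{minimality-criterion} for the inclusion $K[p]\subseteq G$ and asserts the product structure of $K[p]$ without the duality computation, both of which you spell out correctly.
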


\begin{proof}
We have to show that $r_p(G)$ is finite for all primes $p$. According to Theorem  \ref{minimality-criterion}, $G$ contains the closed subgroup $K[p]$ of its compact completion. As $K[p]$ is topologically isomorphic to a product of copies of $\Z(p)$, our hypothesis on $G$ yields that $K[p]$ is finite. This proves that $r_p(K)=r_p(K)$ are finite.
\end{proof}

\begin{corollary} For minimal torsion group $G$ the following properties are equivalent:
\begin{itemize}
\item[(a)]  $G$  contains no infinite products of nontrivial groups;
\item[(b)]  $G$ is TAP.
\end{itemize}
\end{corollary}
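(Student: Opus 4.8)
The plan is to prove the two implications separately. The implication (b)$\to$(a) is immediate: it is the already established implication (2) of this section, that every TAP group contains no infinite products of nontrivial groups. So the content lies in (a)$\to$(b), which I would prove in contrapositive form: assuming $G$ is a torsion abelian group that is \emph{not} TAP, I would exhibit a subgroup of $G$ topologically isomorphic to an infinite product of nontrivial groups. (Minimality will not actually be needed for this direction; it enters only through Proposition~\ref{almost:torsion:free}, which motivates placing the statement here.) Fix a faithfully indexed $f_\omega$-summable (i.e.\ $f_\omega$-productive) set $A=\{a_n:n\in\N\}$ witnessing that $G$ is not TAP; discarding at most one term we may assume $a_n\neq 0$, and we write $m_n=|\langle a_n\rangle|\ge 2$ for the finite order of $a_n$. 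Since $\sum_n c_na_n$ converges in $G$ for every choice of integers $c_n$ (this is $f_\omega$-summability), and since $m_na_n=0$, the assignment $\Phi\big((\overline{c_n})_n\big)=\sum_n c_na_n$ is a well-defined homomorphism $\Phi:\prod_{n\in\N}\Z(m_n)\to G$.

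The key step, and the main obstacle, is to show that $\Phi$ is continuous, equivalently that $A$ is summable \emph{uniformly} in the coefficients: for every neighbourhood $U$ of $0$ there is $N$ with $\sum_{i=l}^{m}z(i)a_i\in U$ for \emph{all} integer sequences $z$ and all $N\le l\le m$. I would derive this uniformity from the Cauchy criterion (Lemma~\ref{left:cauchy:condition}) by a concatenation argument: if the uniform statement failed for some $U$, then for each bound $N$ one could find a finite block of integer coefficients supported beyond $N$ whose partial sum escapes $U$; choosing such blocks with pairwise disjoint, increasing supports and assembling them into a single integer sequence $z$ would produce a sequence violating the Cauchy criterion for that one $z$, contradicting the $f_\omega$-Cauchy summability of $A$. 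Granting continuity, the domain $\prod_n\Z(m_n)$ is compact, so $C=\Phi\big(\prod_n\Z(m_n)\big)$ is a compact subgroup of $G$; it is infinite because $\Phi$ sends the $n$-th generator to $a_n$ and the $a_n$ are distinct.

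Finally I would analyse the structure of $C$. As a subgroup of the torsion group $G$, the compact group $C$ is torsion, and a compact torsion abelian group is bounded (write $C=\bigcup_n C[n]$ as a countable union of closed subgroups and apply the Baire category theorem to find an open, hence finite-index, $C[n]$). A bounded abelian group is a direct sum of finite cyclic groups, so the discrete Pontryagin dual $\widehat C$ is bounded, whence $\widehat C\cong\bigoplus_{i\in I}\Z(d_i)$; as $C$ is infinite the index set $I$ is infinite, and dualizing gives $C\cong\prod_{i\in I}\Z(d_i)$, an infinite product of nontrivial finite groups sitting inside $G$. This contradicts (a) and completes the proof that (a)$\to$(b). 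Alternatively, once $C$ is seen to be an infinite compact abelian group which, being torsion, cannot be a Lie group, one may invoke Corollary~\ref{infinite:products} (or Corollary~\ref{compact:abelian:group:are:Lie}) directly to conclude that $C$, and hence $G$, contains an infinite product.
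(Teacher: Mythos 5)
Your proof is correct, but it is a genuinely different argument from the one in the paper. The paper's proof of (a)$\to$(b) is three lines long and uses minimality in an essential way: by Proposition~\ref{almost:torsion:free} (which rests on the minimality criterion, Theorem~\ref{minimality-criterion}, via essentiality of $G$ in its compact completion), the hypothesis (a) forces all $p$-ranks $r_p(G)$ to be finite; a torsion abelian group with finite $p$-ranks is countable; and countable groups are TAP by Corollary~\ref{|G|<cont:is:TAP}. You instead prove the contrapositive without using minimality at all: from a non-TAP witness $\{a_n\}$ you extract, via a gliding-hump argument, the \emph{uniform} Cauchy condition (uniform over all integer coefficient sequences), which is exactly what makes the map $\Phi\colon\prod_n\Z(m_n)\to G$ continuous; the image $C$ is then an infinite compact torsion abelian subgroup, and Baire category plus Pr\"ufer's theorem plus Pontryagin duality identify $C$ itself as an infinite product of nontrivial finite cyclic groups. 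Your concatenation argument for uniformity is sound (failure of uniformity yields disjoint blocks which assemble into a single coefficient sequence violating Lemma~\ref{Cauchy:productive:criterion}), and the structure theory is applied correctly, including the correct handling of the fact that $\Phi$ need not be injective. The trade-off: the paper's route is far shorter given its earlier machinery and additionally shows $G$ is countable, whereas your route proves a strictly stronger statement --- the equivalence (a)$\Leftrightarrow$(b) holds for \emph{every} Hausdorff torsion abelian group, so minimality is superfluous in this corollary. One minor caution: your parenthetical alternative ending via Corollary~\ref{compact:abelian:group:are:Lie} alone is too loose, since ``$C$ is not TAP'' does not by itself produce an infinite product inside $C$; the conclusion does follow from Corollary~\ref{infinite:products} (torsion forces $\P_C=\emptyset$, hence $C$ would be a Lie group, contradicting that $C$ is infinite and torsion), or, as in your main line, from the Pr\"ufer--duality decomposition.
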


\begin{proof} The implication (b) $\to $ (a) is known.

Assume that (a) holds. Then by Proposition \ref{almost:torsion:free} the group $G$ is almost torsion-free. Since $G$ is also torsion, this yields that $G$ is countable. Hence $G$ is TAP by Corollary  \ref{|G|<cont:is:TAP}. 
\end{proof}

\begin{theorem}
\label{sequentially:complete:minimal:abelian:groups:INFPROD}
A sequentially complete minimal abelian group without infinite products of non-trivial subgroups is compact.
\end{theorem}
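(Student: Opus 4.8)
The plan is to pass to the (compact) completion $K$ of $G$ and show that $G$ must coincide with $K$. Since $G$ is minimal abelian it is precompact, so its Raikov completion $K$ is a compact abelian group in which $G$ sits as a dense subgroup; moreover, by the minimality criterion (Theorem~\ref{minimality-criterion}), $G$ is \emph{essential} in $K$, i.e.\ $G\cap N\neq\{0\}$ for every nontrivial closed subgroup $N$ of $K$. The argument then has three steps: (a) show that $K$ itself contains no infinite product of nontrivial subgroups; (b) invoke Theorem~\ref{compact:abelian:groups:without:infinite:products}, which forces such a compact $K$ to be metrizable; (c) conclude from sequential completeness that the dense subgroup $G$ equals the complete metrizable group $K$, hence is compact.

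The heart of the argument is step~(a), and this is where I expect the main difficulty to lie. I would argue by contraposition: assuming $K$ contains an infinite product $\prod_{n\in\N}A_n$ of nontrivial subgroups, I will produce such a product \emph{inside} $G$, contradicting the hypothesis on $G$. After replacing each $A_n$ by its closure (a routine reduction in the compact group $K$, which keeps the factors independent and nontrivial), essentiality of $G$ lets me pick $h_n\in G\cap A_n$ with $h_n\neq e$ for every $n\in\N$. The key observation is that sequential completeness upgrades these generators into the full product: for an arbitrary $(z_n)_{n\in\N}\in\Z^{\N}$ the partial sums $s_m=\sum_{n\le m}z_nh_n$ lie in $G$, and since $h_n$ sits in the $n$-th factor, these partial sums converge coordinatewise, hence in the product topology of $\prod_n A_n$ and thus in $K$, to $(z_nh_n)_{n\in\N}$. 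Consequently $\{s_m:m\in\N\}$ is a Cauchy sequence in $G$, and sequential completeness yields $(z_nh_n)_{n\in\N}\in G$. As $(z_n)_{n\in\N}$ was arbitrary, the subgroup $\prod_{n\in\N}\langle h_n\rangle$ is contained in $G$, and its subspace topology is exactly the product topology of the nontrivial subgroups $\langle h_n\rangle$; thus $G$ contains an infinite product, the desired contradiction. I regard locating this ``partial-sum'' trick --- and verifying that independence of the factors survives passage to closures and that convergence of the partial sums is genuinely coordinatewise --- as the only delicate points.

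Granting step~(a), step~(b) is immediate: a compact abelian group containing no infinite product of nontrivial subgroups is metrizable by Theorem~\ref{compact:abelian:groups:without:infinite:products}. For step~(c), $K$ is now a complete metrizable (indeed compact) group in which $G$ is dense; in a metrizable group sequential completeness coincides with completeness, so $G$ is complete and therefore closed in $K$. A dense closed subgroup equals the whole group, whence $G=K$ is compact, as required.
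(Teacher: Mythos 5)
Your proof is correct and follows essentially the same route as the paper's: pass to the compact completion $K$, use the minimality criterion to intersect $G$ nontrivially with each closed factor $A_n$, use sequential completeness to force an infinite product of nontrivial subgroups inside $G$ (a contradiction), then apply Theorem~\ref{compact:abelian:groups:without:infinite:products} to conclude that $K$ is metrizable and hence that the dense sequentially complete subgroup $G$ equals $K$. The only real difference lies in the middle step: the paper argues via density of $\bigoplus_n (A_n\cap G)$ in $\prod_n (A_n\cap G)$ together with the claim that this product is metrizable (a claim that is not justified, since the subgroups $A_n\cap G$ need not be metrizable), whereas your explicit partial-sum truncation argument with the cyclic subgroups $\langle h_n\rangle$ obtains the same containment using only coordinatewise convergence, so your write-up in fact repairs a small flaw in the paper's own justification.
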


 \begin{proof}
Let $H$ be a sequentially complete minimal abelian group without infinite products of non-trivial subgroups. 
Since $H$ is minimal, it is precompact, and so its completion $G$ is a compact abelian group. Assume $G$ contains
an infinite product $ A = \prod_nA_n$ of non-trivial subgroups. As in the proof of Theorem \ref{compact:abelian:groups:without:infinite:products}, 
we may assume that each $A_n$ is closed. By Theorem \ref{minimality-criterion}, each subgroup $B_n= A_n \cap G$ of $H$ is non-trivial. 
Moreover, the direct sum $\bigoplus_n B_n\subseteq H$ is a dense subgroup of the subgroup $B =\prod_nB_n$ of $A$. Since $B$ is metrizable and since
$H$ is sequentially complete, we conclude that $H$ contains the infinite product $B$, a contradiction. Therefore,  $G$ contains no infinite products of non-trivial subgroups
By Corollary \ref{no:infinite:products>metrizable}, $G$ is metrizable. Hence the sequential completeness of $H$ yields $H = G$. 
\end{proof}

\section{The TAP-topology of a compact abelian group}

The family $\mathfrak J_G$ of closed subgroups of a compact abelian group $G$ that contains no infinite products, defined in Remark \ref{The:filter}, can be 
enlarged and considered in arbitrary compact abelian groups, as we show in the next definition. 
 
\begin{definition} For an arbitrary compact abelian group $G$ one can introduce he family $\mathfrak F_G$ of all subgroups $N$ of $G$ with this property
$G/N$ is TAP.  Since the quotient of a compact TAP group is TAP and finite products of TAP groups are TAP, one can easily see that the following properties hold:
\begin{itemize}
\item[(i)]  if $N_1, N_2 \in{\mathfrak{F}}_G$, then $N_1\cap N_2 \in \mathfrak{F}_G$;
\item[(ii)]  if $N \in{\mathfrak{F}}_G$ and $N_1$ is a closed subgroup of $G$ containing $N$, then also $N_1 \in \mathfrak{F}_G$.
 \end{itemize}
In other words, ${\mathfrak{F}}_G$ is a filter-base consisting of closed subgroups of $G$.
Therefore, it gives rise to a linear group topology $\mathcal T_{_{TAP}}^G$ on $G$ that we shall call the {\em TAP-topology} of $G$.
 \end{definition}

  Let us start with two examples.

\begin{itemize}
\item[(E$_1$)] For $G=\T^\omega$ the topology $\mathcal T_{_{TAP}}^G$ is complete, non-discrete and non-locally compact ($\mathcal T_{_{TAP}}^G$ coincides with the Tichonov product of the discrete groups $\T$). The non-local compactness of this topology is explained by Theorem \ref{dimension:TAPtop}(ii) below. 
\item[(E$_2$)] For $G = \widehat{\Q} $  the TAP-topology is locally compact, non-compact. This can be easily proved using the fact that $G$ has a closed subgroup 
$H\cong \prod_p \Z_p$ such that $G/H\cong \T$ (\cite[\S 3.5]{DPS}). This will immediately follows from the more general Theorem \ref{dimension:TAPtop}(ii) proved below. 
  \end{itemize}

 One can prove the following basic properties of $\mathcal T_{_{TAP}}^G$.

\begin{theorem}\label{basic:TAPtop} Let $(G,\tau)$ be a compact abelian group. 
\begin{itemize}
\item[(a)] $\mathcal T_{_{TAP}}^G$ is discrete if and only if $G$ is TAP.
\item[(ii)]  the TAP-topology $\mathcal T_{_{TAP}}^G$ is {\em finer} than the original topology $\tau$. 
\item[(iii)] if $N$ is a $\mathcal T_{_{TAP}}^G$-open $\tau$-closed subgroup of $G$, then
$\mathcal T_{_{TAP}}^N=\mathcal T_{_{TAP}}^G\restriction _N$.
 \end{itemize}
\end{theorem}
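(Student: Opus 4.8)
The plan is to handle the three items in increasing order of difficulty. Items (a) and (ii) follow quickly from the definition of $\mathfrak F_G$ together with standard structure theory, while item (iii) carries the real content. For (a): the group $G$ is TAP exactly when $G\cong G/\{0\}$ is TAP, i.e.\ when the trivial subgroup belongs to $\mathfrak F_G$. Since $\mathfrak F_G$ is a base of neighborhoods of $0$ for $\mathcal T_{_{TAP}}^G$ consisting of subgroups, the topology $\mathcal T_{_{TAP}}^G$ is discrete if and only if some member of $\mathfrak F_G$ is contained in $\{0\}$, that is, if and only if $\{0\}\in\mathfrak F_G$; this is exactly the asserted equivalence.

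For (ii) I would use the standard fact that a compact abelian group admits a base at $0$ consisting of $\tau$-closed subgroups $N$ with $G/N$ a Lie group. Each such $N$ lies in $\mathfrak F_G$, since a compact Lie group is NSS and hence TAP by Fact \ref{NSSjeTAP}. Therefore every $\tau$-neighborhood of $0$ contains a member of $\mathfrak F_G$ and is thus a $\mathcal T_{_{TAP}}^G$-neighborhood of $0$; as both are group topologies, this yields $\tau\subseteq\mathcal T_{_{TAP}}^G$, i.e.\ $\mathcal T_{_{TAP}}^G$ is finer than $\tau$.

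For (iii), fix a $\tau$-closed, $\mathcal T_{_{TAP}}^G$-open subgroup $N$; being $\tau$-closed in the compact group $G$, it is itself compact. I would compare the canonical bases $\mathfrak F_N$ of $\mathcal T_{_{TAP}}^N$ and $\{K\cap N:K\in\mathfrak F_G\}$ of $\mathcal T_{_{TAP}}^G\restriction_N$. For the inclusion $\mathcal T_{_{TAP}}^G\restriction_N\subseteq\mathcal T_{_{TAP}}^N$, take $K\in\mathfrak F_G$; then $N/(K\cap N)$ is topologically isomorphic (the natural continuous bijective homomorphism between compact groups is an isomorphism) to the compact subgroup $(N+K)/K$ of the TAP group $G/K$, which is TAP by Theorem \ref{direct:sum:of:TAP:is:TAP}(ii). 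Hence $K\cap N\in\mathfrak F_N$, so each basic $\mathcal T_{_{TAP}}^G\restriction_N$-neighborhood is already a $\mathcal T_{_{TAP}}^N$-neighborhood.

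The reverse inclusion $\mathcal T_{_{TAP}}^N\subseteq\mathcal T_{_{TAP}}^G\restriction_N$ is the crux, and it is here that openness of $N$ is indispensable. Given $M\in\mathfrak F_N$ (so $M$ is $\tau$-closed in $G$ and $N/M$ is TAP), I aim to show $M\in\mathfrak F_G$, which exhibits $M=M\cap N$ as a basic $\mathcal T_{_{TAP}}^G\restriction_N$-neighborhood inside $M$. Since $N$ is $\mathcal T_{_{TAP}}^G$-open, choose $K_0\in\mathfrak F_G$ with $K_0\subseteq N$; then $G/K_0$ is a compact abelian TAP group, hence a Lie group by Corollary \ref{compact:abelian:group:are:Lie}, so $G/N\cong (G/K_0)/(N/K_0)$ is a compact Lie group and therefore TAP. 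Now inside $G/M$ the closed subgroup $N/M$ is TAP and the quotient $(G/M)/(N/M)\cong G/N$ is TAP, so the extension result Theorem \ref{direct:sum:of:TAP:is:TAP}(i) yields that $G/M$ is TAP, i.e.\ $M\in\mathfrak F_G$. The main obstacle is precisely this step: turning a subgroup $M$ that only governs $N/M$ into a bona fide member of $\mathfrak F_G$ governing $G/M$ requires the TAP-ness of $G/N$, which is unavailable without the openness of $N$, and the gluing is then carried out by the extension theorem. The only routine checks left are the topological isomorphisms $N/(K\cap N)\cong (N+K)/K$ and $(G/K_0)/(N/K_0)\cong G/N$, both standard for compact groups.
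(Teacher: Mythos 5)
Your proof is correct, and for items (a) and (ii) it coincides with the paper's own argument. For item (iii) the substance is also the same---both proofs hinge on the observation that a $\tau$-closed, $\mathcal T_{_{TAP}}^G$-open subgroup $N$ satisfies that $G/N$ is TAP (hence a Lie group, by Corollary \ref{compact:abelian:group:are:Lie}), followed by an extension step lifting TAP-ness of $N/M$ and of $G/N$ to $G/M$---but the organization differs. The paper proves only the inclusion $\mathcal T_{_{TAP}}^N\subseteq\mathcal T_{_{TAP}}^G\restriction_N$ at this point (phrased through Lie groups rather than through Theorem \ref{direct:sum:of:TAP:is:TAP}(i), though the content is the same as your extension step), and explicitly defers the opposite inclusion to the later functoriality theorem, where it is obtained by pulling back members of $\mathfrak F_H$ along an arbitrary continuous homomorphism and again invoking the Lie-group characterization of compact abelian TAP groups. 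You instead prove that inclusion on the spot: for $K\in\mathfrak F_G$ you identify $N/(K\cap N)$ with the closed subgroup $(N+K)/K$ of the TAP group $G/K$ and invoke heredity of TAP for subgroups (Theorem \ref{direct:sum:of:TAP:is:TAP}(ii)), which is more elementary than the Lie-group detour. The net effect is that your proof of (iii) is self-contained, whereas the paper's arrangement trades this for a single more general statement (functoriality of the TAP topology) from which the remaining inclusion falls out as the special case of the inclusion homomorphism $N\hookrightarrow G$.
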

 
\begin{proof} (i) If  $\mathcal T_{_{TAP}}^G$ is discrete, then $G \cong G/\{0\}$ is TAP. If $G$ is TAP, then $\{0\}\in {\mathfrak{F}}_G$, so $\mathcal T_{_{TAP}}^G$ is discrete. 

(ii)  According to well-known properties of the compact abelian groups (\cite[\S 3]{DPS}), for every open neighborhood $U$ of 0 in $G$ there exists a closed subgroup $N$ of $G$ such that $G/N$ is an elementary compact abelian group, i.e., a compact Lie group. Then $N \in  {\mathfrak{F}}_G$. This proves that $\tau \subseteq \mathcal T_{_{TAP}}^G$. 

(iii) Let $N_1$ be a $\mathcal T_{_{TAP}}^N$-open $\tau$-closed subgroup of $N$. Then $N/N_1$ is a Lie group. Since $G/N$ is a Lie group as well, 
we conclude that also $G/N_1$ is a Lie group. Therefore, $N_1$ is  also $\mathcal T_{_{TAP}}^G$-open. 
On the other hand, the inclusion $\mathcal T_{_{TAP}}^N\supseteq \mathcal T_{_{TAP}}^G\restriction _N$ (i.e., the continuity of the inclusion $N \hookrightarrow G$
w.r.t. the TAP topologies) will be proved below under more general hypotheses). 
  \end{proof}

\begin{theorem}\label{dimension:TAPtop} Let $(G,\tau)$ be a compact abelian group. 
\begin{itemize}
\item[(i)] $\mathcal T_{_{TAP}}^G$ coincides with $\tau$ iff  $G$ is totally disconnected.
\item[(ii)] $\mathcal T_{_{TAP}}^G$ is locally compact if and only if $G$ is finite-dimensional.
  \end{itemize}
\end{theorem}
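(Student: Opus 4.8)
The plan is to prove both equivalences by reducing everything to the structure of the filter base $\mathfrak F_G$. First I would record that, by Corollary \ref{compact:abelian:group:are:Lie}, a member $N$ of $\mathfrak F_G$ is exactly a closed subgroup with $G/N$ a compact abelian Lie group, i.e. $G/N\cong\T^k\times F$ with $F$ finite. The two tools I will use repeatedly are $\tau\subseteq\mathcal T_{_{TAP}}^G$ (Theorem \ref{basic:TAPtop}(ii)) and the identity $\mathcal T_{_{TAP}}^N=\mathcal T_{_{TAP}}^G\restriction_N$ for $N\in\mathfrak F_G$ (Theorem \ref{basic:TAPtop}(iii), applicable because every member of $\mathfrak F_G$ is $\mathcal T_{_{TAP}}^G$-open and $\tau$-closed).

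For (i), since $\mathcal T_{_{TAP}}^G$ is always finer than $\tau$, I would note that equality amounts to every $N\in\mathfrak F_G$ being $\tau$-open. If $G$ is totally disconnected, then each quotient $G/N$ is a totally disconnected compact Lie group, hence finite, so $N$ is $\tau$-open and $\mathcal T_{_{TAP}}^G\subseteq\tau$. Conversely, if $G$ is not totally disconnected then $\dim G\ge1$, so $\widehat G$ has an element of infinite order, yielding a continuous surjection $f\colon G\to\T$; its kernel lies in $\mathfrak F_G$ but cannot be $\tau$-open (else $\T$ would be discrete), so the two topologies differ.

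For (ii), in the forward direction I would assume $d=\dim G<\infty$ and invoke the construction from the proof of Theorem \ref{compact:abelian:groups:without:infinite:products} (Yamanashita's theorem) to get a continuous surjection $f\colon G\to\T^d$ whose kernel $N_0$ is zero-dimensional, hence totally disconnected, with $N_0\in\mathfrak F_G$. By Theorem \ref{basic:TAPtop}(iii) and part (i) applied to $N_0$, I get $\mathcal T_{_{TAP}}^G\restriction_{N_0}=\mathcal T_{_{TAP}}^{N_0}=\tau\restriction_{N_0}$, which is compact; so $N_0$ is a compact open subgroup and $\mathcal T_{_{TAP}}^G$ is locally compact. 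For the converse I would suppose $\dim G=\infty$ and assume, for contradiction, that $\mathcal T_{_{TAP}}^G$ is locally compact. A compact neighbourhood of $0$ contains some $N\in\mathfrak F_G$, which is then a $\mathcal T_{_{TAP}}^G$-compact (open, hence closed) subgroup. Using Theorem \ref{basic:TAPtop}(iii) and (ii), $\mathcal T_{_{TAP}}^N$ is a compact topology finer than $\tau\restriction_N$; as a continuous bijection from a compact to a Hausdorff space is a homeomorphism, $\mathcal T_{_{TAP}}^N=\tau\restriction_N$, so by part (i) $N$ is totally disconnected and $\dim N=0$. Since $G/N$ is a finite-dimensional Lie group, additivity of dimension gives $\dim G=\dim N+\dim(G/N)<\infty$, a contradiction.

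I expect the main obstacle to be the passage to the subgroup $N\in\mathfrak F_G$ in part (ii): one must correctly compute the induced TAP-topology on $N$ via Theorem \ref{basic:TAPtop}(iii), observe that $\mathcal T_{_{TAP}}^G$-compactness collapses it onto $\tau\restriction_N$, and keep the three topologies $\tau$, $\mathcal T_{_{TAP}}^G$ and their restrictions properly separated, together with the elementary but crucial reduction of local compactness of a linear topology to the existence of a compact open subgroup.
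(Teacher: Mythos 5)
Your proof is correct and follows essentially the same route as the paper's: both parts hinge on Theorem \ref{basic:TAPtop}, Corollary \ref{compact:abelian:group:are:Lie}, the surjection $G\to\T^d$ with totally disconnected kernel, and additivity of dimension. The only (harmless) difference is in the forward direction of (ii), where you invoke Theorem \ref{basic:TAPtop}(iii) together with part (i) applied to the kernel $N_0$ to conclude $\mathcal T_{_{TAP}}^G\restriction_{N_0}=\tau\restriction_{N_0}$, while the paper re-derives this identity by a direct computation with a second subgroup $N_1\in\mathfrak F_G$; your shortcut is legitimate since part (i) is established for all compact abelian groups before being used in (ii).
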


\begin{proof}
(i) Assume $G$ is totally disconnected. Then every quotient $G/N$ of $G$ is totally disconnected as well. Therefore, $G/N$ is a Lie group if and only if $G/N$ is finite. Since
$G/N$ is TAP iff $G/N$ is a Lie group (Corollary  \ref{compact:abelian:group:are:Lie}), we conclude that the quotient $G/N$ for some closed subgroup $B$ of $G$ is TAP iff
$G/N$ is finite, i.e., iff $N$ is an open subgroup of $G$.  This proves the inclusion $ \mathcal T_{_{TAP}}^G \subseteq  \tau $. Now we can conclude that $\mathcal T_{_{TAP}}^G=\tau$ with item (ii) of Theorem \ref{basic:TAPtop}. Now assume that $G$ is not totally disconnected. Then there exists a surjective continuous character $\chi : G \to \T$, hence 
$N = \ker \chi$ is a $\mathcal T_{_{TAP}}^G$-open subgroup of $G$ that cannot be $\tau$-open, since it has infinite index. Therefore, $\mathcal T_{_{TAP}}^G\ne \tau$. 

(ii) Assume that $G$ is finite-dimensional and let $d = \dim G$. Then there exits a  continuous surjective homomorphism $f : G \to \T^d$. As in the proof of Theorem \ref{compact:abelian:groups:without:infinite:products} we conclude that $N= \ker f$ is totally disconnected. Since $G/N \cong \T^d$ is TAP, the subgroup $N$ is $\mathcal T_{_{TAP}}^G$-open. Now suppose that $N_1$ is another $\mathcal T_{_{TAP}}^G$-open $\tau$-closed subgroup of $G$. Then $G/N_1$ is a TAP group, so by Corollary  \ref{compact:abelian:group:are:Lie}, $G/N_1$ is a Lie group. Hence the quotient group $N/N\cap N_1$, being isomorphic to a closed subgroup of 
$G/N_1$, is a Lie group too. On the other hand, $N/N\cap N_1$ is totally disconnected, as a quotient of the totally disconnected compact group $N$. Hence
$N/N\cap N_1$ is finite. This proves that  $N\cap N_1$ is an open subgroup of $N$, provided with the induced by $\tau$ topology. Therefore, the induced by 
$\mathcal T_{_{TAP}}^G$ topology on $N$ coincides with the compact topology $\tau\restriction _N$ of $N$. This proves that $\mathcal T_{_{TAP}}^G$ is locally compact. 

Now assume that $\mathcal T_{_{TAP}}^G$ is locally compact. Then there exists a closed subgroup $N$ of $G$ such that $G/N$ is TAP (so  a Lie group, by Corollary  \ref{compact:abelian:group:are:Lie}) and $(N, \mathcal T_{_{TAP}}^G\restriction _N)$ is compact.  By item (iii) of Theorem \ref{basic:TAPtop} we can claim that $(N, \mathcal T_{_{TAP}}^N)$ is compact. Now item (ii) of Theorem \ref{basic:TAPtop} implies that $\mathcal T_{_{TAP}}^N= \tau\restriction_N$, as the former topology is compact.  Now item (i) yields that the subgroup $N$ is totally disconnected (in the induced by $\tau$ topology). Hence $\dim H =0$. Therefore, $\dim G = \dim G/N < \infty$, as $G/N$ is a Lie group. 
\end{proof}

In item (ii) of the above theorem we characterized the groups with locally compact TAP-topology. In both examples (E$_1$) and (E$_2$) the TAP-topology is complete. 
This justifies the following 

\begin{theorem}
The TAP-topology is complete for every compact abelian group $G$. 
\end{theorem}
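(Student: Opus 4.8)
The plan is to identify the completion of $(G,\mathcal T_{_{TAP}}^G)$ with a suitable inverse limit and then to recognize that inverse limit, via Pontryagin duality, as $G$ itself. First I would recall that $\mathcal T_{_{TAP}}^G$ is a linear group topology whose neighborhood base at $0$ is the filter-base $\mathfrak F_G$ of closed subgroups $N$ with $G/N$ TAP; by Theorem \ref{basic:TAPtop}(ii) it is finer than the compact Hausdorff topology $\tau$, hence Hausdorff (equivalently $\bigcap_{N\in\mathfrak F_G}N=\{0\}$, which one also reads off duality as $\bigcap_N N^{\perp\perp}=0$). For a Hausdorff abelian group with a neighborhood base of open subgroups it is standard that the Raikov completion is the inverse limit $\varprojlim_{N\in\mathfrak F_G} G/N$ of the \emph{discrete} quotients $G/N$, and that the canonical map $\iota\colon G\to\varprojlim_{N} G/N$, $g\mapsto (g+N)_N$, is a topological embedding with dense image. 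Thus it suffices to prove that $\iota$ is surjective.

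Next I would pass to the Pontryagin dual $\widehat G$, a discrete abelian group, and use the order-reversing bijection $N\mapsto N^{\perp}$ between closed subgroups of $G$ and subgroups of $\widehat G$, under which $G/N\cong\widehat{N^{\perp}}$ and $\widehat{G/N}\cong N^{\perp}$. By Corollary \ref{compact:abelian:group:are:Lie}, for a closed $N$ the compact quotient $G/N$ is TAP precisely when it is a compact Lie group, i.e. precisely when its dual $N^{\perp}$ is finitely generated. Hence $N\mapsto N^{\perp}$ carries $\mathfrak F_G$ bijectively onto the family of all finitely generated subgroups $A\le\widehat G$; this family is directed under inclusion and its union is all of $\widehat G$, so $\widehat G=\varinjlim_{A}A$ is the direct limit of its finitely generated subgroups.

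Dualizing this direct limit, Pontryagin duality yields $G\cong\widehat{\widehat G}\cong\widehat{\varinjlim_{A}A}\cong\varprojlim_{A}\widehat A$, the transition maps being the duals of the inclusions $A\hookrightarrow A'$. Under the identifications $\widehat A=\widehat{N^{\perp}}\cong G/N$ (with $N=A^{\perp}$) this inverse system is exactly $\{G/N:N\in\mathfrak F_G\}$ with its natural quotient maps, and the duality isomorphism $G\to\varprojlim_{A}\widehat A$ is, on underlying groups, precisely the map $g\mapsto (g+N)_N$. Consequently $\iota$ is bijective; being also a topological embedding onto the completion, it shows that $(G,\mathcal T_{_{TAP}}^G)$ coincides with its completion and is therefore complete.

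The one point that requires care is that two different inverse limits are in play: the completion is $\varprojlim_{N} G/N$ computed with the quotients $G/N$ taken \emph{discrete} (each $N$ being $\mathcal T_{_{TAP}}^G$-open), whereas Pontryagin duality recovers $G$ as $\varprojlim_{A}\widehat A$ with the factors $\widehat A\cong G/N$ taken \emph{compact}. These two inverse limits carry genuinely different topologies, as already illustrated by $G=\T^{\omega}$ in example (E$_1$), where the TAP-completion is the product of discrete circles rather than the compact torus; yet they have the \emph{same underlying abstract group}, namely the group of coherent families $(x_N)_N$, since the transition maps are the same set-maps in both cases. The main step is therefore to observe that the surjectivity of $\iota$ is a purely algebraic statement about this common underlying group, where it is supplied by the duality isomorphism, while the verification that $\iota$ is a topological embedding onto the inverse-limit topology with discrete factors is then routine.
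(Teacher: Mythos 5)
Your proof is correct, but it takes a genuinely different route from the paper's. The paper argues by reduction: it first treats the special case $G=\T^\kappa$, where the TAP-topology is visibly the Tychonoff product of discrete copies of $\T$ and hence complete; it then embeds an arbitrary compact abelian $G$ as a $\tau$-closed (hence also $\mathcal{T}_{_{TAP}}$-closed) subgroup of some power $\T^\kappa$ and proves that $\mathcal{T}_{_{TAP}}^G=\mathcal{T}_{_{TAP}}^{\T^\kappa}\restriction_G$, by writing a $\mathcal{T}_{_{TAP}}^G$-open subgroup $L$ as $\bigcap_{i=1}^n\ker\chi_i$ for finitely many characters $\chi_i$ of $G$ and extending each $\chi_i$ to a character of $\T^\kappa$ (divisibility/injectivity of $\T$); completeness then follows because a closed subgroup of a complete group is complete. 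You instead compute the completion intrinsically: the Raikov completion of the linearly topologized group $(G,\mathcal{T}_{_{TAP}}^G)$ is $\varprojlim_{N\in\mathfrak{F}_G}(G/N)_{\mathrm{disc}}$, and Pontryagin duality (which carries $\mathfrak{F}_G$ bijectively onto the directed family of finitely generated subgroups of $\widehat{G}$, whose direct limit is $\widehat{G}$) shows that the canonical map $g\mapsto(g+N)_N$ is already surjective onto the set of coherent families, so the dense embedding of $G$ into its completion is onto. Both arguments hinge on Corollary \ref{compact:abelian:group:are:Lie} (TAP $=$ Lie for compact abelian groups): the paper uses it to present TAP quotients as intersections of finitely many character kernels, you use it to identify $\mathfrak{F}_G$ with the finitely generated subgroups of the dual. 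The paper's route is more elementary, avoiding the completion-as-inverse-limit machinery and duality of limits, and it yields as a byproduct the hereditary statement $\mathcal{T}_{_{TAP}}^G=\mathcal{T}_{_{TAP}}^N\restriction_G$ for arbitrary closed subgroups $G\le N$; your route is more functorial, gives the explicit pro-discrete presentation $(G,\mathcal{T}_{_{TAP}}^G)\cong\varprojlim_{N}(G/N)_{\mathrm{disc}}$ of the group together with its completion, and your closing observation --- that the discrete-factor and compact-factor inverse limits carry different topologies but the same underlying group of coherent families, so surjectivity is a purely algebraic assertion supplied by duality --- is precisely the point on which the argument stands, correctly handled.
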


\begin{proof}
We shall first consider the case when $G = \T^\kappa$ is a power of $\T$. In analogy with (E$_1$), one can easily see that the TAP-topology of $G$ 
coincides with the Tichonov product of the discrete groups $\T$, so it is complete. 

In the general case, $G$ is a closed subgroup of a power $ \T^\kappa$, so that it would be sufficient to extend item (iii) of Theorem \ref{basic:TAPtop} to arbitrary closed
subgroups, i.e., if $G$ is a closed subgroup of a compact group $N$. then $\mathcal T_{_{TAP}}^G=\mathcal T_{_{TAP}}^N\restriction _G$. In view of Theorem \ref{basic:TAPtop}(iii), it suffices to prove  only that if $L$ is a $\mathcal T_{_{TAP}}^G$-open subgroup of $G$, then there exists a $\mathcal T_{_{TAP}}^N$-open subgroup $H$ of $N$, such that
$L= H\cap G$. Since $G/L$ is a Lie group. we can find finitely many continuous characters $\chi_k: G\to \T$, $i=1,2,\ldots, n$, such that $L = \bigcap_i \ker \chi_i$. 
Let $\xi_i: N \to \T$ be a continuous character extending $\chi_i$, for  $i=1,2,\ldots, n$. Let $H =   \bigcap_i \ker \xi_i$. Then $N/H$ is isomorphic to a closed subgroup of $\T^k$, so
it is a Lie group. Therefore, $H$ is a $\mathcal T_{_{TAP}}^N$-open subgroup of $N$. Since each $\xi_i$ extends $\chi_i$, one can easily check that $L = H\cap G$. 
\end{proof}

For a compact group $G$ consider the topological group $\mathfrak{F}_{_{TAP}}(G):=(G,\mathcal T_{TAP}^G) $. For a continuous homomorphism $f: G \to H$ of compact  abelian
groups let $\mathfrak{F}_{_{TAP}}(f) = f$ as a set-map.

\begin{theorem}
The assignment $G\mapsto \mathfrak{F}_{_{TAP}}(G)$, $f\mapsto f=\mathfrak{F}_{_{TAP}}(f)$ defines is a covariant functor. 
\end{theorem}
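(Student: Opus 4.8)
The plan is to verify the three defining properties of a covariant functor: that the object assignment lands in the category of topological groups, that the morphism assignment produces an actual morphism there (the only step with real content), and that composition and identities are preserved.

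First, the object assignment requires no further work: by the definition preceding the statement, $\mathfrak{F}_G$ is a filter-base of closed subgroups of $G$, so $\mathcal{T}_{TAP}^G$ is a linear group topology and $\mathfrak{F}_{TAP}(G) = (G, \mathcal{T}_{TAP}^G)$ is a topological group.

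The heart of the argument is to show that, for every continuous homomorphism $f \colon G \to H$ of compact abelian groups, the underlying set-map $f$ remains continuous as a map $(G,\mathcal{T}_{TAP}^G) \to (H,\mathcal{T}_{TAP}^H)$. Since both topologies are linear and $f$ is a homomorphism, it will be enough to check continuity at the identity against a base of neighborhoods; that is, to show that for every $N \in \mathfrak{F}_H$ the preimage $f^{-1}(N)$ lies in $\mathfrak{F}_G$. So I would fix such an $N$ --- a closed subgroup of $H$ with $H/N$ TAP --- and set $M = f^{-1}(N)$, a closed subgroup of $G$ because $f$ is continuous. The map $f$ then induces an injective continuous homomorphism $\bar{f}\colon G/M \to H/N$ given by $\bar{f}(g+M) = f(g)+N$. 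The decisive observation is that $G/M$ is compact, being a Hausdorff quotient of the compact group $G$, while $H/N$ is Hausdorff; hence the continuous injection $\bar{f}$ is a topological isomorphism onto its image, and $G/M$ is topologically isomorphic to a closed subgroup of $H/N$. As $H/N$ is TAP and subgroups of TAP groups are TAP by Theorem~\ref{direct:sum:of:TAP:is:TAP}(ii), it follows that $G/M$ is TAP, i.e.\ $M = f^{-1}(N) \in \mathfrak{F}_G$. Thus $\mathfrak{F}_{TAP}(f)$ is genuinely $\mathcal{T}_{TAP}$-continuous.

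Finally, because $\mathfrak{F}_{TAP}$ alters no underlying set-map, the identity law $\mathfrak{F}_{TAP}(\mathrm{id}_G) = \mathrm{id}_{\mathfrak{F}_{TAP}(G)}$ and the composition law $\mathfrak{F}_{TAP}(g\circ f) = \mathfrak{F}_{TAP}(g)\circ\mathfrak{F}_{TAP}(f)$ hold trivially as equalities of set-maps, completing the verification of functoriality. The single nonformal point, and hence the main obstacle, is the continuity of $\mathfrak{F}_{TAP}(f)$: everything rests on the fact that $f^{-1}$ carries $\mathfrak{F}_H$ into $\mathfrak{F}_G$, which itself relies on compactness of $G/f^{-1}(N)$ (forcing $\bar{f}$ to be an embedding) together with the hereditary behavior of TAP under passage to subgroups.
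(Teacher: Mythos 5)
Your proposal is correct, and its skeleton coincides with the paper's proof: in both cases the whole content is to show that for $N\in\mathfrak{F}_H$ the preimage $f^{-1}(N)$ lies in $\mathfrak{F}_G$, and in both cases this is done by observing that $G/f^{-1}(N)$ is (topologically isomorphic to) a closed subgroup of $H/N$. Where you diverge is in the final step. The paper first upgrades the hypothesis ``$H/N$ is TAP'' to ``$H/N$ is a Lie group'' via Corollary~\ref{compact:abelian:group:are:Lie}, then uses the classical fact that a closed subgroup of a Lie group is a Lie group, and implicitly that Lie $\Rightarrow$ NSS $\Rightarrow$ TAP to conclude $G/f^{-1}(N)$ is TAP. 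You instead conclude directly from the heredity of TAP under passage to subgroups (Theorem~\ref{direct:sum:of:TAP:is:TAP}(ii)), after justifying carefully --- via compactness of $G/f^{-1}(N)$ and Hausdorffness of $H/N$ --- that the injective continuous homomorphism $\bar f$ is an embedding onto a closed subgroup. Your route is more elementary: it bypasses the deep characterization of compact abelian TAP groups as Lie groups (which rests on the entire ``no infinite products'' analysis) and uses only a fact the paper labels as obvious. It is also slightly more self-contained in that you make explicit why the quotient embedding is topological, a point the paper leaves unstated. The paper's route, on the other hand, keeps the argument aligned with the Lie-theoretic description of $\mathcal T_{_{TAP}}^G$-open subgroups used throughout that section (e.g.\ in Theorems~\ref{basic:TAPtop} and~\ref{dimension:TAPtop}), so the two proofs trade a heavier imported theorem for uniformity of exposition. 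Your handling of the remaining functoriality axioms (identities and composition hold trivially since the underlying set-maps are unchanged) matches what the paper tacitly assumes.
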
  

\begin{proof}
We have to show that if $f: G \to H$ is continuous homomorphism of compact abelian groups, then 
the homomorphism $f: (G, \mathcal T_{_{TAP}}^G) \to (H, \mathcal T_{_{TAP}}^H)$ is continuous. Take a closed subgroup $N$ of $H$ such that $H/N$ is a TAP group. 
Then $H/N$ is a Lie group by Corollary  \ref{compact:abelian:group:are:Lie}. Let $N_1= f^{-1}(N)$. Then $G/N_1$ is isomorphic to a closed subgroup of $H/N$, hence 
$G/N_1$ is a Lie group. Therefore, $N_1$ is $\mathcal T_{_{TAP}}^G$-open. 
\end{proof}

\section{TAP property in minimal abelian groups}

Let us recall the definition of a minimal group.

\begin{deff}
Let $G$ be a group. A Hausdorff group topology $\tau$ on $G$ is \emph{minimal} if
for every continuous isomorphism $f:(G,\tau)\to H$, where $H$ is Hausdorff, $f$ is a topological isomorphism.
\end{deff}

A description of the dense minimal subgroups of compact groups was given in the ``minimality criterion'' in \cite{P,S} in terms of
essential subgroups; a subgroup $H$ of a topological group $G$ is \emph{essential} if $H$ nontrivially intersects every nontrivial closed normal subgroup of $G$ \cite{P,S}.

\begin{theorem}\emph{\cite{DPS,P,S}}\label{minimality-criterion}
A dense subgroup $H$ of a compact group $K$ is minimal if and only if $H$ is essential in $G$.
\end{theorem}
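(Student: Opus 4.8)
The statement is the Prodanov--Stephenson minimality criterion, so the plan is to reconstruct its standard proof via completions. Throughout I write $\tau$ for the subspace topology that $H$ inherits from $K$; since $H$ is dense in the compact group $K$, the group $(H,\tau)$ is precompact and its Raikov (two-sided) completion is exactly $K$. I would also use the routine reduction: $(H,\tau)$ is minimal if and only if there is no Hausdorff group topology on $H$ strictly coarser than $\tau$, because any continuous isomorphism $f:(H,\tau)\to L$ onto a Hausdorff group transports the topology of $L$ back to such a coarser topology $\sigma=f^{-1}(\text{topology of }L)$, and $f$ is a topological isomorphism precisely when $\sigma=\tau$.

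For the easy direction (not essential $\Rightarrow$ not minimal), suppose there is a nontrivial closed normal subgroup $N$ of $K$ with $H\cap N=\{e\}$. Let $q:K\to K/N$ be the quotient map. Then $q\restriction_H$ is injective and continuous, so it pulls the (precompact, Hausdorff) topology of $K/N$ back to a coarser Hausdorff topology $\sigma$ on $H$. I would show $\sigma\neq\tau$ by choosing $n\in N\setminus\{e\}$ and, using density of $H$ in $K$, a net in $H$ converging to $n$ in $\tau$; its $q$-image converges to $q(n)=e$, so the same net converges to $e$ in $\sigma$ but to $n\neq e$ in the Hausdorff topology $\tau$. Hence $\sigma$ is strictly coarser and $H$ is not minimal.

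For the substantial direction (essential $\Rightarrow$ minimal), let $\sigma$ be any Hausdorff group topology on $H$ with $\sigma\subseteq\tau$. Since a group topology coarser than a precompact one is again precompact, $(H,\sigma)$ is precompact and its completion $\widetilde L$ is compact. The identity map $(H,\tau)\to(H,\sigma)$ is a continuous homomorphism with dense image, so it extends to a continuous surjective homomorphism $\widetilde f:K\to\widetilde L$ of compact groups. Put $N=\ker\widetilde f$, a closed normal subgroup of $K$. The key computation is $H\cap N=\{e\}$: if $h\in H$ satisfies $\widetilde f(h)=e$, then $h=e$ already in $(H,\sigma)\hookrightarrow\widetilde L$, since $\sigma$ is Hausdorff and $\widetilde f$ restricts to the identity on $H$. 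Essentiality of $H$ now forces $N=\{e\}$, so $\widetilde f$ is a continuous bijection of compact Hausdorff groups and therefore a topological isomorphism. Restricting $\widetilde f$ to $H$ yields a homeomorphism $(H,\tau)\to(H,\sigma)$ that is the identity on the underlying set, whence $\sigma=\tau$ and $H$ is minimal.

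The crux is the completion step in the second direction: one must know that coarsening preserves precompactness (so that $\widetilde L$ is compact and the theory of compact groups applies), and one must correctly identify $\ker\widetilde f$ and verify that it meets $H$ trivially. The first direction is essentially a soft separation argument and should present no difficulty; the main obstacle, as is typical for minimality results, is packaging the completion functoriality cleanly so that the word \emph{essential} translates exactly into the statement that the only closed normal subgroup which could collapse $\tau$ down to $\sigma$ is trivial.
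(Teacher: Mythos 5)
The paper does not prove this theorem at all: it is quoted as a known result with references to \cite{DPS,P,S} (it is the classical Prodanov--Stephenson minimality criterion), so there is no in-paper proof to compare against. Your reconstruction is the standard completion argument found in those sources and it is correct: coarser Hausdorff group topologies on a precompact group are precompact, the extension of the identity to the compact completions has closed normal kernel meeting $H$ trivially, and essentiality plus the compact open-mapping fact (continuous bijections of compact Hausdorff groups are topological isomorphisms) closes the loop; the converse is the quotient construction you give. One wording fix in the easy direction: since $n\in N\setminus\{e\}$ and $H\cap N=\{e\}$, the net does not converge to $n$ \emph{in} $(H,\tau)$ (as $n\notin H$); rather, if $\sigma$ equalled $\tau$ the net would converge to $e$ in $K$ while it converges to $n$ in $K$, contradicting uniqueness of limits in the Hausdorff group $K$ --- this is what shows $\sigma\neq\tau$.
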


Theorem \ref{minimality-criterion}
allows us to partially invert the trivial implication: if $H$ is a (dense) subgroup of a NSS group, then $H$ is NSS as well.

\begin{proposition}
\label{NSS:in:minimal:groups:is:inherited:from:completion}
Let $H$ be a dense minimal subgroup of an abelian topological group $G$. Then the following are equivalent:
\begin{itemize}
\item[(a)] $G $ is NSS;
\item[(b)] $H$ is NSS.
\end{itemize}
\end{proposition}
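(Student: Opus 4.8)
The implication (a)$\to$(b) is the trivial one already noted just before the statement: if $U$ is a neighborhood of the identity in $G$ containing no nontrivial subgroup of $G$, then $U\cap H$ is a neighborhood of the identity in $H$ and any subgroup of $H$ contained in it is a subgroup of $G$ inside $U$, hence trivial. So the entire content lies in (b)$\to$(a), and the plan is to transfer the NSS property through the compact completion of $H$.

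First I would pin down the ambient compact group. Since $H$ is minimal abelian, it is precompact by the Prodanov--Stephenson theorem \cite{DPS}, so its Raikov completion $K=\widetilde H$ is a compact abelian group. As $H$ is dense in $G$ and the groups under consideration are Hausdorff, $G$ embeds into its completion, which coincides with that of $H$; hence I may view $H\subseteq G\subseteq K$ as dense subgroups of $K$, with $H$ carrying the subspace topology inherited from $K$. Applying the minimality criterion (Theorem~\ref{minimality-criterion}) to the dense minimal subgroup $H$ of the compact group $K$, I obtain that $H$ is \emph{essential} in $K$, i.e.\ $H\cap N\neq\{e\}$ for every nontrivial closed subgroup $N$ of $K$ (all subgroups being normal, since $K$ is abelian).

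The heart of the argument is then the chain: $H$ NSS $\Rightarrow$ $K$ NSS $\Rightarrow$ $G$ NSS, where the second step is again the trivial direction applied to the subgroup $G$ of $K$. For the first step I argue contrapositively. Suppose $K$ is not NSS. By Corollary~\ref{compact:abelian:group:are:Lie} a compact abelian group is NSS if and only if it is a Lie group, so $K$ is not a Lie group, i.e.\ $\widehat K$ is not finitely generated. Now I use the standard fact that a compact abelian group has a base of neighborhoods of the identity consisting of closed subgroups $N$ with $K/N$ a compact Lie group \cite{DPS}; dualizing the extension $0\to\widehat{K/N}\to\widehat K\to\widehat N\to 0$, finite generation of $\widehat{K/N}$ together with the non-finite-generation of $\widehat K$ forces $\widehat N$ to be infinite, so each such $N$ is a nontrivial closed subgroup. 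Thus \emph{every} neighborhood of the identity of $K$ contains a nontrivial closed subgroup $N$. By essentiality $H\cap N\neq\{e\}$, and $H\cap N$ is a nontrivial subgroup of $H$ sitting inside the trace on $H$ of that neighborhood. Since these traces form a base at the identity of $H$, no neighborhood of the identity of $H$ avoids all nontrivial subgroups, i.e.\ $H$ is not NSS.

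Combining the two steps, $H$ NSS yields $K$ NSS, whence $G\subseteq K$ is NSS as a subgroup, completing (b)$\to$(a). The main obstacle I anticipate is the compact-group input: I must justify that a compact abelian non-Lie group carries arbitrarily small nontrivial closed subgroups, so that essentiality can actually be invoked, and I must verify that the sets $W\cap H$ (for $W$ a neighborhood of the identity of $K$) form a base at the identity of $H$ — precisely the place where the density of $H$ in $K$ and the fact that $H$ carries the subspace topology enter.
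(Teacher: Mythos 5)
Your proof is correct, but it follows a genuinely different route from the paper's. The paper never leaves $G$: it takes a \emph{closed} neighborhood $U$ of $0$ in $H$ witnessing NSS, writes $U=W\cap H$ for a closed neighborhood $W$ of $0$ in $G$, and observes that any subgroup $N\subseteq W$ of $G$ has closure $\overline{N}\subseteq W$, so that $\overline{N}\cap H$ is a subgroup of $H$ contained in $U$ and hence trivial; essentiality of the dense minimal subgroup $H$ in $G$ (Theorem \ref{minimality-criterion}, invoked there with the ambient group $G$ itself) then gives $\overline{N}=0$, so $W$ witnesses NSS for $G$. This needs no completion, no precompactness, and no structure theory of compact groups. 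Your argument instead climbs up to the compact completion $K=\widetilde{H}=\widetilde{G}$, which costs extra ingredients: the Prodanov--Stephenson theorem that minimal abelian groups are precompact, the classical facts that Lie groups are NSS and that a compact abelian group has a neighborhood base at $0$ of closed subgroups with Lie quotients, and then descent of NSS to the subgroup $G$. In exchange you prove strictly more: the completion $K$ is NSS, hence a compact Lie group, so $G$ is in fact a dense subgroup of a compact Lie group --- exactly the phenomenon exploited later in Corollary \ref{countably:compact:minimal:groups}. Two side remarks: in your middle step, ``$K$ not NSS'' directly yields arbitrarily small nontrivial \emph{closed} subgroups (a nontrivial subgroup inside a closed neighborhood has its closure inside it), so the duality computation with $\widehat{K}$ is dispensable; and, a point in your favor, you apply Theorem \ref{minimality-criterion} literally as stated (compact ambient group), whereas the paper's own proof uses it for the possibly non-compact group $G$, implicitly relying on the general form of the essentiality criterion.
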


\begin{proof} (a) $\to$ (b) is obvious.

(b) Assume that $H$ is NSS. Then this is witnessed by a neighborhood $U$ of 0 in $H$. We can assume, without loss of  generality, that $U$ is closed. Then there exists a closed neighborhood $W$ of 0 in $G$ such that $W \cap H = U$. Assume $N$ is a subgroup of $G$ contained in $W$. Then also the closure of $N$ is contained in $W$, so we may assume, without loss of  generality, that $N$ itself is closed. Then $N \cap H$ is a closed subgroup of $H$ contained in $ U = H \cap W$. Therefore, $N \cap H=0$. Now Theorem
\ref{minimality-criterion} allows us to conclude that $N=0$ as well. Hence $W$ witnesses the NSS property for $G$.
\end{proof}

A topological group $G$ is {\em totally minimal\/} if every quotient group of $G$ is minimal.

\begin{example}\label{example2}
\begin{itemize}
\item[(i)] {\em A totally minimal separable metric linear abelian TAP group need not be NSS\/}. Indeed, let $(p_n)_n$ be a sequence of pairwise disjoint prime numbers. Then
the group $G = \bigoplus_n \Z(p_n)$ equipped with the product topology is a TAP group by Theorem \ref{direct:sum:of:TAP:is:TAP}, although $G$
 is not an NSS group. It is known that $G$ is totally minimal \cite{DPS}.
\item[(ii)]
The group $G$ from Example \ref{question1} is known to be totally minimal. Thus, we also have an example of a {\em cyclic\/} group with the same properties as in item (i).
\end{itemize}
\end{example}

The next example shows that the counterpart of Proposition \ref{NSS:in:minimal:groups:is:inherited:from:completion} for the property TAP fails.
\begin{example}
The (totally) minimal group $G$ from Example \ref{example2}(i) is TAP, while its completion is an infinite product $\prod_n \Z(p_n)$ of nontrivial groups, so trivially fails to be TAP.
\end{example}

\begin{theorem}
\label{sequentially:complete:minimal:abelian:groups:are:Lie}
A sequentially complete minimal abelian TAP group is a compact Lie group.
\end{theorem}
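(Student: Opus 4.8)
The plan is to assemble three facts already in hand. Let $G$ be a sequentially complete minimal abelian TAP group; the goal is to show $G$ is a compact Lie group. I would proceed by the chain TAP $\Rightarrow$ no infinite products $\Rightarrow$ (with sequential completeness and minimality) compact $\Rightarrow$ (with TAP again) Lie.

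First I would record that a TAP group contains no infinite products of nontrivial subgroups, which is implication (2) from the previous section. Concretely, suppose $G$ contained a subgroup topologically isomorphic to $\prod_{n\in\N}A_n$ (with the product topology) for nontrivial groups $A_n$. By Theorem \ref{direct:sum:of:TAP:is:TAP}(ii) this subgroup would itself be TAP. But choosing $a_n\in A_n\setminus\{e\}$ and letting $\alpha_n$ be the element supported only in the $n$-th coordinate (equal to $a_n$ there and to the identity elsewhere), the faithfully indexed set $\{\alpha_n:n\in\N\}$ is $f_\omega$-productive: for any bijection $\varphi:\N\to\N$ and any $z:\N\to\Z$, the partial products $\prod_{i=0}^k\alpha_{\varphi(i)}^{z_i}$ stabilize in each coordinate, hence converge. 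This contradicts TAP, so $G$ has no infinite products of nontrivial subgroups.

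Next I would invoke Theorem \ref{sequentially:complete:minimal:abelian:groups:INFPROD} directly: $G$ is sequentially complete, minimal, abelian, and (by the previous paragraph) free of infinite products of nontrivial subgroups, so that theorem yields that $G$ is compact. Finally, $G$ is now a compact abelian TAP group, and Corollary \ref{compact:abelian:group:are:Lie} gives that $G$ is NSS, i.e.\ a compact Lie group, which is the desired conclusion.

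I do not expect any genuine obstacle here, since the substantive work is packaged into the cited results and the argument is essentially a three-step implication chain. The only point deserving care is the first step, where one must verify that an infinite product of nontrivial groups is never TAP; but this is exactly the routine $f_\omega$-productivity of the ``coordinate unit'' set that underlies implication (2), and it presents no difficulty.
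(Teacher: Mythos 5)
Your proof is correct and follows exactly the paper's own route: TAP implies no infinite products of nontrivial subgroups, Theorem \ref{sequentially:complete:minimal:abelian:groups:INFPROD} then gives compactness, and Corollary \ref{compact:abelian:group:are:Lie} upgrades compact abelian TAP to Lie. The only difference is that you spell out the verification that an infinite product of nontrivial groups is never TAP (via coordinate units and Theorem \ref{direct:sum:of:TAP:is:TAP}(ii)), which the paper treats as the known implication (2); your verification is accurate.
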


\begin{proof}
Let $H$ be a sequentially complete minimal abelian TAP group. Since $H$ contains no infinite products of non-trivial subgroups, $H$ is compact
by Theorem \ref{sequentially:complete:minimal:abelian:groups:INFPROD} $H$ is compact. 
Now Theorem \ref{compact:abelian:group:are:Lie} applies. 
\end{proof}

\begin{corollary}
\label{countably:compact:minimal:groups}
For a  sequentially complete minimal abelian group $G$ the following are equivalent:
\begin{itemize}
\item[(a)] $G$ is TAP;
\item[(b)] $G$ is NSS;
\item[(c)] $G$ is a compact Lie group.
\end{itemize}
\end{corollary}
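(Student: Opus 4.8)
The plan is to establish the three equivalences by closing the cycle (a)$\to$(c)$\to$(b)$\to$(a), leaning almost entirely on the immediately preceding Theorem~\ref{sequentially:complete:minimal:abelian:groups:are:Lie}, which already carries all the analytic weight. The structural hypotheses in the statement---sequential completeness and minimality---will be needed only for the single implication (a)$\to$(c); the other two implications hold for arbitrary (abelian, indeed arbitrary) topological groups and require no compactness-type assumption whatsoever.

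First I would dispatch (a)$\to$(c), which is nothing but a restatement of Theorem~\ref{sequentially:complete:minimal:abelian:groups:are:Lie}: a sequentially complete minimal abelian TAP group is a compact Lie group, so this direction follows by direct citation. Next, for (c)$\to$(b) I would use that a compact Lie group is NSS; in the present abelian setting this is already recorded within Corollary~\ref{compact:abelian:group:are:Lie}, which asserts that for a compact abelian group the properties TAP, NSS and ``being a Lie group'' are all equivalent, so in particular a compact abelian Lie group is NSS. Finally, (b)$\to$(a) is precisely Fact~\ref{NSSjeTAP}(a), that every NSS group is TAP. This closes the cycle and yields the full equivalence of (a), (b) and (c).

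The main obstacle is not located in this corollary at all but in the theorem it rests upon: once Theorem~\ref{sequentially:complete:minimal:abelian:groups:are:Lie} is in hand the present statement is essentially formal. The only point deserving a moment's care is to observe that the two easy implications (c)$\to$(b) and (b)$\to$(a) are valid for arbitrary groups, so that the hypotheses of sequential completeness and minimality are genuinely consumed exactly once, in the step (a)$\to$(c). I would flag this explicitly, since it clarifies that the content of the corollary is the equivalence (a)$\leftrightarrow$(c) under these hypotheses, with (b) inserted as an intermediate equivalent that is automatically sandwiched between them.
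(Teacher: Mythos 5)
Your proposal is correct and matches the paper's intended derivation: the corollary is stated there without a separate proof precisely because it follows by the cycle you describe, namely (a)$\to$(c) by Theorem~\ref{sequentially:complete:minimal:abelian:groups:are:Lie}, (c)$\to$(b) by the standard fact that compact Lie groups are NSS (recorded in the abelian case in Corollary~\ref{compact:abelian:group:are:Lie}), and (b)$\to$(a) by Fact~\ref{NSSjeTAP}(a). Your observation that the hypotheses of sequential completeness and minimality are consumed only in the step (a)$\to$(c) is accurate and a useful clarification.
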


Example \ref{example2} shows that ``sequential completeness'' cannot be dropped in the above corollary, even in the presence of total minimality.

The implication TAP $\to$ NSS fails for both (totally) minimal (metrizable) abelian groups (Example \ref{example2}) and (consistently) for countably compact abelian groups (Example \ref{example3}(ii)). Our next corollary shows that combining these two properties allow us to prove the implication TAP $\to$ NSS.

\begin{corollary}\label{count:comp:min:abelian:group:NSS:iff:TAP}
 For a countably compact minimal abelian group G the following are equivalent:
\begin{itemize}
\item[(i)] G is TAP,
\item[(ii)] G is a compact Lie group,
\item[(iii)] G is NSS.
\end{itemize}
\end{corollary}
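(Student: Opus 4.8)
The plan is to reduce this statement to Corollary \ref{countably:compact:minimal:groups} by invoking the standard fact that every countably compact group is sequentially complete (the same fact was already used in the proof of Theorem \ref{tot:disc:cc:TAP:iff:no:conv:seq}). Since $G$ is countably compact, every Cauchy sequence in $G$ converges, so $G$ is in particular a sequentially complete minimal abelian group. The equivalence of conditions (i), (ii), (iii) then follows directly from Corollary \ref{countably:compact:minimal:groups}, all of whose hypotheses are met. Thus the corollary is essentially immediate once the completeness hypothesis is supplied by countable compactness.

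To make explicit what is being imported: the implication (iii)$\to$(i), that NSS implies TAP, is Fact \ref{NSSjeTAP}(a) and needs no completeness assumption at all; the implication (ii)$\to$(iii), that a compact Lie group is NSS, is a well-known fact. The only substantial direction is (i)$\to$(ii), namely that a sequentially complete minimal abelian TAP group is a compact Lie group, which is precisely Theorem \ref{sequentially:complete:minimal:abelian:groups:are:Lie}. That theorem in turn rests on Theorem \ref{sequentially:complete:minimal:abelian:groups:INFPROD} (a sequentially complete minimal abelian group with no infinite products of nontrivial subgroups is compact), combined with Corollary \ref{compact:abelian:group:are:Lie} (a compact abelian TAP group is a Lie group), after observing that a TAP group contains no infinite products of nontrivial subgroups.

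The substantive work has therefore already been carried out upstream, and here there is no genuine obstacle beyond verifying that the completeness required by Corollary \ref{countably:compact:minimal:groups} is delivered by countable compactness. The only point demanding care is that ``sequentially complete'' be read in the sense appropriate to abelian groups, namely that every Cauchy sequence converges in the unique uniformity (all three uniformities coinciding by commutativity); this is exactly the consequence of countable compactness that the argument needs.
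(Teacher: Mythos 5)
Your proposal is correct and follows exactly the paper's own route: the paper's proof is the one-line reduction ``countably compact groups are sequentially complete, so Corollary \ref{countably:compact:minimal:groups} applies,'' which is precisely your argument (the paper cites \cite{DT1,DT2} for the sequential completeness of countably compact groups rather than re-deriving it, but the content is the same). Your unpacking of the upstream chain through Theorem \ref{sequentially:complete:minimal:abelian:groups:are:Lie}, Theorem \ref{sequentially:complete:minimal:abelian:groups:INFPROD} and Corollary \ref{compact:abelian:group:are:Lie} accurately reflects how the paper's cited corollary is itself proved.
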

\begin{proof} Countably compact  groups are sequentially complete \cite{DT1,DT2}, so Corollary \ref{countably:compact:minimal:groups} applies.
\end{proof}

\begin{question}
Can ``abelian'' be dropped in Corollary \ref{count:comp:min:abelian:group:NSS:iff:TAP}?
\end{question}

We do not know whether  countable compactness in
Corollary \ref{count:comp:min:abelian:group:NSS:iff:TAP}
can be weakened to pseudocompactness:

\begin{question}\label{question5}
Does there exist an example of a pseudocompact minimal abelian TAP group that is not NSS?
\end{question}

Note that the only examples available of minimal TAP groups that fail to be NSS are metrizable (hence cannot be pseudocompact).

\section{An example}

Proposition \ref{non-TAP:subgroups:of:Z_p} may leave the impression that a metrizable non-TAP group $G$ must be in some sense close to being complete in case the
subgroup generated by the AP subset $A$ is dense in $G$. However, this fails to be true as the following easy example shows:

\begin{example}
Let $G_n$ be an infinite non-discrete monothetic metrizable group for every $n\in \N$. Denote by $C_n$ a dense proper cyclic subgroup  subgroup of $G_n$
and let $a_n$ be a generator of $C_n$.  Then the subset $A=\{a_n: n\in \N\}$ of the infinite product $C = \prod_n C_n$ is an $f_\omega$-productive
set. So $C$ is
a  metrizable  non-TAP group that is not complete and $A$ generates a dense subset of $C$.
\end{example}

After seeing this example, one may be tempted to modify the question by formulation the following conjecture: {\em If $A=(a_n)$ is an AP sequence in a (metric) group $G$ and $H $ is the smallest closed subgroup of $G$ containing $A$, then either $H$ is complete or $H$ contains an infinite product of nontrivial groups.\/} The main goal in this section is to produce a counter-example to this conjecture; see Proposition \ref{proposition:example}.

\begin{lemma}
\label{linear:algebra:excercise} If $k,l,m,n\in\N$, $k\not=0$ and $n\not=0$, then there exist $x,y\in\{1,2\}$ such that $kx+ly\not=0$ and $mx+ny\not=0$.
\end{lemma}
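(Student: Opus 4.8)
The plan is to regard the two conditions as the vanishing loci of two linear forms evaluated at the four candidate pairs $(x,y)\in\{1,2\}^2$, and to show these loci cannot jointly cover all four pairs. (If $k,l,m,n$ are genuinely non-negative, the statement is vacuous, since then $kx+ly\ge k\ge 1>0$ and $mx+ny\ge ny\ge n\ge 1>0$; the content lies in the signed case, so I would prove the claim for $k,l,m,n\in\Z$, which covers it.) Concretely, set
\[
B_1=\{(x,y)\in\{1,2\}^2: kx+ly=0\},\qquad B_2=\{(x,y)\in\{1,2\}^2: mx+ny=0\}.
\]
Any pair outside $B_1\cup B_2$ witnesses the lemma, so it suffices to prove $B_1\cup B_2\neq\{1,2\}^2$.

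The key step I would isolate is the structural claim: \emph{each $B_i$ has at most two elements, and equals $\{(1,1),(2,2)\}$ whenever it has two.} Since $k\neq 0$, the form $(x,y)\mapsto kx+ly$ is not identically zero, so its zero set is a line through the origin in $\R^2$; among the four points, the only two collinear with the origin are $(1,1)$ and $(2,2)$ (lying on $y=x$), while $(1,2)$ and $(2,1)$ sit on the distinct lines $y=2x$ and $x=2y$, each meeting the four points only in itself. Hence the kernel line contains at most two of the points, and two only when it is $y=x$. The same holds for $B_2$ using $n\neq 0$. (Algebraically: $(1,2),(2,1)\in B_1$ would force $k+2l=0$ and $2k+l=0$, hence $k=0$, a contradiction; and $(1,1)\in B_1\Leftrightarrow(2,2)\in B_1$ since $2k+2l=2(k+l)$.)

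Granting this, the conclusion is a short counting argument. Suppose $B_1\cup B_2=\{1,2\}^2$; then $|B_1|+|B_2|\ge 4$ forces $|B_1|=|B_2|=2$ and $B_1\cap B_2=\emptyset$, yet $|B_i|=2$ forces $B_i=\{(1,1),(2,2)\}$, so $B_1=B_2$ — contradicting disjointness. Thus some $(x,y)$ avoids both bad sets, giving $kx+ly\neq 0$ and $mx+ny\neq 0$. The main obstacle is precisely this rigidity: the crude union bound $|B_1|+|B_2|\le 4$ does not by itself exclude a cover of all four pairs, and the argument only closes because two bad sets of maximal size are forced to coincide rather than to be complementary.
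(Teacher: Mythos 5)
Your proof is correct. There is nothing in the paper to compare it against: the lemma is labelled as an exercise and stated without any proof, so your write-up supplies an argument the paper omits. Your preliminary observation is also exactly right and worth keeping: as literally stated with $k,l,m,n\in\N$ the lemma is vacuous (all four sums are then at least $1$), whereas the paper invokes it in the proof of Claim \ref{construction:of:a_n}(iii) with coefficients $k=z(q)$, $l=z(r)$, $m=z'(q)$, $n=z'(r)$ taken from $\Z$, so the signed version you prove is the one actually needed. The argument itself is sound: each bad set $B_i$ is the intersection of $\{1,2\}^2$ with a line through the origin (here $k\neq0$, respectively $n\neq0$, guarantees the form is not identically zero), such a line meets the four points in at most two of them, and in two only when it is the diagonal $y=x$, forcing $B_i=\{(1,1),(2,2)\}$; hence two bad sets can never cover all four points, since a cover would force $B_1=B_2=\{(1,1),(2,2)\}$ while simultaneously requiring them to be disjoint. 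The rigidity step, which is the only delicate point, is verified both geometrically and algebraically in your text, so the proof is complete.
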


Call a family $\mathcal{S}$ of subsets of a given set $X$:
\begin{itemize}
\item[(i)]
{\em $2$-linked\/} if any two distinct members of $\mathcal{S}$ have an infinite intersection,
\item[(ii)]
{\em $3$-disjoint\/} if any three pairwise distinct members of
$\mathcal{S}$ have an empty intersection.
\end{itemize}

\begin{lemma}
There exists a $2$-linked $3$-disjoint
faithfully indexed
family
$\mathcal{S}=\{S_n:n\in\N\}$
of subsets of $\N$ satisfying $\bigcup\mathcal{S}=\N$.
\end{lemma}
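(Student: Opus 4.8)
The plan is to realize the desired family as the ``stars'' of an auxiliary partition of $\N$ indexed by the edges of the complete graph on $\N$. First I would set $P=\{\{i,j\}:i,j\in\N,\ i\neq j\}$, the collection of all two-element subsets of $\N$. Since $P$ is countably infinite, I can fix a partition $\{N_p:p\in P\}$ of $\N$ into pairwise disjoint \emph{infinite} subsets (such a partition exists because $\N$ splits into countably many infinite pieces). For each $n\in\N$ I then define
$$
S_n=\bigcup\{N_p:p\in P,\ n\in p\},
$$
and set $\mathcal{S}=\{S_n:n\in\N\}$. Intuitively, $S_n$ is the ``star'' of the vertex $n$, namely the union of the infinite blocks sitting on all edges incident to $n$.

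The one computation that drives everything is that, because the blocks $N_p$ partition $\N$, a point $x$ lies in $S_n$ if and only if the unique block containing $x$ is indexed by a pair $p$ with $n\in p$. From this I would read off all four properties at once. For distinct $n,m$ one gets $S_n\cap S_m=N_{\{n,m\}}$, which is infinite, so $\mathcal{S}$ is $2$-linked; for three pairwise distinct indices $n,m,k$ a common point would force a single two-element set $p$ to contain all of $n,m,k$, which is impossible, so $\mathcal{S}$ is $3$-disjoint. The covering condition is immediate, since every block $N_p$ is contained in $S_n$ for either endpoint $n$ of $p$, whence $\bigcup\mathcal{S}=\bigcup_{p\in P}N_p=\N$. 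Finally, for faithful indexing I would note that for $n\neq m$ one may pick $l\in\N\setminus\{n,m\}$; then $N_{\{n,l\}}\subseteq S_n$ while $N_{\{n,l\}}\cap S_m=\emptyset$, and since $N_{\{n,l\}}\neq\emptyset$ this yields $S_n\neq S_m$.

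I do not expect any genuine obstacle here: once the indexing by pairs is in place, each verification is a one-line consequence of the partition property, and the only background fact used is the standard existence of a partition of $\N$ into countably many infinite sets. The sole conceptual step worth isolating is the recognition that the combined requirements ``every point lies in at most two members'' together with ``every two members meet infinitely'' describe exactly the incidence pattern between vertices and edges of the complete graph on $\N$, with each edge inflated to an infinite block; the construction above is simply the faithful transcription of that picture.
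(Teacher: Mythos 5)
Your proof is correct and is essentially the same construction as the paper's: the paper fixes a surjection $f:\N\to[\N]^2$ whose fibers are all infinite and sets $S_n=\{k\in\N : n\in f(k)\}$, which is precisely your partition $\{N_p : p\in[\N]^2\}$ of $\N$ into infinite blocks (the fibers of $f$) with $S_n=\bigcup\{N_p : n\in p\}$. Your explicit verification of the four properties fills in what the paper leaves as ``a straightforward check.''
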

\begin{proof}
Let $[\N]^2$ be the family of all two-element subsets of $\N$, and let $f:\N\to
[\N]^2$ be any surjection such that the set $\{k\in\N: f(k)=\{n,m\}\}$
is infinite for every $\{n,m\}\in[\N]^2$.
For $n\in\N$ define $S_n=\{k\in\N: n\in f(k)\}$.
A straightforward check that $\mathcal{S}=\{S_n:n\in\N\}$ has the required properties is left to the reader.
\end{proof}

Recall that a family $\mathcal{T}$ of subsets of $\N$ is called {\em independent\/} if the intersection $T_1\cap \ldots \cap T_k\cap
(\N\setminus T_1')\cap\dots \cap(\N\setminus T_l')$ is infinite for every pair of integers $k,l\in\N$ and each pairwise distinct
sequence $T_1,\ldots,T_k, T_1',\dots, T_l'$ of members of $\mathcal{T}$. It is well known  that there exists an independent
family of size continuum on $\N$, so we can choose a ``countable piece'' $\mathcal{T}=\{T_{n}:n\in\N\}$ of that family.

Let $P$ be a countably infinite subset of the set $\mathbb{P}$ of prime numbers such that $2\not\in P$, and let
${P}=\{p_{i,j}:(i,j)\in\N^2\}$ be a faithful enumeration of the set ${P}$. Define
$$
K_P= \prod_{(i,j)\in\N^2} \Z(p_{i,j}).
$$
For $g\in K_P$ let $s(g)=\{(i,j)\in\N^2: g(i,j)\not=0\}$. For each $n\in\N$ define $a_n\in K$ and by
\begin{equation}
\label{eq:defining:a_n} a_n(i,j) = \left\{
\begin{array}{ll}
0 & \mbox{if $i\in \N\setminus S_n$}\\
1 & \mbox{if $i\in S_n$ and $j\in T_n$}\\
2 & \mbox{if $i\in S_n$ and $j\in \N\setminus T_n$}
\end{array}
\right. \hskip40pt \mbox{for } (i,j)\in\N^2.
\end{equation}

\begin{claim}
\label{construction:of:a_n}
The sequence $A=\{a_n:n\in\N\}$ satisfies the following conditions:
\begin{itemize}
\item[(i)]
$\{s(a_n):n\in\N\}$ is a $2$-linked $3$-disjoint family of subsets
of $\N^2$;
\item[(ii)]
$\bigcup_{n\in\N}s(a_n)=\N^2$;
\item[(iii)]
if $q,r\in\N$, $q\not=r$, $k,l,m,n\in\Z$ and $k\neq 0\neq n$, then the set
$$
E(k,l,m,n,q,r)=s(ka_q+la_r)\cap s(ma_q+na_r)\cap s(a_q)\cap s(a_r)
$$
is infinite.
\end{itemize}
\end{claim}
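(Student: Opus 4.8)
The plan is to first unwind the definition of $a_n$ to obtain a transparent description of its support. From \eqref{eq:defining:a_n}, for each $n$ the value $a_n(i,j)$ is nonzero precisely when $i\in S_n$: in that case it equals $1$ or $2$, both of which are nonzero residues in $\Z(p_{i,j})$ because every $p_{i,j}\ge 3$ (here is where $2\notin P$ is used). Hence $s(a_n)=S_n\times\N$. With this identity parts (i) and (ii) become immediate: for distinct $m,n$ we have $s(a_m)\cap s(a_n)=(S_m\cap S_n)\times\N$, which is infinite by the $2$-linkedness of $\mathcal S$, while for any three pairwise distinct indices the corresponding triple intersection is $(\,\cdot\,)\times\N=\emptyset$ by $3$-disjointness; distinctness of the $S_n$ gives faithful indexing. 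Likewise $\bigcup_n s(a_n)=\bigl(\bigcup_n S_n\bigr)\times\N=\N^2$ since $\bigcup\mathcal S=\N$.

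For (iii) the key structural remark is that, for a fixed pair $q\ne r$ and every $i\in S_q\cap S_r$, the pair $(a_q(i,j),a_r(i,j))$ does not depend on $i$: it equals $(1,1)$, $(1,2)$, $(2,1)$ or $(2,2)$ according to whether $j$ lies in $T_q\cap T_r$, $T_q\setminus T_r$, $T_r\setminus T_q$ or $\N\setminus(T_q\cup T_r)$. Since $\mathcal T$ is independent and $q\ne r$, each of these four sets of $j$'s is infinite. I would then apply Lemma \ref{linear:algebra:excercise} to the coefficients $k,l,m,n$ (the elementary argument applies verbatim to integer coefficients, the point being that among the four points of $\{1,2\}^2$ only $(1,1)$ and $(2,2)$ can lie on a common line through the origin, so the two forms can jointly kill at most three of the four points), obtaining $x_0,y_0\in\{1,2\}$ with $kx_0+ly_0\ne 0$ and $mx_0+ny_0\ne 0$ \emph{as integers}. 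Let $J\subseteq\N$ be the infinite set of those $j$ for which the common value $(a_q(i,j),a_r(i,j))$ (for $i\in S_q\cap S_r$) equals $(x_0,y_0)$, and put $I=S_q\cap S_r$, infinite by $2$-linkedness.

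On the infinite set $I\times J$ one has $i\in S_q\cap S_r$, so $(i,j)\in s(a_q)\cap s(a_r)$, and $(ka_q+la_r)(i,j)=(kx_0+ly_0)\bmod p_{i,j}$ while $(ma_q+na_r)(i,j)=(mx_0+ny_0)\bmod p_{i,j}$. Thus a point $(i,j)\in I\times J$ fails to belong to $E(k,l,m,n,q,r)$ only when $p_{i,j}$ divides one of the two fixed nonzero integers $kx_0+ly_0$ or $mx_0+ny_0$. This is where everything comes together: each of these integers has only finitely many prime divisors and the enumeration $(i,j)\mapsto p_{i,j}$ is injective, so only finitely many $(i,j)\in I\times J$ are excluded; hence $E(k,l,m,n,q,r)$ contains $I\times J$ minus a finite set and is infinite. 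The main obstacle—indeed the only genuinely nonformal step—is precisely this passage from the integer inequalities supplied by Lemma \ref{linear:algebra:excercise} to nonvanishing modulo the \emph{varying} primes $p_{i,j}$, which is rescued by the distinctness of the primes and by $2\notin P$ ensuring that the symbol $2$ is a nonzero residue in every coordinate.
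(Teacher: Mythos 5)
Your proposal is correct and follows essentially the same route as the paper's proof: the identity $s(a_n)=S_n\times\N$ giving (i) and (ii), then for (iii) an application of Lemma \ref{linear:algebra:excercise} (read for integer coefficients) to pick a good pattern $(x_0,y_0)\in\{1,2\}^2$, independence of $\mathcal{T}$ to produce infinitely many $j$ realizing that pattern, and removal of the finitely many coordinates whose prime divides the fixed nonzero integers $kx_0+ly_0$ or $mx_0+ny_0$. The only immaterial difference is bookkeeping --- the paper fixes a single $i\in S_q\cap S_r$ and exhibits $\{i\}\times T'\subseteq E$, whereas you use all of $(S_q\cap S_r)\times J$ minus a finite set --- and your explicit justification that the lemma holds for $\Z$-coefficients correctly repairs what is evidently a typo ($\N$ for $\Z$) in the paper's statement of that lemma.
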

\begin{proof}
From \eqref{eq:defining:a_n} it follows that $s(a_n)=S_n\times \N$ for every $n\in\N$. Now (i) and (ii) follow easily from our choice
of the family  $\mathcal{S}$. Let us check that (iii) holds as well. Since $\mathcal{S}$ is $2$-linked, the set $S=S_q\cap S_r$ is
infinite. In particular, we can choose $i\in S$. Let $x$ and $y$ be as in the conclusion of Lemma \ref{linear:algebra:excercise}. Let
\begin{equation}
\label{definition:of:F_pq} R_p= \left\{
\begin{array}{ll}
T_p & \mbox{if $x=1$}\\
\N\setminus T_p & \mbox{if $x=2$}
\end{array}
\right. \ \mbox{ and }\ R_q= \left\{
\begin{array}{ll}
T_q & \mbox{if $y=1$}\\
\N\setminus T_q & \mbox{if $y=2$}.
\end{array}
\right.
\end{equation}

Since $\mathcal{T}$ is independent and $T_p,T_q\in\mathcal{T}$, from
\eqref{definition:of:F_pq} it follows that the set
\begin{equation}
\label{definition:of:T} T=R_p\cap R_q
\end{equation}
is infinite. Since the set
\begin{equation}
\label{definition:of:J} J=\{j\in \N: p_{i,j}\le \max(|kx+ly|,
|mx+ny|)\}
\end{equation}
 is finite,
$T'=T\setminus J$ is infinite. From \eqref{eq:defining:a_n},
\eqref{definition:of:T} and \eqref{definition:of:J} we conclude that
$$
a_q(i,j)=x\not=0, a_r(i,j)=y\not=0, (ka_q+la_r)(i,j)=kx+ly\not=0
\mbox{ and } (ma_q+na_r)(i,j)=mx+ny\not=0
$$
for each $j\in T'$. This proves that $\{i\}\times T'\subseteq
E(k,l,m,n,q,r)$.
\end{proof}

\begin{claim}\label{chinese:remainder:theorem}
If $g\in K_P$ and $F$ is a finite subset of $s(g)$, then the projection $\pi_F:\langle g\rangle\to\prod_{(i,j)\in F}\Z(p_{i,j})$ defined by
$\pi_F(f)=f\restriction_F$ for every $f\in\langle g\rangle$, is surjective.
\end{claim}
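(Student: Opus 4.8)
The plan is to unwind the definitions and reduce the claim to a simultaneous system of congruences, which is then resolved by the classical Chinese Remainder Theorem --- whence the name attached to the claim. First I would record that $\langle g\rangle=\{ng:n\in\Z\}$, so that $\pi_F(ng)=(ng(i,j))_{(i,j)\in F}$. Consequently, surjectivity of $\pi_F$ is equivalent to the following assertion: for an arbitrary target $(c_{i,j})_{(i,j)\in F}\in\prod_{(i,j)\in F}\Z(p_{i,j})$ there exists an integer $n$ such that $ng(i,j)=c_{i,j}$ in $\Z(p_{i,j})$ for every $(i,j)\in F$.

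The crucial observation is that each coordinate equation can be solved individually. Since $F\subseteq s(g)$, we have $g(i,j)\neq 0$ in $\Z(p_{i,j})$ for every $(i,j)\in F$. As $p_{i,j}$ is prime, the ring $\Z(p_{i,j})\cong\Z/p_{i,j}\Z$ is a field, so $g(i,j)$ is invertible; denoting its inverse by $g(i,j)^{-1}$, the equation $ng(i,j)=c_{i,j}$ becomes the congruence $n\equiv c_{i,j}\,g(i,j)^{-1}\pmod{p_{i,j}}$.

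It then remains to solve these finitely many congruences at once. Because the enumeration $\{p_{i,j}:(i,j)\in\N^2\}$ of $P$ is faithful, the primes $p_{i,j}$ attached to the distinct pairs $(i,j)\in F$ are pairwise distinct, hence pairwise coprime. The Chinese Remainder Theorem therefore yields an integer $n$ satisfying every congruence $n\equiv c_{i,j}\,g(i,j)^{-1}\pmod{p_{i,j}}$ simultaneously, and for this $n$ one has $\pi_F(ng)=(c_{i,j})_{(i,j)\in F}$, as required.

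I do not expect any genuine obstacle here, as the argument is a routine application of the Chinese Remainder Theorem. The only point that deserves explicit mention is that it is precisely the distinctness of the primes $p_{i,j}$ (a consequence of the enumeration of $P$ being faithful) that licenses the use of the theorem; the invertibility of each $g(i,j)$, guaranteed by the hypothesis $F\subseteq s(g)$ together with the primality of $p_{i,j}$, is what reduces the linear equations to the congruences to which the theorem is applied.
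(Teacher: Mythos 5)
Your proof is correct and is exactly the argument the paper has in mind: the paper's own proof consists of the single sentence ``This immediately follows from the Chinese remainder theorem,'' and your proposal just spells out the routine details (invertibility of each $g(i,j)$ in the field $\Z(p_{i,j})$ since $F\subseteq s(g)$, and pairwise distinctness of the primes from the faithful enumeration) that make that citation legitimate.
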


\begin{proof}
This immediately follows from the Chinese remainder theorem.
\end{proof}

\begin{claim}\label{no:products:inside}
Given $g,g'\in K_P\setminus\{0\}$, consider the topological group $N=\langle
g\rangle\times\langle g'\rangle$, where $\langle g\rangle$ and
$\langle g'\rangle$ are taken with the subspace topology induced
from $K_P$, and let
$\phi:N\to K_P$ be the homomorphism
defined by  $\phi(mg,ng')=mg+ng'$ for $m,n\in\Z$.
Then $\phi$ is a topological isomorphism
between $N$ and $\phi(N)=\langle\{g,g'\}\rangle$ if and only if $s(g)\cap s(g')=\emptyset$.
\end{claim}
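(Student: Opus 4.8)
The plan is to pass to closures inside the compact group $K_P=\prod_{(i,j)\in\N^2}\Z(p_{i,j})$ and to reduce the claim to a transparent statement about the sum map of two compact subgroups. Throughout I would use that $K_P$ is compact, so that the closure $\overline{\langle g\rangle}$ of any subgroup is its completion, and that the addition map $K_P\times K_P\to K_P$ is continuous.

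First I would note that $\phi$ is nothing but the restriction to $N=\langle g\rangle\times\langle g'\rangle$ of the continuous addition map, so it is a continuous homomorphism onto $\phi(N)=\langle\{g,g'\}\rangle$, and it extends continuously to the map $\bar\phi\colon\overline{\langle g\rangle}\times\overline{\langle g'\rangle}\to K_P$ given by $\bar\phi(u,v)=u+v$. Here $\overline{\langle g\rangle}\times\overline{\langle g'\rangle}$ is the completion of $N$, while the image $\overline{\langle g\rangle}+\overline{\langle g'\rangle}$, being a continuous image of a compact group, is compact, hence closed, and equals $\overline{\langle\{g,g'\}\rangle}$, the completion of $\phi(N)$. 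The kernel of $\bar\phi$ is the antidiagonal copy of $\overline{\langle g\rangle}\cap\overline{\langle g'\rangle}$.

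The key step is the identification $\overline{\langle g\rangle}=\prod_{(i,j)\in s(g)}\Z(p_{i,j})$, regarded as the closed subgroup of $K_P$ of all elements supported on $s(g)$. The inclusion $\subseteq$ is clear, since every multiple of $g$ vanishes off $s(g)$. For density I would invoke Claim \ref{chinese:remainder:theorem}: for each finite $F\subseteq s(g)$ the projection $\langle g\rangle\to\prod_{(i,j)\in F}\Z(p_{i,j})$ is surjective, which says precisely that $\langle g\rangle$ is dense in the coordinate product over $s(g)$. Applying the same to $g'$ gives $\overline{\langle g\rangle}\cap\overline{\langle g'\rangle}=\prod_{(i,j)\in s(g)\cap s(g')}\Z(p_{i,j})$, and this group is trivial exactly when $s(g)\cap s(g')=\emptyset$.

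Finally I would conclude by a compactness argument. If $s(g)\cap s(g')=\emptyset$, then $\bar\phi$ is a continuous injective homomorphism between compact groups, hence a topological isomorphism onto its image; its restriction $\phi$ to the dense subgroup $N$ is then a topological isomorphism onto $\phi(N)$. Conversely, a topological isomorphism $\phi\colon N\to\phi(N)$ extends uniquely to a topological isomorphism of the completions, which can only be $\bar\phi$; in particular $\bar\phi$ is injective, forcing $\overline{\langle g\rangle}\cap\overline{\langle g'\rangle}=\{0\}$ and hence $s(g)\cap s(g')=\emptyset$. The main obstacle is the closure computation together with the bookkeeping of completions; once $\overline{\langle g\rangle}$ is pinned down via Claim \ref{chinese:remainder:theorem}, the remaining facts (continuous bijections of compact groups are isomorphisms, and topological isomorphisms extend to completions) are routine. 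I would also remark that the disjoint-support direction admits an elementary alternative: the retractions $r_g,r_{g'}\colon K_P\to K_P$ that erase all coordinates outside $s(g)$, respectively $s(g')$, are continuous and, on $\langle\{g,g'\}\rangle$, recover $mg$ and $ng'$ from $mg+ng'$, giving an explicit continuous inverse of $\phi$.
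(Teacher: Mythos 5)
Your proposal is correct, but it follows a genuinely different route from the paper. The paper argues directly and "by hand": assuming $(i_0,j_0)\in s(g)\cap s(g')$ and (after dismissing the trivial case) that $\phi$ is a monomorphism, it uses Claim \ref{chinese:remainder:theorem} to manufacture integers $z,z'$ with $zg(i_0,j_0)=1$, $z'g'(i_0,j_0)=p_{i_0,j_0}-1$, and both $zg$, $z'g'$ vanishing on the rest of a prescribed finite set $F$; then $zg+z'g'$ lies in an arbitrarily small basic neighborhood of $0$ in $\phi(N)$ while $(zg,z'g')$ stays outside a fixed open set of $N$, so $\phi$ is not open onto its image. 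You instead pass to closures in the compact group $K_P$: you use the same Claim \ref{chinese:remainder:theorem}, but to identify $\overline{\langle g\rangle}$ with the full subproduct $\prod_{(i,j)\in s(g)}\Z(p_{i,j})$, whence $\overline{\langle g\rangle}\cap\overline{\langle g'\rangle}$ is the subproduct over $s(g)\cap s(g')$; the claim then follows from two soft facts (a continuous injective homomorphism of compact groups is a topological isomorphism onto its image, and a topological isomorphism of dense subgroups extends uniquely to a topological isomorphism of completions, which here must coincide with the addition map $\bar\phi$). Your approach buys a cleaner structural picture and dispenses with the paper's case distinction on whether $\phi$ is a monomorphism: even when $\phi$ is injective and supports meet, the extension $\bar\phi$ cannot be, which settles the converse in one stroke. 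What the paper's approach buys is self-containedness: it needs no completion theory and exhibits concretely where continuity of $\phi^{-1}$ fails, namely at $0$. Your closing remark on the retractions $r_g,r_{g'}$ for the disjoint-support direction matches the paper's view that this direction is "obvious," and is a nice explicit witness of the continuous inverse.
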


\begin{proof}
The ``if'' part is obvious. To prove the ``only if'' part, assume that $(i_0,j_0)\in s(g)\cap s(g')$.
If $\phi$ is not a monomorphism, the proof is complete. So we shall assume that $\phi$ is a monomorphism.
Take the basic open neighborhood $U=\{g\in
K_P:g(i_0,j_0)=0\}$ of $0$ in $K_P$. Then
$V=(U\cap\langle
g\rangle)\times\langle g'\rangle$ is an open subset of $N$, so it suffices to
show that the set  $\phi(V)$
in not open in $\phi(H)$. Assume the contrary. Since $0\in\phi(V)$,
there exists a finite set $F\subseteq \N^2$ such that
$W\cap \phi(H)\subseteq \phi(V)$, where
$W=\{g\in K_P: g(i,j)=0$ for all $(i,j)\in F\}$.
Without loss of generality, we may assume that $(i_0,j_0)\in F$.
Since $(i_0,j_0)\in s(g)\cap s(g')$, applying Claim
\ref{chinese:remainder:theorem} we can choose
$z,z'\in\Z$ such that
\begin{equation}
\label{eq:14}
zg(i_0,j_0)=1
\mbox{ and }
zg(i,j)=0
\mbox{ for all }
(i,j)\in (F\cap s(g))\setminus \{(i_0,j_0)\},
\end{equation}
\begin{equation}
\label{eq:15}
z'g'(i_0,j_0)=p_{i_0,j_0}-1
\mbox{ and }
z'g'(i,j)=0
\mbox{ for all }
(i,j)\in (F\cap s(g'))\setminus \{(i_0,j_0)\}.
\end{equation}
From \eqref{eq:14} and \eqref{eq:15} it follows that
$(zg+z'g')(i,j)=0$ for all $(i,j)\in F$;
that is,
$\phi(zg,z'g')=zg+z'g'\in W\cap\phi(N)\subseteq \phi(V)$.
Since $\phi$ is assumed to be a monomorphism,
we conclude that
$(zg,z'g')\in V$.
In particular, $zg\in U$, which yields $zg(i_0,j_0)=0$.
This contradicts \eqref{eq:14}.
\end{proof}

\begin{claim}
$g_z=\lim_{k\to\infty}\sum_{n=0}^k z(n) a_n\in K_P$ for every $z\in
\Z^{\N}$.
\end{claim}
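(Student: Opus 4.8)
The plan is to exploit that $K_P$ carries the product topology, so that convergence of a sequence in $K_P$ is nothing but coordinate-wise convergence in each (discrete) factor $\Z(p_{i,j})$. Thus it suffices to prove that, for every fixed coordinate $(i,j)\in\N^2$, the sequence of partial sums $\left(\sum_{n=0}^k z(n)a_n(i,j)\right)_{k\in\N}$ is eventually constant in $\Z(p_{i,j})$; once this is established, defining $g_z$ coordinate-wise by these eventual values will produce the required limit.

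First I would fix $(i,j)\in\N^2$ and recall from the proof of Claim \ref{construction:of:a_n} that $s(a_n)=S_n\times\N$, whence $a_n(i,j)\ne 0$ holds precisely when $i\in S_n$. Therefore the only indices $n$ contributing a nonzero summand to $\sum_{n=0}^k z(n)a_n(i,j)$ are those in $N_i=\{n\in\N: i\in S_n\}$.

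The decisive point is that $|N_i|\le 2$: if there were three pairwise distinct indices in $N_i$, then $i$ would lie in the intersection of three distinct members of the family $\{S_n:n\in\N\}$, contradicting its $3$-disjointness. Consequently $\sum_{n=0}^k z(n)a_n(i,j)$ has at most two nonzero terms, and for every $k\ge\max N_i$ it equals the fixed element $\sum_{n\in N_i} z(n)a_n(i,j)$ of $\Z(p_{i,j})$. Hence the partial sums stabilize at the coordinate $(i,j)$.

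Finally, defining $g_z\in K_P$ by $g_z(i,j)=\sum_{n\in N_i} z(n)a_n(i,j)$ for every $(i,j)\in\N^2$, I conclude that $\sum_{n=0}^k z(n)a_n\to g_z$ in the product topology: any basic neighborhood of $g_z$ constrains only finitely many coordinates, each of which stabilizes after finitely many steps, so the partial sums enter the neighborhood for all sufficiently large $k$. I do not anticipate any real obstacle here; the whole argument reduces to the finiteness bound $|N_i|\le 2$ supplied by $3$-disjointness, after which coordinate-wise convergence is automatic.
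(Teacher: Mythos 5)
Your proof is correct and is essentially the paper's argument: the paper derives the claim from Claim \ref{construction:of:a_n}(i) (the $3$-disjointness) together with the ``if'' direction of Lemma \ref{char:of:ap:in:K}, whose proof is precisely the coordinate-wise stabilization you carry out by hand. You have simply inlined that lemma for the product $K_P=\prod_{(i,j)}\Z(p_{i,j})$, using $3$-disjointness to get at most two nonzero terms per coordinate.
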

\begin{proof}
This follows from Claim \ref{construction:of:a_n}(i) and Lemma \ref{char:of:ap:in:K}.
\end{proof}

The last claim allows us to define $H_P=\left\{g_z: z\in\Z^{\N}\right\}\subseteq K_P$. Let $z,z'\in \Z^{\N}$. Since $K_P$ is abelian, we have
$$
\sum_{n=0}^k z(n)a_n+\sum_{n=0}^k z'(n)a_n=\sum_{n=0}^k (z(n)+z'(n))a_n$$ for every $k\in\N$. Thus, using the continuity of the group operation we get $g_z+g_{z'}=g_{z+z'}$. Hence, $H_P$ is a subgroup of $K_P$.

\begin{claim}
\label{any:two:supports:intersect} If $z,z'\in\Z^{\N}$, $g_z\not=0$ and $g_{z'}\not=0$, then $s(g_z)\cap s(g_{z'})\not=\emptyset$.
\end{claim}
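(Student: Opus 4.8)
The plan is to reduce everything to the two‑variable statement already isolated in Claim~\ref{construction:of:a_n}(iii). The starting observation is that for any $z\in\Z^\N$ and any coordinate $(i,j)\in\N^2$, the value $g_z(i,j)$ is the finite sum $\sum_{n\in N(i)}z(n)a_n(i,j)$ computed in $\Z(p_{i,j})$, where $N(i)=\{n\in\N:i\in S_n\}$; this sum is finite because $s(a_n)=S_n\times\N$ and, by the $3$-disjointness in Claim~\ref{construction:of:a_n}(i), each $i$ lies in at most two of the sets $S_n$, so $|N(i)|\le 2$. Since $g_z\neq 0$ and $g_{z'}\neq 0$, the index sets $Z=\{n:z(n)\neq 0\}$ and $Z'=\{n:z'(n)\neq 0\}$ are both nonempty.

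The main case is that one can choose $q\in Z$ and $q'\in Z'$ with $q\neq q'$ (this fails only when $Z=Z'=\{q\}$ is a common singleton, treated below). Here I would apply Claim~\ref{construction:of:a_n}(iii) to the pair $q,q'$ with the integers $k=z(q)$, $l=z(q')$, $m=z'(q)$, $n=z'(q')$; the hypotheses $k\neq 0$ and $n\neq 0$ hold precisely because $q\in Z$ and $q'\in Z'$. The claim then yields that
\[
E=s\bigl(z(q)a_q+z(q')a_{q'}\bigr)\cap s\bigl(z'(q)a_q+z'(q')a_{q'}\bigr)\cap s(a_q)\cap s(a_{q'})
\]
is infinite, in particular nonempty.

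The key step is to identify $E$ as a subset of $s(g_z)\cap s(g_{z'})$. Every point of $E$ lies in $s(a_q)\cap s(a_{q'})=(S_q\cap S_{q'})\times\N$, so for such $(i,j)$ we have $i\in S_q\cap S_{q'}$; by $3$-disjointness this forces $N(i)=\{q,q'\}$ exactly. Consequently $g_z(i,j)=z(q)a_q(i,j)+z(q')a_{q'}(i,j)$ and $g_{z'}(i,j)=z'(q)a_q(i,j)+z'(q')a_{q'}(i,j)$; that is, on this region $g_z$ and $g_{z'}$ coincide with the two elements $z(q)a_q+z(q')a_{q'}$ and $z'(q)a_q+z'(q')a_{q'}$ whose supports appear in $E$. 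Hence membership in $E$ forces both $g_z(i,j)\neq 0$ and $g_{z'}(i,j)\neq 0$, so $\emptyset\neq E\subseteq s(g_z)\cap s(g_{z'})$.

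It remains to dispose of the degenerate case $Z=Z'=\{q\}$, where $g_z=z(q)a_q$ and $g_{z'}=z'(q)a_q$ with $z(q),z'(q)\in\Z\setminus\{0\}$. Since $\mathcal S$ is $2$-linked, $S_q$ is infinite; fixing any $i\in S_q$, for every $j$ we have $a_q(i,j)\in\{1,2\}$, and as the primes $p_{i,j}$ are pairwise distinct I can pick $j$ with $p_{i,j}$ dividing neither $z(q)$ nor $z'(q)$ (using also $p_{i,j}\neq 2$, as $2\notin P$). Then $z(q)a_q(i,j)$ and $z'(q)a_q(i,j)$ are both nonzero in $\Z(p_{i,j})$, giving $(i,j)\in s(g_z)\cap s(g_{z'})$. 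The main obstacle throughout is bookkeeping the overlap region correctly: the whole argument hinges on the fact that $3$-disjointness collapses $g_z$ to a genuine two‑term combination \emph{exactly} on $(S_q\cap S_{q'})\times\N$, which is what makes Claim~\ref{construction:of:a_n}(iii) directly applicable; once this reduction is in place the rest is routine.
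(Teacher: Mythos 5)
Your proof is correct and takes essentially the same route as the paper's: the same case split (distinct indices $q\in Z$, $q'\in Z'$ with $q\neq q'$ versus a common singleton support), the same application of Claim \ref{construction:of:a_n}(iii) combined with $3$-disjointness to reduce $g_z$ and $g_{z'}$ to two-term combinations on $(S_q\cap S_{q'})\times\N$, and the same finitely-many-bad-primes argument (using $2\notin P$) in the degenerate case. The only differences are cosmetic: you formalize the reduction via the sets $N(i)$ with $|N(i)|\le 2$ and note the slightly stronger conclusion that all of $E$ lies in $s(g_z)\cap s(g_{z'})$, where the paper only extracts a single point.
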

\begin{proof}
Since $g_z\neq 0\neq g_{z'}$, we have $\{n\in\N:z(n)\neq 0\}\not=\emptyset$ and $\{n\in\N:z'(n)\neq 0\}\not=\emptyset$. We consider two cases.

{\sl Case 1\/}. {\it $\{n\in\N:z(n)\neq 0\}=\{n\in\N:z'(n)\neq 0\}=\{q\}$ for some $q\in\N$.}
In this case
$g_z=z(q)a_q$ and $g_{z'}=z'(q)a_q$.
Since the family $\{s(a_n):n\in\N\}$ is $2$-linked by
Claim \ref{construction:of:a_n}(i), $s(a_q)$ must be infinite.
Note that $s(g_z)=s(z(q) a_q)=s(a_q)\setminus F_z$
and
$s(g_{z'})=s(z'(q) a_q)=s(a_q)\setminus F_{z'}$,
where $F_z=\{(i,j)\in\N^2: p_{i,j}$ divides $z(q)\}$
and $F_{z'}=\{(i,j)\in\N^2: p_{i,j}$ divides $z'(q)\}$
are finite sets, so the set
$s(g_z)\cap s(g_{z'})=s(a_q)\setminus(F_z\cup F_{z'})$
must be infinite.

{\sl Case 2\/}. {\it There exist distinct $q\in\N$ and $r\in\N$ such that $z(q)\not=0$ and $z'(r)\not=0$.\/} Define $k=z(q)$, $l=z(r)$,
$m=z'(q)$ and $n=z'(r)$. By Claim \ref{construction:of:a_n}(iii), we can choose $(i,j)\in E(k,l,m,n,q,r)\subseteq s(a_q)\cap s(a_r)$.
Since the family $\{s(a_n):n\in\N\}$ is $3$-disjoint by Claim \ref{construction:of:a_n}(iii), it follows that $a_p(i,j)=0$ for all $p\in\N\setminus\{q,r\}$. Therefore,
\begin{equation}
\label{eq:16}
g_z(i,j)=\sum_{n=0}^\infty z(n) a_n(i,j)=z(q)a_q(i,j)+z(r)a_r(i,j)
=
(ka_q+la_r)(i,j)
\end{equation}
and
\begin{equation}
\label{eq:17}
g_{z'}(i,j)=\sum_{n=0}^\infty z'(n) a_n(i,j)=z'(q)a_q(i,j)+z'(r)a_r(i,j)
=
(ma_q+na_r)(i,j).
\end{equation}
Since
$(i,j)\in E(k,l,m,n,q,r)\subseteq s(ka_q+la_r)\cap s(ma_q+na_r)$,
from
\eqref{eq:16} and \eqref{eq:17} we conclude that $g_z(i,j)\not =0$ and $g_{z'}(i,j)\not=0$. In particular, $(i,j)\in s(g_z)\cap s(g_{z'})\not=\emptyset$.
\end{proof}

\begin{claim}
\label{denseness:claim}
$H_P$ is dense in $K_P$.
\end{claim}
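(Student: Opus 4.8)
The plan is to verify density directly against the basic open sets of the Tychonoff topology on $K_P$. Such a set is determined by a finite set $F\subseteq\N^2$ together with a prescribed value $h(i,j)\in\Z(p_{i,j})$ for each $(i,j)\in F$, and density amounts to producing, for every such $F$ and $h$, some $z\in\Z^{\N}$ with $g_z(i,j)=h(i,j)$ for all $(i,j)\in F$. First I would record the crucial finiteness coming from Claim \ref{construction:of:a_n}: since $s(a_n)=S_n\times\N$ and the family $\{s(a_n):n\in\N\}$ is $3$-disjoint, the index set $N_i=\{n\in\N:i\in S_n\}$ has at most two elements for every $i$, and it is nonempty because $\bigcup_{n\in\N} s(a_n)=\N^2$. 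Consequently, for any $z$ and any coordinate $(i,j)$ one has $g_z(i,j)=\sum_{n\in N_i}z(n)\,a_n(i,j)$, a sum of at most two terms computed in $\Z(p_{i,j})$, since $a_n(i,j)=0$ whenever $i\notin S_n$.

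Next I would solve the resulting finite system of congruences. Fix $(i,j)\in F$. For every $n\in N_i$ the value $a_n(i,j)$ is nonzero in the field $\Z(p_{i,j})$, hence invertible; choosing one $n_0\in N_i$ and requiring $z(n)\equiv 0$ for the (at most one) remaining $n\in N_i\setminus\{n_0\}$ and $z(n_0)\equiv h(i,j)\,a_{n_0}(i,j)^{-1}\pmod{p_{i,j}}$ makes the single equation $\sum_{n\in N_i}z(n)\,a_n(i,j)=h(i,j)$ hold modulo $p_{i,j}$. The point that makes these local solutions compatible is that the primes $\{p_{i,j}:(i,j)\in F\}$ are pairwise distinct, because $(p_{i,j})$ is a faithful enumeration of $P$; thus each prime is attached to exactly one coordinate equation, and the residues that a given $z(n)$ is required to have modulo the various primes $p_{i,j}$ with $n\in N_i$ are imposed independently. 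By the Chinese Remainder Theorem I can therefore pick, for each $n\in\bigcup\{N_i:(i,j)\in F\}$, a single integer $z(n)$ realizing all of these residues simultaneously, setting $z(n)=0$ for the remaining indices. The resulting $z\in\Z^{\N}$ gives $g_z(i,j)=h(i,j)$ on all of $F$, so $g_z\in H_P$ lies in the chosen basic open set.

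The only real obstacle is the apparent coupling of the unknowns $z(n)$: a single index $n$ may belong to $N_i$ for several distinct $i$ occurring in $F$, so its value is constrained by several equations at once. The resolution is exactly the observation above that distinct coordinates carry distinct prime moduli, so these constraints live over coprime moduli and are reconciled by the Chinese Remainder Theorem; combined with the bound $|N_i|\le 2$ supplied by $3$-disjointness, each individual equation is trivially solvable. I expect no further difficulties, and the argument shows that $H_P$ meets every nonempty basic open subset of $K_P$, i.e.\ $H_P$ is dense in $K_P$.
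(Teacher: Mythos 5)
Your proof is correct and takes essentially the same route as the paper: both verify density against the basic open sets of the product topology by producing a finite integer combination of the $a_n$ realizing arbitrarily prescribed values on a finite set $F$ of coordinates, the two key inputs being Claim \ref{construction:of:a_n}(ii) (so every coordinate lies in some $s(a_n)$) and the Chinese Remainder Theorem over the pairwise distinct odd primes $p_{i,j}$. The only difference is bookkeeping: the paper packages the CRT step as Claim \ref{chinese:remainder:theorem} (surjectivity of $\pi_F$ on cyclic subgroups $\langle g\rangle$) and extends it to the subgroup generated by $\{a_n:n\in\N\}$, whereas you apply CRT directly to the coefficients $z(n)$, using $3$-disjointness to reduce each coordinate equation to a sum of at most two terms.
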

\begin{proof}
Using Claim \ref{chinese:remainder:theorem}, we can easily derive the following stronger statement. If $K\subset K_P$ and $F$ is a
finite subset of $\bigcup_{g\in K}s(g)$, then the projection $\pi_F:\langle K\rangle\to\prod_{(i,j)\in F}\Z(p_{i,j})$ defined by
$\pi_F(f)=f\restriction_F$ for every $f\in\langle K\rangle$, is surjective. Now the conclusion follows from Claim
\ref{construction:of:a_n}(ii).
\end{proof}

\begin{proposition}
\label{proposition:example}
The group $H_P$ has the following properties:
\begin{itemize}
\item[(i)] $H_P$ is the smallest subgroup $H$ of $K_P$ such that the set $\{a_n:n\in\N\}$ is \ap\ in $H$; in particular, $H_P$ is not TAP.
\item[(ii)] $H_P$ does not contain a subgroup topologically isomorphic to a product of two nontrivial topological groups.
\item[(iii)] $H_P$ is not complete.
\end{itemize}
\end{proposition}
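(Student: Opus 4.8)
The plan is to handle the three items separately: (i) reduces to computing limits coordinatewise in $K_P$, (ii) is the substantial point and reduces to the two Claims \ref{no:products:inside} and \ref{any:two:supports:intersect}, and (iii) is a density-versus-completeness argument that feeds on (ii).

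For (i), I would first confirm that $\{a_n:n\in\N\}$ really is $f_\omega$-productive in $H_P$. By Claim \ref{construction:of:a_n}(i) the family $\{s(a_n):n\in\N\}$ is $3$-disjoint, so for each coordinate $(i,j)\in\N^2$ the set $\{n\in\N:a_n(i,j)\neq 0\}$ has at most two elements and is in particular finite; by Lemma \ref{char:of:ap:in:K} this already yields that $\{a_n\}$ is $f_\omega$-productive in $K_P$. Now fix a bijection $\varphi:\N\to\N$ and a function $z:\N\to\Z$. Each partial sum $\sum_{n=0}^k z(n)a_{\varphi(n)}$ lies in $H_P$ (it equals $g_w$ for a finitely supported $w$), and computing coordinatewise one sees that only finitely many terms are nonzero in each coordinate, so the limit in $K_P$ equals $g_w$ with $w=z\circ\varphi^{-1}$, which belongs to $H_P$ by definition. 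Since $H_P$ carries the subspace topology, this limit is attained in $H_P$ as well, so $\{a_n\}$ is $f_\omega$-productive in $H_P$; as the $a_n$ are pairwise distinct and nonzero (their supports $S_n\times\N$ are distinct and nonempty), this is a faithfully indexed $f_\omega$-productive set witnessing that $H_P$ is not TAP. For minimality, if $H\leq K_P$ is any subgroup in which $\{a_n\}$ is $f_\omega$-productive, then applying the definition to the identity bijection shows that $\sum_{n=0}^k z(n)a_n$ converges in $H$; its limit computed in $K_P$ is $g_z$, so $g_z\in H$ for every $z\in\Z^\N$, whence $H_P\subseteq H$.

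Item (ii) is the heart of the matter, and I expect it to be the main obstacle, since it requires transferring an abstract product decomposition into the concrete two-generator setting of Claim \ref{no:products:inside}. Suppose for contradiction that $H_P$ contains a subgroup $M$ together with a topological isomorphism $j:A\times B\to M$, where $A,B$ are nontrivial topological groups and $A\times B$ carries the product topology (note $M$, and hence $A\times B$, is abelian). Pick $a_0\in A\setminus\{0\}$ and $b_0\in B\setminus\{0\}$ and set $g=j(a_0,0)$ and $g'=j(0,b_0)$, both nonzero elements of $K_P$. The delicate point to check is that the map $\phi:\langle g\rangle\times\langle g'\rangle\to\langle\{g,g'\}\rangle$ of Claim \ref{no:products:inside} is a topological isomorphism: under $j$ the subgroup $\langle a_0\rangle\times\langle b_0\rangle$, equipped with the topology induced from $A\times B$ (which is the product of the induced topologies on the factors), is carried topologically isomorphically onto $\langle\{g,g'\}\rangle$; and because subspace topologies are transitive, the factors $\langle a_0\rangle$ and $\langle b_0\rangle$ correspond to $\langle g\rangle$ and $\langle g'\rangle$ with the topologies induced from $K_P$. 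Tracing $(mg,ng')\mapsto(ma_0,nb_0)\mapsto mg+ng'$ identifies this restriction of $j$ with $\phi$. Claim \ref{no:products:inside} then forces $s(g)\cap s(g')=\emptyset$, which contradicts Claim \ref{any:two:supports:intersect} (any two nonzero elements of $H_P$ have intersecting supports). Hence no such $M$ exists.

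Finally, for (iii) I would argue by completeness versus density. By Claim \ref{denseness:claim} the subgroup $H_P$ is dense in the compact, hence complete, group $K_P$. If $H_P$ were complete it would be closed in $K_P$, and being dense it would then coincide with $K_P$. But $K_P=\Z(p_{0,0})\times\prod_{(i,j)\neq(0,0)}\Z(p_{i,j})$ is itself a product of two nontrivial topological groups, so $K_P$ contains such a product while, by (ii), $H_P$ does not; therefore $H_P\neq K_P$. This contradiction shows that $H_P$ is not complete.
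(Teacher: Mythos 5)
Your proposal is correct and follows essentially the same route as the paper's own proof: coordinatewise convergence plus $3$-disjointness for (i), reduction of an abstract product decomposition to the two-generator map $\phi$ of Claim \ref{no:products:inside} contradicting Claim \ref{any:two:supports:intersect} for (ii), and the dense-proper-subgroup-of-a-compact-group argument for (iii). You merely spell out details the paper leaves implicit (the limit being $g_{z\circ\varphi^{-1}}$, and why the restriction of the isomorphism identifies with $\phi$), so there is nothing to correct.
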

\begin{proof}
(i) It follows immediately from the definition of $H_P$ that $H_P$ is the smallest subgroup $H$ of $K_P$ such that $\{a_n:n\in\N\}$ is an \ap\ sequence in $H$. Since the family $\{s(a_n):n\in\N\}$ is 3-disjoint by Claim \ref{construction:of:a_n}(i), and $K_P$ is abelian, it follows easily that $\{a_n:n\in\N\}$ is an \ap\ set in $H_P$.

(ii) Assume that $M$ is a subgroup of $H_P$ that is topologically isomorphic to a product of nontrivial topological groups $L$ and
$L'$, and let $\theta:L\times L'\to M$ be a topological isomorphism. Choose $x\in L\setminus\{0\}$ and $x'\in L'\setminus \{0\}$, and let
$g=\theta(x,0)$ and $g'=\theta(0,x')$. Then the map $\phi$ from Claim \ref{no:products:inside} becomes a topological isomorphism, so
we must have $s(g)\cap s(g')=\emptyset$ by Claim \ref{no:products:inside}. This contradicts Claim \ref{any:two:supports:intersect}.

(iii) From (ii) it follows that $H_P\not=K_P$. Since $H_P$ is a proper dense subgroup of the compact group $K_P$ by Claim \ref{denseness:claim}, we conclude that $H_P$ cannot be complete.
\end{proof}

\section{TAP topologizations}
\begin{theorem}\label{opens:subgroups<c:implies:TAP} If topological group $G$ has an open normal subgroup $N$ of size $< \cont$, then $G$ is TAP.
\end{theorem}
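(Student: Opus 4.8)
The plan is to deduce the statement directly from two results already established earlier in the paper, so that essentially no new argument is required. First I would observe that the subgroup $N$ satisfies $|N|<\cont$ by hypothesis; hence Corollary \ref{|G|<cont:is:TAP}, asserting that every topological group of size $<\cont$ is TAP, immediately yields that $N$ itself is a TAP group.

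Next I would invoke Corollary \ref{opens:subgroups}, which states that for an open normal subgroup $N$ of a topological group $G$ one has that $G$ is TAP if and only if $N$ is TAP. Since $N$ is open and normal by assumption, and $N$ is TAP by the previous paragraph, this corollary gives at once that $G$ is TAP, completing the proof.

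I do not expect any genuine obstacle: the only point to check is that the hypotheses of Corollary \ref{opens:subgroups} are met, namely that $N$ is both open and normal, and both are explicitly assumed. If one prefers a self-contained derivation that bypasses Corollary \ref{opens:subgroups}, one can argue directly via Theorem \ref{direct:sum:of:TAP:is:TAP}(i): the subgroup $N$, being open, is also closed in $G$, the quotient $G/N$ is discrete and hence has no nontrivial convergent sequences, so $G/N$ is TAP by Fact \ref{no:conv:seq:is:TAP}(b); applying Theorem \ref{direct:sum:of:TAP:is:TAP}(i) to the closed normal TAP subgroup $N$ with TAP quotient $G/N$ then shows that $G$ is TAP.
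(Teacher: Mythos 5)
Your proposal is correct and is essentially identical to the paper's own proof, which likewise combines Corollary \ref{|G|<cont:is:TAP} (so $N$ is TAP) with Corollary \ref{opens:subgroups} (so $G$ is TAP iff $N$ is). Your alternative route via Theorem \ref{direct:sum:of:TAP:is:TAP}(i) merely unwinds the proof of Corollary \ref{opens:subgroups} itself, so it is the same argument in expanded form.
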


\begin{proof} Follows from Corollary \ref{opens:subgroups} and the fact that $N$ is TAP according to Corollary \ref{|G|<cont:is:TAP}.
\end{proof}

As we show in the next example, it easily follows from the above corollary that every infinite abelian group admits a non-discrete TAP group topology.

\begin{example} Let $G$ be an infinite abelian group. Take a countably infinite subgroup $N$ of $G$, a non-discrete group topology $\tau$ on $H$ and extend $\tau$ to a group topology of $G$ declaring $H$ an open topological subgroup of $G$. The above corollary ensures that $G$ is TAP. By choosing $\tau$ metrizable, one can get
a metrizable non-discrete group topology on $G$. Let us note that the metrizable non-discrete TAP topologies obtained in this way are not separable if the group $G$ is not countable (cf. Question \ref{separable:TAP:topology}).
\end{example}

Now we show that the TAP group topology can be chosen precompact.

\begin{corollary}
\label{every:free:abelian:group:has:TAP:topology}
Every abelian group $G$ admits a (Hausdorff) precompact TAP group topology.
\end{corollary}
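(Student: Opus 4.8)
The plan is to equip $G$ with its Bohr topology and then invoke Fact~\ref{no:conv:seq:is:TAP}(b). Write $\mathrm{Hom}(G,\T)$ for the group of all (abstract) characters of the abelian group $G$, and let $\tau$ be the initial topology on $G$ induced by all the maps $\chi\colon G\to\T$ with $\chi\in\mathrm{Hom}(G,\T)$; equivalently, $\tau$ is the topology that $G$ inherits from the diagonal homomorphism $j\colon G\to\T^{\mathrm{Hom}(G,\T)}$ defined by $j(g)=(\chi(g))_{\chi}$. This is a group topology, and it is precompact: since $\T^{\mathrm{Hom}(G,\T)}$ is compact and $(G,\tau)$ is homeomorphic to the subspace $j(G)$, the group $(G,\tau)$ is totally bounded.

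First I would check that $\tau$ is Hausdorff, i.e.\ that $\mathrm{Hom}(G,\T)$ separates the points of $G$. Given $g\in G\setminus\{0\}$, the cyclic subgroup $\langle g\rangle$ is isomorphic to $\Z$ or to $Z(n)$ for some $n$, so there is a character $\langle g\rangle\to\T$ taking $g$ to a nonzero element of $\T$; since $\T$ is divisible, hence injective in the category of abelian groups, this character extends to some $\chi\in\mathrm{Hom}(G,\T)$ with $\chi(g)\neq 0$. Thus $j$ is injective and $\tau$ is a precompact Hausdorff group topology on $G$, which exists for every abelian group $G$.

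The heart of the argument is to show that $(G,\tau)$ has \emph{no} nontrivial convergent sequences. This is exactly the content of a classical theorem of Flor: a sequence in a discrete abelian group that converges in the Bohr topology is eventually constant. Concretely, if $\{a_k:k\in\N\}\subseteq G$ were a $\tau$-convergent sequence, then $\chi(a_k)$ would converge in $\T$ for every $\chi\in\mathrm{Hom}(G,\T)$, and Flor's theorem forces $\{a_k:k\in\N\}$ to be eventually constant, hence trivial. I expect this step to be the main obstacle, since it is the only genuinely nontrivial input; verifying that $\tau$ is a precompact Hausdorff group topology is routine. (Cardinality plays no role here: Flor's conclusion holds for abelian groups of arbitrary size, which is essential because the open-subgroup construction of Theorem~\ref{opens:subgroups<c:implies:TAP} cannot produce a precompact topology once $G$ is infinite, as a precompact group admits no open subgroup of infinite index.)

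Once the absence of nontrivial convergent sequences is established, Fact~\ref{no:conv:seq:is:TAP}(b) immediately yields that $(G,\tau)$ is TAP, completing the proof. If one prefers to sidestep an explicit appeal to Flor's theorem, the same conclusion follows from any construction of a precompact Hausdorff group topology on $G$ without nontrivial convergent sequences, fed into Fact~\ref{no:conv:seq:is:TAP}(b) in the same way.
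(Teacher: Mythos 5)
Your proof is correct, but it takes a genuinely different route from the paper's proof of this corollary. The paper argues internally: using the structure theory of divisible groups, it finds $H\cong \Q/\Z\oplus\Q$ inside $\T$, notes $H$ is TAP by Corollary \ref{|G|<cont:is:TAP} (it is countable), deduces that $H_\sigma=\bigoplus_\sigma H\subseteq\T^\sigma$ is TAP by Theorem \ref{direct:sum:of:TAP:is:TAP}, embeds $G$ algebraically into $H_\sigma$, and pulls back the (precompact) subspace topology, invoking closure of TAP under subgroups. Your argument instead equips $G$ with its Bohr topology and feeds the classical theorem that Bohr-convergent sequences in a discrete abelian group are eventually constant (which you attribute to Flor; the paper cites Reid \cite{Re} for it) into Fact \ref{no:conv:seq:is:TAP}(b). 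Interestingly, your route is exactly the one the paper itself uses for its \emph{final} theorem, which is strictly stronger: $G^\sharp$ contains no $f_1$-Cauchy productive sequence at all, not merely no $f_\omega$-productive set. So your proof buys a stronger conclusion at the cost of a nontrivial external input (Reid/Flor), whereas the paper's proof of this corollary is self-contained modulo its own results plus elementary algebra; note also that the two constructions produce genuinely different topologies, since the paper's group $H_\sigma$ has plenty of nontrivial convergent sequences (it is TAP for cardinality and permanence reasons, not for sequential ones), while the Bohr topology has none. Your parenthetical observation that the open-subgroup construction of Theorem \ref{opens:subgroups<c:implies:TAP} cannot yield precompactness for uncountable $G$ is also correct, since an open subgroup of a precompact group has finite index.
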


\begin{proof} Since the group $\T$ is divisible, with torsion part $\Q/\Z$, it follows from the structure theory of divisible groups that we can find a subgroup $H$ 
of $\T$ isomorphic to
 $\Q/\Z \oplus \Q $.  By Corollary \ref{|G|<cont:is:TAP}, $H$ is TAP (in the induced topology). Then,
for every cardinal $\sigma$, the subgroup $H_\sigma=\bigoplus_\sigma H$ of $\T^\sigma$
 is TAP by Theorem \ref{direct:sum:of:TAP:is:TAP}. Since our group $G$ is algebraically isomorphic to a subgroup of
$H_\sigma$ for an appropriate $\sigma$ (e.g., $\sigma = |G|$), this algebraic isomorphism induces on $G$ a precompact and TAP group
topology.
\end{proof}

It is known that every abelian group of size $\leq 2^\cont$ admits a
separable (precompact) group topology \cite{DS_monothetic}. This
motivates  the following question:

\begin{question}
\label{separable:TAP:topology} Does every abelian group of size
$\leq 2^\cont$ admit a non-discrete separable (precompact) TAP group topology?
\end{question}

It is known that every free group admits precompact (hence, non-discrete) group topologies. This justifies the following:

\begin{question}
\label{free:group:have:TAP:topology} Does every free group admit a non-discrete (precompact) TAP group topology?
\end{question}

Since NSS precompact groups have size at most $\cont$, the following theorem provides us with many TAP groups that are not NSS.

\begin{theorem}
Every abelian group admits a  (Hausdorff) precompact  non-$f_1$-Cauchy productive (hence, TAP) group topology.
\end{theorem}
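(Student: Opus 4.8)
The plan is to equip $G$ with its maximal precompact (Bohr) group topology and to show that this topology admits no nontrivial convergent sequences, from which the failure of $f_1$-Cauchy productivity is immediate. If $G$ is finite there is nothing to prove, since a finite group carries no faithfully indexed sequence at all and is trivially precompact; so I assume $G$ is infinite and let $\tau_b$ be the topology on $G$ generated by the family $\mathrm{Hom}(G,\T)$ of all characters, i.e. the coarsest topology making every $\chi:G\to\T$ continuous. Since $G$ is abelian its characters separate points (every abelian group is maximally almost periodic), so $\tau_b$ is Hausdorff; and $(G,\tau_b)$ embeds into the compact group $\T^{\mathrm{Hom}(G,\T)}$ via $g\mapsto(\chi(g))_\chi$, whence $(G,\tau_b)$ is precompact. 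Thus only the dynamical property remains to be checked.

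The key step is to establish that $(G,\tau_b)$ has no nontrivial convergent sequences. Here I would invoke the classical theorem of Glicksberg, asserting that a locally compact abelian group and its Bohr modification have exactly the same compact subsets. Applied to the discrete group $G$, whose compact subsets are precisely its finite subsets, this yields that every compact subset of $(G,\tau_b)$ is finite. Since a nontrivial convergent sequence together with its limit would be an infinite compact subset of $(G,\tau_b)$, no such sequence can exist.

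With this in hand the conclusion follows from the machinery already developed. Suppose, for contradiction, that $\{a_n:n\in\N\}$ is a faithfully indexed $f_1$-Cauchy productive sequence in $(G,\tau_b)$. Since $f_1\ge f_1$, Lemma \ref{AP:is:null:sequence} forces $\{a_n:n\in\N\}$ to converge to the identity; as the $a_n$ are pairwise distinct this is a nontrivial convergent sequence, contradicting the previous paragraph. Hence $(G,\tau_b)$ is non-$f_1$-Cauchy productive. The parenthetical ``hence TAP'' is then automatic: an $f_\omega$-productive set is in particular an $f_\omega$-Cauchy productive sequence, and therefore, by Lemma \ref{if:f<g} (as $f_1\le f_\omega$), an $f_1$-Cauchy productive sequence; so the absence of $f_1$-Cauchy productive sequences forces the absence of $f_\omega$-productive sets. (Alternatively, one may cite Fact \ref{no:conv:seq:is:TAP}(b) directly, or Corollary \ref{no:sequences:no:Cauchy:prod:sequences}.)

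The main obstacle is the no-convergent-sequences step; everything else is formal. I rely there on Glicksberg's theorem. Should a self-contained argument be preferred, the case $G=\bigoplus_{\N}Z(2)$ is instructive and indicates the general mechanism: identifying characters with subsets $T\subseteq\N$ through $\chi_T(a)=\tfrac12\,|\,\mathrm{supp}(a)\cap T\,|\bmod 1$, a sequence converges to $0$ only if $|\,\mathrm{supp}(a_n)\cap T\,|$ is eventually even for every $T$; the singleton characters force the supports to escape to infinity, so one may pass to a subsequence with pairwise disjoint supports and then pick a single selector set $T_0$ meeting each support in exactly one point, making $\chi_{T_0}(a_n)=\tfrac12$ infinitely often and contradicting convergence. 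Performing such a disjointification uniformly for an arbitrary abelian $G$ is precisely the technical heart underlying Glicksberg's result (existence of infinite $I_0$-sets), which is why invoking the latter is the cleanest route.
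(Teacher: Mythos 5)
Your proof is correct and follows essentially the same route as the paper: equip $G$ with its Bohr (maximal precompact) topology, observe that it has no nontrivial convergent sequences, and conclude via Lemma \ref{AP:is:null:sequence} that no $f_1$-Cauchy productive sequence can exist. The only difference is bibliographic: the paper justifies the no-convergent-sequences step by citing Reid's theorem, whereas you derive it from Glicksberg's theorem on the coincidence of compact sets in a locally compact abelian group and its Bohr modification.
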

\begin{proof} If $G$ is an infinite abelian group and $G^\sharp$ denotes $G$ equipped with its Bohr topology 
(=maximum precompact topology), then $G^\sharp$ has no nontrivial convergent sequences according to  a well known fact 
proved by Reid \cite{Re}, so by Lemma \ref{AP:is:null:sequence} $G^\sharp$ contains no $f_1$-Cauchy productive sequences.
\end{proof}

\medskip
\noindent
{\bf Acknowledgement:} We would like to thank cordinally Vaja Tarieladze for fruitful exchange of ideas that inspired
us to prove Theorem \ref{NSS:is:NACP} and Corollary \ref{weil:NSS:iff:TAP}.

\end{document}